\renewcommand{\labelenumi}{$\mathrm{(\roman{enumi})}$}
\newcommand{\ostar}{\mathbin{\mathpalette\make@circled\star}}
\newcommand{\make@circled}[2]{%
  \ooalign{$\m@th#1\smallbigcirc{#1}$\cr\hidewidth$\m@th#1#2$\hidewidth\cr}%
}
\newcommand{\smallbigcirc}[1]{%
  \vcenter{\hbox{\scalebox{0.77778}{$\m@th#1\bigcirc$}}}%
}
\title{Heavy subsets from microsupports}
\author{Tomohiro Asano}
\date{\today}
\begin{document}
\maketitle

\begin{abstract}
    We construct partial symplectic quasi-states on a cotangent bundle with the use of microlocal sheaf theory. 
    We also give criteria and characterization for heaviness/superheaviness with respect to the partial symplectic quasi-state.
\end{abstract}

\tableofcontents

\section{Introduction}

Let $X$ be a symplectic manifold. 
For closed subsets $A,B\subset X$, $A$ is said to be non-displaceable from $B$ if there exists no Hamiltonian diffeomorphism $\varphi\colon X\to X$ with $\varphi(A)\cap B=\emptyset$. 
The topic of non-displaceability has studied over the years from various viewpoints. 

Entov--Polterovich~\cite{EP06quasi-state,EP09rigid} introduced the notion of partial symplectic quasi-states and heavy/superheavy subsets. 
A partial symplectic quasi-state is a special functional defined on the function space of a symplectic manifold. 
For a partial symplectic quasi-state the notions of heavy/superheavy subsets are assigned. 
These notions provide a powerful tool to prove non-displaceability.

Tamarkin~\cite{Tamarkin} introduced triangulated categories $h\cT (T^*M), h\cT_\infty (T^*M)$ and proved non-displaceability results. 
The category $h\cT (T^*M)$ is a subcategory of the derived category of the sheaves on $M\times \bR$ and $h\cT_\infty(T^*M)$ is a quotient category of $h\cT (T^*M)$. 
For an object $F$ of $h\cT (T^*M)$, a closed subset $\RS (F)\subset T^*M$ called the reduced microsupport of $F$ is defined. 
Tamarkin proved that the non-tiriviality of the morphism space between $F$ and $G$ in $h\cT_\infty(T^*M)$ implies non-displaceability of $\RS(F)$ and $\RS(G)$ if they are compact.
See \cref{subsec:Tamarkin_cat,subsec:spectral}. 

In this paper, we construct partial symplectic quasi-states via sheaves. 
We also give criteria for heaviness/superheaviness with respect to these partial symplectic quasi-states. 

\subsection{Main results}

We first define partial symplectic quasi-states from sheaves.

\begin{theorem}[{see \cref{theorem:idempotent_quasistate,prop:existence_superheavy}}]
    One can construct a partial symplectic quasi-state $\zeta_F$ from a non-trivial object $F\in h\cT_\infty (T^*M)$. 
    Moreover, if $\RS(F)$ is compact, it is $\zeta_F$-superheavy.
\end{theorem}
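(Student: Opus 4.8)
The plan is to imitate the Entov--Polterovich construction of partial quasi-states from idempotents, with Floer-theoretic spectral invariants replaced by sheaf-theoretic ones. Since $h\cT(T^*M)$ carries a convolution product $\star$, I would first extract from the non-trivial object $F$ an idempotent (e.g. a suitable limit of the powers $F^{\star n}$, or the unit of the subcategory it generates), which I still denote $F$; the non-triviality of $F$ in $h\cT_\infty(T^*M)$, i.e. $\tau_c(F)\neq 0$ for all $c\ge 0$, is what keeps the invariants below finite. For a compactly supported Hamiltonian $H$ on $T^*M$, let $\mathcal{K}_H$ be the sheaf quantization of the time-$1$ map $\varphi^1_H$ (\cref{subsec:Tamarkin_cat}), and let $c(H;F)\in\bR$ be the spectral invariant obtained by filtering $\mathrm{Hom}_{h\cT(T^*M)}(F,\mathcal{K}_H\circ F)$ by the $\bR$-action, as in \cref{subsec:spectral}. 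I would then record the standard properties of $c(\,\cdot\,;F)$: the shift identity $c(H+s;F)=c(H;F)+s$ for $s\in\bR$; $1$-Lipschitz continuity in the $C^0$-norm; monotonicity $H\le H'\Rightarrow c(H;F)\le c(H';F)$; and the triangle inequality $c(H\#H';F)\le c(H;F)+c(H';F)$, where the idempotence $F\star F\simeq F$ is exactly what upgrades the general bound $c(H\#H';F)\le c(H;F)+c(H';F\star F)$ to this clean form. All of these are formal consequences of the functoriality of sheaf quantization together with the interleaving estimates of \cref{subsec:spectral}.

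Next I would set $\zeta_F(H):=\lim_{n\to\infty}\tfrac1n c(nH;F)$. For autonomous $H$ the flows of its rescalings compose, so $c((m{+}n)H;F)=c((mH)\#(nH);F)\le c(mH;F)+c(nH;F)$, and the limit exists by Fekete's lemma. The defining properties of a partial symplectic quasi-state (in the version suited to the open manifold $T^*M$; see \cref{theorem:idempotent_quasistate}) then come out one at a time: monotonicity, $C^0$-continuity and the normalization from the corresponding properties of $c$ and the shift identity; positive homogeneity is built into the definition; subadditivity $\zeta_F(H+H')\le\zeta_F(H)+\zeta_F(H')$ from the triangle inequality after homogenization (this is the step where a little care is needed, comparing $n(H+H')$ with $(nH)\#(nH')$ along the lines of Entov--Polterovich); and vanishing on displaceable supports from the displacement-energy bound of \cref{subsec:spectral} --- if $\mathrm{supp}(H)$ is displaceable then $\mathcal{K}_{nH}\circ F$ stays interleaved with $F$ at distance at most the displacement energy of $\mathrm{supp}(H)$, uniformly in $n$, so $c(nH;F)$ is bounded and $\zeta_F(H)=0$.

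It remains to prove that a compact $\RS(F)$ is $\zeta_F$-superheavy, i.e. $\zeta_F(H)\le \max_{\RS(F)}H$ for all $H$ (see \cref{prop:existence_superheavy}). Using the shift identity and positive homogeneity, and after replacing $\max_{\RS(F)}H$ by $\max_{\RS(F)}H+\varepsilon$ and letting $\varepsilon\to 0$, this reduces to showing that if $H<0$ on a neighbourhood of $\RS(F)$ then $c(nH;F)\le 0$ for all $n$, equivalently that the relevant class of $F$ survives in $\mathrm{Hom}_{h\cT(T^*M)}(F,\mathcal{K}_{nH}\circ F)$ at level $0$. The idea is that $H<0$ near $\RS(F)$ forces the lifted contact flow on $T^*M\times\bR$ to move the $z$-coordinate in the non-positive direction over the region that actually carries $\mathrm{SS}(F)$, so that, after cutting $H$ off outside a relatively compact neighbourhood of the compact set $\RS(F)$ and estimating the error through the microsupport of $\mathcal{K}_{nH}\circ F$ (the graph of the lifted flow applied to $\mathrm{SS}(F)$), the required class persists at level $0$.

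The main obstacle is precisely this superheaviness estimate. The trouble is that $\RS(F)$ need not be a smooth Lagrangian and the Hamiltonian flow of $H$ need not preserve any neighbourhood of it, so $\mathcal{K}_H\circ F$ genuinely depends on $H$ along the whole trajectory of $\RS(F)$ and not merely near $\RS(F)$; squeezing out the sharp bound $\max_{\RS(F)}H$ --- rather than a weaker one involving the Liouville action integrated along trajectories --- is where the real work lies. The secondary difficulty is the homogenization step: the existence of the limit and the subadditivity rest on the invariance of $c$ under composition with Hamiltonian flows, which in the sheaf setting has to be checked through the functoriality of kernel composition.
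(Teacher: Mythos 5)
Your broad strategy (homogenize a sheaf-theoretic spectral invariant over the iterated Hamiltonian kernel, à la Entov--Polterovich and Monzner--Vichery--Zapolsky) is the same as the paper's, but two of your key steps diverge from what actually happens, and the second one is a genuine gap.

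First, the idempotence you invoke is the wrong one. You propose to replace $F$ by a $\star$-idempotent (``a suitable limit of the powers $F^{\star n}$, or the unit of the subcategory it generates''), and then use $F\star F\simeq F$ to upgrade a bound of the form $c(H\#H';F)\le c(H;F)+c(H';F\star F)$. The paper never needs $F\star F\simeq F$, and in general there is no reason the powers $F^{\star n}$ stabilize or that the candidate limit is non-trivial in $h\cT_\infty(T^*M)$. What the construction actually uses is a \emph{morphism} idempotent $e\colon F\to F$ in $h\cT_\infty(T^*M)$ with $e^2=e$ and $c(e;F,F)\neq+\infty$ --- in the simplified statement, just $e=\id_F$, for which $e^2=e$ is tautological and the only condition is the non-vanishing $c(\id_F;F,F)\neq+\infty$. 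The subadditivity $\ell_e(\phi\psi)\le\ell_e(\phi)+\ell_e(\psi)$ in \cref{lemma:elle} then follows from $e^2=e$ together with the product inequality for spectral invariants (\cref{lemma:specprodineq}) and the fact that $K^{\ostar}(\mathchar`-)$ preserves interleaving distances, not from any monoidal idempotence of $F$. Your reduction step introduces an unnecessary and potentially unfulfillable requirement.

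Second, and more seriously, your superheaviness argument is missing its engine. You correctly identify that you must show $\zeta_F(H)\le\max_{\RS(F)}H$, reduce to $H$ constant near $\RS(F)$, and then try to control $\cK_{nH}\circ F$ by microsupport estimates along the Hamiltonian trajectories of $\RS(F)$; you yourself flag this as the ``main obstacle,'' and indeed as stated it does not close, because the flow carries the relevant part of $\MS(F)$ far from $\RS(F)$ and the resulting bound involves action integrals rather than the pointwise maximum. What the paper uses instead is \cref{lemma:constantshift}: if $H-c\in C_{cc}(T^*M\times[0,1],\RS(F))$, i.e.\ $H\equiv c$ on a neighborhood of $\RS(F)$, then $\cK_H^{\ostar}(F)\simeq T_cF$ \emph{exactly}. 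This is an identification, not an estimate: once one has it, $\cK_H^{\ostar-k}(F)\simeq T_{-kc}F$, so $c(e;F,\cK_H^{\ostar-k}(F))=c(e;F,F)-kc$ and $\zeta_e(H)=c$ on the nose, which via \cref{lemma:heavyequiv} gives both heaviness and superheaviness of $\RS(F)$ in one stroke. Your plan has no substitute for this lemma, and without it the superheaviness claim does not follow from the rest of your argument.
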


We can extend this construction to more general idempotents on $F$. 
In this introduction, we restrict ourselves to the cases that the idempotent is the identity morphism $\id_F$ for simplicity. 
For $F=\bfk_{M\times [0,\infty)}$, $\zeta_F$ coincides with the partial symplectic quasi-state $\zeta_{\mathrm{MVZ}}$ defined via the Lagrangian intersection Floer theory by \cite{MVZ12}. 

We then give a sufficient condition for $\zeta_F$-heaviness. 
For objects $F, G$ of $h\cT (T^*M)$, the subset $\Spec(F,G)$ of $\bR$ is defined and reflects some non-tiriviality of the morphism space between $F$ and $G$ in $h\cT_\infty(T^*M)$. 
\begin{theorem}[{see \cref{theorem:zetae_heavy}}]
    Let $G \in h\cT(T^*M)$.
    Assume that $\RS(G)$ is compact and the set $\Spec(F, G)$ of the spectral invariants is non-empty.
    Then $\RS(G)$ is $\zeta_{F}$-heavy.
\end{theorem}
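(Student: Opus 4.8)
The plan is to deduce heaviness from a two-step comparison between the spectral invariant underlying $\zeta_F$, the analogous quantity attached to $G$, and the pointwise values of the Hamiltonian on $\RS(G)$. Recall that by \cref{theorem:idempotent_quasistate} and the constructions preceding it, $\zeta_F$ is obtained by homogenizing a spectral invariant $\ell(\,\cdot\,;F)$ built from $F$ together with the Guillermou--Kashiwara--Schapira sheaf quantization of Hamiltonian isotopies, so that $\zeta_F(H)=\lim_{n\to\infty}\tfrac1n\,\ell(nH;F)$ for $H\in C^\infty_c(T^*M)$. To prove that $\RS(G)$ is $\zeta_F$-heavy it suffices, by definition, to show $\zeta_F(H)\ge\inf_{\RS(G)}H$ for every such $H$.

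\textbf{Step 1: comparison of $F$ and $G$.} Choose $a\in\Spec(F,G)$, which by definition encodes a nontrivial morphism in $h\cT_\infty(T^*M)$ between $F$ and $G$ realized at level $a$. Applying the (invertible) GKS kernel of the flow of $nH$ to this morphism and using its compatibility with the convolution product and with the $\bR$-translation functors $\bfk_{M\times[c,\infty)}\star(\,\cdot\,)$, I would obtain an inequality $\ell(nH;F)\ge\ell(nH;G)-a$ with $a$ independent of $n$. Dividing by $n$ and letting $n\to\infty$ gives $\zeta_F(H)\ge\lim_{n\to\infty}\tfrac1n\,\ell(nH;G)$, the additive constant $a$ disappearing in the homogenization.

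\textbf{Step 2: microlocal lower bound for $G$.} Here I would prove $\ell(nH;G)\ge n\inf_{\RS(G)}H$, equivalently $\tfrac1n\,\ell(nH;G)\ge\inf_{\RS(G)}H$ for all $n$. The point is that the image of $G$ under the GKS kernel of $\phi^{1}_{nH}$ has microsupport contained in the image of $\mathrm{SS}(G)$ under the lifted flow, hence in the $\bR$-direction the relevant morphism cannot be pushed down beyond the amount prescribed by $H$ along $\RS(G)$; since $\RS(G)$ is compact this amount is bounded below by $n\inf_{\RS(G)}H$. Combining with Step 1 yields $\zeta_F(H)\ge\inf_{\RS(G)}H$, which is exactly $\zeta_F$-heaviness of $\RS(G)$.

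\textbf{Main obstacle.} The delicate part is Step 2: converting the microsupport estimate for the Hamiltonian sheaf quantization into the sharp numerical bound $\ell(nH;G)\ge n\inf_{\RS(G)}H$. This requires controlling the behaviour of the quantization in the non-compact $\bR$-direction and the tails of objects of $h\cT(T^*M)$, and verifying that only the compact \emph{reduced} microsupport $\RS(G)\subset T^*M$ — rather than the full microsupport in $T^*(M\times\bR)$ — governs the estimate. By contrast, Step 1 should be routine once the functoriality of the GKS quantization with respect to the morphism recorded by $\Spec(F,G)$ is set up, and the existence of the homogenization limit together with the quasi-state properties of $\zeta_F$ are already available from \cref{theorem:idempotent_quasistate}.
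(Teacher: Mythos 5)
Your proposal takes a genuinely different route from the paper, and Step~2, which you yourself identify as the delicate part, is where a gap remains. The paper does not attempt to prove the quantitative bound $\ell(nH;G)\ge n\inf_{\RS(G)}H$ for each $n$ directly from a microsupport estimate. Instead it first invokes \cref{lemma:heavyequiv} to reduce to the case where $H$ is constant, equal to some $c$, on a neighborhood of $\RS(G)$; under that hypothesis the constant-shift lemma (\cref{lemma:constantshift}) gives a clean isomorphism $\cK_H^{\ostar -k}(G)\simeq T_{-kc}G$, and composing a lift of $e$ with a lift of $\beta$ and applying the definition of $c(\beta e;F,G)$ then yields $\ell_e(kH)\ge kc$ in one shot. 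Your sketch of Step~2 — that the flowed microsupport ``cannot be pushed down beyond the amount prescribed by $H$ along $\RS(G)$'' — is a heuristic, but converting a microsupport inclusion into a sharp spectral inequality is exactly what \cref{lemma:constantshift} accomplishes after the reduction, and your argument does not supply a substitute; in particular, for non-constant $H$ the image of $\MS(G)$ under the GKS lift does not control the filtration level of the relevant morphism.

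There is a second, smaller issue in Step~1. You want the inequality $\ell(nH;F)\ge\ell(nH;G)-a$, but the triangle inequality for spectral invariants (\cref{lemma:specprodineq}) gives lower bounds on the invariant of a composite in terms of the invariants of its factors — it does not directly compare $c(\id_F;F,\cK_H^{\ostar -n}(F))$ with $c(\id_G;G,\cK_H^{\ostar -n}(G))$. The paper avoids introducing the intermediate quantity $\ell(nH;G)$ altogether: it works only with a lift of $e$ and a lift of $\beta$, and the isomorphism $\cK_H^{\ostar -k}(G)\simeq T_{-kc}G$ closes the loop. I would suggest restructuring your argument around \cref{lemma:heavyequiv} and \cref{lemma:constantshift}: once $H\equiv c$ near $\RS(G)$, the composition $F\xrightarrow{\tl e}T_{\ell+\varepsilon}\cK_H^{\ostar -k}(F)\xrightarrow{\cK_H^{\ostar -k}(\tl\beta)}T_{\ell-b+\varepsilon+\varepsilon'-kc}G[d]$ is a lift of $\beta e$, which immediately forces $\ell\ge kc$, and homogenization finishes the proof.
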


We also give sufficient conditions that the reduced microsupport is $\zeta_{\mathrm{MVZ}}$-superheavy (see \cref{corollary:superheavyV} and \cref{theorem:superheavy}).

For $F$'s satisfying some conditions,
we also give a characterization for $\zeta_F$-heaviness.

\begin{theorem}[{see \cref{theorem:characterization}}]
    For a special $F$, the following conditions for a compact subset $A\subset T^*M$ are equivalent:  
    \begin{enumerate}
    \item $A$ is $\zeta_F$-heavy,
    \item there exist an object $G\in h\cT(T^*M)$ with $\RS(G)\subset A$ and a non-zero morphism $F\to G$ in $h\cT_\infty(T^*M)$, 
    \item the unit morphism $\eta_{A,F} \colon F\to  {\iota_A}_*\iota_A^*F$ of the adjunction $\iota_A^*\dashv {\iota_A}_*$ satisfies $\eta_{A,F} \neq 0$ as a morphism of $h\cT_\infty(T^*M)$. 
\end{enumerate}
\end{theorem}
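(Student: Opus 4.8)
The plan is to prove the cycle of implications $(iii)\Rightarrow(ii)\Rightarrow(i)\Rightarrow(iii)$, where the first step is essentially formal, the second step is the content of the previously stated heaviness criterion (second theorem of the introduction), and the last step is where the special assumption on $F$ must be used.

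First I would observe that $(iii)\Rightarrow(ii)$ is immediate by taking $G := {\iota_A}_*\iota_A^*F$: this object has $\RS(G)\subset A$ because $\iota_A^*$ only retains microlocal information over $A$ and ${\iota_A}_*$ does not enlarge the microsupport outside $A$ (one should cite the microsupport estimate for the pair $\iota_A^*\dashv {\iota_A}_*$ established earlier in the paper), and the unit $\eta_{A,F}$ is by hypothesis a non-zero morphism $F\to G$ in $h\cT_\infty(T^*M)$. Then $(ii)\Rightarrow(i)$ should follow by feeding $G$ into \cref{theorem:zetae_heavy}: a non-zero morphism $F\to G$ in $h\cT_\infty(T^*M)$ forces $\Spec(F,G)\neq\emptyset$ (this is the defining property of $\Spec$ recalled in the introduction, and should be spelled out via the persistence/spectral-invariant machinery of \cref{subsec:spectral}), and together with compactness of $\RS(G)\subset A$ this yields that $\RS(G)$ is $\zeta_F$-heavy; since heaviness is monotone under inclusion of closed sets and $\RS(G)\subset A$, the set $A$ itself is $\zeta_F$-heavy.

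The substantive implication is $(i)\Rightarrow(iii)$. Here I would argue by contraposition: suppose $\eta_{A,F}=0$ in $h\cT_\infty(T^*M)$, i.e. $F$ is a direct summand of the cone of $\eta_{A,F}$ shifted, equivalently $F$ is "supported away from $A$" in the appropriate sense — more precisely, $\eta_{A,F}=0$ means the adjunction counit exhibits $F$ as a retract of an object whose microsupport avoids $A$ after passing to $h\cT_\infty$. Using the construction of $\zeta_F$ from \cref{theorem:idempotent_quasistate}, the value $\zeta_F$ on a function (or equivalently on the indicator of $A$) is computed from the interaction of $F$ with sheaves microlocally concentrated near $A$ via the spectral invariant; if $F$ has no microlocal content over $A$, this interaction vanishes, and one concludes $\zeta_F$ evaluated against $A$ takes the minimal value, i.e. $A$ is not $\zeta_F$-heavy. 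This is the step where the word "special" in the hypothesis on $F$ does its work: one needs $F$ to be such that $\zeta_F$ is genuinely detected by these microlocal pairings (e.g. $F$ idempotent-complete, or $\RS(F)=T^*M$, or the relevant separation/quantization hypotheses guaranteeing that vanishing of $\eta_{A,F}$ in $h\cT_\infty$ propagates to vanishing of all spectral numbers $\Spec(F,G)$ for $\RS(G)\subset A$).

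The main obstacle I anticipate is precisely making $(i)\Rightarrow(iii)$ quantitative: translating the categorical vanishing $\eta_{A,F}=0$ into the analytic statement that the partial quasi-state $\zeta_F$ fails the heaviness inequality on $A$. This requires a careful unwinding of how $\zeta_F$ is assembled from spectral invariants $\Spec(F,-)$ in \cref{theorem:idempotent_quasistate,prop:existence_superheavy}, together with a localization/excision argument showing that $\Spec(F,G)$ depends on $F$ only through $\iota_A^*F$ when $\RS(G)\subset A$ — so that $\iota_A^*F \simeq 0$ (the meaning of $\eta_{A,F}=0$ when $F$ is "special") kills every such spectral invariant simultaneously. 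I would isolate this as a lemma: if $\RS(G)$ is compact and contained in $A$, then the spectral invariant pairing $F\otimes G\to$ (energy) factors through $\eta_{A,F}$, hence is controlled by ${\iota_A}_*\iota_A^*F$ alone; the heaviness characterization then falls out by combining this lemma with the two heaviness results already proved.
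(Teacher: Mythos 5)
Your overall plan — a cycle $(\mathrm{iii})\Rightarrow(\mathrm{ii})\Rightarrow(\mathrm{i})\Rightarrow(\mathrm{iii})$ with the weight on the last implication — matches the paper's strategy, and you correctly identify $(\mathrm{i})\Rightarrow(\mathrm{iii})$ as the substantive direction. However, there are two gaps, one small and one essential.

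\emph{Small gap in $(\mathrm{ii})\Rightarrow(\mathrm{i})$.} You assert that a non-zero morphism $F\to G$ in $h\cT_\infty(T^*M)$ automatically gives $\Spec(F,G)\neq\emptyset$, but this is exactly what \cref{remark:c=+infty} warns against: there exist non-zero $\alpha$ with $c(\alpha;F,G)=+\infty$, so non-vanishing does not by itself produce a finite spectral number. The paper handles this via \cref{lemma:specinfty}: under a boundedness hypothesis on $\Sigma(F,G)$ (which here follows from compactness of $\RS(G)$ together with the ``special'' hypotheses on $F$), a morphism with $c=+\infty$ must be zero. You need to invoke that lemma explicitly here; it is precisely where the hypotheses on $F$ first enter.

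\emph{Essential gap in $(\mathrm{i})\Rightarrow(\mathrm{iii})$.} Your contraposition starts correctly — $\eta_{A,F}=0$ in $h\cT_\infty$ does imply $\iota_A^*F\simeq 0$ in $h\cT_\infty$ by the adjunction (since $\id_{\iota_A^*F}$ corresponds to $\eta_{A,F}$) — but the crux is not the localization/excision lemma you isolate at the end (that $c(\beta;F,G)$ for $G\in\cT_A$ factors through $\iota_A^*F$). That lemma would only reprove the equivalence of $(\mathrm{ii})$ and $(\mathrm{iii})$, which is formal. What you actually need, and what the paper provides as \cref{proposition:spectral-colim}, is a compatibility of spectral invariants with sequential colimits: for $F$ cohomologically constructible with $q_\bR\pi(\MS(F)\cap\{\tau>0\})$ bounded, and $(G_i)$ with uniformly bounded-below $q_\bR\pi(\MS(G_i)\cap\{\tau>0\})$, one has $\lim_i c(\beta_i;F,G_i)=c(\beta_\infty;F,\colim_i G_i)$. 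Since $\iota_A^*F=\colim_{H\in\cC_{cc}(T^*M,A)}\cK_H\ostar F$, this translates $c(\eta_{A,F};F,\iota_A^*F)=+\infty$ into the existence of some $H\equiv 0$ near $A$ with $c(\id_F;F,\cK_H\ostar F)>0$ (this is \cref{corollary:unitinfty}); and then the concrete violation of heaviness is obtained by evaluating $\zeta_F$ at the function $-H$, which vanishes on a neighborhood of $A$ but satisfies $\zeta_F(-H)\le\ell_{\id_F}(-H)=-c(\id_F;F,\cK_H\ostar F)<0$. Without this colimit-compatibility result, the passage from the categorical vanishing $\iota_A^*F\simeq 0$ to the analytic failure of the heaviness inequality is not established; merely noting that all relevant spectral pairings die does not by itself produce the required test Hamiltonian. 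Your proposal flags this as ``the main obstacle'' but does not supply the argument, so as written the proof is incomplete at its crucial step.
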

For the proof, we use the explicit description of the functor $\iota_A^*$ by Kuo~\cite{Kuowrap}.

With these results and known results about Viterbo's spectral bound conjecture, we give several examples for $\zeta_{\mathrm{MVZ}}$-superheavy subsets, which include generalization of some of the results in Kawasaki--Orita~\cite{KO22rigidfiber}.
Furthermore, in conjunction with recent studies on $\gamma$-support~\cite{AGHIV,AHV24}, we see that $\zeta_{\mathrm{MVZ}}$-superheavy subsets can become indecomposable continua.

We also construct partial symplectic quasi-states with a direct use of sheaf quantization of Hamiltonian diffeomorphisms and prove characterization of heaviness in \Cref{sec:psqsHam}. 

\subsection{Related work}

There are two kinds of well-studied partial symplectic quasi-states on the cotangent bundles of compact manifolds. 
Lanzat~\cite{Lanzat13} defined partial symplectic quasi-states for Liouville domains using spectral invariants of Hamiltonian Floer homologies.
Monzner--Vichery--Zapolsky~\cite{MVZ12} defined a partial symplectic quasi-state on a cotangent bundle using spectral invariants of Lagrangian Floer homologies for the zero-section.

Chracterizations of heaviness already appeared in \cite{OnoSugimoto, MSV23}. 
Ono--Sugimoto~\cite{OnoSugimoto} constructed partial symplectic quasi-states for symplectic manifolds with boundaries of contact type from idempotents of symplectic cohomology and gave a characterization of heaviness.
Mak--Sun--Varolgunes~\cite{MSV23} also proved a similar charaterization of heviness with respect to  partial symplectic quasi-states defined via quantum cohomologies of closed symplectic manifolds. 
Both studies use relative symplectic cohomologies to determine the heaviness. 
The main result of \cite{OnoSugimoto} restricted for cotangent bundles corresponds to \cref{theorem:Hamchar} of this paper.  
See \cref{remark:OSrem} for a difference.

Since the relation between sheaf kernels and symplectic cohomologies of domains in cotangent bundles are described in \cite{KSZ23} depending on a result of \cite{guillermou2022gamma}, the author expects that more concrete relation between our work and Ono--Sugimoto's would be revealed.

\subsection*{Acknowledgements}
The author would like to thank Yuichi Ike for his numerous helpful comments and suggestions regarding the structure of this paper.
The author is also thankful to Yoshihiro Sugimoto for his proposal, which inspired the content of \cref{section:characterization}.
The author would also like to thank Morimichi Kawasaki for enlightening him about the connection with Viterbo's conjecture.  
Furthermore, the author also thank Christopher Kuo, Tatsuki Kuwagaki, Wenyuan Li, and Ryuma Orita for helpful comments. 

This work was supported by JSPS KAKENHI Grant Number JP24K16920.

\section{Preliminaries}\label{section:preliminaries}

In this paper, we work in the area of microlocal sheaf theory developed by Kashiwara--Schapira~\cite{KS90} with the framework of infinity categories. 
For the basics of infinity categories, refer to \cite{LurieHTT,LurieHA}. 
Regarding the implementation of microlocal sheaf theory within the setting of infinity categories, 
we will follow Kuo's setting~\cite{Kuowrap} in this paper.
See \cite{Kuowrap} and its references.

Throughout this paper, we fix a coefficient field $\bfk$.
For a manifold $X$, 
let $\Sh (X)$ be the $\bfk$-linear stable derived category of sheaves of $\bfk$-vector spaces on $X$. 
For each object $F\in \Sh(X)$, we write $\MS(F)\subset T^*X$ for the microsupport of $F$, which is a closed conic subset.
For a closed subset $A\subset T^*X$, $\Sh_A(X)$ denotes the full subcategory of $\Sh(X)$ consisting of objects whose microsupports are contained in $A$.

\subsection{Tamarkin category}\label{subsec:Tamarkin_cat}

Until the end of this paper, let $M$ be a connected manifold without boundary. 
We write $(t;\tau)$ for the homogeneous coordinate system on the cotangent bundle $T^*\bR$.
The Tamarkin category $\cT(T^*M)$ is defined to be $\Sh (M\times \bR)/\Sh_{\{\tau \le 0 \}}(M\times \bR)$. 
The homotopy category $h\cT(T^*M)$ is isomorphic to the quotient category $h\Sh (M\times \bR)/h\Sh_{\{\tau \le 0 \}}(M\times \bR)$ of the homotopy categories by \cite[Prop.~5.9]{BGT13}. 
For an object $F\in \cT (T^*M)$, $\MS(F)\cap \{\tau>0 \}$ is invariant under isomorphisms in $\cT(T^*M)$. 
See \cite{Tamarkin,GS14} for details. 
For an object $F \in \cT(T^*M)$, we define 
\begin{equation}
    \RS(F) \coloneqq \overline{\rho(\MS(F) \cap \{ \tau>0 \})},
\end{equation}
where $\rho \colon T^*_{\tau>0}(M \times \bR_t) \to T^*M; (x,t;\xi,\tau) \mapsto (x;\xi/\tau)$.
The closed subset $\RS(F)$ is called the \emph{reduced microsupport} of $F \in \cT(T^*M)$.
For a closed subset $A\subset T^*M$, let $\cT_A(T^*M)$ denote the full subcategory of $\cT (T^*M)$ consisting of objects $F$ with $\RS (F)\subset A$.

We introduce notation for basic operations in the Tamarkin categories.
We set 
\begin{equation}
\begin{aligned}
    & q_{1} \colon M \times \bR \times \bR \to M \times \bR; 
    (x,t_1,t_2) \mapsto (x,t_1) \\
    & q_{2} \colon M \times \bR \times \bR \to M \times \bR; 
    (x,t_1,t_2) \mapsto (x,t_2) \\
    & m \colon M \times \bR \times \bR \to M \times \bR; 
    (x,t_1,t_2) \mapsto (x,t_1+t_2)
\end{aligned}
\end{equation}
and define 
\begin{equation}
    F \star G \coloneqq {m}_!(q_1^{-1} F \otimes q_2^{-1} G) \in \Sh(\bfk_{M \times \bR})
\end{equation}
for $F,G \in \Sh(\bfk_{M \times \bR})$.
This induces a functor $\star\colon \cT(T^*M)\times \cT(T^*M)\to \cT(T^*M)$, which is called the convolution functor and provides $\cT(T^*M)$ with a monoidal structure. 
The monoidal structure is closed, and the monoidal unit is given by $\bfk_{M\times [0,\infty)}$. 
The internal homomorphism is given by $\cHom^\star$, which is defined as 
\begin{equation}
    \cHom^\star(G,H) \coloneqq {q_1}_* \cHom(q_2^{-1}G,m^!H).
\end{equation}

Set $q_{\bR}\colon M\times \bR \to \bR$ be the projection. 

\begin{lemma}\label{lemma:TamarkinVerdier}
    Let $F, G \in \Sh(\bfk_{M \times \bR})$.
    Assume $F$ is cohomologically constructible and the projection $q_{\bR}|_{\Supp(F)}\colon \Supp(F) \to \bR$ is proper and has a bounded image.
    Then, one has an isomorphism
    \begin{equation}
        \cHom^\star(F,G) \simeq D(i^{-1}F) \star G,
    \end{equation}
    where $D$ denotes the Verdier dual and $i \colon M \times \bR \to M \times \bR; (x,t) \mapsto (x,-t)$.
    In particular, the functor $\cHom^\star(F,\mathchar`-)$ preserves colimits.    
\end{lemma}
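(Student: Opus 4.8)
\emph{Strategy.} I would prove the isomorphism by a direct computation in which, after a single shear coordinate change on $M\times\bR\times\bR$, both $\cHom^\star(F,G)$ and $D(i^{-1}F)\star G$ get identified with push-forwards of the \emph{same} sheaf along the difference map $d\colon M\times\bR\times\bR\to M\times\bR$, $(x,t_1,t_2)\mapsto(x,t_1-t_2)$ --- one via $d_*$, the other via $d_!$ --- the hypotheses guaranteeing that these agree; the colimit statement then follows formally. First I would record that the hypotheses force $\Supp(F)$ to be compact: the bounded-image assumption gives $\Supp(F)\subset M\times[a,b]$, and applying properness of $q_{\bR}|_{\Supp(F)}$ to the compact set $[a,b]$ shows $\Supp(F)$ is compact. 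Next, using the diffeomorphism $\sigma(x,t_1,t_2)=(x,t_1+t_2,t_2)$, which satisfies $m=q_1\circ\sigma$ and $q_2\circ\sigma=q_2$, one rewrites $\cHom^\star(F,G)={q_1}_*\cHom(q_2^{-1}F,m^!G)\simeq d_*\cHom(q_2^{-1}F,q_1^!G)$. Since $q_1$ is a submersion with oriented one-dimensional fibers, $q_1^!G\simeq q_1^{-1}G[1]$, and the two arguments of this inner $\cHom$ are ``transverse'': $q_2^{-1}F$ depends only on the $(x,t_2)$-variables and $q_1^{-1}G$ only on the $(x,t_1)$-variables. This is exactly where cohomological constructibility of $F$ enters: the Künneth formula for the internal hom of external products (Kashiwara--Schapira), valid when one factor is cohomologically constructible, gives $\cHom(q_2^{-1}F,q_1^!G)\simeq q_1^{-1}G\otimes q_2^{-1}D(F)$, hence $\cHom^\star(F,G)\simeq d_*\bigl(q_1^{-1}G\otimes q_2^{-1}D(F)\bigr)$.

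For the other side I would compute $D(i^{-1}F)\star G\simeq G\star D(i^{-1}F)=m_!\bigl(q_1^{-1}G\otimes q_2^{-1}D(i^{-1}F)\bigr)$; writing $m=d\circ\psi$ with $\psi(x,t_1,t_2)=(x,t_1,-t_2)$ and using $D\circ i^{-1}\simeq i^{-1}\circ D$ together with $i^2=\id$, a short manipulation yields $D(i^{-1}F)\star G\simeq d_!\bigl(q_1^{-1}G\otimes q_2^{-1}D(F)\bigr)$. It then remains to compare $d_*$ and $d_!$ on the sheaf $q_1^{-1}G\otimes q_2^{-1}D(F)$: its support lies in $q_2^{-1}\Supp(D(F))\subset q_2^{-1}\Supp(F)$, and since $\Supp(F)$ is compact, $d$ restricted to $q_2^{-1}\Supp(F)$ is proper, so the canonical morphism $d_!(q_1^{-1}G\otimes q_2^{-1}D(F))\to d_*(q_1^{-1}G\otimes q_2^{-1}D(F))$ is an isomorphism. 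This proves the displayed isomorphism. The last assertion is then immediate, because $\cHom^\star(F,\mathchar`-)\simeq D(i^{-1}F)\star(\mathchar`-)=m_!\bigl(q_1^{-1}D(i^{-1}F)\otimes q_2^{-1}(\mathchar`-)\bigr)$ is a composite of the colimit-preserving functors $q_2^{-1}$, $(-)\otimes q_1^{-1}D(i^{-1}F)$ and $m_!$.

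The step I expect to be the crux --- and the one genuinely using the hypotheses --- is the identification $\cHom(q_2^{-1}F,q_1^!G)\simeq q_1^{-1}G\otimes q_2^{-1}D(F)$. The naive ``dualization'' identity $\cHom(A,B)\simeq D(A)\otimes B$ is \emph{false} for a general cohomologically constructible $A$ (already for $A=\bfk_{M\times[a,b]}$ convolved against a skyscraper), so one cannot apply it pointwise; what rescues the argument is the transversality of the two factors, which brings the external-product Künneth formula into play, together with the compact-support input needed to pass from $d_!$ to $d_*$. A secondary technical point is keeping track of the shift and orientation conventions implicit in $q_1^!\simeq q_1^{-1}[1]$, $m^!\simeq m^{-1}[1]$, and in the normalization of the duality $D$ on $M\times\bR$, so that the same $D$ appears on both sides.
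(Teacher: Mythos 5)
Your proposal is correct and takes essentially the same route as the paper's proof: both push $\cHom^\star(F,G)$ through a change of variables on $M\times\bR\times\bR$, apply \cite[Prop.~3.4.4]{KS90} to turn the inner $\cHom$ into a "tensor of pullbacks" using cohomological constructibility of $F$, and then compare $(-)_!$ and $(-)_*$ pushforwards using the support hypothesis. The only cosmetic difference is the choice of shear (yours lands on the difference map $d$ and keeps $F$ undualized-by-$i$; the paper lands on $m$ and works with $i^{-1}F$), and you make the compactness of $\Supp(F)$ explicit rather than leaving the properness of the final pushforward implicit.
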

\begin{proof}
    First note that we have an isomorphism 
    \begin{equation}
        \cHom^\star(F,G) \simeq m_* \cHom(q_2^{-1}i^{-1}F,q_1^!G).
    \end{equation}
    Since $i^{-1}F$ is cohomologically constructible, by \cite[Prop.~3.4.4]{KS90}, 
    \begin{equation}
        \cHom(q_2^{-1}i^{-1}F,q_1^!G) \simeq G \boxtimes D(i^{-1}F).
    \end{equation}
    By the assumption, $m$ is proper on the support of $G \boxtimes D(i^{-1}F)$, which proves the first isomorphism.
    The second assertion follows from the fact that the functor $D(i^{-1}F) \star (\mathchar`-)$ is a left adjoint.
\end{proof}

\begin{remark}
    In the proof, the assumption about the boundedness of the image of $q_{\bR}|_{\Supp(F)}\colon \Supp(F) \to \bR$ is only used for the properness of $m$ on $\Supp(G \boxtimes D(i^{-1}F))$. 
    This properness is satisfied also in other situations:
    \begin{enumerate}
        \item $q_{\bR}|_{\Supp(G)}\colon \Supp(G) \to \bR$ has a bounded image, 
        \item The image of $q_{\bR}|_{\Supp(F)}$ is bounded below and that of $q_{\bR}|_{\Supp(G)}$ is bounded above,
        \item The image of $q_{\bR}|_{\Supp(F)}$ is bounded above and that of $q_{\bR}|_{\Supp(G)}$ is bounded below.
    \end{enumerate}
    Hence one can replace the boundedness of the image of $q_{\bR}|_{\Supp(F)}$ by either of them. 
    We do not use the other versions in this paper. 
\end{remark}

We will use a variant of the convolution functor.
Let $M_i \ (i=1,2,3)$ be a manifold. 
We set 
\begin{equation}
\begin{aligned}
    q_{12} \colon M_1 \times M_2 \times M_3 \times \bR \times \bR & \to M_1 \times M_2 \times \bR; \\ 
    (x_1,x_2,x_3,t_1,t_2) & \mapsto (x_1,x_2,t_1) \\
    q_{23} \colon M_1 \times M_2 \times M_3 \times \bR \times \bR & \to M_2 \times M_3 \times \bR; \\
    (x_1,x_2,x_3,t_1,t_2) & \mapsto (x_2,x_3,t_2) \\
    m_{13} \colon M_1 \times M_2 \times M_3 \times \bR \times \bR & \to M_1 \times M_3 \times \bR; \\
    (x_1,x_2,x_3,t_1,t_2) & \mapsto (x_1,x_3,t_1+t_2).
\end{aligned}
\end{equation}
With these maps, for $F_{12} \in \Sh(\bfk_{M_1 \times M_2 \times \bR})$ and $F_{23} \in \Sh(\bfk_{M_2 \times M_3 \times \bR})$, we define 
\begin{equation}
    F_{12} \ostar F_{23} \coloneqq {m_{13}}_! (q_{12}^{-1} F_{12} \otimes q_{12}^{-1} F_{23}) \in \Sh(\bfk_{M_1 \times M_3 \times \bR}),
\end{equation}
which induces a functor $\ostar \colon \cT (T^*M_1\times T^*M_2)\times \cT (T^*M_2\times T^*M_3)\to \cT (T^*M_1\times T^*M_3)$.

For $c\in \bR$, $T_c\colon M\times \bR\to M\times \bR\colon (x,t)\mapsto (x,t+c)$.
We shall abbreviate the functor ${T_c}_*$ as $T_c$. 
For $c\ge 0$, there exists a natural transformation $\tau_c\colon \id_{\cT(T^*M)} \Rightarrow T_c$.
Asano--Ike~\cite{AI20} defined a pseudo-distance on the class of the objects of $\cT(T^*M)$ with the use of $T_c$ and $\tau_c$, following the sheaf-theoretic interleaving distance by Kashiwara--Schapira~\cite{KS18persistent}.
In this paper, we use the following pseudo-distance (cf.~\cite{AI22completeness}).

\begin{definition}
    Let $F,G \in \cT(T^*M)$ and $a,b \ge 0$.
    \begin{enumerate}
        \item The pair $(F,G)$ is said to be \emph{$(a,b)$-isomorphic} if there exist morphisms $\alpha \colon F \to T_a G$ and $\beta \colon G \to T_b F$ in $h\cT(T^*M)$ such that 
        \[
        \begin{cases}
            \ld F \xrightarrow{\alpha} T_a G \xrightarrow{T_a \beta} T_{a+b} F \rd = \tau_{a+b}(F), \\
            \ld G \xrightarrow{\beta} T_b F \xrightarrow{T_b \alpha} T_{a+b} G \rd = \tau_{a+b}(G).
        \end{cases}
        \]
        \item We define 
        \[
            d_{\cT(T^*M)}(F,G) \coloneqq \inf \lc a+b \relmid \text{$(F,G)$ is $(a,b)$-isomorphic} \rc.
        \]
        \item One defines $\mathrm{Tor}$ to be the full triangulated subcategory of $h\cT(T^*M)$ consisting of the objects $F$ with $d_{\cT(T^*M)}(F,0)<+\infty$. 
        The quotient category $h\cT(T^*M)/\mathrm{Tor}$ is denoted by $h\cT_\infty(T^*M)$.
    \end{enumerate}
\end{definition}

The Hom space in $h\cT_\infty(T^*M)$ is described as follows. 

\begin{lemma}[{\cite[Prop.~5.7]{GS14}}]\label{proposition:HomhT}
    For $F, G \in \cT(T^*M)$, one has
    \begin{equation}
        \Hom_{h\cT_\infty (T^*M)}(F,G)\simeq \colim_c \Hom_{h\cT(T^*M)} (F,T_c G).
    \end{equation}
\end{lemma}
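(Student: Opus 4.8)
The plan is to realize $h\cT_\infty(T^*M)=h\cT(T^*M)/\mathrm{Tor}$ as a Verdier localization and then to replace the colimit over all denominators by the colimit over the cofinal subsystem $\{\tau_c(G)\colon G\to T_c G\}_{c\ge 0}$.

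First I would record a concrete description of $\mathrm{Tor}$: an object $F\in h\cT(T^*M)$ lies in $\mathrm{Tor}$ if and only if $\tau_c(F)=0$ for some $c\ge 0$. Indeed, specializing the definition of being $(a,b)$-isomorphic to the pair $(F,0)$, the morphism $\alpha\colon F\to T_a 0=0$ is necessarily zero, the second compatibility condition is vacuous, and the first reads $\tau_{a+b}(F)=0$; conversely, if $\tau_c(F)=0$ then $(F,0)$ is $(c,0)$-isomorphic via the zero morphisms. Hence $d_{\cT(T^*M)}(F,0)<+\infty$ is equivalent to the vanishing of some $\tau_c(F)$. Granting this, the standard calculus of fractions for a Verdier quotient represents a morphism $F\to G$ in $h\cT_\infty(T^*M)$ by a diagram $F\xrightarrow{f}G'\xleftarrow{s}G$ with $\mathrm{cone}(s)\in\mathrm{Tor}$, and gives
\[
    \Hom_{h\cT_\infty(T^*M)}(F,G)\simeq\colim_{s\colon G\to G'}\Hom_{h\cT(T^*M)}(F,G'),
\]
the colimit being over the filtered category of denominators $s\colon G\to G'$ with $\mathrm{cone}(s)\in\mathrm{Tor}$.

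It then remains to prove that the subsystem $\{\tau_c(G)\colon G\to T_c G\}_{c\ge 0}$, with transition morphisms $T_c\tau_{c'-c}(G)$, is cofinal in this category; this uses only the naturality of $\tau_{\bullet}$ together with the identities $\tau_c(T_d F)=T_d\tau_c(F)$ and $\tau_{c+c'}(F)=T_c\tau_{c'}(F)\circ\tau_c(F)$. \emph{(a) Each $\tau_c(G)$ belongs to the denominator class.} Writing $Z$ for its cone, with distinguished triangle $G\xrightarrow{\tau_c(G)}T_c G\xrightarrow{p}Z\xrightarrow{w}G[1]$, naturality gives $\tau_c(Z)\circ p=T_c p\circ\tau_c(T_c G)=T_c(p\circ\tau_c(G))=0$, so $\tau_c(Z)=\psi\circ w$ for some $\psi\colon G[1]\to T_c Z$; one further manipulation of the same triangle (using $\tau_c(G)[1]\circ w=0$) shows $T_c w\circ\psi=\sigma\circ\tau_c(G)[1]$ for some $\sigma$, whence $\tau_{2c}(Z)=T_c\psi\circ(T_c w\circ\psi)\circ w=T_c\psi\circ\sigma\circ(\tau_c(G)[1]\circ w)=0$ and $Z\in\mathrm{Tor}$. \emph{(b) The subsystem is coinitial among denominators.} Given any $s\colon G\to G'$ with cone $Z'\in\mathrm{Tor}$, a rotation of the distinguished triangle on $s$ reads $Z'[-1]\xrightarrow{\delta}G\xrightarrow{s}G'\to Z'$; choosing $c$ with $\tau_c(Z')=0$, naturality gives $\tau_c(G)\circ\delta=T_c\delta\circ\tau_c(Z')[-1]=0$, so $\tau_c(G)$ factors through $s$, say $\tau_c(G)=t\circ s$, which exhibits a morphism $s\to\tau_c(G)$ in the denominator category. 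For connectedness of the relevant comma categories one argues similarly: two factorizations of $\tau_{c_1}(G)$ and $\tau_{c_2}(G)$, pushed forward to a common level $T_{c_3}G$, differ by a morphism that vanishes after composition with $s$, hence factors through $Z'$, and is annihilated after one more translation by a $\tau_d$ with $\tau_d(Z')=0$. Granting all this, the colimit above reduces to $\colim_{c\ge 0}\Hom_{h\cT(T^*M)}(F,T_c G)$, which is the assertion.

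The calculus of fractions and step \emph{(b)} are formal; the main obstacle is step \emph{(a)}, the verification that the cone of $\tau_c(G)$ is a torsion object. This is the only place where a genuine diagram chase in the triangulated category $h\cT(T^*M)$ is needed, and keeping track of the shifts and of the several instances of $\tau_{\bullet}$ is the sole delicate point of the proof.
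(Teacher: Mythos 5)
Your proof is correct. Note, though, that the paper does not supply its own argument for this lemma: it is imported verbatim from Guillermou--Schapira (\cite[Prop.~5.7]{GS14}), and the argument there is precisely the one you have reconstructed. You correctly identify $\mathrm{Tor}$ as the class of objects $F$ with $\tau_c(F)=0$ for some $c\ge 0$, realize $h\cT_\infty(T^*M)$ as a Verdier quotient and describe its $\Hom$ via right fractions, and then reduce the colimit over all denominators to the colimit over $\{\tau_c(G)\}_{c\ge 0}$ by a cofinality argument; step~(a) (the cone of $\tau_c(G)$ is torsion) is indeed where the only real diagram chase lives. One small simplification worth recording: in step~(a), instead of producing the auxiliary morphism $\sigma$, apply naturality of $\tau_c$ directly to $w\colon Z\to G[1]$ to get
\[
T_c w\circ \tau_c(Z) \;=\; \tau_c(G[1])\circ w \;=\; \tau_c(G)[1]\circ w \;=\;0,
\]
and then combine with $\tau_c(Z)=\psi\circ w$ (from naturality applied to $p$) to conclude
\[
\tau_{2c}(Z)\;=\;T_c\tau_c(Z)\circ\tau_c(Z)\;=\;T_c\psi\circ T_c w\circ\tau_c(Z)\;=\;0.
\]
This reaches the same conclusion with fewer moving parts. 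Everything else, including step~(b) and the connectedness argument for cofinality, is sound.
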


\begin{remark}
    The notation $h\cT_\infty (T^*M)$ suggests that this category is the homotopy category of a certain stable category $\cT_\infty (T^*M)$. 
    Indeed, the author expects that such $\cT_\infty (T^*M)$ is obtained as a quotient of $\cT (T^*M)$ by the full subcategory of torsion objects. 
    This fact seems to follow from, for example, the appendix of \cite{Kuowrap} or \cite[Thm.~I.3.3]{NS18}, where the morphisms of the quotient of a small stable categories are described via colimits, and the fact that these colimits can be replaced by the smaller colimits as discussed in \cite{GS14}.
    Precisely speaking, we need to work in a larger universe so that $\cT(T^*M)$ become a small category.
    Regardless, we only need the triangulated category $h\cT_\infty (T^*M)$, so we do not go into this aspect in this paper.
\end{remark}

\subsection{Sheaf quantization of Hamiltonian diffeomorphisms and homeomorphisms}\label{subsec:SQHam}

In this subsection, we recall sheaf quantization of Hamiltonian isotopies due to Guillermou--Kashiwara--Schapira~\cite{GKS}.

First we introduce several classes of functions following Lanzat~\cite{Lanzat13} and Ono--Sugimoto~\cite{OnoSugimoto}. 
We let $C^\infty_c(T^*M)$ denote the set of compactly supported $C^\infty$-functions on $T^*M$. 
Furthermore, we define 
\begin{equation}
    C^\infty_{cc}(T^*M)\coloneqq \{ H\colon T^*M\to \bR \mid \text{there exists } C_H\in \bR \text{ such that } H-C_H\in C^\infty_c(T^*M) \}. 
\end{equation}
We also write $C_c^\infty(T^*M\times [0,1])$ for the set of compactly supported $C^\infty$-functions on $T^*M \times [0,1]$, and set
\begin{equation}
    C_{cc}^\infty(T^*M \times [0,1]) \coloneqq \{ H \in C^\infty(M \times [0,1]) \mid \text{$H|_{T^*M\times \{s\}}\in C_{cc}^\infty (T^*M)$ for each $s\in [0,1]$} \}.
\end{equation}
If $M$ is $0$-dimensional, we define $C^\infty_c(T^*M)\coloneqq \{ 0\}$ and $C^\infty_c(T^*M\times [0,1])\coloneqq \{ 0\}$ exceptionally.
We say that $H\in C_{cc}^\infty(T^*M)$ or $H\in C_{cc}^\infty(T^*M\times [0,1])$ is normalized if it is in $H\in C_{c}^\infty(T^*M)$ or $C_{c}^\infty(T^*M\times [0,1])$.

Each $H\in C_{cc}^\infty(T^*M\times [0,1])$ generates an isotopy $\phi^H=(\phi^H_s\colon T^*M\to T^*M)_{s\in [0,1]}$ called a Hamiltonian isotopy. 
Let $\Ham_c(T^*M)$ be the set $\{\phi^H_1 \mid H\in C_{cc}^\infty(T^*M\times [0,1])\}$ of the time $1$-maps of the Hamiltonian isotopies, which forms a subgroup of the diffeomorphism group of $T^*M$. 
For $H,H'\in C_{cc}^\infty(T^*M\times [0,1])$, put $H\sharp H'(p,s)\coloneqq H(p,s)+H'((\phi^H_s)^{-1}(p),s)$ and $\overline{H}(p,s)\coloneqq-H(\phi^H_s(p),s)$. 
Note that $\phi^{H\sharp H'}_s=\phi^H_s\circ \phi^{H'}_s$ and $\phi^{\overline{H}}_s=(\phi^H_s)^{-1}$. 

For $H\in C_{cc}^\infty(T^*M \times [0,1])$, one can uniquely associate an object $\tl{\cK}_H\in \cT (T^*M^2\times T^*[0,1])$, which is called the sheaf quantization or the Guillermou--Kashiwara--Schapira (GKS) kernel \cite{GKS}.
See also \cite[Part~2]{Gu23cotangent} and \cite{KSZ23}. 
We set $\cK_H\coloneqq \tl{\cK}_H|_{M^2\times \{1\}\times \bR} \in \cT(T^*M\times T^*M)$.

For $\varphi \in \Ham_c(T^*M)$, there exists a normalized Hamiltonian function $H$ with $\phi^H_1=\varphi$. 
We define $\cK_\varphi\coloneqq \cK_H$. 
The following lemma asserts that this assignment is well-defined. 

\begin{lemma}[{\cite[Prop.~5.9]{AI22completeness}}]
    Let $H$ and $H'$ be normalized time-dependent Hamiltonian functions with $\phi^H_1=\phi^{H'}_1$. 
    Then $\cK_H\simeq \cK_{H'}$. 
\end{lemma}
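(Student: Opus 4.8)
The plan is to reduce the well-definedness of $\cK_\varphi$ to a statement about sheaf quantizations of \emph{loops} of Hamiltonian diffeomorphisms and then invoke the uniqueness/composition properties of the GKS kernel. Concretely, suppose $H$ and $H'$ are normalized time-dependent Hamiltonians with $\phi^H_1 = \phi^{H'}_1 = \varphi$. Set $G \coloneqq \overline{H'} \sharp H$, so that $\phi^G_s = \phi^{H'}_s{}^{-1}\circ \phi^H_s$ is a loop based at the identity (i.e.\ $\phi^G_0 = \phi^G_1 = \id$), and $G$ is again normalized since the normalization condition (being compactly supported, not merely in $C^\infty_{cc}$) is preserved under $\sharp$ and $\overline{(\cdot)}$. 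The composition/functoriality property of the GKS kernel — $\tl\cK_{H \sharp H'} \simeq \tl\cK_H \ostar \tl\cK_{H'}$, restricted appropriately at $s=1$ — would then give $\cK_H \simeq \cK_{H'} \ostar \cK_G$ (up to the obvious identifications), so it suffices to prove $\cK_G \simeq \cK_{\id} \simeq \bfk_{\Delta_M \times [0,\infty)}$, the monoidal unit, whenever $G$ generates a loop.

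First I would recall the precise uniqueness statement for $\tl\cK_H$ from \cite{GKS}: the sheaf quantization of a Hamiltonian isotopy $\phi^H$ is the unique object of $\cT(T^*M^2 \times T^*[0,1])$ (up to isomorphism) whose microsupport is controlled by the conic Lagrangian movie of $\phi^H$ and whose restriction to $s=0$ is the constant sheaf on the diagonal, $\bfk_{\Delta_M \times [0,\infty)}$. For a loop $\phi^G$, both endpoints $s=0$ and $s=1$ carry the diagonal, and the Lagrangian movie is a closed loop. The key step is to show that this forces $\cK_G = \tl\cK_G|_{s=1} \simeq \bfk_{\Delta_M \times [0,\infty)}$ in $\cT(T^*M^2)$. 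One route: by the homotopy invariance of GKS quantization (sheaf quantization depends only on the path-homotopy class of the isotopy in the appropriate sense, via the uniqueness on $[0,1]$), contract the loop $\phi^G$ through the identity loop using the normalized generating Hamiltonian $sG(\cdot, s\mathchar`-\text{rescaled})$ — but this is exactly the subtle point, since such a contraction need not stay within compactly supported Hamiltonians in a way that preserves the relevant microsupport bounds. A cleaner route, and the one I would pursue, is to appeal directly to the cited source: \cite[Prop.~5.9]{AI22completeness} is precisely this lemma, so the intended proof is almost certainly to invoke the persistence/interleaving machinery — using $d_{\cT(T^*M)}$ and the fact that a loop gives a kernel at finite interleaving distance from the unit, hence isomorphic to it after passing to the relevant quotient, combined with the spectral-norm rigidity that a normalized loop has zero "action shift" so the interleaving is in fact a genuine isomorphism.

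So the steps in order: (1) replace the pair $(H, H')$ by the single normalized loop-generating Hamiltonian $G = \overline{H'}\sharp H$ and reduce, via $\tl\cK_{H\sharp H'} \simeq \tl\cK_H \ostar \tl\cK_{H'}$, to showing $\cK_G \simeq \cK_{\id}$; (2) observe $\tl\cK_G$ restricts to the diagonal constant sheaf at both $s=0$ and $s=1$, and its microsupport over $s \in [0,1]$ is the conic Lagrangian swept by the loop; (3) using the GKS microsupport estimate, show $\cK_G$ and $\cK_{\id} = \bfk_{\Delta_M\times[0,\infty)}$ are $(a,b)$-isomorphic for some finite $a,b$ — in fact the natural transformations $\tau_c$ built into the loop structure furnish the maps; (4) upgrade the finite interleaving to an honest isomorphism in $\cT(T^*M^2)$ by checking that the composites equal the correct $\tau_{a+b}$ and that, because $G$ is normalized, no shift survives, i.e.\ $a = b = 0$ works. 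The main obstacle is step (4): controlling the \emph{action shift} of the loop and proving the interleaving is exact rather than merely approximate — this is where normalization is essential and where one must either quote the rigidity of spectral invariants for Hamiltonian loops or argue via the explicit structure of $\tl\cK_G$ near a full period. Everything else (the composition formula for GKS kernels, the restriction at $s=0,1$, preservation of normalization under $\sharp$ and $\overline{(\cdot)}$) is routine given the results recalled above.
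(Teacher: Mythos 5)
Your reduction to the loop case is the right opening move and is exactly how one should start: with $G = \overline{H'}\sharp H$ you get $\phi^G_1=\id$, and since $H'\sharp\overline{H'}=0$ the identity $\cK_{H'}\ostar\cK_G\simeq\cK_{H'\sharp\overline{H'}}\ostar\cK_H\simeq\cK_H$ holds by \cref{lemma:GKSsharp}, so the problem reduces to showing $\cK_G\simeq\bfk_{\Delta_M\times[0,\infty)}$ for a normalized loop. But the rest of the argument has a genuine gap that you yourself flag as ``the main obstacle'' and then do not close. In step (2) you ``observe'' that $\tl\cK_G$ restricts to the diagonal constant sheaf at $s=1$; this is precisely the conclusion you need, not an observation. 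The microsupport estimate at $s=1$ only constrains $\MS(\cK_G)\cap\{\tau>0\}$ to lie in (the conification of) the conormal of $\Delta_M\times\{\text{pt}\}$; by itself that leaves open a translate $T_c\bfk_{\Delta_M\times[0,\infty)}$, a degree shift, or a twist by a local system, and eliminating these requires an actual argument.

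The interleaving route in steps (3)--(4) does not repair this. Even granting $d_{\cT(T^*M^2)}(\cK_G,\bfk_{\Delta_M\times[0,\infty)})=0$, the definition of $d_{\cT}$ is an infimum over $(a,b)$-isomorphisms and the infimum need not be attained; $d=0$ does not yield a $(0,0)$-isomorphism (compare $\bfk_{[0,1)}$, which has interleaving distance $0$ from $0$ but is not zero). So ``upgrading'' the finite interleaving to an honest isomorphism in $\cT(T^*M^2)$ is not a formal step, and appealing to ``rigidity of spectral invariants for Hamiltonian loops'' is at this point a label rather than a proof. The argument actually needed is the GKS non-characteristic deformation: the family $\tl\cK_G$ over $s\in[0,1]$ is locally constant in $s$ away from the cone, so the endpoint condition $\tl\cK_G|_{s=0}=\bfk_{\Delta_M\times[0,\infty)}$ propagates and pins down $\tl\cK_G|_{s=1}$ up to a translation $T_c$ by the loop's action shift, and normalization of $G$ (compact support, not merely $C^\infty_{cc}$) forces $c=0$ by constancy of the action on the connected fixed set and its vanishing at infinity. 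That is the content of \cite[Prop.~5.9]{AI22completeness} which this paper cites; the interleaving/distance machinery of \cref{theorem:Ham_stability} is not the right tool for this well-definedness statement.
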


The following lemmas are basic properties of the GKS kernel. 
\begin{lemma}\label{lemma:GKSsharp}
    For $H,H'\in C_{cc}^\infty (T^*M \times [0,1])$, $\cK_{H\sharp H'}\simeq \cK_H \ostar \cK_{H'}$. 
\end{lemma}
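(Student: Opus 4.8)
The plan is to prove $\cK_{H\sharp H'}\simeq \cK_H \ostar \cK_{H'}$ by lifting the statement to the level of the GKS kernels over the parameter space $[0,1]$ and then restricting to the endpoint $s=1$. Recall that by construction $\cK_H = \tl{\cK}_H|_{M^2\times\{1\}\times\bR}$ and $\cK_{H'} = \tl{\cK}_{H'}|_{M^2\times\{1\}\times\bR}$. The key point is the uniqueness statement in the Guillermou--Kashiwara--Schapira theorem: the kernel $\tl{\cK}_H$ is the \emph{unique} object of $\cT(T^*M^2\times T^*[0,1])$ whose microsupport, restricted to $\{\tau>0\}$, is driven by the graph of the Hamiltonian isotopy $\phi^H$, and which restricts to $\bfk_{\Delta_M\times[0,\infty)}$ at $s=0$ (the initial condition). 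So the strategy is to exhibit a candidate object built out of $\tl{\cK}_H$ and $\tl{\cK}_{H'}$, check that it satisfies the two defining properties of $\tl{\cK}_{H\sharp H'}$, and conclude by uniqueness.

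First I would form the composition $\tl{\cK}_H \ostar \tl{\cK}_{H'}$ appropriately in the family version, i.e.\ along the $M$-middle factor while keeping the $[0,1]$-parameter diagonal (this requires the straightforward generalization of the convolution $\ostar$ with an extra parameter space, using fiber product over $[0,1]$). The relation $\phi^{H\sharp H'}_s = \phi^H_s\circ\phi^{H'}_s$ identifies the relevant Lagrangian/conic Legendrian data: the microsupport of the composition is governed by the composition of the two exact Hamiltonian isotopies, which is exactly the isotopy generated by $H\sharp H'$ (this is precisely why the operation $\sharp$ on Hamiltonians was defined the way it was). Then I would check the initial condition at $s=0$: since $\phi^H_0 = \phi^{H'}_0 = \id$, both kernels restrict to $\bfk_{\Delta_M\times[0,\infty)}$ at $s=0$, and the convolution of $\bfk_{\Delta_M\times[0,\infty)}$ with itself over the middle $M$-factor is again $\bfk_{\Delta_M\times[0,\infty)}$ (this is the statement that $\bfk_{M\times[0,\infty)}$ is the monoidal unit for $\star$, packaged in the $\ostar$ formalism). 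With these two properties verified, the GKS uniqueness forces $\tl{\cK}_H\ostar\tl{\cK}_{H'}\simeq\tl{\cK}_{H\sharp H'}$.

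Finally I would restrict to $s=1$: the formation of the composition $\ostar$ commutes with restriction to $M^2\times\{1\}\times\bR$ (the middle-factor convolution does not interact with the parameter restriction, since restriction to a slice is a pullback and $\ostar$ on that slice is computed fiberwise), giving $\cK_{H\sharp H'} = \tl{\cK}_{H\sharp H'}|_{s=1} \simeq (\tl{\cK}_H\ostar\tl{\cK}_{H'})|_{s=1} \simeq \cK_H\ostar\cK_{H'}$.

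The main obstacle I anticipate is bookkeeping rather than conceptual: setting up the parametrized convolution over $[0,1]$ carefully and verifying that microsupport estimates for $\ostar$ (a Kashiwara--Schapira-type estimate for the composition of kernels) combine with properness so that the composition actually lands in the Tamarkin category and that its microsupport is \emph{exactly}, not just contained in, the graph of $\phi^{H\sharp H'}$ — one needs the reverse inclusion, which typically comes from the invertibility of the GKS kernels (each $\tl\cK_H$ has a two-sided inverse $\tl\cK_{\overline H}$ under $\ostar$), so that the composition is invertible and hence its microsupport cannot be smaller than the expected Legendrian. Alternatively, one can sidestep the microsupport argument entirely by directly checking that $\cK_{\overline{H'}}\ostar$ is inverse to $\cK_{H'}\ostar$ and composing the uniqueness characterizations, but the cleanest route is the initial-value + uniqueness argument sketched above, relying on \cref{lemma:GKSsharp}'s analogue already being implicit in the GKS construction of $\tl{\cK}_H$.
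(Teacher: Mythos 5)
Your approach is correct and follows the standard strategy that underlies this assertion: lift to the parametrized GKS kernels on $M^2\times[0,1]\times\bR$, compose, verify the microsupport bound and the initial condition at $s=0$, and invoke the uniqueness statement of the Guillermou--Kashiwara--Schapira theorem. The paper itself gives no proof here, presenting the lemma as one of the ``basic properties of the GKS kernel'' and implicitly deferring to \cite{GKS}; your sketch is the argument that the reference supplies.

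One remark on your anticipated obstacle: you worry that the microsupport of $\tl{\cK}_H\ostar\tl{\cK}_{H'}$ must be shown to be \emph{exactly} the expected conic Lagrangian, and propose invoking invertibility to get the reverse inclusion. This is unnecessary. The GKS uniqueness theorem requires only (a) containment of the microsupport in the prescribed conic Lagrangian union the zero section, together with (b) the correct restriction at $s=0$; it does not require equality. The Kashiwara--Schapira estimate for composition of kernels (combined with the properness you correctly flag as needing to be checked) gives the containment, and the initial condition follows from $\phi^H_0=\phi^{H'}_0=\id$ and the fact that $\bfk_{\Delta_M\times[0,\infty)}$ is the $\ostar$-unit. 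That alone pins down $\tl{\cK}_H\ostar\tl{\cK}_{H'}\simeq\tl{\cK}_{H\sharp H'}$, and restriction to $s=1$ finishes the proof. Invertibility of the GKS kernels is a consequence of this lemma (take $H'=\overline{H}$ and use \cref{lemma:GKSconst}), not an ingredient one should need to feed into it.
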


\begin{lemma}\label{lemma:GKSconst}
    Let $h\colon [0,1]\to \bR$ be a smooth function and $s\colon T^*M\times [0,1]\to [0,1]$ be the projection. 
    For the timewise constant function $H= h\circ s \in C_{cc}^\infty (T^*M \times [0,1])$, $\cK_H\simeq \bfk_{\Delta_M \times [c,\infty)}$ where $c=\int_0^1 h(s)\; ds$. 
\end{lemma}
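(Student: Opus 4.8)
The plan is to write down the GKS kernel $\tl{\cK}_H$ explicitly and then restrict it to $s=1$. Put $g(u)\coloneqq\int_0^u h(r)\,dr$, so that $g(0)=0$ and $g(1)=c$, and let $W\coloneqq\{(s,t)\in[0,1]\times\bR \mid t\ge g(s)\}$; the candidate is the sheaf $\bfk_{\Delta_M\times W}$ on $M^2\times[0,1]\times\bR$, regarded as an object of $\cT(T^*M^2\times T^*[0,1])$. The first observation is that $H_s=h(s)$ is a \emph{constant} function on $T^*M$ for each fixed $s$, so $dH_s=0$, the Hamiltonian vector field of $H_s$ vanishes, and $\phi^H_s=\id_{T^*M}$ for all $s\in[0,1]$. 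The homogeneous Hamiltonian on $\dot T^*(M\times\bR)$ lifting $H$ in the sense of \cite{GKS} is accordingly $(x,t;\xi,\tau)\mapsto\tau h(s)$, whose flow is the $t$-translation $(x,t;\xi,\tau)\mapsto(x,t+g(s);\xi,\tau)$; hence, up to the sign normalization of \cite{GKS}, the conic Lagrangian attached to this isotopy is the $g$-shifted conormal of the diagonal,
\[
    \tl{\Lambda}_{\phi^H}=\{(x,t+g(s);\xi,\tau;\ x,t;-\xi,-\tau;\ s,-\tau h(s)) \mid (\xi,\tau)\neq 0\}.
\]

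Next I would check that $\bfk_{\Delta_M\times W}$ satisfies the two properties that characterize the GKS kernel via the uniqueness part of the GKS theorem \cite{GKS} (in the form used in the present framework, cf.\ \cite{Gu23cotangent,KSZ23}). The initial condition holds: $\bfk_{\Delta_M\times W}|_{s=0}=\bfk_{\Delta_M\times[g(0),\infty)}=\bfk_{\Delta_M\times[0,\infty)}$, the unit kernel. For the microsupport estimate, write $\Delta_M\times W=\{f\ge0\}$ with $f(x,x',s,t)=t-g(s)$ on $\Delta_M\times[0,1]\times\bR$; combining $\MS(\bfk_{\{f\ge0\}})\subset\{f>0,\ \zeta=0\}\cup\{f=0,\ \zeta=\lambda\,df,\ \lambda\ge0\}$ with $\MS(\bfk_{\Delta_M})=T^*_{\Delta_M}(M\times M)$, one computes over $s\in(0,1)$ that $\MS(\bfk_{\Delta_M\times W})$ is contained in the union of $\tl{\Lambda}_{\phi^H}$ and the zero-section: on $\{f=0\}$ one gets $\zeta=\lambda(dt-h(s)\,ds)$, i.e.\ $\tau=\lambda\ge0$ and $\sigma=-\tau h(s)$, which is exactly the $T^*[0,1]$-component appearing in $\tl{\Lambda}_{\phi^H}$. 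By uniqueness, $\tl{\cK}_H\simeq\bfk_{\Delta_M\times W}$, and restricting to $M^2\times\{1\}\times\bR$ gives
\[
    \cK_H=\tl{\cK}_H|_{M^2\times\{1\}\times\bR}\simeq\bfk_{\Delta_M\times W}|_{s=1}=\bfk_{\Delta_M\times[g(1),\infty)}=\bfk_{\Delta_M\times[c,\infty)}.
\]

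The microsupport computation and the two restrictions are routine. The step I would be most careful with — and the only genuine obstacle — is matching conventions with the source of the uniqueness statement: the antipodal/sign normalization built into $\tl{\Lambda}_{\phi^H}$, whether the defining restrictions use $i^{-1}$ or $i^{!}$, and the precise form of the initial datum at $s=0$ (which, in the convolution normalization, must be the unit $\bfk_{\Delta_M\times[0,\infty)}$ and not $\bfk_{\Delta_M}$); the behaviour at the endpoints $s\in\{0,1\}$ is dealt with in the usual way by first extending $g$ to an open interval containing $[0,1]$ and restricting afterwards. Once these are aligned with the normalization of \cite{GKS}, the argument goes through verbatim. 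An alternative that sidesteps the uniqueness statement altogether is to observe that, since $g$ records precisely the $t$-translation flow generated by $\tau h(s)$, the sheaf $\bfk_{\Delta_M\times W}$ is the propagation of the unit kernel $\bfk_{\Delta_M\times[0,\infty)}$ along that flow, hence coincides with $\tl{\cK}_H$ by construction.
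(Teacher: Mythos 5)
The paper states this lemma without proof, as one of several ``basic properties of the GKS kernel'' deferred to the references (\cite{GKS}, \cite{Gu23cotangent}, \cite{KSZ23}), so there is no author's proof to compare against; your argument is the standard and correct way to establish it. Two small remarks. First, the Lagrangian $\tl\Lambda_{\phi^H}$ you write lives naturally in $T^*(M\times\bR)\times T^*(M\times\bR)\times T^*[0,1]$, whereas the microsupport of your candidate $\bfk_{\Delta_M\times W}$ lives in $T^*(M^2\times[0,1]\times\bR)$; in the Tamarkin setting the GKS condition is phrased in the latter space, so strictly speaking you should state and check containment in the reduced Lagrangian
\[
\lc \bigl(x,x;\xi,-\xi;\ s,-\tau h(s);\ g(s),\tau\bigr) \relmid x\in M,\ \xi\in T^*_xM,\ s\in[0,1],\ \tau>0 \rc,
\]
which is exactly what your fibrewise computation ($\tau=\lambda\ge 0$, $\sigma=-\tau h(s)$, and $T^*_{\Delta_M}(M^2)$ from the diagonal factor) produces. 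This is a bookkeeping issue, not a mathematical gap, and you flag it yourself. Second, the closing ``propagation along the flow'' paragraph is not a self-contained argument as written — it is a heuristic restatement of the uniqueness step rather than an alternative to it — so the proof should rest on the uniqueness argument you give, not on that remark. With those conventions pinned down, the initial condition $g(0)=0$ and the microsupport estimate for $\bfk_{\{t\ge g(s)\}}$ (valid for any smooth $h$, sign changes included, since $df=dt-h(s)\,ds$ never vanishes) indeed pin down $\tl\cK_H\simeq\bfk_{\Delta_M\times W}$ by GKS uniqueness, and restriction to $s=1$ gives $\bfk_{\Delta_M\times[c,\infty)}$.
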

\begin{lemma}
    For $H\le H' \in C_{cc}^\infty (T^*M \times [0,1])$, there exists a natural morphism $\cK_H\to \cK_{H'}$ which corresponds to $1\in H^0(M)\simeq \Hom_{h\cT_\infty (T^*M^2)}(\cK_H, \cK_{H'})$. 
\end{lemma}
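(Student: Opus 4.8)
The plan is to reduce to the case $H=0$ by means of Lemma~\ref{lemma:GKSsharp}, to build the morphism out of the microsupport of the Guillermou--Kashiwara--Schapira kernel in the $[0,1]$-direction, and then to compute the relevant Hom-space. First I would set $G\coloneqq\overline{H}\sharp H'$. Since $(\overline{H}\sharp H')((\phi^H_s)^{-1}(p),s)=H'(p,s)-H(p,s)\ge 0$ for all $(p,s)$, one has $G\ge 0$, and $G\in C^\infty_{cc}(T^*M\times[0,1])$. Because $\bfk_{\Delta_M\times[0,\infty)}$ is the unit for $\ostar$ and coincides with $\cK_0$ (Lemma~\ref{lemma:GKSconst} with $h\equiv 0$), Lemma~\ref{lemma:GKSsharp} gives $\cK_H\simeq\cK_H\ostar\bfk_{\Delta_M\times[0,\infty)}$ and $\cK_{H'}=\cK_{H\sharp G}\simeq\cK_H\ostar\cK_G$, so any morphism $\mu\colon\bfk_{\Delta_M\times[0,\infty)}\to\cK_G$ induces $\cK_H\ostar\mu\colon\cK_H\to\cK_{H'}$. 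Moreover $\cK_H$ is $\ostar$-invertible with inverse $\cK_{\overline{H}}$ (Lemma~\ref{lemma:GKSsharp}, using $\overline{H}\sharp H=0$), so $\cK_{\overline{H}}\ostar(-)$ is an autoequivalence of $\cT(T^*M^2)$ that descends to $h\cT_\infty(T^*M^2)$ (being compatible with the functors $T_c$) and sends $\cK_H\mapsto\bfk_{\Delta_M\times[0,\infty)}$, $\cK_{H'}\mapsto\cK_G$. Hence $\Hom_{h\cT_\infty(T^*M^2)}(\cK_H,\cK_{H'})\cong\Hom_{h\cT_\infty(T^*M^2)}(\bfk_{\Delta_M\times[0,\infty)},\cK_G)$, compatibly with the construction above, and everything reduces to the case $H=0$.

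For $G\ge 0$ I would pass to the full GKS kernel $\widetilde{\cK}_G\in\cT(T^*M^2\times T^*[0,1])$, which satisfies $\widetilde{\cK}_G|_{s=0}\simeq\bfk_{\Delta_M\times[0,\infty)}$ and $\widetilde{\cK}_G|_{s=1}\simeq\cK_G$. Since $G\ge 0$, the conified Hamiltonian is non-negative on $\{\tau>0\}$, so the GKS microsupport estimate~\cite{GKS} yields $\MS(\widetilde{\cK}_G)\cap\{\tau>0\}\subset\{\sigma\le 0\}$, where $\sigma$ is dual to the $[0,1]$-coordinate. By microlocal propagation in that direction — the persistence-type transition morphisms carried by a sheaf whose microsupport lies in a half-space — one gets canonical morphisms $\widetilde{\cK}_G|_{s=s_0}\to\widetilde{\cK}_G|_{s=s_1}$ for $s_0\le s_1$; taking $(s_0,s_1)=(0,1)$ defines $\mu\colon\bfk_{\Delta_M\times[0,\infty)}\to\cK_G$, and the desired morphism is $\cK_H\ostar\mu\colon\cK_H\to\cK_{H'}$ (note $H\sharp G=H\sharp\overline{H}\sharp H'=H'$).

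For the Hom-space, a direct computation with Lemma~\ref{proposition:HomhT}, using $\bfk_{\Delta_M\times[c,\infty)}=\bfk_{\Delta_M}\boxtimes\bfk_{[c,\infty)}$ and the corepresentability of the Tamarkin unit, gives $\Hom_{h\cT_\infty(T^*M^2)}(\bfk_{\Delta_M\times[0,\infty)},\bfk_{\Delta_M\times[0,\infty)})\cong\colim_{c\ge 0}H^0(M\times[c,\infty);\bfk)\cong H^0(M;\bfk)$, with $1\leftrightarrow\id$; as $M$ is connected this is one-dimensional. Since $G$ is compactly supported, $d_{\cT(T^*M^2)}(\cK_G,\bfk_{\Delta_M\times[0,\infty)})<+\infty$ (cf.~\cite{AI22completeness}), hence $\cK_G\simeq\bfk_{\Delta_M\times[0,\infty)}$ in $h\cT_\infty(T^*M^2)$ and $\Hom_{h\cT_\infty(T^*M^2)}(\bfk_{\Delta_M\times[0,\infty)},\cK_G)$ is free of rank one over $H^0(M;\bfk)$. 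To see $\mu\neq 0$, put $c\coloneqq\sup G\ (\ge 0)$; by functoriality of the GKS continuation morphisms, the composite of $\mu$ with the continuation $\cK_G\to\bfk_{\Delta_M\times[c,\infty)}$ attached to $G\le c$ (Lemma~\ref{lemma:GKSconst}) equals the continuation $\bfk_{\Delta_M\times[0,\infty)}\to\bfk_{\Delta_M\times[c,\infty)}$ of the constant Hamiltonian $c$, which is the restriction morphism $\tau_c(\bfk_{\Delta_M\times[0,\infty)})$ — an isomorphism in $h\cT_\infty$. Thus $\mu$ is a generator of the rank-one module; normalizing the identification through $\tau_c$ identifies it with $1$, and transporting back through the reduction yields the assertion for $\cK_H\to\cK_{H'}$.

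The main obstacle is the second step: extracting the canonical transition morphisms $\widetilde{\cK}_G|_{s=0}\to\widetilde{\cK}_G|_{s=1}$ from the one-sided microsupport bound, including the treatment of the boundary $\{s=0,1\}$ of $[0,1]$ and the functoriality in $G$ needed for the composition-of-continuations identity used above. This is where the hypothesis $H\le H'$ is used; the remaining steps are formal, together with the colimit computation of the Hom-space.
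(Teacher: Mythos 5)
The paper states this lemma without proof, treating it as a standard property of GKS kernels drawn from the cited sources (\cite{GKS}, \cite{Gu23cotangent}, \cite{Kuowrap}, \cite{KSZ23}), so there is no in-paper argument to compare against. Your outline does reproduce the standard construction. The algebraic reduction to the non-negative Hamiltonian $G=\overline{H}\sharp H'$ via the autoequivalence $\cK_{\overline{H}}\ostar(\mathchar`-)$ is correct, since $H\sharp G=H'$, and so are the microsupport estimate $\MS(\widetilde{\cK}_G)\cap\{\tau>0\}\subset\{\sigma\le 0\}$ and its direction. The Hom computation via \cref{proposition:HomhT} is sound; note only that $G$ lies in $C^\infty_{cc}$, not $C^\infty_c$, so the phrase ``$G$ is compactly supported'' is slightly off, but $d_{\cT(T^*M^2)}(\cK_G,\bfk_{\Delta_M\times[0,\infty)})<+\infty$ still follows from \cref{lemma:GKSconst} and \cref{theorem:Ham_stability}.

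The step you flag as the obstacle is the real content of the lemma and is left as a black box: passing from the bound $\sigma\le 0$ to canonical morphisms $\widetilde{\cK}_G|_{s=s_0}\to\widetilde{\cK}_G|_{s=s_1}$. The microsupport estimate by itself does not hand you a morphism; one has to construct it (via the non-characteristic deformation lemma applied along the $s$-line, with care at the endpoints of $[0,1]$, or equivalently via the unit/counit of $j_!j^{-1}$ and $j_*j^!$ for half-lines in $s$), check that it is compatible with $\ostar$ so that $\cK_H\ostar\mu$ is well defined, and prove the cocycle identity $\mu_{G\le c}\circ\mu_{0\le G}=\mu_{0\le c}=\tau_c$ that your nonvanishing argument invokes. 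These are precisely the technicalities worked out in \cite[Sec.~3]{Kuowrap} and \cite[Part~2]{Gu23cotangent}; without either citing them or carrying out the construction, the final identification with $1\in H^0(M)$ rests on the very functoriality that the missing step is supposed to provide.
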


For the pseudo-distance $d_{\cT(T^*M)}$, \cite{AI20,AI22completeness} proved the following Hamiltonian stability theorem.

\begin{theorem}[{\cite[Thm.~4.16]{AI20} and \cite[Thm.~5.1]{AI22completeness}}]\label{theorem:Ham_stability}
    For $H\in C_c^\infty (T^*M\times [0,1])$, one has 
    \begin{equation}
        d_{\cT(T^*M^2)}(\cK_0, \cK_H)\le \| H\|_{\mathrm{osc}},
    \end{equation}
    where 
    \begin{equation}
        \|H\|_{\mathrm{osc}} \coloneqq \int_0^1 \lb \max_{p \in T^*M} H_s(p) - \min_{p \in T^*M} H_s(p) \rb ds.
    \end{equation}
\end{theorem}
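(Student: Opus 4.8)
The plan is to sandwich $H$ between two timewise-constant Hamiltonians and feed the resulting GKS monotonicity morphisms into the definition of the interleaving distance, exploiting that the sheaf quantization of a timewise-constant Hamiltonian is a translate $T_c\cK_0$ of $\cK_0$ (\cref{lemma:GKSconst}, applied also with $h\equiv 0$ to get $\cK_0\simeq\bfk_{\Delta_M\times[0,\infty)}$). First I would dispose of the case $\dim M=0$, where $H=0$ by convention and both sides vanish, and assume $\dim M\ge 1$, so that $T^*M$ is noncompact; since $H$ has compact support, the functions $a_+(s)\coloneqq\max_{p\in T^*M}H_s(p)$ and $a_-(s)\coloneqq\min_{p\in T^*M}H_s(p)$ are continuous on $[0,1]$, satisfy $a_-(s)\le 0\le a_+(s)$, and bracket $H$ pointwise: $a_-(s)\le H_s(p)\le a_+(s)$ for all $(p,s)$. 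For $\epsilon>0$ I would then choose smooth $h_\pm\colon[0,1]\to\bR$ with $h_-\le a_-$, $h_+\ge a_+$ and $\int_0^1(h_+(s)-h_-(s))\,ds\le\|H\|_{\mathrm{osc}}+\epsilon$ (possible since $\int_0^1(a_+-a_-)\,ds=\|H\|_{\mathrm{osc}}$ and continuous functions on $[0,1]$ are uniformly approximable by smooth ones), and set $H_\pm\coloneqq h_\pm\circ s\in C_{cc}^\infty(T^*M\times[0,1])$, so that $H_-\le H\le H_+$. By \cref{lemma:GKSconst}, $\cK_{H_\pm}\simeq\bfk_{\Delta_M\times[c_\pm,\infty)}=T_{c_\pm}\cK_0$ with $c_\pm\coloneqq\int_0^1 h_\pm(s)\,ds$, and $c_-\le 0\le c_+$, $c_+-c_-\le\|H\|_{\mathrm{osc}}+\epsilon$.

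Next, setting $a\coloneqq-c_-\ge 0$ and $b\coloneqq c_+\ge 0$ (so $a+b\le\|H\|_{\mathrm{osc}}+\epsilon$), I would apply the monotonicity lemma above (on morphisms $\cK_{H'}\to\cK_{H''}$ for $H'\le H''$) to $H_-\le H$ and to $H\le H_+$ and transport along $\cK_{H_\pm}\simeq T_{c_\pm}\cK_0$ to obtain morphisms $\mu_-\colon T_{-a}\cK_0\to\cK_H$ and $\mu_+\colon\cK_H\to T_b\cK_0$ in $h\cT(T^*M^2)$. The candidate interleaving data are $\alpha\coloneqq T_a\mu_-\colon\cK_0\to T_a\cK_H$ and $\beta\coloneqq\mu_+\colon\cK_H\to T_b\cK_0$. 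For the first interleaving identity, functoriality of the monotonicity morphisms along $H_-\le H\le H_+$ identifies the composite $\cK_0\to T_a\cK_H\to T_{a+b}\cK_0$ with $T_a$ applied to the monotonicity morphism $\cK_{H_-}\to\cK_{H_+}$; since $H_+=H_-+(h_+-h_-)\circ s$ and, by \cref{lemma:GKSsharp} and \cref{lemma:GKSconst}, adding a timewise-constant function to a Hamiltonian translates its kernel with associated monotonicity morphism equal to the corresponding $\tau$, this composite is $T_aT_{c_-}\tau_{a+b}(\cK_0)=\tau_{a+b}(\cK_0)$. For the second identity, I would use the chain $H\le H_+\le H+(h_+-h_-)\circ s$ together with $\cK_{H+(h_+-h_-)\circ s}\simeq T_{a+b}\cK_H$ to identify the composite $\cK_H\to T_b\cK_0\to T_{a+b}\cK_H$ with the monotonicity morphism $\cK_H\to\cK_{H+(h_+-h_-)\circ s}=T_{a+b}\cK_H$, which is $\tau_{a+b}(\cK_H)$. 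This exhibits $(\cK_0,\cK_H)$ as $(a,b)$-isomorphic with $a+b\le\|H\|_{\mathrm{osc}}+\epsilon$, whence $d_{\cT(T^*M^2)}(\cK_0,\cK_H)\le\|H\|_{\mathrm{osc}}+\epsilon$; letting $\epsilon\to 0$ finishes the proof.

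The hard part will be the behaviour of the GKS monotonicity morphisms at the level of $h\cT(T^*M^2)$ rather than of the localization $h\cT_\infty$: one needs that they are functorial along chains of Hamiltonians, that they are compatible with the composition $\ostar$ and with the identifications $\cK_{H+g\circ s}\simeq T_{\int_0^1 g}\cK_H$ coming from \cref{lemma:GKSsharp} and \cref{lemma:GKSconst}, and that for a timewise-constant shift the monotonicity morphism is exactly the canonical translation natural transformation $\tau$. This structural input is the substance packaged in the GKS construction and in \cite{AI20,AI22completeness}; granting it, the continuity and sandwiching of $a_\pm$, the smoothing argument, and the identification of the constant kernels are routine, and the two interleaving identities reduce to the diagram chases indicated above.
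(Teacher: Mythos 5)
The paper itself does not prove \cref{theorem:Ham_stability}; it cites \cite{AI20,AI22completeness}, where the argument proceeds via microsupport estimates on the full GKS kernel $\tl{\cK}_H$ over $M^2\times[0,1]\times\bR$ (roughly: the microsupport bound in the $[0,1]$-direction controls how fast the restrictions $\cK_{H,s}$ can change, and integrating over $s$ yields the interleaving bound). Your approach is genuinely different: you replace the continuous propagation argument by a single sandwich $H_-\le H\le H_+$ with $H_\pm$ timewise constant, use \cref{lemma:GKSconst} to recognize $\cK_{H_\pm}$ as translates of $\cK_0$, and feed the monotonicity morphisms directly into the definition of $d_{\cT(T^*M^2)}$. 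The set-up, the continuity of $a_\pm$, the use of $\dim M\ge 1$ to get $a_-\le 0\le a_+$, the smoothing/$\epsilon$-approximation, and the bookkeeping $a=-c_-$, $b=c_+$, $a+b\le\|H\|_{\mathrm{osc}}+\epsilon$ are all correct, and the candidate interleaving data $\alpha=T_a\mu_-$, $\beta=\mu_+$ are the right ones.

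The gap you flag at the end is real, and it is essentially the whole technical content. The paper's monotonicity lemma characterizes $\cK_H\to\cK_{H'}$ only by its image in $\Hom_{h\cT_\infty}(\cK_H,\cK_{H'})\simeq H^0(M)$, while the interleaving identities must hold on the nose in $h\cT(T^*M^2)$. Your two diagram chases each rely on (a) functoriality of the monotonicity morphisms along chains $H_-\le H\le H_+$, and (b) the fact that, after identifying $\cK_{H+g\circ s}\simeq T_{\int g}\cK_H$ via \cref{lemma:GKSsharp,lemma:GKSconst}, the monotonicity morphism for $H\le H+g\circ s$ (with $g\ge 0$) \emph{is} $\tau_{\int g}(\cK_H)$, compatibly with $\ostar$. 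Point (a) is essentially part of the GKS/Kuo package (cf.\ \cref{lemma:infintyGKS}: the kernels form a functor out of the poset, so the induced morphisms compose), but (b) needs to be checked, and it is not a formal consequence of the quoted lemmas.

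A clean way to close the gap, which you might prefer to a diagram chase: for every $H$ and every $c\ge 0$ one has $\Hom^0_{h\cT(T^*M^2)}(\cK_H, T_c\cK_H)\simeq\bfk$. (Since $\cK_H$ is $\ostar$-invertible with inverse $\cK_{\overline H}$, reduce to $\Hom^0_{h\cT(T^*M^2)}(\bfk_{\Delta\times[0,\infty)},\bfk_{\Delta\times[c,\infty)})\simeq H^0(\Delta;\bfk)\simeq\bfk$ for $M$ connected.) Because the transition maps in the colimit defining $\Hom_{h\cT_\infty}$ are then all nonzero (they send $\tau_{c'}$ to $\tau_{c''}$), the map $\Hom^0_{h\cT}(\cK_H,T_c\cK_H)\to\Hom^0_{h\cT_\infty}(\cK_H,\cK_H)\simeq H^0(M)$ is injective. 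So it suffices to verify the two interleaving composites map to $1\in H^0(M)$ \emph{in $h\cT_\infty$}; this follows from functoriality (a) alone together with the paper's monotonicity lemma, and you no longer need the delicate compatibility (b) at the level of $h\cT$. With this observation your outline becomes a complete proof, and it is somewhat more elementary than the microsupport-estimate route in \cite{AI20,AI22completeness}.
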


We define $C_c(T^*M), C_{cc}(T^*M), C_c(T^*M\times [0,1])$ and $C_{cc}(T^*M \times [0,1])$ by replacing $C^\infty$-functions by continuous functions. 
Moreover we regard $C_c(T^*M)\subset  C_c(T^*M\times [0,1])$ and $C_{cc}(T^*M)\subset C_{cc}(T^*M \times [0,1])$. 
We have the direct sum decompositions
\begin{align}
    C_{cc}(T^*M ) & = C_{c}(T^*M ) \oplus \bR, \\
    C_{cc}(T^*M \times [0,1]) & =   C_{c}(T^*M \times [0,1])\oplus C([0,1]). 
\end{align}
Let $H\in C_{cc}(T^*M\times [0,1])$. 
We set $H_{\mathrm{norm}} \in C_c(T^*M \times [0,1])$ to be the normalized part through the above decomposition and write $\Supp(H)\subset T^*M$ for the projection of the support of $H_{\mathrm{norm}}$ under the projection $T^*M\times [0,1]\to T^*M$. 
Note that $\Supp (H)\subset T^*M$ is compact. 

It is proved in \cite{AI22completeness,guillermou2022gamma} that the pseudo-distance $d_{\cT(T^*X)}$ is complete, i.e., any Cauchy sequence converges. 
By combining the completeness and the Hamiltonian stability (\cref{theorem:Ham_stability}), we can associate a GKS kernel for continuous Hamiltonian functions as follows.
Let $H \in C_{cc}(T^*M \times [0,1])$.
Then there exists a sequence $(H_n)_n \subset C^\infty_{cc}(T^*M \times [0,1])$ satisfying $\| H_n - H \|_{C^0} \to 0$ as $n \to \infty$.  
Since $(H_n)_n$ is a Cauchy sequence with respect to $\| \cdot \|_{\mathrm{osc}}$, the sequence $(\cK_{H_n})_n$ is also a Cauchy sequence with respect to $d_{\cT(T^*M^2)}$ by \cref{theorem:Ham_stability}.
By the completeness of $d_{\cT(T^*M^2)}$, this sequence converges to an object of $\cT(T^*M^2)$. 
Moreover, by \cite[Prop.~B.7]{guillermou2022gamma}, the object is unique. 
We write $\cK_H \in \cT(T^*M\times T^*M)$ for the object. 
Note that $(\cK_{\overline{H_n}})_n$ also converges and 
we write $\cK_H^{-1} \in \cT(T^*M\times T^*M)$ for the object.

In the later applications, we let $\cK_H^{\ostar k}$ denote $\underbrace{\cK_H \ostar \dots \ostar \cK_H}_{\text{$k$-times}}$ for $k \in \bN_{\ge 1}$. 
We extend the notation as $\cK_H^{\ostar 0} \coloneqq \bfk_{\Delta \times [0,\infty)}$ and $\cK_H^{\ostar -k} \coloneqq (\cK^{-1}_H)^{\ostar k}$. 
Furthermore, for $K\in \cT (T^*M\times T^*M)$, the functor $K \ostar (\mathchar`-)$ is also denoted as $K^{\ostar}(\mathchar`-)$, and $(\cK^{\ostar k}_H)^{\ostar}(\mathchar`-)$ is abbreviated as $\cK^{\ostar k}_H(\mathchar`-)$.

Let $\cC_{cc} (T^*M\times [0,1])$ (resp.\ $\cC_{cc}^\infty (T^*M\times [0,1])$) be the nerve of the poset $C_{cc}(T^*M\times [0,1])$ (resp.\ $C_{cc}^\infty (T^*M\times [0,1])$).

\begin{lemma}[cf.~{\cite[Sec.~3.3]{Kuowrap}}]\label{lemma:infintyGKS}
    The assignment of the GKS kernels is refined to an infinity functor $\cC_{cc} (T^*M\times [0,1])\to \cT(T^*M^2)$. 
\end{lemma}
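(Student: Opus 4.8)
The plan is to construct the functor first on the smooth sub-poset $\cC^\infty_{cc}(T^*M\times[0,1])$ and then to extend it to $\cC_{cc}(T^*M\times[0,1])$ using the completeness of $d_{\cT(T^*M^2)}$. On the smooth sub-poset I would follow the construction of \cite[Sec.~3.3]{Kuowrap}: the levelwise data is already at hand, namely $H\mapsto\cK_H$ on objects and $(H\le H')\mapsto(\cK_H\to\cK_{H'})$ on morphisms, the latter being the natural morphism recalled above. It is convenient to note that this structure morphism is obtained by applying $\cK_H\ostar(\mathchar`-)$ to the canonical map $\cK_0\to\cK_{\overline H\sharp H'}$ attached to $0\le\overline H\sharp H'$: here $(\overline H\sharp H')(p,s)=(H'-H)(\phi^H_s(p),s)\ge 0$, while $\cK_0\simeq\bfk_{\Delta_M\times[0,\infty)}$ by \cref{lemma:GKSconst} and $\cK_H\ostar\cK_{\overline H\sharp H'}\simeq\cK_{H'}$ by \cref{lemma:GKSsharp}. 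The substance of the lemma is then to promote this levelwise data to a homotopy-coherent diagram, i.e.\ to supply all higher simplices of an infinity functor $\cC^\infty_{cc}(T^*M\times[0,1])\to\cT(T^*M^2)$; as in \cite[Sec.~3.3]{Kuowrap}, composability of the structure morphisms and the higher coherences between their composites are extracted from the associativity of $\sharp$ and of $\ostar$, the identities of \cref{lemma:GKSsharp,lemma:GKSconst}, and the uniqueness up to canonical isomorphism of the GKS kernels \cite{GKS}. (Alternatively one may exhibit a strict point-set model of the GKS kernels in a model category presenting $\cT(T^*M^2)$ and read off the strict functor directly.)

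For the extension to $\cC_{cc}(T^*M\times[0,1])$ I would combine three inputs: (i) $C^\infty_{cc}(T^*M\times[0,1])$ is dense in $C_{cc}(T^*M\times[0,1])$ for $\|\cdot\|_{C^0}$, and mollification preserves the order, so any finite chain $H_0\le\dots\le H_k$ in $C_{cc}(T^*M\times[0,1])$ is a $C^0$-limit of chains of smooth Hamiltonians; (ii) $\cT(T^*M^2)$ is complete for $d_{\cT(T^*M^2)}$ (\cite{AI22completeness,guillermou2022gamma}); (iii) the smooth functor is uniformly continuous for $d_{\cT(T^*M^2)}$ by the Hamiltonian stability \cref{theorem:Ham_stability}. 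Then the images under the smooth functor of such chains and of the higher coherence data are Cauchy for $d_{\cT(T^*M^2)}$ and converge, and by the uniqueness of limits \cite[Prop.~B.7]{guillermou2022gamma} the resulting objects, morphisms and higher simplices do not depend on the approximating sequences; on objects they recover the $\cK_H$ defined above. In short, the smooth functor extends uniquely by uniform continuity along a dense sub-poset into a complete target, which yields the desired infinity functor.

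The coherence (rectification) step of the first paragraph is what I expect to be the main obstacle, and it is the part for which I would lean on \cite[Sec.~3.3]{Kuowrap}: since GKS determine $\cK_H$ only up to canonical isomorphism, turning the objects $\{\cK_H\}_H$ and the structure morphisms into a genuine point of $\mathrm{Fun}(\cC^\infty_{cc}(T^*M\times[0,1]),\cT(T^*M^2))$ requires producing a contractible space of higher coherences out of the associativity of $\sharp$, the identifications $\cK_{H\sharp H'}\simeq\cK_H\ostar\cK_{H'}$ and $\cK_0\simeq\bfk_{\Delta_M\times[0,\infty)}$, and GKS uniqueness. A secondary point is to make the extension by uniform continuity of the second paragraph precise for all higher simplices and not merely for objects and $1$-morphisms, i.e.\ to check that the $d_{\cT(T^*M^2)}$-limit procedure is compatible with the coherences; this is of a bookkeeping nature given \cref{theorem:Ham_stability} together with the completeness and uniqueness results.
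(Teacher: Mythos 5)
Your first paragraph matches the paper's first step: construct the functor on the smooth sub-poset $\cC^\infty_{cc}(T^*M\times[0,1])$ following \cite[Sec.~3]{Kuowrap}. The divergence is in the extension step, and that is where a real gap appears. You propose to extend by \emph{uniform continuity} along a dense sub-poset into the $d_{\cT(T^*M^2)}$-complete target. But $d_{\cT(T^*M^2)}$ is a pseudo-distance on \emph{objects}; it does not give a notion of Cauchy sequence or limit for the $1$-morphisms or the higher simplices of a diagram in $\cT(T^*M^2)$, so ``the images of chains and higher coherence data are Cauchy and converge'' is not a meaningful assertion as stated. You flag this yourself as a ``bookkeeping'' secondary point, but it is in fact the whole difficulty of extending an infinity functor, and the proposal does not supply an actual mechanism for it.

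The paper's proof avoids this entirely by taking the \emph{left Kan extension} of the smooth functor along the inclusion $\cC^\infty_{cc}(T^*M\times[0,1])\hookrightarrow\cC_{cc}(T^*M\times[0,1])$. This is a purely categorical construction that produces the desired infinity functor without any continuity argument. The metric inputs you invoke (\cref{theorem:Ham_stability}, completeness, and uniqueness of limits) enter only afterwards, as a compatibility check: one must verify that the Kan extension, evaluated on a continuous $H$, agrees with the $\cK_H$ already defined in \cref{subsec:SQHam} as a Cauchy limit. Even this check has a subtlety you do not address: the comma-poset $\{H'\in C^\infty_{cc}\mid H'\le H\}$ governing the pointwise formula for the Kan extension need not be filtered, so the paper replaces it by the filtered poset $\{H'\mid H'\prec H\}$ (where $H'\prec H$ means $H'\le H+a$ for some $a<0$) and shows the two colimits agree via $\colim_{a<0}\cK_{H'+a}\simeq\cK_{H'}$, which uses \cref{lemma:GKSsharp,lemma:GKSconst}. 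To repair your argument you would essentially have to rediscover the Kan extension: either use it directly, or give a genuine definition of convergence of simplices in the infinity category $\cT(T^*M^2)$ and prove the needed limits exist, which is more work and still would not obviously handle the filteredness issue.
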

\begin{proof}
    The functor $\cC_{cc}^\infty (T^*M\times [0,1])\to \cT(T^*M^2)$ is defined by a similar argument to \cite[Sec.~3]{Kuowrap}. 
    By the left Kan extension along the inclusion $\cC_{cc}^\infty (T^*M\times [0,1])\to \cC_{cc}(T^*M\times [0,1])$, we obtain a functor $\cC_{cc} (T^*M\times [0,1])\to \cT(T^*M^2)$.
    
    This construction is compatible with the construction above since homotopy colimits \cite{bokstedt1993homotopy} used in the previous papers \cite{AI22completeness, guillermou2022gamma} corresponds to colimits in the infinity categorical sense, which appear in the explicit description of the left Kan extension. 
    Precisely speaking, we need to be careful to see the correspondence of the colimits. 
    Note that for $H\in C_{cc} (T^*M\times [0,1])$, the poset $\{H'\in C_{cc}^\infty (T^*M\times [0,1]) \mid H'\le H\}$ may not be filtered. 
    For $H, H'\in C_{cc} (T^*M\times [0,1])$, we write $H'\prec H$ if there exists a real number $a<0$ satisfying $H'\le H+a$. 
    For $H\in C_{cc} (T^*M\times [0,1])$, the poset $\{H'\in C_{cc}^\infty (T^*M\times [0,1]) \mid H'\prec H\}$ is filtered and the colimit indexed by this poset corresponds to the homotopy colimit in \cite{AI22completeness, guillermou2022gamma}. 
    The colimits indexed by $2$ posets $\{H'\in C_{cc}^\infty (T^*M\times [0,1]) \mid H'\prec H\}$ and $\{H'\in C_{cc}^\infty (T^*M\times [0,1]) \mid H'\le H\}$ are isomorphic since $\colim_{a<0}\cK_{H'+a}\simeq \cK_{H'}$ holds for any $H'\in C_{cc}^\infty (T^*M\times [0,1])$ by \cref{lemma:GKSsharp,lemma:GKSconst}. 
\end{proof}

\subsection{An explicit description of projectors}\label{subsec:explicitadjoints}

Let $A\subset T^*M$ be a compact subset.
The inclusion functor $\iota_{A*} \colon \cT_A(T^*M)\to \cT(T^*M)$ admits a left adjoint $\iota_A^*$ and a right adjoint $\iota_A^!$ by the presentavility of $\cT_A(T^*M)$ and $\cT(T^*M)$.
In this section, we give explicit descriptions for $\iota_A^*$ and $\iota_A^!$. 

Kuo's explicit descriptions~\cite{Kuowrap} of the adjoint functors of the inclusion functor $\Sh_{\rho^{-1}(A)\cup 0_{M\times \bR}}(M\times \bR)\to \Sh (M\times \bR)$ descend to $\cT(T^*M) \to \cT_A(T^*M)$, where $0_{M\times \bR}$ is the image of the zero section of $T^*(M\times \bR)$. 
In this paper, we use a variant of his description, which is also used in \cite{KSZ23}. 

For a compact subset $A\subset T^*M$, 
define 
\begin{align}
    C_{cc}(T^*M\times [0,1],A) & \coloneqq \{H \in C_{cc}(T^*M\times [0,1])\mid \text{$H \equiv 0$ on a neighborhood of $A$} \}, \\
    C_{cc}(T^*M,A) & \coloneqq \{H \in C_{cc}(T^*M)\mid \text{$H \equiv 0$ on a neighborhood of $A$} \}.
\end{align}
Define $C_{cc}^\infty (T^*M\times [0,1],A)$ and $C_{cc}^\infty(T^*M,A)$ similarly. 

Let $\cC_{cc}(T^*M,A)$ (resp.\ $\cC_{cc}(T^*M\times [0,1],A)$, $\cC_{cc}^\infty(T^*M,A)$, $\cC_{cc}^\infty(T^*M\times [0,1],A)$) be the nerve of the poset $C_{cc}(T^*M,A)$ (resp.\ $C_{cc}(T^*M\times [0,1],A), C_{cc}^\infty(T^*M,A), C_{cc}^\infty(T^*M\times [0,1],A)$). 

Kuo~\cite{Kuowrap} considers compactly supported Hamiltonians on the sphere cotangent bundle, whereas we and Kuo--Shende--Zhang~\cite{KSZ23} consider Hamiltonians on $J^1(M)=T^*M\times\bR$ that are translation invariant, and become compactly supported after taking the quotient of the domain by the $\bR$-action. 
This is the only difference, and the proofs proceed parallelly.
Hence we obtain the following descripitions of the adjoint functors of ${\iota_{A}}_*$
\begin{proposition}[{cf.\ \cite[Thm.~1.2]{Kuowrap} and \cite[Cor.~6.6]{KSZ23}}]
    \begin{equation}
        \iota^!_A F =\lim_{H\in \cC_{cc}(T^*M,A)} \cK_H\ostar F, \quad
        \iota^*_A F =\colim_{H\in \cC_{cc}(T^*M,A)} \cK_H\ostar F. 
    \end{equation}
\end{proposition}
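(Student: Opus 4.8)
The plan is to import Kuo's proof of the wrapping formula~\cite[Thm.~1.2]{Kuowrap} (see also~\cite[Sec.~6]{KSZ23}) through the dictionary already noted before the statement: $\Sh_{\overline{\rho^{-1}(A)}\cup 0_{M\times\bR}}(M\times\bR)$ becomes $\cT_A(T^*M)$ after passing to the Tamarkin quotient, and the $\bR_t$-translation-invariant Hamiltonians appearing in Kuo's colimit are exactly the elements of $C_{cc}(T^*M,A)$ (and of $C_{cc}(T^*M\times[0,1],A)$ in their time-dependent form). I would treat $\iota_A^*$; the case of $\iota_A^!$ is dual. Since the adjoints exist by presentability, it is enough to produce a natural transformation $\eta\colon\id\to L$, where $L(F)\coloneqq\colim_{H\in\cC_{cc}(T^*M,A)}\cK_H\ostar F$, and to verify: (i) $L(F)\in\cT_A(T^*M)$ for all $F$; and (ii) for every $G\in\cT_A(T^*M)$, precomposition with $\eta_F$ gives an equivalence $\Hom(L(F),G)\xrightarrow{\sim}\Hom(F,G)$.

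I would first record the formal inputs. The poset $C_{cc}(T^*M,A)$ is directed both upward and downward — mollify the maximum, resp.\ minimum, of finitely many Hamiltonians vanishing near $A$ into a smooth one still vanishing near $A$ and dominating, resp.\ dominated by, them — its smooth nonnegative members $\{H\ge 0\}$ are cofinal, and the smooth Hamiltonians are cofinal among the continuous ones, so one may work throughout with smooth Hamiltonians (so that $\phi^H$ and $\sharp$ make sense) and pass freely between $C_{cc}(T^*M,A)$ and $C_{cc}(T^*M\times[0,1],A)$ (replace $H(p,s)$ by a mollification of $\max_s H(p,s)$). By the argument of \cref{lemma:infintyGKS}, restricted to Hamiltonians vanishing near $A$, the assignment $H\mapsto\cK_H\ostar(\mathchar`-)$ is an $\infty$-functor, so $L$ is a well-defined functor on $\cT(T^*M)$ and the monotonicity morphisms $\cK_0\to\cK_H$ ($H\ge 0$) assemble into $\eta\colon\id\to L$. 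Moreover, combining \cref{lemma:GKSsharp} with the explicit form of $\sharp$, one checks that for fixed $H_0$ the family $\{H_0\sharp H\}$ is again cofinal in $C_{cc}(T^*M\times[0,1],A)$, whence $\cK_{H_0}\ostar L(F)\simeq\colim_H\cK_{H_0\sharp H}\ostar F\simeq L(F)$; that is, $L(F)$ is fixed by every wrapping supported away from $A$.

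For (ii), note first that each $\cK_H\ostar(\mathchar`-)$ is an autoequivalence of $\cT(T^*M)$ with inverse $\cK_{\overline H}\ostar(\mathchar`-)$, because $H\sharp\overline H\equiv\overline H\sharp H\equiv 0$ and so $\cK_{H\sharp\overline H}\simeq\cK_{\overline H\sharp H}\simeq\bfk_{\Delta_M\times[0,\infty)}$, the identity kernel, by \cref{lemma:GKSsharp,lemma:GKSconst}. Restricting the colimit defining $L$ to the cofinal nonnegative $H$, then, for $G\in\cT_A(T^*M)$,
\begin{equation}
    \Hom(L(F),G)\simeq\lim_{H}\Hom(\cK_H\ostar F,G)\simeq\lim_{H}\Hom(F,\cK_{\overline H}\ostar G).
\end{equation}
If $H$ vanishes near $A$ then so does $\overline H$, so $\phi^{\overline H}_s$ is the identity on a neighborhood of $A\supseteq\RS(G)$; by the locality of the GKS kernel — the translation-invariant analogue of the fact, used by Kuo, that wrapping by a Hamiltonian trivial near the microsupport acts as the identity — the monotonicity morphism $\cK_{\overline H}\ostar G\to\cK_0\ostar G=G$ (available since $\overline H\le 0$) is an isomorphism, naturally in $H$. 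Hence the diagram $(\cK_{\overline H}\ostar G)_H$ is equivalent to the constant diagram at $G$, the displayed limit is $\Hom(F,G)$, and tracing the identifications shows the comparison map is evaluation along $\eta_F$; this proves (ii).

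Finally (i): by the fixed-point property above, $L(F)$ is invariant under $\cK_H\ostar(\mathchar`-)$ for every $H\in C_{cc}(T^*M,A)$, and I would deduce $\RS(L(F))\subseteq A$ from the statement that an object of $\cT(T^*M)$ invariant under wrapping by every Hamiltonian supported away from $A$ has reduced microsupport contained in $A$. This is exactly the microlocal part of Kuo's proof, and the point to check is that it is local on $T^*M\setminus A$ and uses only Hamiltonians supported in an arbitrarily small ball disjoint from $A$ — equivalently, only elements of $C_{cc}(T^*M,A)$ — so the microlocal step in Kuo's argument (a cut-off/deformation argument at covectors off $A$) goes through verbatim for our translation-invariant kernels. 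I expect this to be the main obstacle: (i) is the single genuinely geometric input — that wrapping by Hamiltonians off $A$ exhausts the complement of $A$ microlocally — while everything else above is formal. Combining (i) and (ii) yields $L\simeq\iota_A^*$; the dual argument, with limits in place of colimits, $\{H\le 0\}$ in place of $\{H\ge 0\}$, and $\Hom(G,\lim_H\cK_H\ostar F)\simeq\lim_H\Hom(\cK_{\overline H}\ostar G,F)$, yields $\lim_{H\in\cC_{cc}(T^*M,A)}\cK_H\ostar(\mathchar`-)\simeq\iota_A^!$.
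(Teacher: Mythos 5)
Your proposal is correct and follows essentially the same route as the paper: the paper's proof is just the observation, stated in the surrounding text, that Kuo's argument from \cite{Kuowrap} (and its variant in \cite{KSZ23}) transfers verbatim once one swaps compactly supported Hamiltonians on the sphere cotangent bundle for translation-invariant Hamiltonians on $J^1(M)$ that vanish near $A$. What you have done is unpack what "proceeds parallelly" actually requires — cofinality/directedness of $C_{cc}(T^*M,A)$, the $\cK_{H_0}$-invariance of the colimit via $\sharp$, the adjunction check using the identity $\cK_{\overline H}\ostar G\simeq G$ for $G\in\cT_A$ (which is \cref{lemma:constantshift}), and the one genuinely geometric step $\RS(L(F))\subseteq A$ deferred to Kuo's microlocal argument — which is a faithful and correctly organized reconstruction of that same proof rather than a different one.
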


\begin{remark}
    Every inclusion functor between any two of $\cC_{cc}(T^*M,A)$, $\cC_{cc}(T^*M\times [0,1],A)$, $\cC_{cc}^\infty(T^*M,A)$, $\cC_{cc}^\infty(T^*M\times [0,1],A)$ is initial and final if it exists. 
    So we can replace the index category of the limit or the colimit by other ones. 
\end{remark}

Define
\begin{equation}
    \cK_A\coloneqq \colim_{H\in \cC_{cc}(T^*M,A)} \cK_H \in \cT (T^*M^2). 
\end{equation}
Since $\ostar$ commutes with colimits, $\iota_A^* F $ is equivalent to $\cK_A \ostar F$. 
Since $({\iota_A}_*\iota_A^*)^2\simeq {\iota_A}_*\iota_A^*$,  
\begin{equation}\label{equation:kernelidempotnce}
    \cK_A\ostar \cK_A \simeq \cK_A
\end{equation}
by \cite[Prop.~5.12]{KSZ23}.

\section{Partial symplectic quasi-states from sheaves}\label{section:quasistate}

In this section, we construct a partial symplectic quasi-state from an idempotent of a sheaf category.
From now on, we assume the compactness of $M$. 
Hence $M$ is a connected closed manifold. 

\subsection{Spectral invariants in Tamarkin category}\label{subsec:spectral}

In this subsection, we introduce spectral invariants for objects in the Tamarkin category.
See also Vichery~\cite{VicheryPhD}, Asano--Ike~\cite{AI22completeness}, and  Guillermou--Viterbo~\cite{guillermou2022gamma} for example.

\begin{definition}
    Let $F, G \in \cT(T^*M)$.
    One defines 
    \begin{align}
        h\cT_c^d(F,G) & \coloneqq \Hom_{h\cT(T^*M)} (F,T_cG[d]), \\
        h\cT_c^*(F, G) & \coloneqq \bigoplus_{d\in \bZ} h\cT_c^d(F, G), \\
        h\cT_\infty^d(F,G) & \coloneqq \colim_c h\cT_c^d(F,G)\simeq\Hom_{h\cT_\infty(T^*M)}(F, G[d]), \\
        h\cT_\infty^*(F,G) & \coloneqq \colim_c h\cT_c^*(F,G) \simeq \bigoplus_{d\in \bZ} \Hom_{h\cT_\infty(T^*M)}(F, G[d]).
    \end{align}
\end{definition}

\begin{definition}
    Let $F, G \in \cT(T^*M)$. 
    For $\alpha \in h\cT^*_\infty(F,G)$, one sets 
    \begin{equation}
        c(\alpha;F,G) 
        \coloneqq 
        -\inf\{ c \in \bR \mid \alpha \in \Image(h\cT^*_c(F,G) \to h\cT^*_\infty(F,G)) \}\in \bR\cup \{ +\infty \}
    \end{equation}
    and 
    \begin{equation}
        \Spec(F,G) \coloneqq \{ c(\alpha;F,G)\in \bR \mid \alpha \in h\cT^*_\infty(F,G) \} \subset \bR.
    \end{equation}
\end{definition}

Note that our $\Spec(F, G)$ never contains $+\infty$ by definition.

\begin{remark}
    The sign convention of spectral invariants differs among papers. In this paper, we follow that of Monzner--Vichery--Zapolsky~\cite{MVZ12}. This is consistent with the convention of Guillermou--Viterbo~\cite{guillermou2022gamma}, but it is opposite to that of Oh~\cite{Oh97,Oh99}, Frauenfelder--Schlenk~\cite{FS07} and the previous work~\cite{AI22completeness}.
    See also \cite[Section~2.4.2]{MVZ12}. 
\end{remark}

For $\alpha \in h\cT^d_\infty(F,G ) $, if we can take a lift $\tilde{\alpha} \colon F \to T_c G[d]$ of $\alpha$, then we have
\begin{equation}
    -c \le c(\alpha;F, G).
\end{equation}

 \begin{remark}\label{remark:c=+infty}
     In our definition, $c(0;F,G)=+\infty$. Generally, $c(\alpha; F,G)$ can be $+\infty$ for non-zero $\alpha \in h\cT_{\infty}^*(F,G)$. 
     See \cite[Rem.~6.2]{AI22completeness} for an example. 
\end{remark}

For $F,G\in \cT(T^*M)$, we define 
\begin{equation}
    \Sigma(F,G):=-q_\bR\pi((\MS(G)\widehat{\star}\MS(F)^\alpha ) \cap \Gamma_{dt}). 
\end{equation}
See \cite{GS14} for the definitions of $\widehat{\star}$ and $(\mathchar`-)^\alpha$. 
However, we do not need to refer to these definitions directly; for the necessary properties of $\Sigma(F,G)$, see the following.
By \cite{GS14}, 
\begin{equation}
    \MS(\cHom^\star(F,G))\subset \RS(G)\widehat{\star}\RS(F)^\alpha 
\end{equation}
and its immediate consequence is the following. 
\begin{lemma}\label{lemma:spectrality}
    For any $\alpha \in h\cT^*_\infty(F,G)$, 
    \begin{equation}
        c(\alpha; F, G)\in \Sigma (F,G)\cup \{+\infty \}. 
    \end{equation}
\end{lemma}
\begin{remark}\label{remark:spectrality}
    The calculation of $\Sigma(F, G)$ is not easy in general. 
    However we will use this result only for either case of the following:
    \begin{enumerate}
   \renewcommand{\labelenumi}{$\mathrm{(\arabic{enumi})}$}
        \item $\RS (F)$ or $\RS(G)$ is compact,
        \item both of $F$ and $G$ are of the form $\cK_H$ for some $H\in C_{cc}^{\infty}(T^*M\times [0,1])$. 
    \end{enumerate}
    For both cases, it is easy to see
    \begin{equation}
        \Sigma (F,G)=\{c\in \bR \mid \MS(T_cF)\cap \MS(G)\cap \{\tau>0\}\neq \emptyset\}. 
    \end{equation}
\end{remark}

In favorable situations, phenomena like those mentioned in \cref{remark:c=+infty} do not occur.

\begin{lemma}\label{lemma:specinfty}
    Let $F, G \in \cT(T^*M)$ and assume $\Sigma(F,G)$ is bounded above or below.
    If $\alpha \in h\cT_\infty(F,G)$ satisfies $c(\alpha;F,G)=\infty$, then $\alpha=0$.
\end{lemma}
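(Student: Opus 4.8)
The plan is to exploit the structure of $\Sigma(F,G)$ as the obstruction set for $c(\alpha;F,G)$, combined with the colimit description of $h\cT_\infty^*(F,G)$ from \cref{proposition:HomhT}. Suppose $\alpha \in h\cT_\infty(F,G)$ has $c(\alpha;F,G)=+\infty$; by definition this means $\alpha \notin \Image(h\cT_c^*(F,G)\to h\cT_\infty^*(F,G))$ for every $c\in\bR$. I want to derive $\alpha=0$ from this, using that $\Sigma(F,G)$ is bounded (say above — the bounded-below case is symmetric, exchanging the roles via $T_{-c}$ or running the shift in the opposite direction).

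First I would recall from \cref{lemma:spectrality} that the only finite values $c(\alpha;F,G)$ can take lie in $\Sigma(F,G)$. The key observation is that the assignment $c\mapsto \Image(h\cT_c^*(F,G)\to h\cT_\infty^*(F,G))$ is an increasing filtration of $h\cT_\infty^*(F,G)$ whose union is everything (this is exactly \cref{proposition:HomhT}), and the value of $c(\alpha;F,G)$ is $-1$ times the infimum of the thresholds at which $\alpha$ enters this filtration. The point is that this filtration can only "jump" at values belonging to $\Sigma(F,G)$: for $c<c'$ with $[c,c']\cap\Sigma(F,G)=\emptyset$, the transition morphism $h\cT_c^*(F,G)\to h\cT_{c'}^*(F,G)$ should be surjective onto the image inside $h\cT_\infty^*(F,G)$ — more precisely, the cone of $\tau_{c'-c}(F)$-type maps between the relevant complexes has microsupport controlled by $\Sigma(F,G)$, so over a gap in $\Sigma(F,G)$ the relevant transition is an isomorphism after passing to $h\cT_\infty$. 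Concretely, $\cHom^\star(F, T_c G)$ for varying $c$ form a persistence-type object, and its microsupport being contained in (a set projecting to) $\Sigma(F,G)$ forces the structure maps to be isomorphisms over any interval disjoint from $\Sigma(F,G)$.

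Now since $\Sigma(F,G)$ is bounded above by some $c_0$, for every $c \le -c_0$ the image of $h\cT_c^*(F,G)$ in $h\cT_\infty^*(F,G)$ is the same subspace $V$, and as $c\to-\infty$ this stabilizes; but also every element of $h\cT_\infty^*(F,G)$ enters the filtration at some finite threshold, i.e.\ lies in the image of $h\cT_c^*$ for $c$ large enough. The set of $\alpha$ with $c(\alpha;F,G)=+\infty$ is precisely $\bigcap_{c}\big(h\cT_\infty^*(F,G)\setminus \Image(h\cT_c^*)\big)^{c}$ — wait, rather it is the set of $\alpha$ that lie in \emph{no} finite stage except forced ones; the right way is: $c(\alpha)=+\infty$ iff $\alpha\notin\Image(h\cT_c^*\to h\cT_\infty^*)$ for all $c$, equivalently $\alpha$ is not hit by the colimit cocone at any finite stage, which for a filtered colimit over $\bR$ (cofinal in $\bN$) forces $\alpha=0$ once one knows the filtration exhausts and the jumps are confined to the bounded set $\Sigma(F,G)$: over the unbounded gap $(-\infty,\min\Sigma(F,G))$ the stages are all isomorphic to a single fixed space $V_{-\infty}$ which maps isomorphically into $h\cT_\infty^*(F,G)$, hence $V_{-\infty}=\Image(h\cT_c^*\to h\cT_\infty^*)$ for all small $c$; and similarly the stages are eventually isomorphic to $V_{+\infty}=h\cT_\infty^*(F,G)$ for $c$ large. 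An element with $c(\alpha)=+\infty$ lies in the complement of the image of every finite stage, in particular of $V_{+\infty}$-realizing large stages — but those already surject onto $h\cT_\infty^*$, contradiction unless $\alpha=0$.

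The main obstacle I anticipate is making the step "the filtration $\Image(h\cT_c^*(F,G)\to h\cT_\infty^*(F,G))$ only jumps at points of $\Sigma(F,G)$" fully rigorous: this requires invoking the microlocal cut-off / non-characteristic deformation lemma (as in \cite{GS14}, \cite{KS18persistent}) to show that if $[c,c']$ avoids $\Sigma(F,G)$ then $\tau_{c'-c}$ acts as an isomorphism on the relevant $\Hom$'s after stabilizing, equivalently that $\Gamma(M\times\bR;\cHom^\star(F,T_cG))$ is "locally constant in $c$" away from $\Sigma(F,G)$. Once that is in place, boundedness of $\Sigma(F,G)$ immediately gives that the colimit over $c\in\bR$ stabilizes on both ends, and the cocone into the colimit is eventually an isomorphism, so nothing finite can be "missed" — i.e.\ $c(\alpha)=+\infty\Rightarrow\alpha=0$. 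I would also double-check the edge case where $\Sigma(F,G)$ is empty, where the statement is essentially trivial since then $\cHom^\star(F,G)$ has no reduced microsupport in $\{\tau>0\}$ and the colimit is constant.
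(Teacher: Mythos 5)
There is a fundamental misreading of the definition of $c(\alpha;F,G)$ at the base of your argument. You assert that $c(\alpha;F,G)=+\infty$ means $\alpha\notin\Image(h\cT_c^*(F,G)\to h\cT_\infty^*(F,G))$ for every $c\in\bR$. But with the paper's definition
$c(\alpha;F,G)=-\inf\{c\in\bR \mid \alpha\in\Image(h\cT^*_c(F,G)\to h\cT^*_\infty(F,G))\}$,
the value $+\infty$ corresponds to the infimum being $-\infty$, i.e.\ to $\alpha$ lying \emph{in} the image of $h\cT_c^*$ for arbitrarily negative $c$ (equivalently for every $c$, since these images increase in $c$). This is consistent with the remark $c(0;F,G)=+\infty$, whereas your reading would give $c(0;F,G)<+\infty$ and, worse, would make the hypothesis vacuous: since $h\cT_\infty^*=\colim_c h\cT_c^*$, every element lies in the image of some stage, so no $\alpha$ at all can satisfy ``$\alpha\notin\Image$ for every $c$''. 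The ``contradiction'' you derive is therefore vacuous and does not prove $\alpha=0$.

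Once the sign is fixed, the task is to show that being in the image of $h\cT_c^*$ for $c\to-\infty$ forces $\alpha=0$, and the persistence-module constancy outside $\Sigma(F,G)$ that you invoke is relevant but not sufficient on its own: you additionally need to know that the stabilized stage is actually zero, which the paper gets from $\colim_{c'}T_{c'}F\simeq(\colim_{c'}\bfk_{[c',\infty)})\star F\simeq 0$ (for $\Sigma$ bounded above) or from a separate factorization/cofiber argument using $F\star\bfk_{M\times[c',s)}$ (for $\Sigma$ bounded below). The two boundedness cases are also not symmetric in the way you suggest, because $h\cT_\infty^*$ is the colimit in the $c\to+\infty$ direction only; the paper treats them with two genuinely different arguments, and a symmetry based on $T_{-c}$ would not transport one to the other.
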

\begin{proof}
    Let us consider a morphism $\tl{\alpha} \colon F\to G$ in $\cT (T^*M)$ which is a lift of $\alpha$ with $c(\alpha ;F,G)=+\infty $. 
    For any $s\in \bR_{\ge 0}$, there exists a lift $\tl{\alpha}_s\colon T_{s}F\to G$ of $\alpha$. 
    There exists $a_s\le 0$ such that $\tl{\alpha}_s$ and $\tl{\alpha}$ induces the same morphism $T_{a_s}F\to G$. 
    This implies the natural morphism $F\star\bfk_{M\times [a_s,s)}\to G$ induced by $\tl{\alpha}$ is $0$.    
    
    Assume $\Sigma(F,G)\subset (c,\infty)$ for some $c\in \bR$, then we can take any $c'\in (-\infty, c)$ as $a_s$ above. 
    Hence $F\star\bfk_{M\times [c',s)} \to G$ induced by $\tl{\alpha}$ is zero. 
    Since $\colim_s F\star\bfk_{M\times [c',s)}\simeq T_{c'}F$, the morphism $\tl{\alpha}\colon T_{c'}F\to G$ is also zero.  
    This shows $\alpha=0$. 
    
    Asuume $\Sigma(F,G) \subset (-\infty ,c)$ for some $c\in \bR$, then the canonical morphism $h\cT_{-c'}^*(F,G)\to h\cT_{-c}^*(F,G)$ is isomophic for every $c'\ge c$. 
    Hence any morphism $\tl{\alpha}_{c}\colon T_cF\to G[d]$ uniquely factors through a morphism $\tl{\alpha}_{c'}\colon T_{c'} F\to G[d]$. 
    By the uniqueness, we obtain a morphism $\tl{\alpha}_\infty\colon \colim_{c'} T_{c'}F\to G[d]$. 
    For every object $F\in \cT(T^*M)$, $\colim_{c'} T_{c'}F\simeq (\colim_{c'} \bfk_{[c',\infty)} )\star F\simeq 0$ holds. 
    Then we obtain $\tl{\alpha}_\infty =0$ and hence $\tl{\alpha}_{c}=0$ by $\Hom (\colim_{c'} T_{c'}F, G[d])\simeq \lim_{c'} \Hom (T_{c'}F, G[d])\simeq \Hom (T_cF, G[d])$. 
\end{proof}

We will use the following basic property of the spectral invariant. 

\begin{lemma}\label{lemma:specprodineq}
    One has
    \begin{equation}
        c(\beta \alpha; F,F'' ) \ge c(\alpha; F,F')+c(\beta; F',F''). 
    \end{equation}
\end{lemma}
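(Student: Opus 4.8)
The plan is to argue directly from the definition of $c(\alpha;F,F')$ in terms of the filtration by the images of $h\cT^*_c$ in $h\cT^*_\infty$, together with the compatibility of composition with the shift functors $T_c$. Recall that $c(\alpha;F,F') = -\inf\{c \mid \alpha \in \Image(h\cT^*_c(F,F') \to h\cT^*_\infty(F,F'))\}$, so unwinding: for every $\varepsilon>0$ there is a lift $\tl\alpha \colon F \to T_{a}F'$ (in $h\cT(T^*M)$, up to shift $[\ast]$ which I suppress) representing $\alpha$ with $a \le -c(\alpha;F,F') + \varepsilon$, and similarly a lift $\tl\beta \colon F' \to T_{b}F''$ representing $\beta$ with $b \le -c(\beta;F',F'') + \varepsilon$.

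First I would form the composite $T_{a}F' \xrightarrow{T_{a}\tl\beta} T_{a}T_{b}F'' = T_{a+b}F''$, and then compose with $\tl\alpha$ to obtain a morphism $F \to T_{a+b}F''$. The key point is that this composite is a lift of $\beta\alpha \in h\cT^*_\infty(F,F'')$: this follows because the colimit maps $h\cT^*_c(F,G) \to h\cT^*_\infty(F,G)$ (\cref{proposition:HomhT}) are compatible with composition, i.e. the image of $\tl\beta$ is $\beta$ and the image of $T_a\tl\beta \circ \tl\alpha$ under the colimit map is the composite of the images, which is $\beta\alpha$. Concretely one uses that $T_a$ is an exact functor and that the natural transformations $\tau_c$ intertwine everything, so passing to $h\cT_\infty$ (where all the $T_c$ become isomorphic to the identity) the composite $T_a\tl\beta\circ\tl\alpha$ maps to $\beta\circ\alpha$.

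Having produced a lift $F \to T_{a+b}F''$ of $\beta\alpha$, the definition of the spectral invariant (the inequality "$-c \le c(\alpha;F,G)$ whenever there is a lift $F\to T_cG[d]$" stated just before \cref{remark:c=+infty}) gives $c(\beta\alpha;F,F'') \ge -(a+b) \ge c(\alpha;F,F') + c(\beta;F',F'') - 2\varepsilon$. Letting $\varepsilon \to 0$ yields the claim; the cases where one of $c(\alpha;F,F')$ or $c(\beta;F',F'')$ equals $+\infty$ are automatic since then the right-hand side is $+\infty$ and the same construction shows $\beta\alpha$ admits lifts $F\to T_cF''$ for $c$ arbitrarily negative, forcing $c(\beta\alpha;F,F'')=+\infty$ as well (or one can simply note the inequality holds vacuously in $\bR\cup\{+\infty\}$ with the convention $-\infty + x = -\infty$, reading it in the reverse direction).

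The only genuinely delicate point is the bookkeeping in the second step: verifying that the composition of representatives at finite filtration level maps to the composition in $h\cT_\infty$ under the colimit structure maps, and keeping the cohomological shifts $[d]$ consistent. This is essentially formal — it is the statement that composition in $h\cT_\infty(T^*M)$ is computed by the filtered colimit of \cref{proposition:HomhT} in a way compatible with the filtration — but it should be spelled out, perhaps by noting that for lifts $\tl\alpha \in h\cT^{d}_a(F,F')$ and $\tl\beta \in h\cT^{e}_b(F',F'')$ the composite $T_a\tl\beta[\,\cdot\,]\circ\tl\alpha$ lands in $h\cT^{d+e}_{a+b}(F,F'')$ and its class agrees with $\beta\alpha$ in the colimit. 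I do not expect any serious obstacle beyond this.
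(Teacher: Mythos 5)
The paper states this lemma without proof, treating it as a standard property of spectral invariants, so there is no proof in the paper to compare against. Your argument is the standard (and essentially the only natural) one: take lifts $\tl\alpha \colon F \to T_a F'[d]$ and $\tl\beta \colon F' \to T_b F''[e]$ at filtration levels within $\varepsilon$ of the optimal ones, observe that $T_a\tl\beta[d]\circ\tl\alpha \colon F \to T_{a+b}F''[d+e]$ is a lift of $\beta\alpha$ (because the colimit of \cref{proposition:HomhT} is compatible with composition via the shift functors and $\tau_c$), apply the bound $-c \le c(\gamma;F,G)$ for a lift at level $c$, and let $\varepsilon \to 0$. Your handling of the $+\infty$ cases is also correct, using that $c(\cdot\,;F,G)$ takes values in $\bR \cup \{+\infty\}$ and never $-\infty$. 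The one bookkeeping point you flag — that composing representatives at finite filtration level computes composition in $h\cT_\infty$ — is exactly the right thing to observe, and is formally immediate from the construction of $h\cT_\infty(T^*M)$ as a Verdier quotient together with the colimit formula for its Hom spaces. No gaps.
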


\subsection{Sheaf-theoretic spectral norm}

For $\phi, \psi \in \Ham_c(T^*M)$, we set
\begin{equation}
    \gamma (\phi, \psi)\coloneqq -c(1;\cK_\phi, \cK_\psi )-c(1;\cK_\psi , \cK_\phi ). 
\end{equation}
We write $\gamma (\phi)\coloneqq \gamma (\phi, \id )$ for short. 
We call $\gamma (\phi)$ the sheaf-theoretic spectral norm of $\phi$.

\begin{remark}
    With the use of a version of Hamiltonian Floer theory, Frauenfelder--Schlenk~\cite{FS07} defined a spectral norm $\gamma^{\mathrm{Floer}}(\phi)$ for each compactly supported Hamiltonian diffeomorphism $\phi$. 
    Guillermou--Viterbo~\cite[Thm.~E.1]{guillermou2022gamma} combined with Viterbo's earlier result~\cite[Lem.~3.2]{viterbo2018functors} proves the filtered isomorphism between Hamiltonian Floer cohomology and some sheaf cohomology defined from GKS kernels if the characteristic of the coefficient field is $2$ or if the base manifold $M$ is spin. 
    Hence $\gamma^{\mathrm{Floer}}$ and $\gamma$ in this paper coincide for these cases.
\end{remark}

We give some basic properties on $\gamma$, which is similar to those for $\gamma^{\mathrm{Floer}}$ in \cite{FS07}. 

\begin{lemma}[cf.~{\cite[Prop.~7.4]{FS07}}]\label{lemma:displace_spectral}
    Let $H,H'\in C_{c}^\infty (T^*M\times [0,1])$ and assume $\phi_{H'}$ displaces $\Supp (H)$ then
    \begin{equation}
        \gamma(\phi_H) \le 2\gamma (\phi_{H'}).
    \end{equation}
\end{lemma}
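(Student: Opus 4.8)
The plan is to carry the commutator argument of Frauenfelder--Schlenk into the sheaf-theoretic setting, so that only formal properties of $\gamma$ enter.

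First I would isolate three elementary properties of $\gamma$: for $\phi,\psi\in\Ham_c(T^*M)$, \textbf{(a)} $\gamma(\phi^{-1})=\gamma(\phi)$; \textbf{(b)} $\gamma(\psi\phi\psi^{-1})=\gamma(\phi)$; \textbf{(c)} $\gamma(\psi\phi)\le\gamma(\psi)+\gamma(\phi)$. All three rest on \cref{lemma:GKSsharp} (e.g.\ $\cK_{\psi\phi}\simeq\cK_\psi\ostar\cK_\phi$ and $\cK_\phi\ostar\cK_{\phi^{-1}}\simeq\cK_{\id}$) together with the observation that $\ostar$-composition with an invertible GKS kernel is an auto-equivalence of $\cT(T^*M^2)$ commuting with every $T_c$, hence identifies the filtered Hom-spaces and sends units to units, preserving the numbers $c(1;\cdot,\cdot)$. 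Concretely, \textbf{(b)} follows by applying $\cK_{\psi^{-1}}\ostar(\mathchar`-)\ostar\cK_\psi$ and \textbf{(a)} by applying $\cK_\phi\ostar(\mathchar`-)$; \textbf{(c)} follows by factoring the unit $\cK_{\psi\phi}\to\cK_{\id}$ as $\cK_\psi\ostar\cK_\phi\to\cK_\psi\ostar\cK_{\id}=\cK_\psi\to\cK_{\id}$ (and symmetrically $\cK_{\id}\to\cK_{\psi\phi}$) and invoking the super-additivity of spectral invariants under composition from \cref{lemma:specprodineq}.

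The crucial, least formal ingredient is a monotonicity under disjoint supports: if $\Supp(\phi)\cap\Supp(\psi)=\emptyset$ then $\gamma(\phi\psi)\ge\gamma(\phi)$. To prove it I would fix a compact $A\subset T^*M$ with $\Supp(\phi)\subset\operatorname{int}A$ and $A\cap\Supp(\psi)=\emptyset$ and use the explicit projectors of \cref{subsec:explicitadjoints}. Those adjoint functors commute with the translations $T_c$, so applying one to a $c$-bounded lift of a morphism produces a $c$-bounded lift, hence never increases $-c(\mathchar`-)$. It then suffices to show that, after applying the projector attached to the graph region over $A$, $\cK_{\phi\psi}$ becomes isomorphic to $\cK_\phi$ compatibly with the units; this is the locality of GKS quantization, since the generator $H_\phi\sharp H_\psi$ of $\phi\psi$ agrees with $H_\phi$ on a neighborhood of $\Supp(\phi)$, and the microlocal kernel over $A$ only sees that generator. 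I expect this step to be the main obstacle, as it requires a careful formulation of how spectral invariants of GKS kernels localize to a neighborhood of the support.

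Granting these, the proof concludes quickly. Since $\phi_{H'}$ displaces $\Supp(H)=\Supp(\phi_H)$, the diffeomorphism $\phi_{H'}\phi_H^{-1}\phi_{H'}^{-1}$ is supported in $\phi_{H'}(\Supp(H))$, which is disjoint from $\Supp(\phi_H)$. Hence the commutator $[\phi_H,\phi_{H'}]=\phi_H\cdot(\phi_{H'}\phi_H^{-1}\phi_{H'}^{-1})$ is a product of two Hamiltonian diffeomorphisms with disjoint supports, so by the monotonicity above $\gamma(\phi_H)\le\gamma([\phi_H,\phi_{H'}])$. On the other hand, writing $[\phi_H,\phi_{H'}]=(\phi_H\phi_{H'}\phi_H^{-1})\cdot\phi_{H'}^{-1}$ and using \textbf{(c)}, \textbf{(b)}, \textbf{(a)} gives $\gamma([\phi_H,\phi_{H'}])\le\gamma(\phi_H\phi_{H'}\phi_H^{-1})+\gamma(\phi_{H'}^{-1})=\gamma(\phi_{H'})+\gamma(\phi_{H'})=2\gamma(\phi_{H'})$. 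Combining the two inequalities yields $\gamma(\phi_H)\le2\gamma(\phi_{H'})$.
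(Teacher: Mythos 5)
Your scaffolding (the commutator decomposition, the three formal properties (a)--(c) of $\gamma$, the factorization $[\phi_H,\phi_{H'}] = \phi_H\cdot(\phi_{H'}\phi_H^{-1}\phi_{H'}^{-1}) = (\phi_H\phi_{H'}\phi_H^{-1})\cdot\phi_{H'}^{-1}$) is correct and matches the Frauenfelder--Schlenk template. But the whole argument hangs on the ``monotonicity under disjoint supports'' claim $\gamma(\phi\psi)\ge\gamma(\phi)$ when $\Supp(\phi)\cap\Supp(\psi)=\emptyset$, and this is precisely where the gap is. It is not proved in your sketch --- the suggested route via the Kuo projectors of \cref{subsec:explicitadjoints} is not worked out, and there is a mismatch: those projectors act on $\cT(T^*M)$ attached to a compact $A\subset T^*M$, whereas $\cK_{\phi\psi}$ lives in $\cT(T^*M^2)$ and no corresponding projector compatible with the $\ostar$-composition and with the spectral filtration is set up in the paper. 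More importantly, this monotonicity claim is a genuinely different and stronger statement than what Frauenfelder--Schlenk (or the paper) actually prove, and the homotopy argument that powers their proof does \emph{not} work for it: along $\varepsilon\mapsto\phi\cdot\psi_\varepsilon$ (scaling $\psi$), the action spectrum $\Sigma(\cK_0,\cK_{\phi\psi_\varepsilon})$ contains the actions of the periodic orbits of $\psi_\varepsilon$ inside $\Supp\psi$, which \emph{do} vary with $\varepsilon$, so one cannot conclude $c_+$ is constant from nowhere-density.

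The paper's proof avoids the disjoint-support monotonicity entirely. After a reparameterization separating the time supports of $H$ and $H'$, it observes that $\Sigma(\cK_0,\cK_{\varepsilon H\sharp H'})=\Sigma(\cK_0,\cK_{H'})$ for all $\varepsilon\in[0,1]$ --- this identity of spectra is exactly where the \emph{displacement} hypothesis enters (since $\phi_{H'}$ pushes $\Supp H$ off itself, no new contractible $1$-periodic orbits appear as $\varepsilon$ varies, and the orbits present all lie outside $\Supp H$ with actions coming only from $H'$). Combined with nowhere-density of that spectrum (\cref{corollary:Sigmanondense}, via \cref{lemma:SigmaGKS}) and Lipschitz continuity of $c_+$ in $H$ (from \cref{theorem:Ham_stability}), the map $\varepsilon\mapsto c_+(\varepsilon H\sharp H')$ is a continuous $\Sigma(\cK_0,\cK_{H'})$-valued function on a connected domain, hence constant; so $c_+(H\sharp H')=c_+(H')$, and similarly $c_+(\overline{H\sharp H'})=c_+(\overline{H'})$, giving $\gamma(H\sharp H')=\gamma(H')$. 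The triangle inequality then finishes. To repair your argument you would either have to supply a full proof of the disjoint-support monotonicity lemma (a nontrivial localization statement in its own right), or, better, replace it by this spectrality-plus-continuity argument, which uses the displacement hypothesis in exactly the place where your claim does not.
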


To give a sheaf-theoretic proof of this lemma, we prepare the spectrality of the spectral invariants for GKS kernels and 
geometric description of $\Sigma (\cK_H,\cK_{H'})$

By \cref{lemma:spectrality,lemma:specinfty,remark:spectrality}, we obtain the following lemma. 
\begin{lemma}\label{lemma:Hamspectrality} 
    One has
    \begin{equation}
        c(\alpha; \cK_H, \cK_{H'})\in \Sigma (\cK_H,\cK_{H'})=\{c\in \bR \mid \MS(T_c \cK_H)\cap \MS(\cK_{H'})\cap \{\tau>0\}\neq \emptyset\}
    \end{equation}
    for any non-zero element $\alpha \in h\cT^*_\infty (\cK_H,\cK_{H'})\simeq H^*(M)$.
\end{lemma}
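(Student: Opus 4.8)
The plan is to deduce this directly from the three lemmas cited, so the work is essentially bookkeeping to check that the hypotheses of each apply in the present setting. First I would establish the identification $h\cT^*_\infty(\cK_H, \cK_{H'}) \simeq H^*(M)$. Since $\cK_H$ and $\cK_{H'}$ are GKS kernels on $T^*M \times T^*M$, composing with $\cK_H^{-1}$ (i.e.\ convolving with $\cK_{\overline H}$) gives an equivalence identifying $\Hom_{h\cT_\infty}(\cK_H, \cK_{H'})$ with $\Hom_{h\cT_\infty}(\cK_0, \cK_{H'\# \overline H}) = \Hom_{h\cT_\infty}(\cK_0, \cK_{H''})$ for a suitable compactly supported $H''$; and since $\cK_{H''}$ is isomorphic in $h\cT_\infty$ to the unit $\bfk_{\Delta_M \times [0,\infty)} = \cK_0$ (the reduced microsupport being compact, $H''$ being displaceable off itself up to the $\bR$-factor, this follows from the completeness/stability machinery already recalled), we get $\Hom_{h\cT_\infty}^*(\cK_0, \cK_0) \simeq H^*(M)$ by a direct computation of the convolution. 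Alternatively one simply cites that this identification is standard for GKS kernels; I would keep this step short.

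Next I would apply \cref{lemma:spectrality}: for any $\alpha \in h\cT^*_\infty(\cK_H, \cK_{H'})$ we have $c(\alpha; \cK_H, \cK_{H'}) \in \Sigma(\cK_H, \cK_{H'}) \cup \{+\infty\}$. To upgrade $\cup\{+\infty\}$ to actual membership in $\Sigma(\cK_H, \cK_{H'})$ for \emph{non-zero} $\alpha$, I would invoke \cref{lemma:specinfty}, whose hypothesis requires $\Sigma(\cK_H, \cK_{H'})$ to be bounded above or below. This is where case (2) of \cref{remark:spectrality} enters: both $F = \cK_H$ and $G = \cK_{H'}$ are of the form $\cK_{(\cdot)}$ for compactly supported Hamiltonians, so the remark gives the geometric description
\begin{equation}
    \Sigma(\cK_H, \cK_{H'}) = \{ c \in \bR \mid \MS(T_c \cK_H) \cap \MS(\cK_{H'}) \cap \{\tau > 0\} \neq \emptyset \},
\end{equation}
and this set is bounded (both sides) because the GKS kernels of compactly supported Hamiltonians have microsupport contained in a region where the $t$-coordinate, hence the relevant shift parameter $c$, ranges over a bounded interval controlled by $\|H\|_{\mathrm{osc}}$ and $\|H'\|_{\mathrm{osc}}$ — concretely, $\MS(\cK_H) \cap \{\tau > 0\}$ projects into $\Delta_M$ away from a compact set, forcing boundedness of admissible $c$. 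Given boundedness, \cref{lemma:specinfty} forbids $c(\alpha; \cK_H, \cK_{H'}) = +\infty$ for $\alpha \neq 0$, yielding $c(\alpha; \cK_H, \cK_{H'}) \in \Sigma(\cK_H, \cK_{H'})$.

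The main obstacle is the boundedness of $\Sigma(\cK_H, \cK_{H'})$, i.e.\ verifying that the microsupports of $\cK_H$ and $\cK_{H'}$ cannot meet after arbitrarily large $T_c$-shifts. This is intuitively clear since $\cK_H$ differs from the unit kernel $\bfk_{\Delta_M \times [0,\infty)}$ only over a compact region of $T^*M \times T^*M$ (translation-invariant in $t$), so $\MS(\cK_H) \subset \MS(\bfk_{\Delta_M\times[0,\infty)}) \cup C$ for a compact set $C$, and likewise for $\cK_{H'}$; a shift $T_c$ with $|c|$ large then separates the two non-conic pieces, and the conic diagonal pieces never intersect in $\{\tau > 0\}$ after any nonzero shift. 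Making this precise requires the explicit bound on the support of the GKS kernel in the $t$-direction in terms of $\|H\|_{\mathrm{osc}}$, which is exactly the content underlying \cref{theorem:Ham_stability}; I would extract the needed estimate from there. Once boundedness is in hand, everything else is a direct concatenation of \cref{lemma:spectrality,lemma:specinfty} together with the geometric formula from \cref{remark:spectrality}.
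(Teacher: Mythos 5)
Your proposal follows the paper's own route exactly: the lemma is stated in the paper with only the one-line justification ``By \cref{lemma:spectrality,lemma:specinfty,remark:spectrality}'', and your proof is precisely the unpacking of that citation string — apply \cref{lemma:spectrality} to land in $\Sigma(\cK_H,\cK_{H'})\cup\{+\infty\}$, use case (2) of \cref{remark:spectrality} for the geometric description of $\Sigma(\cK_H,\cK_{H'})$, observe boundedness, and then apply \cref{lemma:specinfty} to exclude $+\infty$ for non-zero $\alpha$. You correctly identify the boundedness of $\Sigma(\cK_H,\cK_{H'})$ as the only non-formal input, which is indeed implicit in the paper (one can also see it a posteriori from \cref{lemma:SigmaGKS}, since $\Sigma(H)$ is a compact action spectrum for compactly supported $H$, although that lemma is stated just afterwards). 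Two small remarks: the phrase ``$\MS(\cK_H)\subset\MS(\bfk_{\Delta_M\times[0,\infty)})\cup C$ for a compact $C$'' cannot be literally true since microsupports are conic, so that sentence should be phrased at the level of the reduced microsupport or of the $t$-projection of $\MS(\cK_H)\cap\{\tau>0\}$; and the computation $h\cT^*_\infty(\cK_H,\cK_{H'})\simeq H^*(M)$ is part of the lemma's statement rather than something that needs re-deriving, so that paragraph could be dropped.
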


For $H\in C_{cc}^\infty (T^*M\times [0,1])$, define 
\begin{equation}
    \Sigma(H)\coloneqq \left\{\int_0^1 (H_s-\alpha (X_{H_s}))( \gamma(s) )\;ds  \relmid \gamma\colon [0,1]\to T^*M, \dot{\gamma}(s)=X_{H_s}(\gamma (s)), \gamma(0)=\gamma(1) \right\}.
\end{equation}

By a direct calculation of $\MS (\cK_H)$, we obtain the following lemma.  
\begin{lemma}\label{lemma:SigmaGKS}
    One has
    \begin{equation}\label{eq:SigmaGKS}
        \Sigma(\cK_0, \cK_H)=\Sigma(H).
    \end{equation}
\end{lemma}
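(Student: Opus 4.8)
The plan is to compute $\MS(\cK_H)$ explicitly and then read off the characteristic variety intersection condition appearing in \cref{lemma:Hamspectrality}. Recall that the GKS kernel $\cK_H$ is the sheaf quantization associated to the Hamiltonian isotopy $\phi^H$; by the Guillermou--Kashiwara--Schapira theorem, its microsupport (away from the zero section and in the region $\tau > 0$) is the image of the conic Lagrangian obtained by flowing the conormal of the diagonal along the (homogenized, $1$-parameter) Hamiltonian flow. Concretely, after the reduction $\rho$ and restriction to time $s=1$, a point of $\MS(\cK_H) \cap \{\tau>0\}$ corresponds, up to the $\bR_{>0}$-scaling in $\tau$ and the translation in $t$, to a pair $\big((\phi^H_1(p),p)\big) \in T^*M \times T^*M$ together with the $t$-coordinate recording the generating-function value, i.e. the action $\int_0^1 (H_s - \alpha(X_{H_s}))(\gamma(s))\,ds$ along the flow line $\gamma$ with $\gamma(s) = \phi^H_s(p)$. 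This is a standard computation; the content is bookkeeping the $t$- and $\tau$-coordinates correctly against the sign conventions fixed in the excerpt.

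Next I would translate the set $\Sigma(\cK_0, \cK_H) = \{ c \in \bR \mid \MS(T_c \cK_0) \cap \MS(\cK_H) \cap \{\tau > 0\} \ne \emptyset \}$ into geometric terms. The kernel $\cK_0$ is $\bfk_{\Delta_M \times [0,\infty)}$ by \cref{lemma:GKSconst}, whose microsupport in $\{\tau>0\}$ reduces under $\rho$ to the conormal of the diagonal $\Delta_M \subset T^*M \times T^*M$ together with $t$-coordinate $0$; after applying $T_c$ this $t$-coordinate becomes $c$. Intersecting with $\MS(\cK_H)$ forces the $T^*M\times T^*M$-components to agree, which by the description above means $(\phi^H_1(p),p) \in \Delta_{T^*M}$, i.e. $\phi^H_1(p) = p$, equivalently $\gamma(s) = \phi^H_s(p)$ is a closed orbit $\gamma(0) = \gamma(1)$; and it forces the $t$-coordinates to match, which pins $c$ to be (minus, per the convention) the action of that closed orbit. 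Comparing with the definition of $\Sigma(H)$ — which is exactly the set of such action values $\int_0^1 (H_s - \alpha(X_{H_s}))(\gamma(s))\,ds$ over periodic orbits — gives the claimed equality $\Sigma(\cK_0,\cK_H) = \Sigma(H)$, where I must double-check that the overall sign matches the one embedded in $\RS$ (the map $\rho$) and in the definition of $\Sigma(F,G)$ (there is a $-q_\bR\pi$ in front), and that the "$\widehat\star$ / $(-)^\alpha$" operations collapse, in the case at hand covered by \cref{remark:spectrality}(2), to the naive intersection formula already quoted in \cref{lemma:Hamspectrality}.

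The main obstacle I anticipate is the careful sign/coordinate audit rather than any conceptual difficulty: the excerpt has flagged (in the remark after \cref{remark:c=+infty}) that sign conventions for spectral invariants differ between sources, and the Hamiltonian action functional, the homogenization trick $T^*M \rightsquigarrow T^*M \times \bR$ adding the extra $t$-variable, and the reduction $\rho(x,t;\xi,\tau) = (x;\xi/\tau)$ all introduce opportunities for a sign slip. I would nail this down by testing on a sanity case — for instance the timewise-constant Hamiltonian $H = h\circ s$, where \cref{lemma:GKSconst} gives $\cK_H \simeq \bfk_{\Delta_M \times [c,\infty)}$ with $c = \int_0^1 h(s)\,ds$, so $\Sigma(\cK_0,\cK_H)$ should be the single point obtained by shifting, and $\Sigma(H)$ is the single value $\int_0^1(h(s) - 0)\,ds = c$ since $X_{H_s} = 0$ — and then propagate the verified convention through the general flow computation. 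A secondary technical point to handle is that $\cK_H$ for merely continuous $H$ is defined as a limit of smooth approximations; but \cref{lemma:SigmaGKS} is stated for $H \in C^\infty_{cc}$, so the microsupport computation is the classical GKS one and no limiting argument is needed here. I would therefore present the proof as: (1) recall the GKS description of $\MS(\tl\cK_H)$; (2) restrict to $s=1$ and apply $\rho$; (3) intersect with $\MS(T_c\cK_0)$ and extract the periodicity and action conditions; (4) match signs against the conventions, using the constant-Hamiltonian case as the calibration.
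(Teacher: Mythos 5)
Your proposal is correct and follows the same route the paper indicates: the paper's entire proof is the sentence ``By a direct calculation of $\MS(\cK_H)$,'' and you have filled in precisely that calculation (GKS description of $\MS(\tl{\cK}_H)$, restriction to $s=1$, reduction by $\rho$, intersection with $\MS(T_c\cK_0) = \MS(\bfk_{\Delta_M\times[c,\infty)})$ giving periodicity of the orbit and pinning $c$ to the action value), together with the sensible calibration of signs via \cref{lemma:GKSconst}. No gap; this is the intended argument spelled out.
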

\begin{corollary}\label{corollary:Sigmanondense}
    The set $\Sigma(\cK_\phi, \cK_\psi)$ is nowhere dense in $\bR$ for every $\phi, \psi \in \Ham_c(T^*M)$.
\end{corollary}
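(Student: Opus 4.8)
First I would reduce to the case $\psi = \id$. For $\phi,\psi \in \Ham_c(T^*M)$ pick normalized Hamiltonians $H, H'$ with $\phi^H_1 = \phi$, $\phi^{H'}_1 = \psi$. Using \cref{lemma:GKSsharp} one has $\cK_\psi^{-1} \ostar \cK_\phi \simeq \cK_{\bar{H'} \sharp H}$ (up to the usual identifications), and convolving by the invertible kernel $\cK_\psi^{-1}$ is an autoequivalence that intertwines the microsupport data, so $\Sigma(\cK_\phi,\cK_\psi) = \Sigma(\cK_0, \cK_{\bar{H'}\sharp H})$; alternatively one just observes directly from the description in \cref{lemma:Hamspectrality} that $\MS(T_c\cK_\phi)\cap \MS(\cK_\psi)\cap\{\tau>0\}\neq\emptyset$ translates, after applying $\cK_\psi^{-1}\ostar(-)$, into the corresponding condition for $\cK_0$ and $\cK_{\bar H' \sharp H}$. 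Either way, it suffices to show that $\Sigma(\cK_0,\cK_K)$ is nowhere dense for every $K \in C^\infty_{cc}(T^*M\times[0,1])$.

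By \cref{lemma:SigmaGKS} this is the assertion that the set
\begin{equation*}
    \Sigma(K) = \left\{ \int_0^1 \bigl(K_s - \alpha(X_{K_s})\bigr)(\gamma(s))\, ds \;\middle|\; \dot\gamma(s) = X_{K_s}(\gamma(s)),\ \gamma(0) = \gamma(1) \right\}
\end{equation*}
is nowhere dense in $\bR$. This is the classical action spectrum of the compactly supported Hamiltonian isotopy $\phi^K$, and nowhere density (in fact measure zero) is a standard fact: the values in $\Sigma(K)$ are the critical values of the Hamiltonian action functional on the (free) loop space, and one invokes a Sard-type argument. Concretely, the $1$-periodic orbits of $\phi^K$ all lie inside the compact set $\Supp(K)$, and a periodic orbit is a fixed point of $\phi^K_1$ together with a choice of capping/reference; the action is locally constant on connected families of such orbits of a fixed homotopy class, and the set of achievable action values is the image of a smooth map out of a finite-dimensional (or, via a standard reduction, suitably controlled infinite-dimensional) critical manifold whose image has empty interior by Sard's theorem applied to the action functional restricted to the relevant finite-dimensional approximation. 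Since $\bR$ is a Baire space, a set of measure zero — or even just a countable union of nowhere dense sets — is nowhere dense; and here the action spectrum is closed (the orbits live in a compact set, so the spectrum is a closed subset of $\bR$) with empty interior, hence nowhere dense.

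I would phrase the argument to lean on an already-published statement of the nowhere-density (or measure-zero) of the Hamiltonian action spectrum for compactly supported isotopies rather than reproving Sard; the cited literature on spectral invariants (\cite{FS07,Oh97,Oh99}) contains exactly this. The main obstacle, and the only place requiring genuine care, is the reduction in the first paragraph: one must make sure the identification $\Sigma(\cK_\phi,\cK_\psi) = \Sigma(\cK_0,\cK_K)$ is carried out with the correct normalization (the $\alpha(X_{K_s})$ term must match up, which is where normalization of $H,H'$ enters) and that convolving with $\cK_\psi^{-1}$ genuinely preserves the relevant microsupport intersection condition, including the behavior at the closure operation in the definition of $\Sigma$. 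Once that bookkeeping is in place, nowhere density of $\Sigma(K)$ is purely classical.
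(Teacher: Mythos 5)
Your overall strategy matches the paper's: reduce via the kernel invertibility to $\Sigma(\cK_0,\cK_K)$, identify this with the classical action spectrum $\Sigma(K)$ by \cref{lemma:SigmaGKS}, and then invoke a known nowhere-density result for the action spectrum of a compactly supported Hamiltonian. Your first reduction paragraph is exactly the step the paper compresses into the sentence ``It is enough to prove $\Sigma(H)$ is nowhere dense.''

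The gap is in how you cash out the classical result. The Sard-type/generating-function argument you gesture at is worked out in the literature for compactly supported Hamiltonians on $\bR^{2n}$ (Hofer--Zehnder), and the sources you cite for spectral invariants on cotangent bundles do not prove it directly on $T^*M$ either; they typically reduce to the Euclidean case. Your paragraph ``the set of achievable action values is the image of a smooth map out of a finite-dimensional \dots critical manifold whose image has empty interior by Sard'' therefore still needs a concrete finite-dimensional reduction that works on an arbitrary cotangent bundle. The paper supplies exactly the missing step: reparameterize so that $H$ descends to $T^*M \times \bR/\bZ$, embed $M$ in some $\bR^m$, and extend $H$ to a compactly supported $\tilde H$ on $T^*\bR^m \times \bR/\bZ$ with $\Sigma(H) \subset \Sigma(\tilde H)$; then $\Sigma(\tilde H)$ is nowhere dense by \cite[Chap.~5, Prop.~8]{HZ11}, and a subset of a nowhere dense set is nowhere dense. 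Without spelling out some such reduction (or pointing to a reference that really does prove nowhere density of the action spectrum on cotangent bundles), your plan remains a sketch at the crucial step. The rest (compactness of the spectrum, reduction to $\psi=\id$, matching normalizations) is correct and in line with the paper.
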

\begin{proof}
    It is enough to prove $\Sigma(H)$ is nowhere dense for each $H\in  C_{c}^\infty (T^*M)$. 
    We may assume $H$ descends to a smooth function on $T^*M\times \bR/\bZ$ by a reparameterization. 
    By embedding $M$ into a Euclidean space $\bR^m$, we can construct a compactly supported smooth function $\tl{H}\colon T^*\bR^m\times \bR/\bZ\to\bR$ with $\Sigma(H) \subset \Sigma(\tl{H})$. 
    By \cite[Chap.~5, Prop.~8]{HZ11}, $\Sigma(\tl{H})$ is nowhere dense and hence, so is $\Sigma(H)$. 
\end{proof}

\begin{proof}[Proof of \cref{lemma:displace_spectral}]
    We will mimic the proof by \cite[Prop.~7.4]{FS07}. 
    Define $c_+(H'')\coloneqq -c(1; \cK_{H''}, \cK_0)$ and $\gamma (H)\coloneqq \gamma (\phi_{H''})$ for each $H''\in C_{cc}^{\infty} (T^*M\times [0,1])$. 
    
    By reparameterizing, we may assume that $H_s=0$ for $s \in [0,1/2]$ and $H'_s=0$ for $s \in [1/2,1]$. 
    We can check $\Sigma(\cK_{0}, \cK_{\varepsilon H \sharp H'}) = \Sigma(\cK_{0}, \cK_{H'})$ for $\varepsilon \in [0,1]$.
    Since $\Sigma(\cK_{0}, \cK_{H'}) = \Sigma(\cK_{0}, \cK_{\varepsilon H \sharp H'})$ is nowhere dense, by the continuity of $c_+$ we find that the map 
    \begin{equation}
        [0,1] \to \Sigma(\cK_{0}, \cK_{H'}); \quad 
        \varepsilon \mapsto c_+(\varepsilon H \sharp H')
    \end{equation}
    is constant. 
    In particular, $c_+(H \sharp H')=c_+(H')$. 
    Since $\phi^{H'}_1$ displaces $\supp H$, its inverse $\phi^{\overline{H'}}_1$ displaces $\supp \overline{H}=\supp H$. 
    An argument similar to the above shows that
    \begin{equation}
        c_+(\overline{H \sharp H'})
        =
        c_+(\overline{H'} \sharp \overline{H})
        = 
        c_+(\overline{H'}).
    \end{equation}
    Hence, we have $\gamma(H \sharp H') = \gamma(H')$. 
    We thus conclude that 
    \begin{equation}
        \gamma(H) = \gamma(H \sharp H' \sharp \overline{H'})
        \le \gamma(H \sharp H') + \gamma(\overline{H'}) = 2\gamma(H').
    \end{equation}
\end{proof}

\begin{lemma}\label{lemma:cnonposi}
    The inequality $c(1;\phi,\psi) \le 0$ holds for each $\phi, \psi$. 
\end{lemma}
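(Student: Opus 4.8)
The plan is to unwind the definitions and produce, for every $\phi,\psi\in\Ham_c(T^*M)$, an explicit lift of the morphism $1\in\Hom_{h\cT_\infty(T^*M^2)}(\cK_\phi,\cK_\psi)$ with nonnegative shift parameter, which by the estimate $-c\le c(\alpha;F,G)$ stated after \cref{remark:c=+infty} forces $c(1;\cK_\phi,\cK_\psi)\le 0$. Write $\phi=\phi_1^{H}$, $\psi=\phi_1^{H'}$ for normalized Hamiltonians $H,H'$. The morphism $1$ in $h\cT_\infty$ is, by definition of $h\cT_\infty(T^*M)$ and \cref{proposition:HomhT}, the image of the identity-type morphism $\cK_H\to T_c\cK_{H'}$ for suitably large $c$; concretely it is induced from the natural comparison maps between GKS kernels. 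So the key step is: \emph{there exists $c_0\ge 0$ and a morphism $\cK_\phi\to T_{c_0}\cK_\psi$ in $h\cT(T^*M^2)$ lifting $1$.}

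First I would reduce to a normalization statement about a single kernel. Using \cref{lemma:GKSsharp} one has $\cK_\psi\ostar\cK_{\overline{H'}\sharp H}\simeq \cK_H=\cK_\phi$ up to the well-definedness lemma, or more symmetrically one composes with $\cK_\phi^{-1}$; the cleanest route is to observe that $1\in\Hom_{h\cT_\infty(T^*M^2)}(\cK_\phi,\cK_\psi)$ corresponds under the monoidal action to $1\in\Hom_{h\cT_\infty(T^*M^2)}(\cK_0,\cK_{\overline H\sharp H'})=H^0(M)$, so it suffices to treat the case $\phi=\id$, i.e. to show $c(1;\cK_0,\cK_G)\le 0$ for every normalized $G\in C^\infty_{cc}(T^*M\times[0,1])$. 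Now take a smooth timewise-constant function $h\circ s$ with $h(s)$ large enough that $G\le h\circ s$; by \cref{lemma:GKSconst}, $\cK_{h\circ s}\simeq\bfk_{\Delta_M\times[c,\infty)}=T_c\cK_0$ with $c=\int_0^1 h(s)\,ds>0$, and the third GKS lemma (the one producing $\cK_G\to\cK_{h\circ s}$ from $G\le h\circ s$, corresponding to $1\in H^0(M)$) together with the analogous map $\cK_0\to\cK_G$ (choosing first a nonpositive constant below $G$) composes to give a lift $\cK_0\to T_c\cK_G$ — wait, one must be careful about direction: instead choose a negative constant $-c_1\le \min G$, so $\cK_{-c_1}=T_{-c_1}\cK_0\to\cK_G$, i.e. a morphism $T_{-c_1}\cK_0\to\cK_G$, equivalently $\cK_0\to T_{c_1}\cK_G$, which lifts $1$; hence $-c_1\le c(1;\cK_0,\cK_G)$ is the wrong sign. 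The correct choice is a \emph{positive} constant $c$ with $G\le c$: then $\cK_G\to\cK_c=T_c\cK_0$, so $\cK_0$ receives no map, but dualizing via $\cHom^\star$ and the monoidal closedness of $\cT(T^*M^2)$ turns this around; alternatively use that $c(1;\cK_0,\cK_G)=c(1;\cK_{\overline G},\cK_0)$ by the conjugation-invariance of spectral invariants for GKS kernels, and for $\cK_\bullet\to\cK_0$ one uses $\overline G\le 0+c'$ with $c'\ge 0$ to get a lift $\cK_{\overline G}\to T_{c'}\cK_0$ directly.

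So the step I expect to do carefully is this sign bookkeeping: arrange, for a normalized Hamiltonian $G$, a comparison morphism $\cK_{\overline G}\to T_{c'}\cK_0$ with $c'\ge 0$ lifting the unit $1\in H^0(M)$, using monotonicity of the GKS kernel (the lemma giving $\cK_{H}\to\cK_{H'}$ for $H\le H'$) applied to $\overline G\le c'$ for any $c'\ge\max_{p,s}(-G(\phi^G_s(p),s))$, and identifying $\cK_{c'}\simeq T_{c'}\cK_0$ by \cref{lemma:GKSconst}. Then $-c'\le c(1;\cK_{\overline G},\cK_0)=c(1;\cK_0,\cK_G)$ is again the wrong inequality; the resolution is that for the \emph{reverse} monotonicity one also has $0=\overline G\sharp G\le G$ is false in general, but $\overline 0=0\le$ nothing helps — the real point is that one wants an \emph{upper} bound on $c(1;\cdot)$, and for that the inequality $-c\le c(\alpha;F,G)$ is used with a lift at shift $-c$ for \emph{negative} $c$, i.e. one must produce a lift $T_c\cK_0\to\cK_G$ with $c\le 0$, equivalently $\cK_{-c}\to\cK_G$ with $-c\ge 0$, which follows from $-(-c)=c\le 0\le G$ — no: from the constant $c\le\min_{p,s}G(\phi^G_s(p),s)$, which can be taken $\le 0$ by subtracting, giving $\cK_c\to\cK_G$ hence the desired lift with nonpositive parameter. \textbf{The main obstacle}, then, is purely the careful matching of sign conventions between $\Sigma(H)$, the action functional normalization fixed after \cref{remark:c=+infty}, and the direction of the GKS monotonicity morphism; once a lift $T_c\cK_0\to\cK_G$ (equivalently a lift of $1$ at a nonpositive shift) is exhibited from a constant Hamiltonian $\le G$, \cref{lemma:GKSconst} and the inequality $-c\le c(1;F,G)$ close the argument, and the general case follows by the monoidal-equivariance reduction to $\phi=\id$ together with \cref{lemma:specprodineq} if a two-step composition is preferred.
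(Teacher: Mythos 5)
Your reduction to $\phi=\id$ (writing $c(1;\phi,\psi)=c(1;\id,\phi^{-1}\psi)$, equivalently replacing $(\cK_\phi,\cK_\psi)$ by $(\cK_0,\cK_G)$ via the monoidal structure) is sound and is exactly the reduction the paper makes. But the remainder of your argument has a direction error that cannot be repaired by bookkeeping. Producing a lift $\tilde{\alpha}\colon F\to T_cG$ of $\alpha$ only certifies membership in the image at that particular $c$, so by the definition $c(\alpha;F,G)=-\inf\{c\mid \alpha \text{ lifts to shift }c\}$ it gives the \emph{lower} bound $c(\alpha;F,G)\ge -c$; it can never by itself give an upper bound. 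That is precisely the inequality ``$-c\le c(\alpha;F,G)$'' you invoke. Concretely, taking the constant $c_0=\min G\le 0$ and the monotonicity morphism $\cK_{c_0}\to\cK_G$ yields a lift $\cK_0\to T_{-c_0}\cK_G$ at shift $-c_0\ge 0$, hence $c(1;\cK_0,\cK_G)\ge c_0=\min G$ — a weak lower bound by a nonpositive number, and not the desired $\le 0$. All the variants you try (dualizing, passing to $\overline G$, choosing the other constant) produce lifts and therefore also only lower bounds; you flag the ``wrong inequality'' several times but the final paragraph does not actually escape it.

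To prove $c(1;\cK_0,\cK_G)\le 0$ one must show the \emph{non-existence} of a lift $\cK_0\to T_c\cK_G$ representing the unit for every $c<0$, i.e.\ one needs an obstruction, not a construction. This is the actual content of the paper's proof: for the carefully chosen radial Hamiltonian $H'=f(|\xi|)$, a microsupport estimate shows $\Hom_{h\cT}(\cK_0,T_c\cK_{H'})$ only jumps at $c=0$ and $c=-b$; the jumps are computed via $\muhom$ and assembled into the long exact sequence
$H^*(M;\mathrm{or}_M)[-n]\to H^*(M)\to H^*(S^*M)\xrightarrow{+1}$,
and a degree argument ($H^d(M)[-n]=0$ for $d<n$) shows $1\in H^0(M)$ already injects into $H^*(S^*M)$, so it first appears at $c=0$ and not before, giving $c(1;\id,\phi^{H'}_1)=0$. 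Only then does monotonicity of $c$ along $H\le H'$ finish the general case. The $\muhom$/long-exact-sequence step is the genuine content of the lemma and is entirely absent from your proposal; comparison with constant Hamiltonians and the monotonicity morphisms of GKS kernels is not sufficient.
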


The following proof is obtained through communication with Yuichi Ike. 

\begin{proof}
    Since $c(1;\phi,\psi)=c(1;\id,\phi^{-1}\psi)$, it is enough to show that $c(1;\id,\phi^H_1) \le 0$ for any $H \in C_{c}^\infty(T^*M\times [0,1])$.
    First let us study the case $H$ of the form $H=f(|\xi|)$, where $f$ is a $C^\infty$-function on $[0,\infty)$ such that, for some $0< C< C'$ and $b>0$,      
    \begin{enumerate}
    \renewcommand{\labelenumi}{$\mathrm{(\arabic{enumi})}$}
        \item $f \equiv b>0$ on $\{ z < C \}$,
        \item $f \equiv 0$ on $\{ z > C' \}$,
        \item $0 \le f \le b$, 
        \item $f$ is strictly decreasing and $\frac{df}{dz}$ is non-zero on $C < z < C'$,
        \item $\frac{df}{dz}$ is sufficiently small so that $\varphi^H$ has no non-trivial fixed points.
    \end{enumerate}
    By microsupport estimate, we can see that $\Hom_{h\cT(T^*M)}(\cK^0, T_c \cK^H)$ only changes at $c=0$ or $-b$.
    In each case, the change can be estimated by the microlocal stalk 
    \begin{equation}
        \mu_0 (q_* \cHom^\star(\cK_0, T_c \cK_H)),
    \end{equation}
    where $q \colon M^2 \times \bR \to \bR$ is the projection.
    Although the intersection of  $\RS(\cK_0)$ and $\RS(\cK_H)$ is not clean, we can apply an argument similar to \cite{Ike19} to see that it is isomorphic to
    \begin{equation}
        \RG(\{ \tau >0 \} ; \muhom(\cK_0,T_c \cK_H)|_{\{ \tau >0 \}}).
    \end{equation}
    See for example \cite{KS90,Gu23cotangent,Ike19} for the definition and treatment of $\muhom$. 
    Calculations similar to \cite[Appendix A]{Ike19} show
    \begin{equation}
         \muhom(\cK_0, \cK_H)|_{\{ \tau >0 \}} \simeq \bfk_{\{ |\xi/\tau| \ge C' \}}
    \end{equation}
    and 
    \begin{equation}
         \muhom(\cK_0,T_{b} \cK_H)|_{\{ \tau >0 \}} \simeq \bfk_{\{ |\xi/\tau|<C \}}.
    \end{equation}
    The contribution at $c=0$ is $H^*(S^*M)$ and that at $c=-b$ is $H^*(M;\mathrm{or}_M)[-n]$. 
    We obtain a long exact sequence 
    \begin{equation}\label{eq:Gysin?}
        H^*(M;\ori_M)[-n] \to H^*(M) \to H^*(S^*M) \xrightarrow{+1}.
    \end{equation} 
    Since $H^d(M)[-n]$ is zero for $d<n$, $H^0(M)\simeq \bfk$ in the middle term injects into $H^*(S^*M)$. 
    This means that $c(1;\id,\phi^H_1) = 0$.

    For a general $H \in C_{c}^\infty(T^*M\times [0,1])$, we can take $H'=f(|\xi|)$ with $f$ satisfying the above conditions such that $H \le H'$.
    By the monotonicity of $c$, we have $c(1;\id,\phi^H_1) \le c(1;\id,\phi^{H'}_1)$.
    The right-hand side is 0 by the above argument.
\end{proof}
\begin{remark}
    In this proof, we did not identified the morphisms in the exact sequence (\ref{eq:Gysin?}) since it is not needed for the proof. 
    The author conjectures that it coincides with the Gysin exact sequence. 
\end{remark}

\begin{proposition}[{\cite[Prop.~6.14]{guillermou2022gamma}}]\label{proposition:gamma=d}
    One has
    \begin{equation}
        \gamma(\phi,\psi)=d_{\cT(T^*M^2)}(\cK_\phi, \cK_{\psi}).
    \end{equation}
\end{proposition}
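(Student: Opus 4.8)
**Proof proposal for Proposition (\texorpdfstring{$\gamma(\phi,\psi)=d_{\cT(T^*M^2)}(\cK_\phi,\cK_\psi)$}{gamma = d}).**

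The plan is to prove the two inequalities separately. For the inequality $d_{\cT(T^*M^2)}(\cK_\phi,\cK_\psi)\le\gamma(\phi,\psi)$, I would unwind the definition of the pseudo-distance: one must produce morphisms $\alpha\colon\cK_\phi\to T_a\cK_\psi$ and $\beta\colon\cK_\psi\to T_b\cK_\phi$ whose composites recover $\tau_{a+b}$. The natural candidates are lifts of the identity element $1\in H^0(M)\simeq\Hom_{h\cT_\infty(T^*M^2)}(\cK_\phi,\cK_\psi)$ at filtration levels dictated by the spectral invariants $c(1;\cK_\phi,\cK_\psi)$ and $c(1;\cK_\psi,\cK_\phi)$. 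Concretely, setting $a=-c(1;\cK_\psi,\cK_\phi)+\varepsilon$ and $b=-c(1;\cK_\phi,\cK_\psi)+\varepsilon$ (note both are $\ge 0$ by \cref{lemma:cnonposi}), one gets lifts $\alpha,\beta$ at these levels. The composite $T_b\alpha\circ\beta\colon\cK_\psi\to T_{a+b}\cK_\psi$ represents the identity in $h\cT_\infty$, hence agrees with $\tau_{a+b}(\cK_\psi)$ after increasing the shift by an arbitrarily small amount; here I would invoke the structure of $h\cT^*_\infty(\cK_\psi,\cK_\psi)\simeq H^*(M)$ together with \cref{lemma:specprodineq} to control the filtration level at which the two morphisms become equal. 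Taking $\varepsilon\to 0$ and using nowhere-density of $\Sigma$ from \cref{corollary:Sigmanondense} (so that $c$ is locally constant in the relevant family and the levels are actually attained) yields $a+b\le\gamma(\phi,\psi)+O(\varepsilon)$, giving the bound.

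For the reverse inequality $\gamma(\phi,\psi)\le d_{\cT(T^*M^2)}(\cK_\phi,\cK_\psi)$, I would start from an $(a,b)$-isomorphic pair $(\cK_\phi,\cK_\psi)$ realizing (nearly) the distance, with structure morphisms $\alpha\colon\cK_\phi\to T_a\cK_\psi$, $\beta\colon\cK_\psi\to T_b\cK_\phi$. These induce non-zero classes in $h\cT^*_\infty$, which by the identification $\Hom_{h\cT_\infty(T^*M^2)}(\cK_\phi,\cK_\psi)\simeq H^*(M)$ and the fact that the composites equal $\tau_{a+b}$ (hence are non-zero, in particular the degree-zero identity component is non-zero) must have non-vanishing degree-$0$ part equal to $1$. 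Since $\alpha$ is a lift of $1$ at level $-a$, we get $c(1;\cK_\phi,\cK_\psi)\ge -a$, and similarly $c(1;\cK_\psi,\cK_\phi)\ge -b$. Summing gives $-c(1;\cK_\phi,\cK_\psi)-c(1;\cK_\psi,\cK_\phi)\le a+b$, i.e.\ $\gamma(\phi,\psi)\le a+b$; taking the infimum over such pairs finishes this direction.

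The main obstacle I anticipate is the first inequality, specifically showing that the composite $T_b\alpha\circ\beta$ equals $\tau_{a+b}(\cK_\psi)$ \emph{exactly} (not merely up to a further small shift), since a priori two lifts of the same class in $h\cT_\infty$ agree only after passing to a larger filtration level. Here the key leverage is that $\Sigma(\cK_\psi,\cK_\psi)$ is nowhere dense and, more importantly, that $c(1;\cK_\psi,\cK_\psi)=0$ with $1$ the identity — combined with \cref{lemma:specprodineq} one gets $c(T_b\alpha\circ\beta;\cK_\psi,T_{a+b}\cK_\psi)\ge c(\alpha;\cdots)+c(\beta;\cdots)\ge -(a+b)$, so the composite already lives at level $-(a+b)$, which is precisely the level of $\tau_{a+b}(\cK_\psi)$; the difference then lies at a strictly higher level in a nowhere-dense spectrum and can be absorbed by an arbitrarily small adjustment of $a$. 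A secondary technical point is ensuring $a,b\ge 0$, which is exactly the content of \cref{lemma:cnonposi}, so that the pseudo-distance definition applies. I would also note that \cite{guillermou2022gamma} proves this as Prop.~6.14, so one may alternatively cite it directly; the argument above records the sheaf-theoretic reasoning in the present notation.
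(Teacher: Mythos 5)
Your proposal is correct and follows essentially the same route as the paper's proof, which simply notes that once \cref{lemma:cnonposi} guarantees $a,b\ge 0$, the remaining argument is parallel to \cite{AI22completeness} and \cite{guillermou2022gamma}; you have spelled out those parallel details, correctly pinpointing \cref{lemma:cnonposi} as the key new input and the nowhere-density of $\Sigma$ (\cref{corollary:Sigmanondense}) as what lets the interleaving level be pushed down to $\gamma+\varepsilon$. One small slip: in your first inequality you set $a=-c(1;\cK_\psi,\cK_\phi)+\varepsilon$ for a lift $\alpha\colon\cK_\phi\to T_a\cK_\psi$, but a lift of $1\in h\cT^0_\infty(\cK_\phi,\cK_\psi)$ lives at level $>-c(1;\cK_\phi,\cK_\psi)$, so the roles of $a$ and $b$ should be swapped; since only $a+b$ enters the conclusion this does not affect the result.
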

\begin{proof}
     With \cref{lemma:cnonposi} in hand, the rest of the discussion is parallel to those of \cite{AI22completeness} and \cite{guillermou2022gamma}. 
\end{proof}

\subsection{Partial symplectic quasi-states from sheaves}\label{subsec:partial_quasistate}

In this subsection, we construct a partial symplectic quasi-state from an idempotent of $h\cT_\infty(T^*M)$.

First, let us recall the definition of partial symplectic quasi-states on $T^*M$.
The notion of partial symplectic quasi-states was first introduced by Entov--Polterovich~\cite{EP09rigid} on closed symplectic manifolds. 
Later, Lanzat~\cite{Lanzat13} and Monzner--Vichery--Zapolsky~\cite{MVZ12} defined  partial symplectic quasi-states for some open symplectic manifolds, especially for the cotangent bundles of closed manifolds.

\begin{definition}\label{definition:partial_quasistate}
    A \textit{partial symplectic quasi-state} $\zeta$ is a map $\zeta \colon C_{cc}(T^*M)\to \bR$ satisfying the following conditions:
    \begin{enumerate}
    \renewcommand{\labelenumi}{$\mathrm{(\arabic{enumi})}$}
        \item (Lipschitz continuity) For all $H_1, H_2 \in C_{cc}(T^*M)$, $|\zeta (H_1)-\zeta (H_2)|\le \|H_1-H_2\|_{C^0}$.
        \item (Semi-homogeneity) For all $\lambda \ge 0$ and $H \in C_{cc}(T^*M)$, $\zeta (\lambda H)=\lambda \zeta (H)$.
        \item (Monotonicity) If $H_1, H_2 \in C_{cc}(T^*M)$ and $H_1\le H_2$, then $\zeta (H_1)\le \zeta (H_2)$.
        \item (Additivity with respect to constants and Normalization) For all $H \in C_{cc}(T^*M)$ and $a \in \bR$, $\zeta (H+a)=\zeta (H)+a$.
        \item (Partial additivity) If $H_1, H_2 \in C_{cc}^\infty (T^*M)$ and $\{H_1,H_2\}=0$, then $\zeta (H_1+H_2)\le \zeta (H_1)+\zeta (H_2)$. 
        \item (Vanishing) If $H \in C_{c} (T^*M)$ and $\Supp (H)$ is displaceable from itself, then $\zeta (H)= 0$. 
        \item (Invariance) For each $\varphi\in \Ham_c(T^*M)$ and $H \in C_{cc}(T^*M)$, $\zeta (H\circ \varphi)=\zeta (H)$.
    \end{enumerate}
\end{definition}

\begin{definition}
    Let $\zeta \colon C_{cc}(T^*M) \to \bR$ be a partial symplectic quasi-state. 
    A compact subset $A$ of $T^*M$ is said to be $\zeta$-heavy (resp.\ $\zeta$-superheavy) if for any $H \in C_{cc}(T^*M)$ one has $\zeta(H) \ge \min_{A} H$ (resp.\ $\zeta(H) \le \max_{A} H$). 
\end{definition}

We will use the following equivalent definition. 

\begin{lemma}\label{lemma:heavyequiv}
     A compact subset $A$ of $T^*M$ is $\zeta$-heavy (resp.\ $\zeta$-superheavy) if and only if for any $H \in C_{cc}(T^*M)$ and $c\in \bR$ satisfying $H\equiv c$ on a neighborhood of $A$ one has $\zeta(H) \ge c$ (resp.\ $\zeta(H) \le c$). 
\end{lemma}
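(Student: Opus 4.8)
The statement is an equivalence, so I would prove both directions; one is essentially trivial and the other uses the Lipschitz continuity together with the normalization axiom to pass from a ``constant near $A$'' hypothesis to the pointwise inequality $\zeta(H) \ge \min_A H$. Throughout I treat only the heavy case; the superheavy case is obtained by replacing $H$ with $-H$ and using that $A$ is $\zeta$-heavy for $-H$ iff $\zeta(-H) \ge \min_A(-H)$, i.e.\ $\zeta(H) \le \max_A H$, after invoking additivity with respect to constants to handle the sign (or simply running the same argument with inequalities reversed and $\max$ in place of $\min$).

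\emph{($\Rightarrow$, the easy direction.)} Suppose $A$ is $\zeta$-heavy in the sense of the original definition. Let $H \in C_{cc}(T^*M)$ and $c \in \bR$ with $H \equiv c$ on a neighborhood of $A$. Then in particular $H \ge c$ fails to be automatic, but $\min_A H = c$ since $H$ is literally equal to $c$ at every point of $A$. Hence $\zeta(H) \ge \min_A H = c$, which is the asserted inequality. So this direction is immediate and needs no extra input beyond the observation that $H|_A \equiv c$ forces $\min_A H = c$.

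\emph{($\Leftarrow$, the direction requiring work.)} Assume that for every $H \in C_{cc}(T^*M)$ and every $c$ with $H \equiv c$ near $A$ we have $\zeta(H) \ge c$; I must deduce $\zeta(H) \ge \min_A H$ for an \emph{arbitrary} $H \in C_{cc}(T^*M)$. Write $m \coloneqq \min_A H$. The idea is to approximate $H$ from below by a function that is genuinely constant equal to $m$ near $A$: choose a smooth cutoff supported in a small neighborhood $U$ of $A$ that equals $1$ on a smaller neighborhood $V \supset A$, and set $H' \coloneqq H + \chi \cdot (m - H)$ — wait, more carefully, one builds $H'$ so that $H' \equiv m$ on $V$ and $\| H - H' \|_{C^0} \le \max_U|H - m| = \max_U H - m$ (using $H \ge m$ near $A$... which need not hold). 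The cleaner route: for $\varepsilon > 0$ pick a neighborhood $U_\varepsilon$ of $A$ with $\mathrm{osc}_{U_\varepsilon} H < \varepsilon$, hence $H \ge m - \varepsilon$ on $U_\varepsilon$; then construct $H'_\varepsilon \in C_{cc}(T^*M)$ with $H'_\varepsilon \equiv m - \varepsilon$ on a neighborhood of $A$, $H'_\varepsilon \le H$ everywhere (pushing $H'_\varepsilon$ down to a large negative constant outside $U_\varepsilon$, matching $H$'s constant-at-infinity value appropriately so that $H'_\varepsilon \in C_{cc}$), so that by monotonicity and the hypothesis $\zeta(H) \ge \zeta(H'_\varepsilon) \ge m - \varepsilon$. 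Letting $\varepsilon \to 0$ gives $\zeta(H) \ge m = \min_A H$.

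\textbf{Main obstacle.} The delicate point is the explicit construction of the comparison function $H'_\varepsilon$: it must simultaneously (i) be identically a constant on a full neighborhood of $A$, (ii) lie pointwise below $H$, and (iii) belong to $C_{cc}(T^*M)$, i.e.\ agree with a constant outside a compact set — and the ``constant at infinity'' of $H'_\varepsilon$ is forced, so one has to be careful that (ii) is compatible with that forced value (this is where one uses that $\zeta$ is defined on $C_{cc}$, that additivity with respect to constants lets one normalize, and that one may first subtract $H$'s constant-at-infinity and reduce to $H \in C_c$). Once this cutoff-and-truncation is set up, the conclusion follows purely formally from monotonicity and Lipschitz continuity (the latter only needed to take the limit cleanly, or avoidable altogether by a direct $\varepsilon$-free choice if one can arrange $H'_\varepsilon \equiv m$ exactly). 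I would spell out the cutoff construction in one displayed formula and leave the (routine) verification of (i)–(iii) to the reader.
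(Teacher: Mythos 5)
Your proof is correct and follows essentially the same strategy as the paper: the easy direction is immediate, and the harder direction proceeds by approximation from below by functions constant near $A$, using monotonicity and Lipschitz continuity. The ``main obstacle'' you flag is sidestepped more cleanly in the paper by first passing to $H' \coloneqq \min\{c,H\}$ (automatically $\le H$, automatically in $C_{cc}(T^*M)$, and $\equiv c$ on $A$) via monotonicity, and only then choosing any $C^0$-approximating sequence $H'_n \to H'$ with each $H'_n$ constant on a neighborhood of $A$ — no explicit cutoff with a forced value at infinity is needed.
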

\begin{proof}
    Since the ``only if" part is obvious, let us prove the ``if" part. 
    For arbitrary $H\in C_{cc}(T^*M)$, take $H'\in C_{cc}(T^*M)$ with $H'|_{A}\equiv c=\min_A H$ (resp.\ $\max_A H$) and $H\ge H'$ (resp.\ $H\le H'$). 
    For example, $\min\{c, H\}$ (resp.\ $\max \{c, H\}$) satisfies the condition for $H'$. 
    Then by the monotonicity, we get $\zeta(H) \ge \zeta(H')$ (resp.\ $\zeta(H) \le \zeta(H')$). 
    There exists a sequence $(H'_n)_n$ $C^0$-converging to $H'$ such that $H'_n|_{U_n}\equiv c$ for a neighborhood $U_n$ of $A$ for each $n$. 
    By the Lipschitz continuity, we obtain $\zeta(H')=\lim_n \zeta(H'_n)$. 
    By assumption, $\zeta(H'_n)\ge c$ (resp.\ $\zeta(H'_n)\le c$) and hence $\zeta (H)\ge c$ (resp.\ $\zeta (H)\le c$). 
\end{proof}

The following are basic and important properties of heavy/superheavy subsets. 

\begin{proposition}[{\cite[Thm.~1.4]{EP09rigid}}]\label{proposition:heavy_nondisplace}
    Let $\zeta$ be a partial symplectic quasi-state.
    \begin{enumerate}
        \item Every $\zeta$-superheavy subset is $\zeta$-heavy. 
        \item Every $\zeta$-heavy subset is non-displaceable from itself.
        \item Every $\zeta$-heavy subset is non-displaceable from every $\zeta$-superheavy subset.
    \end{enumerate}
\end{proposition}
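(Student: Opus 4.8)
The plan is to follow the original argument of Entov--Polterovich~\cite{EP09rigid}, using only the axioms in \cref{definition:partial_quasistate} together with the reformulation of heaviness/superheaviness in \cref{lemma:heavyequiv}. For (i), the key point is the inequality $\zeta(H)+\zeta(-H)\ge 0$ for every $H\in C_{cc}(T^*M)$. For $H\in C_{cc}^\infty(T^*M)$ this follows from Partial additivity applied to $H_1=H$ and $H_2=-H$ (note $\{H,-H\}=-\{H,H\}=0$), since $\zeta(H+(-H))=\zeta(0)=0$ by Semi-homogeneity with $\lambda=0$; for general continuous $H$ one approximates in the $C^0$-norm by smooth functions and passes to the limit using Lipschitz continuity. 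Granting this, if $A$ is $\zeta$-superheavy then for any $H$ we apply superheaviness to $-H$ to get $\zeta(-H)\le\max_A(-H)=-\min_A H$, whence $\zeta(H)\ge -\zeta(-H)\ge\min_A H$, i.e.\ $A$ is $\zeta$-heavy.

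For (ii), I would argue by contradiction: suppose some $\varphi\in\Ham_c(T^*M)$ displaces the $\zeta$-heavy set $A$, so that $\varphi(A)\cap A=\emptyset$. Since $A$ and $\varphi(A)$ are disjoint compact sets, they admit disjoint open neighborhoods, and hence there is an open neighborhood $U$ of $A$ with $\varphi(U)\cap U=\emptyset$. Choose $H\in C_c(T^*M)$ with $0\le H\le 1$, with $H\equiv 1$ on a neighborhood of $A$, and with $\Supp(H)\subset U$. Then $\varphi$ displaces $\Supp(H)$ from itself, so $\zeta(H)=0$ by the Vanishing axiom, whereas $\zeta(H)\ge 1$ by $\zeta$-heaviness of $A$ and \cref{lemma:heavyequiv} --- a contradiction.

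For (iii), I first record that heaviness is invariant under $\Ham_c(T^*M)$: by the Invariance axiom, $\zeta(H)=\zeta(H\circ\psi)\ge\min_A(H\circ\psi)=\min_{\psi(A)}H$ for every $H$, so $\psi(A)$ is $\zeta$-heavy whenever $A$ is, for any $\psi\in\Ham_c(T^*M)$. Now suppose $\varphi\in\Ham_c(T^*M)$ satisfies $\varphi(A)\cap B=\emptyset$ for a $\zeta$-heavy $A$ and a $\zeta$-superheavy $B$. Then $\varphi(A)$ is $\zeta$-heavy and disjoint from the compact set $B$, so we may choose $H\in C_c(T^*M)$ with $0\le H\le 1$, with $H\equiv 1$ on a neighborhood of $\varphi(A)$, and with $\Supp(H)\cap B=\emptyset$. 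Heaviness of $\varphi(A)$ together with \cref{lemma:heavyequiv} gives $\zeta(H)\ge 1$, while superheaviness of $B$ gives $\zeta(H)\le\max_B H=0$ --- again a contradiction.

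The mathematics here is entirely elementary, so the "hard part" is really only bookkeeping: in (i) one must be careful that Partial additivity is stated only for smooth functions and therefore reduce the inequality $\zeta(H)+\zeta(-H)\ge 0$ to the smooth case via $C^0$-approximation and Lipschitz continuity, and in (ii) and (iii) one must make sure the cutoff function lies in the correct class $C_c(T^*M)$ and is genuinely displaced (resp.\ supported away from $B$), which is the standard construction of a function adapted to a pair of disjoint compact sets. No microlocal input is needed for this proposition.
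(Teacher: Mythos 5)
Your proof is correct and follows the standard Entov--Polterovich argument. The paper itself does not supply a proof of this proposition --- it is stated as a citation to \cite[Thm.~1.4]{EP09rigid} --- so there is nothing in the text to compare against beyond the choice of axioms, and your argument uses exactly the axioms of \cref{definition:partial_quasistate} together with the reformulation of \cref{lemma:heavyequiv} that the paper sets up precisely for this kind of bookkeeping. Your handling of the two small subtleties (extending $\zeta(H)+\zeta(-H)\ge 0$ from $C^\infty_{cc}$ to $C_{cc}$ via Lipschitz continuity, and producing a cutoff $H$ whose support is genuinely displaced, resp.\ disjoint from $B$) is right.
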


We can define a partial symplectic quasi-state from sheaves as follows.
Let $F \in \cT(T^*M)$ and $e\colon F\to F$ be an idempotent in $h \cT_\infty(T^*M)$ satisfying $c(e;F,F)\neq +\infty$. 
We define $\zeta_e\colon C_{cc}(M)\to \bR$ by
\begin{equation}
    \zeta_e(H) \coloneqq -\lim_k\frac{c(e;F, \cK_{H}^{\ostar -k} (F))}{k}.
\end{equation}
We also define $\zeta_F$ as $\zeta_{\id_F}$ for $F\in \cT(T^*M)$ with $c(\id_F;F,F)\neq \infty$. 

\begin{remark}\label{remark:finitelength}
    Let $F, F' \in \cT (T^*M)$ be objects which are isomorphic in $h\cT_{\infty}(T^*M)$. 
    Then $d_{\cT(T^*M)} (F,F') <+\infty $ and an isomorphism $F\simeq F'$ in $h\cT_{\infty}(T^*M)$ induces an isomorphism $h\cT_{\infty}^*(F, F) \simeq h\cT_{\infty}^*(F', F')$. 
    An idempotent $e\colon F\to F$ in $h\cT_{\infty}(T^*M)$ corresponds to an idempotent $e'\colon F'\to F'$ through the isomorphism.
    The condition $c(e;F,F)<+\infty$ implies $c(e';F',F')<+\infty$ since $d (F,F') <+\infty$. 
    The difference $|c(e;F, \cK_{H}^{\ostar -k} (F))-c(e';F', \cK_{H}^{\ostar -k} (F'))|$ is bounded above by $2d_{\cT(T^*M)} (F,F')$, and hence $\zeta_e=\zeta_{e'}$. 
\end{remark}

\begin{theorem}\label{theorem:idempotent_quasistate}
    The map $\zeta_e$ is a partial symplectic quasi-state. 
\end{theorem}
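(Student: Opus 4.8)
The plan is to verify the seven axioms of a partial symplectic quasi-state (Definition~\ref{definition:partial_quasistate}) one by one for the functional $\zeta_e(H) = -\lim_k c(e; F, \cK_H^{\ostar -k}(F))/k$. First one must check that the limit exists: by \cref{lemma:specprodineq} applied to the idempotent $e = e \circ e$ and the multiplicativity of the kernels $\cK_H^{\ostar -(k+l)} = \cK_H^{\ostar -k} \ostar \cK_H^{\ostar -l}$, the sequence $a_k \coloneqq c(e; F, \cK_H^{\ostar -k}(F))$ should satisfy a super- or sub-additivity relation (e.g. $a_{k+l} \ge a_k + a_l$ up to the correction coming from $c(e;F,F)$), so Fekete's lemma gives existence of $\lim_k a_k/k$ in $\bR \cup \{\pm\infty\}$; finiteness will follow from the hypothesis $c(e;F,F) \neq +\infty$ together with two-sided bounds proved along the way (the Lipschitz estimate below, compared to a constant Hamiltonian handled via \cref{lemma:GKSconst}).

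The core of the argument is the Lipschitz continuity (axiom (1)) and the additivity/normalization with respect to constants (axiom (4)); the remaining axioms are then either formal or reduce to these. For Lipschitz continuity: for $H_1, H_2 \in C_{cc}(T^*M)$, the Hamiltonian stability theorem (\cref{theorem:Ham_stability}) together with \cref{proposition:gamma=d} controls $d_{\cT(T^*M^2)}(\cK_{H_1}, \cK_{H_2})$ — or rather its $k$-fold analogue — by $k \|H_1 - H_2\|_{C^0}$ (after passing through normalized representatives), and a displacement-type estimate on spectral invariants under an $(a,b)$-isomorphism shows $|c(e; F, \cK_{H_1}^{\ostar -k}(F)) - c(e; F, \cK_{H_2}^{\ostar -k}(F))| \le k\|H_1 - H_2\|_{C^0}$ up to an additive constant independent of $k$; dividing by $k$ and letting $k \to \infty$ gives the Lipschitz bound. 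For axiom (4), the key input is \cref{lemma:GKSconst}: if $H$ is replaced by $H + a$ then $\cK_{H+a}^{\ostar -k} \simeq \cK_H^{\ostar -k} \ostar \bfk_{\Delta_M \times [-ka, \infty)} \simeq T_{-ka}\cK_H^{\ostar -k}$, so $c(e; F, \cK_{H+a}^{\ostar -k}(F)) = c(e; F, T_{-ka}\cK_H^{\ostar -k}(F)) = c(e; F, \cK_H^{\ostar -k}(F)) + ka$ by the translation-shift property of $c$; dividing by $k$ and negating yields $\zeta_e(H+a) = \zeta_e(H) + a$.

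Semi-homogeneity (2) follows because reparameterizing $\lambda H$ in time rescales the generating Hamiltonian, giving $c(e; F, \cK_{\lambda H}^{\ostar -k}(F))$ as a reparametrization of $c(e; F, \cK_H^{\ostar -\lceil \lambda k\rceil}(F))$ in the limit; monotonicity (3) follows from the monotonicity of $c$ under the natural morphisms $\cK_{H_1} \to \cK_{H_2}$ for $H_1 \le H_2$ (which induce $\cK_{H_1}^{\ostar -k} \to \cK_{H_2}^{\ostar -k}$ after passing to inverses, reversing the inequality appropriately), combined with \cref{lemma:specprodineq}. Partial additivity (5) for Poisson-commuting $H_1, H_2$ uses that $\phi^{H_1}$ and $\phi^{H_2}$ commute, so $\cK_{H_1 \sharp H_2} \simeq \cK_{H_1} \ostar \cK_{H_2} \simeq \cK_{H_2} \ostar \cK_{H_1}$ (\cref{lemma:GKSsharp}), and a triangle-type spectral inequality from \cref{lemma:specprodineq}. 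Vanishing (6) is exactly the content adapted from \cref{lemma:displace_spectral}: if $\Supp(H)$ is displaceable then $\gamma(\phi_H^{\ostar k})$ — hence the relevant spectral invariants — stays bounded independently of $k$, so the limit is $0$; and invariance (7) follows by conjugating the kernels $\cK_{H \circ \varphi}^{\ostar -k} \simeq \cK_\varphi^{-1} \ostar \cK_H^{\ostar -k} \ostar \cK_\varphi$ and noting conjugation changes $c(e; F, -)$ by a bounded amount (at most $2\gamma(\varphi)$), which washes out after dividing by $k$.

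The main obstacle I anticipate is making the "bounded additive correction, independent of $k$" claims genuinely uniform — i.e., controlling the error terms in \cref{lemma:specprodineq} and in the conjugation/stability estimates so they do not grow with $k$. This requires carefully tracking that $c(e; F, F)$ and $d_{\cT(T^*M)}(F, \cK_H^{\ostar -k}(F))$-type quantities enter only additively and not multiplicatively in $k$; the idempotence $e = e^2$ and the kernel identity $\cK_H^{\ostar -(k+l)} \simeq \cK_H^{\ostar -k} \ostar \cK_H^{\ostar -l}$ are what make this work, so the proof will hinge on organizing these super-additivity bounds cleanly before invoking Fekete's lemma.
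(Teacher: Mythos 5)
Your proposal and the paper take essentially the same route. The paper's proof is a one-liner: it notes that \cref{lemma:displace_spectral} and \cref{lemma:elle} (sub-additivity of $\ell_e$ from idempotence of $e$ via \cref{lemma:specprodineq}, and Lipschitz continuity of $\ell_e$ in the spectral norm $\gamma$) have been established, and then defers to Monzner--Vichery--Zapolsky~\cite{MVZ12}, whose argument derives all the axioms from precisely these two inputs. What you have written is the MVZ12 argument unpacked axiom by axiom, drawing on the same ingredients: Fekete-type superadditivity from $e=e^2$ for existence of the limit, \cref{theorem:Ham_stability} and \cref{proposition:gamma=d} for Lipschitz continuity, \cref{lemma:GKSconst} for normalization, $\cK_H$-commutation under Poisson commutation for partial additivity, and \cref{lemma:displace_spectral} for vanishing. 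Two cosmetic remarks: the passage from the $\|\cdot\|_{\mathrm{osc}}$ estimate of \cref{theorem:Ham_stability} to a genuine $\|\cdot\|_{C^0}$-Lipschitz bound needs one more step (combine the constant-additivity axiom and monotonicity, as in MVZ12, rather than a direct comparison), and for monotonicity one must pay attention to the reversal of the inequality when passing from $H_1\le H_2$ to the inverse kernels $\cK_{H_i}^{\ostar -k}$. Neither of these is a gap, just a point where your sketch should be tightened.
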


To prove the theorem, we shall prepare a lemma.
Define 
\begin{equation}
    \ell_e (\phi)\coloneqq -c(e;\cK_{\phi}^{\ostar} (F), F). 
\end{equation}

\begin{lemma}\label{lemma:elle}
The following hold:
\begin{enumerate}
    \item $\ell_e (\phi \psi)\le \ell_e (\phi)+\ell_e(\psi)$,
    \item $|\ell_e(\phi)-\ell_e(\psi)|\le \gamma (\phi,\psi)$.
\end{enumerate}
\end{lemma}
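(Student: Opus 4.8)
The plan is to establish both inequalities by unwinding the definition $\ell_e(\phi) = -c(e;\cK_\phi^\ostar(F),F)$ and feeding the relevant kernel identities and the Hamiltonian stability estimate into \cref{lemma:specprodineq} (the triangle inequality $c(\beta\alpha;F,F'')\ge c(\alpha;F,F')+c(\beta;F',F'')$) together with \cref{proposition:gamma=d} ($\gamma(\phi,\psi)=d_{\cT(T^*M^2)}(\cK_\phi,\cK_\psi)$).

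For part (i), I would first observe that $\cK_{\phi\psi}^\ostar(F)\simeq \cK_\phi^\ostar(\cK_\psi^\ostar(F))$ by \cref{lemma:GKSsharp} (together with the normalization conventions), so $\cK_\phi^\ostar$ intertwines with the compositions in the obvious way. Applying the convolution functor $\cK_\phi^\ostar(-)$ to a lift of the idempotent $e\colon \cK_\psi^\ostar(F)\to F$ produces a morphism $\cK_\phi^\ostar(\cK_\psi^\ostar(F))\to \cK_\phi^\ostar(F)$, and one checks it still represents (a version of) $e$ and does not decrease the spectral invariant, i.e.\ $c(e;\cK_\phi^\ostar(\cK_\psi^\ostar(F)),\cK_\phi^\ostar(F)) \ge c(e;\cK_\psi^\ostar(F),F)$ — this uses that $\cK_\phi^\ostar$ is an equivalence commuting with the $T_c$-shifts. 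Then composing with $e\colon \cK_\phi^\ostar(F)\to F$ and invoking \cref{lemma:specprodineq} (plus the idempotence $e\circ e = e$ in $h\cT_\infty$) gives
\begin{equation}
    c(e;\cK_{\phi\psi}^\ostar(F),F) \ge c(e;\cK_\phi^\ostar(\cK_\psi^\ostar(F)),\cK_\phi^\ostar(F)) + c(e;\cK_\phi^\ostar(F),F) \ge c(e;\cK_\psi^\ostar(F),F) + c(e;\cK_\phi^\ostar(F),F),
\end{equation}
which is exactly $\ell_e(\phi\psi)\le \ell_e(\phi)+\ell_e(\psi)$ after negating.

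For part (ii), the strategy is to compare $c(e;\cK_\phi^\ostar(F),F)$ and $c(e;\cK_\psi^\ostar(F),F)$ by inserting the natural morphisms between $\cK_\phi$ and $\cK_\psi$ coming from an $(a,b)$-isomorphism realizing $d_{\cT(T^*M^2)}(\cK_\phi,\cK_\psi)$ close to $\gamma(\phi,\psi)$. Convolving such morphisms with $F$ yields morphisms $\cK_\phi^\ostar(F)\to T_a\cK_\psi^\ostar(F)$ and $\cK_\psi^\ostar(F)\to T_b\cK_\phi^\ostar(F)$ whose composites are the canonical $\tau_{a+b}$ maps; composing with a lift of $e$ and applying \cref{lemma:specprodineq} together with the fact that the spectral invariant of the canonical shift map $F\to T_cF$ equals $-c$ gives $c(e;\cK_\psi^\ostar(F),F)\ge c(e;\cK_\phi^\ostar(F),F) - (a+b)$, and symmetrically; taking the infimum over $(a,b)$-isomorphisms yields $|\ell_e(\phi)-\ell_e(\psi)|\le \gamma(\phi,\psi)$.

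The main obstacle I anticipate is the bookkeeping in part (i): making precise that applying the functor $\cK_\phi^\ostar(-)$ to a representative of the idempotent $e$ again represents $e$ (as an element of $h\cT_\infty$) and behaves correctly with respect to the colimit defining $c(\,\cdot\,;-,-)$ — in particular that $\cK_\phi^\ostar$ commutes with the translation functors $T_c$ and hence induces an isomorphism on the filtered pieces $h\cT_c^*$, so that spectral invariants are exactly preserved under $\cK_\phi^\ostar$. Once that functoriality is pinned down, both parts are formal consequences of \cref{lemma:specprodineq,proposition:gamma=d}.
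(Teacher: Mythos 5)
Your proposal is correct and follows essentially the same route the paper sketches: part (i) is exactly the factorization $\cK_{\phi\psi}^{\ostar}(F)\to\cK_\phi^{\ostar}(F)\to F$ plus $e^2=e$ and \cref{lemma:specprodineq}, and part (ii) is the interleaving/Lipschitz estimate for spectral invariants combined with \cref{proposition:gamma=d}. The paper's proof is a two-line sketch of these same steps, so you have just filled in the details.
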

\begin{proof}
    (i) is a direct consequence of \cref{lemma:specprodineq} and $e^2=e$. 

    (ii) $|\ell_e(\phi)-\ell_e(\psi)|\le d_{\cT(T^*M^2)}(\cK_\phi, \cK_{\psi})$ is deduced from a basic property of the spectral invariant. Hence the assertion follows from \cref{proposition:gamma=d}
\end{proof}

\begin{proof}[Proof of \cref{theorem:idempotent_quasistate}]
    We have prepared \cref{lemma:displace_spectral,lemma:elle}. 
    The rest of the proof is parallel to that of \cite{MVZ12}. 
\end{proof}

If $\RS(F)$ is compact, the partial symplectic quasi-state $\zeta_e$ satisfies the following important property.  
\begin{proposition}\label{prop:existence_superheavy}
    If $\RS (F)$ is compact, then $\RS (F)$ is $\zeta_e$-superheavy.
\end{proposition}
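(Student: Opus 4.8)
The plan is to use the characterization of superheaviness from \cref{lemma:heavyequiv}: it suffices to show that for every $H\in C_{cc}(T^*M)$ and $c\in\bR$ with $H\equiv c$ on a neighborhood of $\RS(F)$, one has $\zeta_e(H)\le c$. By the additivity with respect to constants (\cref{definition:partial_quasistate}(4)) we may subtract $c$ and reduce to the case $c=0$, i.e.\ $H\in C_{cc}(T^*M,\RS(F))$ in the notation of \cref{subsec:explicitadjoints}; then the goal is $\zeta_e(H)\le 0$, that is, $\liminf_k \tfrac{1}{k}c(e;F,\cK_H^{\ostar -k}(F))\ge 0$.

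The key point is that when $H$ vanishes near $\RS(F)$, the GKS kernel $\cK_H$ (and each $\cK_H^{\ostar -k}$) acts almost trivially on $F$ at the level of $h\cT_\infty$. Concretely, first I would observe that $\cK_H^{\ostar -k}=\cK_{kH'}$-type kernels, or more cleanly that $\cK_{\bar H}^{\ostar k}=\cK_{\overline{H^{(k)}}}$ for a Hamiltonian $H^{(k)}$ whose flow still fixes a neighborhood of $\RS(F)$ and whose support is disjoint from $\RS(F)$; since $\MS(F)\cap\{\tau>0\}$ projects into $\RS(F)$, the kernel $\cK_H^{\ostar -k}$ acts on $F$ via a Hamiltonian supported away from $\RS(F)$. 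The crucial consequence is that the natural morphism $F\to \cK_H^{\ostar -k}(F)$ (coming from $H\le \text{const}$, via \cref{lemma:GKSconst} and the comparison morphisms) becomes an \emph{isomorphism} in $h\cT_\infty(T^*M)$: microlocally, $\cK_{\bar H}^{\ostar k}$ and $\bfk_{\Delta_M\times[0,\infty)}$ agree on a neighborhood of $\RS(F)\supset\rho(\MS(F)\cap\{\tau>0\})$, so convolution with them agrees on $F$ up to a torsion object. Then $c(e;F,\cK_H^{\ostar -k}(F))$ is, under this identification and \cref{remark:finitelength}, equal to $c(e';F,F)$ for the transported idempotent $e'$, which differs from $c(e;F,F)$ by at most $2d_{\cT(T^*M)}(F,\cK_H^{\ostar -k}(F))$.

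To make the bound uniform in $k$ I would estimate $d_{\cT(T^*M)}(F,\cK_H^{\ostar -k}(F))$. Here the point is that although $\|kH\|_{\mathrm{osc}}$ grows linearly, the relevant distance does \emph{not}, because $H$ is normalized (mean zero in the fiber after subtracting constants does not quite give this, but) — more precisely, since $\Supp(H)$ is disjoint from $\RS(F)$, I expect $d_{\cT(T^*M)}(F,\cK_H^{\ostar -k}(F))$ to stay bounded, using that convolution of $F$ with $\cK_H^{\ostar -k}$ only ``sees'' $H$ near $\RS(F)$ where it is constant: one shows $\cK_H^{\ostar -k}(F)\simeq \cK_{H_{\mathrm{loc}}}^{\ostar -k}(F)$ where $H_{\mathrm{loc}}$ is a modification of $H$ supported in a fixed neighborhood of $\RS(F)$ on which it is the constant $0$, whence $\cK_H^{\ostar -k}(F)\simeq F$ in $h\cT_\infty$ with distance bounded \emph{independently of $k$}. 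Granting this, $|c(e;F,\cK_H^{\ostar -k}(F))-c(e;F,F)|\le 2C$ for a constant $C$ independent of $k$, so dividing by $k$ and letting $k\to\infty$ gives $\liminf_k \tfrac{1}{k}c(e;F,\cK_H^{\ostar -k}(F))\ge 0$ (using $c(e;F,F)<+\infty$, which holds by hypothesis), i.e.\ $\zeta_e(H)\le 0$, and the superheaviness follows.

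The main obstacle I anticipate is the uniform-in-$k$ distance bound: proving rigorously that convolving $F$ with $\cK_H^{\ostar -k}$ for $H$ vanishing near $\RS(F)$ does not move $F$ by more than a fixed amount in the $d_{\cT(T^*M)}$-metric, regardless of $k$. This requires a localization argument — replacing the globally-defined (large-oscillation) kernel by one whose action on $F$ is controlled by the behavior of $H$ only on a neighborhood of $\RS(F)$ — presumably via the microlocal cut-off / the explicit description of $\iota^*_A$ and the idempotence $\cK_A\ostar\cK_A\simeq\cK_A$ of \eqref{equation:kernelidempotnce} applied with $A$ a neighborhood of $\RS(F)$, together with the Hamiltonian stability theorem \cref{theorem:Ham_stability} applied to a \emph{bounded-oscillation} representative. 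Everything else — the reduction via \cref{lemma:heavyequiv}, the constant-shift normalization, and the passage to the limit — is routine.
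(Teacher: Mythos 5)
Your overall strategy — reduce via \cref{lemma:heavyequiv} to $H\equiv c$ on a neighborhood of $\RS(F)$, normalize so $c=0$, and then argue that convolution with $\cK_H^{\ostar -k}$ does not move $F$ — is the correct one and is in substance what the paper does. However, you have made the argument considerably harder than it needs to be, and the place where you flag an anticipated obstacle (``the uniform-in-$k$ distance bound'') is in fact a non-issue once you invoke the right lemma, which the paper has already proved.

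The paper derives the proposition as a direct consequence of \cref{lemma:constantshift}, which states: if $H-c\in C_{cc}(T^*M\times[0,1],\RS(F))$, then $\cK_H^{\ostar}(F)\simeq T_cF$. This is an isomorphism in $\cT(T^*M)$ itself, not merely in $h\cT_\infty(T^*M)$ and not merely up to bounded interleaving distance. Applying it inductively (note that $\RS(T_{-c}F)=\RS(F)$, so the hypothesis persists) gives $\cK_H^{\ostar -k}(F)\simeq T_{-kc}F$ for every $k$, whence $c(e;F,\cK_H^{\ostar -k}(F))=c(e;F,F)-kc$ exactly, and $\zeta_e(H)=c$. In particular the interleaving distance $d_{\cT(T^*M)}(F,\cK_H^{\ostar -k}(F))$ that you worry may grow with $k$ is identically zero, so there is nothing to localize or to bound. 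Your sketch instead reaches for idempotence of $\cK_A$, the explicit form of $\iota_A^*$, and a localization of $H$ near $\RS(F)$ — all machinery that is not needed here and that introduces the very ``is the bound uniform in $k$?'' worry you then have to dispose of. The content you are trying to reprove by hand is precisely \cref{lemma:constantshift} (itself imported from \cite[Lem.~3.1]{AGHIV}); once you cite it, the proposition is a two-line computation.

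Two smaller points. First, your claim that ``$\cK_{\bar H}^{\ostar k}$ and $\bfk_{\Delta_M\times[0,\infty)}$ agree on a neighborhood of $\RS(F)$ \ldots{} so convolution with them agrees on $F$ up to a torsion object'' is a weaker statement than what holds and than what you need for a clean limit; the agreement is on the nose, not up to torsion. Second, the passage ``$H_{\mathrm{loc}}$ is a modification of $H$ supported in a fixed neighborhood of $\RS(F)$ on which it is the constant $0$'' describes the zero Hamiltonian, which is a sign that the localization you were reaching for collapses to the trivial case — again consistent with the exact isomorphism of \cref{lemma:constantshift} being the right tool.
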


This is a straightforward consequence of the following standard lemma, to which similar statements and proofs can be found in several articles. 

\begin{lemma}[{cf.~\cite[Lem.~3.1]{AGHIV}, \cite[Thm.~A.2]{AI22completeness}, and \cite[Prop.~3.33]{Kuowrap}}]\label{lemma:constantshift} 
    Let $F\in \cT (T^*M)$ and $H\in C_{cc}(T^*M\times [0,1])$. Assume $H-c \in  C_{cc}(T^*M\times [0,1], \RS(F) )$ for some constant $c\in \bR$. Then $\cK_H^{\ostar}(F)\simeq T_c F$.
\end{lemma}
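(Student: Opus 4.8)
The plan is to reduce the statement to the case $c=0$ and then exploit the explicit description of $\cK_H$ as a colimit of GKS kernels of smooth Hamiltonians, together with the invariance property of $\RS(F)$ under the sheaf quantization of a Hamiltonian flow that is trivial near $\RS(F)$. First I would observe that by \cref{lemma:GKSconst} the timewise-constant Hamiltonian with integral $c$ has $\cK \simeq \bfk_{\Delta_M\times[c,\infty)}$, and convolution with this kernel is precisely the functor $T_c$; since $H = (H-c) + (\text{constant with integral }c)$ and these two pieces Poisson-commute (one is a function of $s$ alone), \cref{lemma:GKSsharp} gives $\cK_H \simeq \cK_{H-c}\ostar \bfk_{\Delta_M\times[c,\infty)}$, hence $\cK_H^\ostar(F)\simeq T_c\bigl(\cK_{H-c}^\ostar(F)\bigr)$. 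So it suffices to prove: if $H\in C_{cc}(T^*M\times[0,1],\RS(F))$ then $\cK_H^\ostar(F)\simeq F$.

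For the reduced case, the key point is that $H\equiv 0$ on a neighborhood $U$ of $\RS(F)$, so $H$ lies in the poset $C_{cc}(T^*M\times[0,1],A)$ for any compact $A$ with $\RS(F)\subset \mathrm{int}(A)\subset A\subset U$ when such $A$ can be chosen compact — and since $\RS(F)$ is assumed compact, it can. Then $\cK_H$ appears as one term in the diagram defining $\cK_A = \colim_{H'\in\cC_{cc}(T^*M,A)}\cK_{H'}$, and more relevantly the natural morphism $\cK_0 \to \cK_H$ (from $0\le H$ after a harmless constant shift, or directly from the monotonicity morphism) becomes an isomorphism after applying $(-)\ostar F$. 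The cleanest route I would take: the functor $\iota_A^* = \cK_A\ostar(-)$ is a localization onto $\cT_A(T^*M)$, and $F\in \cT_{\RS(F)}(T^*M)\subset \cT_A(T^*M)$, so $\cK_A\ostar F\simeq F$; meanwhile $\cK_H\ostar \cK_A \simeq \cK_A$ because $\cK_H$ is already ``absorbed'' — indeed $\cK_H\ostar\cK_A = \colim_{H'}\cK_H\ostar\cK_{H'} = \colim_{H'}\cK_{H\sharp H'}$ by \cref{lemma:GKSsharp}, and $\{H\sharp H' \mid H'\in C_{cc}(T^*M\times[0,1],A)\}$ is cofinal in (or at least interleaved with) $C_{cc}(T^*M\times[0,1],A)$ since $H\sharp H'$ again vanishes near $A$ and $H'\mapsto H\sharp H'$ is an order-compatible-up-to-shift bijection. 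Therefore $\cK_H\ostar F \simeq \cK_H\ostar\cK_A\ostar F \simeq \cK_A\ostar F\simeq F$.

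I should be slightly careful about the difference between $C_{cc}$ and $C^\infty_{cc}$ and between the ``$\le$'' and ``$\prec$'' posets, exactly as in the proof of \cref{lemma:infintyGKS}; but since all the relevant inclusions of index categories are initial and final (as noted in the remark after the Proposition in \cref{subsec:explicitadjoints}), this is a routine matter and does not affect the argument. The alternative, more hands-on proof — which I would fall back on if I wanted to avoid invoking the localization interpretation of $\iota_A^*$ — is a microsupport estimate: the morphism $\cK_0\to\cK_{H-c}$ has cone whose microsupport, after convolving with $F$, lives over the region where $H-c$ is nonzero, i.e. away from $\RS(F)$, combined with a non-characteristic deformation/integration-along-the-flow argument showing $\cK_{sH}^\ostar(F)$ is independent of $s\in[0,1]$; this is the content cited from \cite[Lem.~3.1]{AGHIV}, \cite[Thm.~A.2]{AI22completeness}, \cite[Prop.~3.33]{Kuowrap}.

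The main obstacle I anticipate is purely bookkeeping: verifying that $H\sharp(-)$ (or $H + (-)$, depending on how one sets up the composition versus sum of Hamiltonians) genuinely induces a cofinal map between the relevant posets of Hamiltonians vanishing near $A$, so that the colimit defining $\cK_A$ is unchanged after ``pre-convolving'' with $\cK_H$. Once that cofinality is in place, the isomorphism $\cK_H\ostar\cK_A\simeq\cK_A$ and hence the lemma follow formally. No genuinely hard analysis or geometry is needed beyond what \cref{lemma:GKSsharp}, \cref{lemma:GKSconst}, and the explicit description of $\iota_A^*$ already provide.
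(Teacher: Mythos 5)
Your proposal is correct, but it takes a genuinely different route for the key step from the one the paper uses. The paper's proof is very short: reduce to smooth $H$ by the colimit argument, reduce to $c=0$ via $\cK_H\simeq T_c\cK_{H-c}$ (\cref{lemma:GKSsharp} and \cref{lemma:GKSconst}), and then simply \emph{cite} \cite[Lem.~3.1]{AGHIV} for the $c=0$ case. You perform the same two reductions, but instead of citing AGHIV you re-derive the $c=0$ statement internally using the $\cK_A$-projector formalism of \cref{subsec:explicitadjoints}: pick a compact $A$ with $\RS(F)\subset\operatorname{int}(A)\subset A\subset U$ so that $H\in C_{cc}(T^*M\times[0,1],A)$, show the absorption $\cK_H\ostar\cK_A\simeq\cK_A$ by cofinality of $H\sharp(\mathchar`-)$ on the poset (which is exactly the paper's appendix \cref{lemma:Kernelconst}, proven there independently so no circularity), and use $\cK_A\ostar F\simeq\iota_A^*F\simeq F$ from $F\in\cT_A(T^*M)$ and idempotence of the localization. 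This buys a self-contained argument in exchange for invoking the Kuo/KSZ description of $\iota_A^*$, whereas the paper keeps this lemma elementary and defers the work to the reference.

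One point to flag: your compact-$A$ step makes essential use of compactness of $\RS(F)$. The lemma as written does not state this hypothesis explicitly, but it is implicit in the notation $C_{cc}(T^*M\times[0,1],\RS(F))$ (defined in the paper only for compact base sets) and all applications in the paper are for compact $\RS$; still, you should state that you are using this compactness rather than asserting it is ``assumed.'' Your fallback sketch (microsupport estimate plus non-characteristic deformation showing $\cK_{sH}^\ostar(F)$ is $s$-independent) correctly identifies the content of the cited references and would not need compactness, so nothing is lost.
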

\begin{proof}
    It is enough to prove for $H\in C_{cc}^\infty(T^*M\times [0,1])$ since the kernel $\cK_H$ for continuous $H$ is defined as a colimit of $\cK_{H'}$ for smooth $H'$ and the colimit commute with $\ostar$.  
    \cite[Lem.~3.1]{AGHIV} is the statement for $c=0$. 
    The desired statement is obtained from it since $\cK_{H}\simeq T_c\cK_{H-c}$ by \cref{lemma:GKSsharp,lemma:GKSconst}. 
\end{proof}

As a special case, we can take $F=\bfk_{M \times [0,\infty)}$ and $e=\id_F$. 
In this case, if $\bfk=\bZ/2\bZ$, $\zeta_e$ coincides with the partial symplectic quasi-state $\zeta_{\mathrm{MVZ}}$ defined by Monzner--Vichery--Zapolsky~\cite{MVZ12}. 
Hence, we still denote the partial symplectic quasi-state by $\zeta_{\mathrm{MVZ}}$ for a general coefficient.

For the later application, we describe $\zeta_{\mathrm{MVZ}}$ in a different form.

\begin{definition}
    One defines
    \begin{equation}
        \ell_-(H)\coloneqq c(1;\bfk_{M\times [0,\infty)}, \cK_H^{\ostar} (\bfk_{M\times [0,\infty)})),\quad \ell_+(H)\coloneqq -c(1;\bfk_{M\times [0,\infty)}, \cK_H^{\ostar -1} (\bfk_{M\times [0,\infty)})).
    \end{equation}
\end{definition}

\begin{remark}\label{remark:ell+}
    We have the following:
    \begin{enumerate}
        \item $\ell_+(H)= -\ell_-(\overline{H})$ for $H\in C_{cc}^\infty(T^*M\times [0,1])$,
        \item $\ell_+(H)= -\ell_-(-H)$ for $H\in C_{cc}(T^*M)$,
        \item $\ell_+(H)=c(\mu_M; \bfk_{M\times [0,\infty)}, \cK_H^{\ostar} (\bfk_{M\times [0,\infty)})\otimes \mathrm{or}_{M\times \bR})$ for $H\in C_{cc}(T^*M\times [0,1])$. 
    \end{enumerate}
    The last equation follows from the work~\cite{Vi19} by Viterbo. 
    Although an orientation for $M$ is assumed in that work, parallel arguments can be applied in the absence of orientation once one use the orientation sheaf.
\end{remark}

\begin{lemma}
    For $H \in C_{cc}(T^*M)$, one has 
    \begin{equation}
        \zeta_{\mathrm{MVZ}}(H) = \lim_{k \to \infty} \frac{\ell_+(kH)}{k}=\lim_{k \to \infty} \frac{c(\mu_M; \bfk_{M\times [0,\infty)}, \cK_H^{\ostar k} (\bfk_{M\times [0,\infty)})\otimes \mathrm{or}_{M\times \bR})}{k}.
    \end{equation}
\end{lemma}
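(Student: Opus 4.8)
The plan is to unwind the definition of $\zeta_{\mathrm{MVZ}}$ and show the three limits agree term by term up to bounded error. By definition $\zeta_{\mathrm{MVZ}}(H) = \zeta_{\id_{\bfk_{M\times[0,\infty)}}}(H) = -\lim_k \frac{c(\id; \bfk_{M\times[0,\infty)}, \cK_H^{\ostar -k}(\bfk_{M\times[0,\infty)}))}{k}$, where I abbreviate $\bfk_0 \coloneqq \bfk_{M\times[0,\infty)}$. The key observation is that $\bfk_0$ is the monoidal unit, so $\id_{\bfk_0}$ corresponds to $1\in H^0(M)$ under $h\cT_\infty^*(\bfk_0,\bfk_0)\simeq H^*(M)$, and more generally $h\cT_\infty^*(\bfk_0, \cK_H^{\ostar -k}(\bfk_0))\simeq H^*(M)$ via the convolution/adjunction structure, so that $c(\id;\bfk_0,\cK_H^{\ostar -k}(\bfk_0))$ is exactly $c(1;\bfk_0,\cK_H^{\ostar -k}(\bfk_0))$. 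Comparing with the definition of $\ell_+$, we have $\ell_+(H) = -c(1;\bfk_0,\cK_H^{\ostar -1}(\bfk_0))$; iterating this with $kH$ in place of $H$ and using $\cK_{kH}^{\ostar -1} \simeq \cK_H^{\ostar -k}$ up to the reparameterization tricks (\cref{lemma:GKSsharp}), or alternatively the submultiplicativity of $\ell_e$ from \cref{lemma:elle}(i) together with Fekete's subadditive lemma, identifies $\lim_k \frac{\ell_+(kH)}{k}$ with $-\lim_k \frac{c(1;\bfk_0,\cK_H^{\ostar -k}(\bfk_0))}{k} = \zeta_{\mathrm{MVZ}}(H)$. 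This is the first equality.

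For the second equality, I would use \cref{remark:ell+}(iii): $\ell_+(H') = c(\mu_M; \bfk_0, \cK_{H'}^{\ostar}(\bfk_0)\otimes \mathrm{or}_{M\times\bR})$ for $H'\in C_{cc}(T^*M\times[0,1])$. Apply this with $H' = \overline{kH}$ (viewing $H\in C_{cc}(T^*M)$ as time-independent, so $\overline{H} = -H$ and $\overline{kH} = -kH$), and note $\cK_{\overline{kH}}^\ostar \simeq \cK_{kH}^{\ostar -1} \simeq (\cK_H^{\ostar k})^{-1}$, hence by \cref{remark:ell+}(i)–(ii) one has $\ell_+(kH) = -\ell_-(-kH)$, and $\ell_-(-kH) = c(1;\bfk_0,\cK_{-kH}^\ostar(\bfk_0))$. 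Then re-expressing via the orientation sheaf: $-c(1;\bfk_0,\cK_{-kH}^\ostar(\bfk_0)) = c(\mu_M;\bfk_0,\cK_{kH}^\ostar(\bfk_0)\otimes\mathrm{or}_{M\times\bR})$, which is the claim after dividing by $k$ and taking the limit. The passage between $\cK_H^{\ostar k}$ and $\cK_{kH}^\ostar$ is handled by \cref{lemma:GKSsharp} (so that $\cK_H^{\ostar k}\simeq \cK_{H\sharp\cdots\sharp H}$, and for time-independent $H$ this is $\cK_{kH}$ after reparameterization) together with the translation-covariance $\cK_{H+c}\simeq T_c\cK_H$ from \cref{lemma:GKSconst}.

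The main obstacle I anticipate is the careful bookkeeping in the identification $h\cT_\infty^*(\bfk_0, \cK_H^{\ostar -k}(\bfk_0))\simeq H^*(M)$ and the verification that under this identification the idempotent $\id$ is sent to the class $1$ (resp.\ $\mu_M$ for the twisted version), so that the spectral invariant $c(\id;-,-)$ coincides with $c(1;-,-)$ (resp.\ $c(\mu_M;-,-)$); one must check this is compatible with the convolution action of $\cK_H$ and with the Verdier/orientation duality underlying \cref{remark:ell+}(iii), invoking Viterbo's result \cite{Vi19} as cited there. A secondary technical point is justifying that all three sequences are subadditive (via \cref{lemma:specprodineq} and \cref{lemma:elle}) so that the limits exist and can be computed along the subsequence $kH$; once existence is known, the equalities follow from matching the $k$-th terms up to an error bounded independently of $k$ (e.g.\ by $2d_{\cT(T^*M^2)}(\cK_{kH},\cK_H^{\ostar k}) = 0$ or by the Lipschitz estimates in \cref{lemma:elle}(ii)), and dividing by $k\to\infty$.
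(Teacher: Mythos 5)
The paper records this lemma without a written proof, treating it as an immediate bookkeeping consequence of the definitions of $\zeta_{\mathrm{MVZ}}$, $\ell_\pm$, and \cref{remark:ell+}; your proposal correctly reconstructs exactly that bookkeeping and is essentially the intended argument. The three ingredients are all present and correctly deployed: $\mathrm{(a)}$ $\bfk_{M\times[0,\infty)}$ is the monoidal unit, so $\id$ is the class $1\in H^0(M)$ under the canonical identification of $h\cT_\infty^*$; $\mathrm{(b)}$ for autonomous $H$ one has $\overline{H}=-H$ and $H\sharp\cdots\sharp H = kH$ (by flow-invariance of $H$), so $\cK_{kH}^{\ostar -1}\simeq\cK_H^{\ostar -k}$ and $\cK_{kH}\simeq\cK_H^{\ostar k}$, from which the first equality drops out of the definitions of $\zeta_e$ and $\ell_+$; $\mathrm{(c)}$ \cref{remark:ell+}(iii), i.e.\ the twisted Poincar\'e duality from \cite{Vi19}, gives $\ell_+(kH)=c(\mu_M;\bfk_{M\times[0,\infty)},\cK_{kH}^\ostar(\bfk_{M\times[0,\infty)})\otimes\mathrm{or}_{M\times\bR})$, hence the second equality. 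One small slip in the write-up: you say ``apply \cref{remark:ell+}(iii) with $H'=\overline{kH}$'', but what the chain of equalities actually uses is $H'=kH$ together with (i)--(ii); this does not affect the substance since the conclusion you reach, $\ell_+(kH)=-c(1;\bfk_{M\times[0,\infty)},\cK_{-kH}^\ostar(\bfk_{M\times[0,\infty)}))=c(\mu_M;\bfk_{M\times[0,\infty)},\cK_{kH}^\ostar(\bfk_{M\times[0,\infty)})\otimes\mathrm{or}_{M\times\bR})$, is correct. The Fekete-lemma remark is unnecessary here (the terms match on the nose, not just up to $o(k)$), but is harmless.
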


\section{Criteria for heviness/superheaviness}\label{section:results}

In this section, we give sufficient conditions for $\zeta_e$-heaviness and $\zeta_{\mathrm{MVZ}}$-superheaviness. 

\subsection{Criteria for $\zeta_e$-heaviness}

Let $F \in \cT(T^*M)$ and $e\colon F\to F$ be an idempotent in $h\cT_\infty(T^*M)$ with $c(e;F,F)\neq +\infty$. 
Let $\zeta_e$ be the partial symplectic quasi-state associated with $e$. 
In this section, we first prove the following theorem.

\begin{theorem}\label{theorem:zetae_heavy}
    Let $G \in \cT(T^*M)$. Assume that that $\RS(G)$ is compact.  
    \begin{enumerate}
        \item If there exists an element $\beta \in hT^* (F,G)$ satisfying $c(\beta e ;F,G)\neq +\infty $, then $\RS (G)$ is $\zeta_e$-heavy. 
        \item If there exists an element $\beta \in  hT^* (G,F)$ satisfying $c(e \beta ;F,G)\neq +\infty$, then $\RS (G)$ is $\zeta_e$-heavy. 
    \end{enumerate}
\end{theorem}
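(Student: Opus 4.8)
The plan is to deduce $\zeta_e$-heaviness of $\RS(G)$ from a bound, uniform in $k$, on the spectral numbers $c(e;F,\cK_H^{\ostar -k}(F))$ that enter the definition of $\zeta_e$, using the hypothesis to produce a morphism whose spectral invariant controls them. First I would reduce the claim: by \cref{lemma:heavyequiv} it suffices to show $\zeta_e(H)\ge c$ whenever $H\in C_{cc}(T^*M)$ and $c\in\bR$ satisfy $H\equiv c$ on a neighborhood of $\RS(G)$, and since $\zeta_e$ is a partial symplectic quasi-state (\cref{theorem:idempotent_quasistate}), its additivity with respect to constants lets me replace $H$ by $H-c$; so I may assume $H\equiv 0$ on an open neighborhood $U$ of $\RS(G)$ and must prove $\zeta_e(H)\ge 0$, which by the definition of $\zeta_e$ follows once $c(e;F,\cK_H^{\ostar -k}(F))$ is bounded above independently of $k$.

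The key preliminary is to trivialize the action of $\cK_H$ on $G$. Since $H\equiv 0$ on $U$, the flow $\phi^H_s$ is the identity on $U$, hence $\overline H$ and all its iterated compositions also vanish on $U$; applying \cref{lemma:constantshift} with shift constant $0$ gives, for every $k$, an isomorphism $\iota_k\colon\cK_H^{\ostar -k}(G)\xrightarrow{\ \sim\ }G$, which — the shift being $0$ — is compatible with the canonical comparison isomorphisms $F\simeq\cK_H^{\ostar -k}(F)$ in $h\cT_\infty(T^*M)$ (available because $d_{\cT(T^*M)}(F,\cK_H^{\ostar -k}(F))<+\infty$) and carries vanishing spectral invariant. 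I would also record that $\cK_H^{\ostar -k}$ is a monoidal autoequivalence of $\cT(T^*M)$ commuting with the $T_c$ and with the $\tau_c$, so it preserves spectral invariants, i.e.\ $c(\cK_H^{\ostar -k}(\alpha);\cK_H^{\ostar -k}(X),\cK_H^{\ostar -k}(Y))=c(\alpha;X,Y)$.

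For part (i), set $\alpha\coloneqq\beta e\in h\cT^*_\infty(F,G)$, so that $c_0\coloneqq c(\alpha;F,G)\in\bR$ by hypothesis, and form $\delta_k\coloneqq\iota_k\circ\cK_H^{\ostar -k}(\alpha)\colon\cK_H^{\ostar -k}(F)\to G$. Naturality of the comparison isomorphisms together with $e^2=e$ shows that the composite $F\xrightarrow{e}\cK_H^{\ostar -k}(F)\xrightarrow{\delta_k}G$ — where the first arrow is the morphism realizing $c(e;F,\cK_H^{\ostar -k}(F))$ — equals $\alpha$ again. Two applications of the subadditivity \cref{lemma:specprodineq}, combined with the preservation of spectral invariants under $\cK_H^{\ostar -k}$ and with $c(\iota_k)=0$, then yield
\[
    c_0\ \ge\ c(e;F,\cK_H^{\ostar -k}(F))+c(\delta_k;\cK_H^{\ostar -k}(F),G)\ \ge\ c(e;F,\cK_H^{\ostar -k}(F))+c_0,
\]
whence $c(e;F,\cK_H^{\ostar -k}(F))\le 0$ for all $k$, which is exactly the bound needed.

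Part (ii) would be obtained by the mirror argument with $e\beta\colon G\to F$: transporting it by $\cK_H^{\ostar -k}$ and precomposing with $\iota_k^{-1}$ produces $\epsilon_k\colon G\to\cK_H^{\ostar -k}(F)$, and besides \cref{lemma:specprodineq} one invokes the elementary inequality $c(u\circ v)\le c(u)-c(v^{-1})$ valid for an isomorphism $v$, applied to $v=\iota_k^{-1}$ with $c(\iota_k^{\pm1})=0$, to reach $c(e;F,\cK_H^{\ostar -k}(F))\le 0$ once more. The main obstacle is not the inequalities, which are formal given \cref{lemma:specprodineq,lemma:constantshift} and the invariance of spectral invariants under the $\cK_H^{\ostar}$, but the bookkeeping of identifications: one must verify that $c(e;F,\cK_H^{\ostar -k}(F))$ is genuinely computed through comparison isomorphisms that are natural in the objects and compatible with the isomorphism $\iota_k$ of \cref{lemma:constantshift}, and that this shift-$0$ instance of $\iota_k$ indeed has vanishing spectral invariant.
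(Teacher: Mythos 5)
Your proposal takes essentially the same approach as the paper's proof: after reducing via \cref{lemma:heavyequiv} to the case of $H$ locally constant near $\RS(G)$, both proofs use \cref{lemma:constantshift} to trivialize $\cK_H^{\ostar-k}(G)$, compose $e$ (viewed as $F\to\cK_H^{\ostar -k}(F)$) with the transported morphism $\cK_H^{\ostar -k}(\beta e)$ to recover $\beta e$ via $e^2=e$, and then read off the desired bound on $c(e;F,\cK_H^{\ostar -k}(F))$. Your repackaging — normalizing to $c=0$ via additivity under constants, and arguing directly with \cref{lemma:specprodineq} instead of chasing explicit lifts $\tilde e,\tilde\alpha$ with $\varepsilon$'s — is cosmetic; the one step you flag yourself (compatibility of the comparison isomorphism $F\simeq\cK_H^{\ostar-k}(F)$ with $\iota_k$ and with composition) is also implicit in the paper's proof.
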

\begin{proof}
    We only give a proof for (i) since the other is parallel. 
    By \cref{lemma:heavyequiv}, it is enough to prove $\zeta_e(H)\ge c $ for $H \in C_{cc}(T^*M)$ with $H|_{U}\equiv c$ where $U$ is a neighborhood of $\RS(G)$. 
    
    Take $k\in \bZ$ and set $\ell \coloneqq \ell_e(kH)\coloneqq -c(e;F, \cK_{-kH}^{\ostar}(F))=-c(e;F, \cK_H^{\ostar -k}(F))$.
    For any $\varepsilon>0$, we can take a lift 
    \begin{equation}
        \tl{e} \colon F \to T_{\ell+\varepsilon}\cK_H^{\ostar -k}(F)
    \end{equation}
    of $e \in Q_\infty^*(F,F)$.
    We may assume there exist $d\in \bZ$ such that $\beta \in Q^d_\infty(F,G)$. Put $b\coloneqq c(\beta e; F,G)\in \bR$.
    For any $\varepsilon'>0$, we can take a lift 
    \begin{equation}
        \tl{\alpha} \colon F \to T_{-b+\varepsilon'} G[d]
    \end{equation}
    of $\beta e$.
    By the assumption of $H$ and \cref{lemma:constantshift}, we find that 
    \begin{equation}
        \cK_H^{\ostar -k}(G) \simeq T_{-kc}G \quad (k \in \bZ). 
    \end{equation}
    Hence, by composing the above morphisms, we get a morphism
    \begin{equation}
        F \xrightarrow{\tl{e}} T_{\ell+\varepsilon}\cK_H^{\ostar -k}F \xrightarrow{\cK_H^{\ostar -k} (\tl{\alpha})} T_{\ell-b+\varepsilon+\varepsilon' - kc} G[d],
    \end{equation}
    which is also a lift of $\beta e \in Q^*_\infty(F,G)$.
    This means that 
    \begin{equation}
        -\ell+b-\varepsilon-\varepsilon' + kc \le c(\beta e;F, G)=b.
    \end{equation}
    Since $\varepsilon$ and $\varepsilon'$ are arbitrary, we obtain 
    \begin{equation}
        \ell_e(kH) = \ell \ge kc.
    \end{equation}
    By the definition of $\zeta_e$ and taking the limit $k\to +\infty$,
    we get $\zeta_{e}(H)\ge c$. 
\end{proof}

\begin{remark}
    Let us assume $\RS(F)$ and $\RS(G)$ are compact. 
    Under the assumption of (i) or (ii) in \cref{theorem:zetae_heavy}, $\Hom_{h\cT_\infty(T^*M)}(F,G[d])$ or  $\Hom_{h\cT_\infty(T^*M)}(G,F[d])$ for some $d \in \bZ$ is non-zero. 
    Hence, by Tamarkin's non-displaceability theorem (see \cite[Thm.~3.1]{Tamarkin} and \cite[Thm.~7.2 and Cor.~7.3]{GS14}), we find that $\RS(G)$ is non-displaceable from $\RS(F)$. 
    Since any $\zeta_e$-heavy subset is non-displaceable from any $\zeta_e$-superheavy subset (\cref{proposition:heavy_nondisplace}), one can view \cref{theorem:zetae_heavy} as a refinement of Tamarkin's non-displaceability theorem. 
\end{remark}

\begin{corollary}\label{corollary:heavy}
    Let $G \in \cT(T^*M)$. Assume that $\RS(G)$ is compact.  
    \begin{enumerate}
        \item If $\Spec(\bfk_{M\times [0,\infty)}, G) \neq \emptyset$, 
    then $\RS(G)$ is $\zeta_{\mathrm{MVZ}}$-heavy.
        \item If $\Spec(G,\bfk_{M\times[0,\infty)}) \neq \emptyset$,
    then $\RS(G)$ is $\zeta_{\mathrm{MVZ}}$-heavy.
    \end{enumerate}
\end{corollary}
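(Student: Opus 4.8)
The plan is simply to specialize Theorem \ref{theorem:zetae_heavy} to the case $F = \bfk_{M \times [0,\infty)}$ and $e = \id_F$, using the identification $\zeta_e = \zeta_{\mathrm{MVZ}}$ established just before Definition \ref{definition:partial_quasistate} was applied in Section \ref{subsec:partial_quasistate}. First I would recall that for $F = \bfk_{M\times[0,\infty)}$ the monoidal unit, $c(\id_F; F, F) = 0 \neq +\infty$, so $\zeta_{\mathrm{MVZ}} = \zeta_{\id_F}$ is a well-defined partial symplectic quasi-state.

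For part (i): assume $\Spec(\bfk_{M\times[0,\infty)}, G) \neq \emptyset$. By definition of $\Spec$, this means there is some nonzero $\beta \in h\cT^*_\infty(F, G)$ with $c(\beta; F, G) \in \bR$, i.e.\ $c(\beta; F, G) \neq +\infty$ (recall the remark that $\Spec(F,G)$ never contains $+\infty$). Since $e = \id_F$, we have $\beta e = \beta$, so $c(\beta e; F, G) = c(\beta; F, G) \neq +\infty$, which is precisely the hypothesis of Theorem \ref{theorem:zetae_heavy}(i). Together with the compactness of $\RS(G)$, that theorem yields that $\RS(G)$ is $\zeta_{\id_F}$-heavy, hence $\zeta_{\mathrm{MVZ}}$-heavy. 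Part (ii) is identical: $\Spec(G, \bfk_{M\times[0,\infty)}) \neq \emptyset$ gives a nonzero $\beta \in h\cT^*_\infty(G, F)$ with $c(\beta; G, F) \neq +\infty$, and since $e\beta = \beta$ (again because $e = \id_F$), Theorem \ref{theorem:zetae_heavy}(ii) applies verbatim.

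There is essentially no obstacle here; the only point requiring a word of care is the bookkeeping of which $\Spec$ lands in which slot of Theorem \ref{theorem:zetae_heavy} — part (i) of the corollary uses morphisms $F \to G$ and hence part (i) of the theorem, while part (ii) uses morphisms $G \to F$ and hence part (ii) of the theorem — and the observation that a nonempty spectrum is exactly the existence of a morphism with finite spectral invariant, which is immediate from the definition of $\Spec(F,G)$ as the set of values $c(\alpha; F, G)$ for $\alpha$ ranging over $h\cT^*_\infty(F,G)$ (these values being automatically in $\bR$). So the proof is just: unwind the definition of $\Spec$, set $e = \id$, and invoke Theorem \ref{theorem:zetae_heavy}.
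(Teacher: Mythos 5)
Your proof is correct and matches the paper's (implicit) reasoning exactly: the paper states this corollary without a separate proof, treating it as the immediate specialization of Theorem~\ref{theorem:zetae_heavy} to $F = \bfk_{M\times[0,\infty)}$, $e = \id_F$, with the observation that a nonempty $\Spec$ supplies a morphism of finite spectral invariant.
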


\begin{remark}
    We can state our results in a different way by introducing a $1$-category $\overline{h\cT_\infty}^*(T^*M)$ as follows.
    
    We set $h\cT_{\infty, -\infty}^d(F,G) \coloneqq \{\alpha \in h\cT_\infty^d(F,G) \mid c(\alpha;F,G)=+\infty \}$. 
    This is closed under precomposition and postcomposition of morphisms in $h\cT_\infty(T^*M)$.
    Let $\overline{h\cT_\infty}(T^*M)$ be the category that has the same object as $h\cT_\infty(T^*M)$ and the morphism spaces is defined by 
    \begin{equation}
        \Hom_{\overline{h\cT_\infty}(T^*M)}(F,G)\coloneqq h\cT_{\infty}^0(F,G) /h\cT_{\infty, -\infty}^0(F,G). 
    \end{equation}
    The triangulated structure on $h\cT_\infty(T^*M)$ is revoked by this quotient.  
    Define 
    \begin{align}
        \overline{h\cT_\infty}^d(F,G)& \coloneqq h\cT_{\infty}^d(F,G) /h\cT_{\infty, -\infty}^d(F,G) =\Hom_{\overline{h\cT_\infty}(M)}(F,G[d]), \\
        \overline{h\cT_\infty}^*(F,G)&\coloneqq \bigoplus_{d\in \bZ}\overline{h\cT_\infty}^d(F,G).
    \end{align}
    Notice that the assignment $c( \mathchar`-; F,G)\colon h\cT_\infty^*(F,G) \to \bR\cup \{ +\infty \}$ descends to an assignment $\overline{h\cT_\infty}^*(F,G) \to \bR\cup \{ +\infty\}$. We also write this as $c(\mathchar`-; F,G)$. 
    By definition, $\alpha \in \overline{h\cT_\infty}^*(F,G)$ is zero if and only if $c(\alpha ;F,G)=+\infty$. 
    
    In some reasonable situations, $\overline{h\cT_\infty}^*(F,G)$ is isomorphic to $h\cT_\infty^*(F,G)$ (see \cref{lemma:specinfty}).

    In fact, $e$ can be taken as a non-zero idempotent in $\overline{h\cT_\infty}(T^*M)$. 
    However the author do not know an example of idempotents in $\overline{h\cT_\infty}(T^*M)$ which does not lift to that of $h\cT_\infty(T^*M)$.
\end{remark}

\subsection{Criteria for $\zeta_{\mathrm{MVZ}}$-superheaviness}

We also give a sufficient condition for $\zeta_{\mathrm{MVZ}}$-superheaviness.

\begin{theorem}\label{theorem:superheavy}
    Let $G \in \cT(T^*M)$.
    Assume that $\RS(G)$ is compact.
    \begin{enumerate}
        \item We assume that there exists $\beta \in h\cT_\infty^*(\bfk_{M\times [0,\infty)}, G) $ such that $c(\beta \mu_M;\bfk_{M\times [0,\infty)},  G\otimes \mathrm{or}_{M\times \bR})\neq +\infty$. Then $\RS(G)$ is $\zeta_{\mathrm{MVZ}}$-superheavy.
        \item We assume that there exists $\beta \in h\cT_\infty^*(G,\bfk_{M\times [0,\infty)}) $ such that $c(\mu_M\beta; G, \bfk_{M\times [0,\infty)}\otimes \mathrm{or}_{M\times \bR})\neq +\infty $. Then $\RS(G)$ is $\zeta_{\mathrm{MVZ}}$-superheavy.
    \end{enumerate}
\end{theorem}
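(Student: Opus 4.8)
The plan is to run the argument of \cref{theorem:zetae_heavy} on the ``upper'' side — with $\ell_+$ in place of $\ell_e$ — and to feed the hypothesis in through the Viterbo-type description $\ell_+(H)=c(\mu_M;\bfk_{M\times[0,\infty)},\cK_H^{\ostar}(\bfk_{M\times[0,\infty)})\otimes\mathrm{or}_{M\times\bR})$ from \cref{remark:ell+}~(iii). I will only detail (i); (ii) is the mirror statement, handled by the same scheme after passing to the dual formulation $\ell_+(H)=-\ell_-(\overline H)$ of \cref{remark:ell+}~(i), so that postcomposition by $\beta$ becomes precomposition and $\mu_M\beta$ rather than $\beta\mu_M$ enters.

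Write $F:=\bfk_{M\times[0,\infty)}$. By \cref{lemma:heavyequiv} it suffices to prove $\zeta_{\mathrm{MVZ}}(H)\le c$ whenever $H\in C_{cc}(T^*M)$ satisfies $H\equiv c$ on a neighborhood of $\RS(G)$. Since $\cK_H^{\ostar k}\simeq\cK_{kH}$ for time-independent $H$ (by \cref{lemma:GKSsharp}) and $\zeta_{\mathrm{MVZ}}(H)=\lim_k\ell_+(kH)/k$ with $\ell_+(kH)=c(\mu_M;F,\cK_H^{\ostar k}(F)\otimes\mathrm{or}_{M\times\bR})$, it is enough to produce a constant $C$, independent of $k$, with $\ell_+(kH)\le kc+C$. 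As $H-c\in C_{cc}(T^*M,\RS(G))$ and $\cK_H\simeq T_c\cK_{H-c}$ by \cref{lemma:GKSsharp,lemma:GKSconst}, \cref{lemma:constantshift} gives $\cK_H^{\ostar k}(G)\simeq T_{kc}G$, and hence $\cK_H^{\ostar k}(G\otimes\mathrm{or}_{M\times\bR})\simeq T_{kc}(G\otimes\mathrm{or}_{M\times\bR})$, because the GKS action commutes with tensoring by a local system pulled back from $M$.

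Now the heart of the proof. The functor $\cK_H^{\ostar k}(-)$ is an autoequivalence of $\cT(T^*M)$ (inverse $\cK_H^{\ostar -k}(-)$) commuting with the translations $T_c$ and with shifts, hence — just as in the computation of $c(T_c\alpha;T_cF,T_cG)$ — it preserves spectral invariants. Thus, setting $b':=c(\beta;F,G)$, which is finite by \cref{lemma:specprodineq} applied to $\beta\mu_M=(\beta\otimes\mathrm{or}_{M\times\bR})\circ\mu_M$ together with the hypothesis $c(\beta\mu_M;F,G\otimes\mathrm{or}_{M\times\bR})<+\infty$, one has $c\bigl(\cK_H^{\ostar k}(\beta\otimes\mathrm{or}_{M\times\bR});\cK_H^{\ostar k}(F\otimes\mathrm{or}_{M\times\bR}),\cK_H^{\ostar k}(G\otimes\mathrm{or}_{M\times\bR})\bigr)=b'$. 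Next, the class $\mu_M$ realizing $\ell_+$ is natural with respect to the GKS action: it is the image of the fundamental class under a continuation map out of $\cK_0^{\ostar k}(F)=F$, so composing $\mu_M\in h\cT_\infty^*(F,\cK_H^{\ostar k}(F)\otimes\mathrm{or}_{M\times\bR})$ with $\cK_H^{\ostar k}(\beta\otimes\mathrm{or}_{M\times\bR})$ and transporting the target along $\cK_H^{\ostar k}(G\otimes\mathrm{or}_{M\times\bR})\simeq T_{kc}(G\otimes\mathrm{or}_{M\times\bR})\simeq G\otimes\mathrm{or}_{M\times\bR}$ in $h\cT_\infty$ produces $\beta\mu_M$. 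Applying \cref{lemma:specprodineq} to the composite $F\xrightarrow{\mu_M}\cK_H^{\ostar k}(F)\otimes\mathrm{or}_{M\times\bR}\to\cK_H^{\ostar k}(G)\otimes\mathrm{or}_{M\times\bR}$ and using the identity $c(\,\cdot\,;F,T_{kc}Y)=kc+c(\,\cdot\,;F,Y)$ gives
\[
  kc+c(\beta\mu_M;F,G\otimes\mathrm{or}_{M\times\bR})\ \ge\ \ell_+(kH)+b',
\]
so $\ell_+(kH)\le kc+\bigl(c(\beta\mu_M;F,G\otimes\mathrm{or}_{M\times\bR})-b'\bigr)$. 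Dividing by $k$ and letting $k\to\infty$ yields $\zeta_{\mathrm{MVZ}}(H)\le c$.

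The step I expect to be the main obstacle is the naturality claim above: that the class underlying the Viterbo description $\ell_+(H)=c(\mu_M;F,\cK_H^{\ostar}(F)\otimes\mathrm{or}_{M\times\bR})$ is functorial under $\cK_H^{\ostar k}$, with the orientation twist and the translations $T_{kc}$ kept consistent. Making this precise requires unwinding that class along the lines of \cite{guillermou2022gamma,Vi19}; everything else is formal (the triangle inequality \cref{lemma:specprodineq}, the constant shift \cref{lemma:constantshift}, and invariance of spectral invariants under autoequivalences). A technically lighter route would avoid the $\mu_M$-twisted formula altogether and instead start from $\ell_+(kH)=-c(1;F,\cK_H^{\ostar -k}(F))$ together with a Verdier-duality avatar of the hypothesis; this trades the naturality point for a duality computation and is also the natural setting for (ii).
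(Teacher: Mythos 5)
Your proof is essentially the paper's: reduce via \cref{lemma:heavyequiv}, use $\ell_+(kH)=c(\mu_M;\bfk_{M\times[0,\infty)},\cK_H^{\ostar k}(\bfk_{M\times[0,\infty)})\otimes\mathrm{or}_{M\times\bR})$ from \cref{remark:ell+}, apply \cref{lemma:constantshift} to get $\cK_H^{\ostar k}(G)\simeq T_{kc}G$, and deduce $\ell_+(kH)\le kc+c(\beta\mu_M)-c(\beta)$ from the triangle inequality \cref{lemma:specprodineq} applied to the composite through $\cK_H^{\ostar k}(\bfk_{M\times[0,\infty)})\otimes\mathrm{or}_{M\times\bR}$ --- the paper does precisely this, working with explicit near-optimal lifts of $\mu_M$ and $\beta$ rather than invoking the autoequivalence abstractly. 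The naturality you flag as the main obstacle is asserted without further justification in the paper as well (the claim that the composite of lifts ``is a lift of $\beta\mu_M$''), so that concern is shared rather than a gap peculiar to your proposal.
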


\begin{proof}
    We only give a proof for (i) since the other is parallel. 

    By \cref{lemma:heavyequiv}, it is enough to prove $\zeta_e(H)\le c $ for $H \in C_{cc}(T^*M)$ with $H|_{U}\equiv c$ where $U$ is a neighborhood of $\RS(G)$. 

    Take $k\in \bZ$ and set $\ell \coloneqq \ell_+(kH)=c(\mu_M;\bfk_{M \times [0,\infty)},\cK_H^{\ostar k}(\bfk_{M \times [0,\infty)})\otimes \mathrm{or}_{M\times \bR})$ (see \cref{remark:ell+}).
    For $\varepsilon>0$, we take a lift
    \begin{equation}
        \tl{\mu}_M \colon \bfk_{M \times [0,\infty)} \to T_{-\ell+\varepsilon} \cK_H^{\ostar k}(\bfk_{M \times [0,\infty)})\otimes \mathrm{or}_{M\times \bR}[n]
    \end{equation}
    of $\mu_M$.
    For any $a < c(\beta; \bfk_{M \times [0,\infty)},G)$, we can also take a lift 
    \begin{equation}
        \tl{\beta} \colon \bfk_{M \times [0,\infty)} \to T_{-a} G[d]
    \end{equation}
    of $\beta$.
    By composing the above morphisms and noticing that $\cK_H^{\ostar k} (G) \simeq T_{kc} G$ by \cref{lemma:constantshift}, we get a morphism
    \begin{equation}
        \bfk_{M \times [0,\infty)} \xrightarrow{\tl{\mu}_M} T_{-\ell+\varepsilon}\cK_H^{\ostar k}(\bfk_{M \times [0,\infty)})\otimes \mathrm{or}_{M\times \bR}[n] \xrightarrow{\cK_H^{\ostar k} (\tilde{\beta})\otimes \id } T_{-\ell-a+\varepsilon + kc} G\otimes \mathrm{or}_{M\times \bR}[n+d],
    \end{equation}
    which is a lift of $\beta \mu_M \in Q^*_\infty(\bfk_{M \times [0,\infty)}, G)$.
    This means that 
    \begin{equation}
        \ell+a-\varepsilon - kc \le c(\beta \mu_M;\bfk_{M \times [0,\infty)}, G\otimes \mathrm{or}_{M\times \bR}).
    \end{equation}
    Since $\varepsilon >0$ and $a < c(\beta;\bfk_{M \times [0,\infty)}, G)$ are arbitrary, we obtain 
    \begin{equation}
        \ell_+(kH) = \ell \le kc + c(\beta \mu_M;\bfk_{M \times [0,\infty)}, G\otimes \mathrm{or}_{M\times \bR})-c(\beta;\bfk_{M \times [0,\infty)}, G).
    \end{equation}
    Taking the limit $k\to +\infty$ completes the proof.
\end{proof}

\subsection{$\zeta_{\mathrm{MVZ}}$-heavy/superheaviness subsets and Viterbo's conjecture}\label{subsec:Viterbo_conj}

In this subsection, we briefly review the relationship between heavy/superheavy subsets and Viterbo's spectral bound conjecture.

We define $DT^*M\coloneqq \{(x,\xi)\in T^*M \mid \|\xi\|_g \le 1\}$ for a Riemannian manifold $(M,g)$. 
The following was first conjectured by Viterbo in an earlier verion of \cite{viterbo2022symplectic}, as the special case $M=T^n$.
In this paper, we call this conjecture the Viterbo conjecture. 
Let $0_{M}$ be the image of the zero section of $T^*M$. 
\begin{conjecture}[Viterbo conjecture]
Let $(M,g)$ be a closed Riemannian manifold. There exists a constant $R>0$ such that, if $\phi \in \Ham_c(T^*M)$ satisfies $\phi (0_M)\subset DT^*M$, then $\gamma (\phi(0_M))<R$ holds.
\end{conjecture}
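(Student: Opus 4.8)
As it stands the Viterbo conjecture is an open problem, so what follows is a plan for the shape a proof — or, more realistically, a verification under extra hypotheses on $(M,g)$ — would take within the present framework, together with the point at which the genuine difficulty appears. Given $\phi\in\Ham_c(T^*M)$ with $L\coloneqq\phi(0_M)\subset DT^*M$, set $F_L\coloneqq\cK_\phi\ostar\bfk_{M\times[0,\infty)}\in\cT(T^*M)$, so that $\RS(F_L)=L\subset DT^*M$. By definition and the duality of \cref{remark:ell+}, the spectral norm $\gamma(L)$ is a sum of the negatives of two spectral invariants, one of the form $c(\alpha;\bfk_{M\times[0,\infty)},F_L)$ and one of the form $c(\beta;F_L,\bfk_{M\times[0,\infty)})$ (the latter up to an orientation twist), with $\alpha$ the unit class and $\beta$ the fundamental class; I would bound both of these in absolute value simultaneously. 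The payoff of this reformulation is that the hypothesis on $\phi$ has become the purely microlocal condition $\RS(F_L)\subset DT^*M$, with no further reference to $\phi$.

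\textbf{The plan.} By \cref{lemma:spectrality} both spectral invariants lie in $\Sigma(\bfk_{M\times[0,\infty)},F_L)\cup\{+\infty\}$ (and likewise with the arguments reversed), and \cref{lemma:specinfty} rules out the value $+\infty$ as soon as $\Sigma(\bfk_{M\times[0,\infty)},F_L)$ is bounded on one side. Hence the conjecture reduces to the assertion that there is a constant $R'=R'(M,g)$ with $\Sigma(\bfk_{M\times[0,\infty)},F_L)\subset[-R',R']$ for \emph{every} $F_L$ satisfying $\RS(F_L)\subset DT^*M$; granting this, $\gamma(L)\le 2R'$, so one may take $R=2R'$ (or any strictly larger constant). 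Heuristically one expects such a bound because the elements of $\Sigma(\bfk_{M\times[0,\infty)},F_L)$ are critical values of a generating-function-type presentation of $F_L$ relative to the zero section — equivalently, symplectic areas swept between $0_M$ and $L$ inside $DT^*M$ — and those areas should be controlled by the ``width'' of $DT^*M$ measured in $(M,g)$.

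\textbf{The main obstacle.} The step asserting that $\Sigma(\bfk_{M\times[0,\infty)},F_L)$ is uniformly bounded is exactly the heart of the conjecture and is not available at this level of generality. Although $L$ lies in $DT^*M$, it may be $C^0$-close to $0_M$ while oscillating arbitrarily much, so there is no a priori control on the combinatorial complexity — number of fibrewise intersections of $L$ with the zero section, winding, number of Morse generators of a presentation — of $F_L$, and the naive area estimate does not close up. Getting past this requires genuinely new input: either restricting to base manifolds with extra structure, for instance $M=T^n$ where the group structure affords additional leverage, or importing deep Floer-theoretic input such as energy estimates or the analytic techniques behind known cases of $C^0$-continuity of spectral invariants. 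Phrased microlocally, what is missing is a bound, uniform over the infinite-dimensional family of sheaves $F_L$ with $\RS(F_L)\subset DT^*M$, on $\RG(\{\tau>0\};\muhom(\bfk_{M\times[0,\infty)},F_L)|_{\{\tau>0\}})$ — the object whose support records $\Sigma(\bfk_{M\times[0,\infty)},F_L)$ — and at present we do not see how to produce it.
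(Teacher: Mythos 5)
You have correctly identified that this is a \emph{conjecture}, not a theorem: the paper itself only states it, records that it is open in general, and cites the work of Shelukhin, Guillermou--Vichery and Viterbo for the special cases where it is known. There is therefore no ``paper's own proof'' against which to measure your argument, and your decision to present a reduction rather than claim a proof is the right one.

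As for the content of your reduction: the reformulation is sound and consistent with the paper's framework. Writing $\gamma(\phi(0_M))$ as a difference of two spectral invariants via \cref{remark:ell+} and equation~\eqref{eq:gammaLag}, noting via \cref{lemma:spectrality} that each lies in $\Sigma(\bfk_{M\times[0,\infty)},F_L)\cup\{+\infty\}$, ruling out $+\infty$ by \cref{lemma:specinfty} once $\Sigma$ is bounded on one side, and thereby reducing the conjecture to a \emph{uniform} two-sided bound on $\Sigma(\bfk_{M\times[0,\infty)},F_L)$ over all $F_L$ with $\RS(F_L)\subset DT^*M$ --- all of this is correct, and the resulting bound $\gamma(L)\le 2R'$ follows as you say. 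You also put your finger on exactly where the difficulty sits: for any fixed compact $L$ the set $\Sigma$ is bounded, but nothing in the microlocal formalism alone gives a bound that is independent of the sheaf $F_L$ (equivalently, of $\phi$), and this is precisely the hard analytic/Floer-theoretic input that the cited proofs in special cases supply by other means. So: no gap in what you wrote, because you did not overclaim; just be aware that the paper treats this as an external input, and the constructions in \cref{subsec:Viterbo_conj} (\cref{theorem:Viterbotrue}, \cref{corollary:superheavyV}, \cref{proposition:Viterbocsheavy}) are all conditional on the conjecture holding for the given $M$ and $\bfk$.
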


Note that, by \cite{Vi19}, we have 
\begin{equation}\label{eq:gammaLag}
    \gamma (\phi(0_M))= c(\mu_M; \bfk_{M\times[0,\infty)}, \cK_{\phi}^{\ostar}( \bfk_{M\times [0,\infty)})\otimes \mathrm{or}_{M\times\bR})-c(1; \bfk_{M\times[0,\infty)}, \cK_{\phi}^{\ostar}( \bfk_{M\times [0,\infty)}))
\end{equation}
where $\mu_M\in H^n(M;\mathrm{or}_M)$ is the fundamental class. 
The validity of this conjecture does not depend on the choice of the metric $g$ on $M$. It may depend on the coefficient field $\bfk$.

This conjecture is related to our discussion by the following theorem:

\begin{theorem}\label{theorem:Viterbotrue}
If $M$ satisfies the Viterbo conjecture, then any $\zeta_{\mathrm{MVZ}}$-heavy subset $A\subset T^*M$ is $\zeta_{\mathrm{MVZ}}$-superheavy. 
\end{theorem}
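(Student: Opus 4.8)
The plan is to derive the statement from the fact that, assuming the Viterbo conjecture, the partial symplectic quasi-state $\zeta_{\mathrm{MVZ}}$ is \emph{homogeneous}, in the sense that $\zeta_{\mathrm{MVZ}}(-H)=-\zeta_{\mathrm{MVZ}}(H)$ for every $H\in C_{cc}(T^*M)$. Granting this, the implication is immediate: if $A$ is $\zeta_{\mathrm{MVZ}}$-heavy and $H\in C_{cc}(T^*M)$ satisfies $H\equiv c$ on a neighborhood of $A$, then $-H\equiv -c$ there, so \cref{lemma:heavyequiv} applied to $A$ and $-H$ gives $\zeta_{\mathrm{MVZ}}(-H)\ge -c$, whence $\zeta_{\mathrm{MVZ}}(H)=-\zeta_{\mathrm{MVZ}}(-H)\le c$; by \cref{lemma:heavyequiv} again, this is exactly $\zeta_{\mathrm{MVZ}}$-superheaviness of $A$.

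So the work is to prove $\zeta_{\mathrm{MVZ}}(H)+\zeta_{\mathrm{MVZ}}(-H)=0$. Using the additivity with respect to constants in \cref{definition:partial_quasistate}, this sum equals $\zeta_{\mathrm{MVZ}}(H_{\mathrm{norm}})+\zeta_{\mathrm{MVZ}}(-H_{\mathrm{norm}})$, and since $\zeta_{\mathrm{MVZ}}$ is Lipschitz and $C^\infty_c(T^*M)$ is $C^0$-dense in $C_c(T^*M)$, it suffices to treat $H\in C^\infty_c(T^*M)$. For such $H$ I would combine the description $\zeta_{\mathrm{MVZ}}(H)=\lim_k\ell_+(kH)/k$ from the last lemma of \cref{subsec:partial_quasistate} with the identity $\ell_+(H')=-\ell_-(-H')$ of \cref{remark:ell+} and with $\cK_{kH}\simeq\cK_{\phi^{kH}_1}$ (recall $kH$ is normalized), to get
\[
\ell_+(kH)+\ell_+(-kH)=\ell_+(kH)-\ell_-(kH)=\gamma\bigl(\phi^{kH}_1(0_M)\bigr),
\]
where the last equality is \eqref{eq:gammaLag} together with item (iii) of \cref{remark:ell+}. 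Dividing by $k$ and taking $k\to\infty$ yields $\zeta_{\mathrm{MVZ}}(H)+\zeta_{\mathrm{MVZ}}(-H)=\lim_k \gamma\bigl(\phi^{kH}_1(0_M)\bigr)/k$, so it remains to show this limit is $0$.

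This is the only place the Viterbo conjecture is used, and the step I expect to carry the real content is the following elementary observation: with $K=\Supp(H)$, the flow $\phi^H_t$ is the identity on the open set $T^*M\setminus K$, hence preserves $K$ setwise, so $\phi^{kH}_1(0_M)=\phi^H_k(0_M)\subset 0_M\cup K$ for all $k$, a single compact subset of $T^*M$ (using that $M$ is closed). Choosing a metric on $M$ for which $0_M\cup K\subset DT^*M$ — legitimate because the validity of the conjecture does not depend on the metric — the conjecture furnishes $R>0$ with $\gamma\bigl(\phi^{kH}_1(0_M)\bigr)<R$ for every $k$, so the limit above vanishes and homogeneity follows. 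The genuine obstacle is thus not conceptual but bookkeeping: one has to pin down the sign conventions relating the Lagrangian spectral invariants $c(1;-)$ and $c(\mu_M;-)$, the functionals $\ell_\pm$, and $\gamma(\phi(0_M))$ so that \eqref{eq:gammaLag} and \cref{remark:ell+} combine as claimed.
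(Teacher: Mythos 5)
Your proof is correct, but it follows a genuinely different route from the paper's. You prove Theorem~\ref{theorem:Viterbotrue} directly by establishing that the Viterbo conjecture implies \emph{homogeneity} of $\zeta_{\mathrm{MVZ}}$, i.e., $\zeta_{\mathrm{MVZ}}(H)+\zeta_{\mathrm{MVZ}}(-H)=0$: after reducing to $H\in C_c^\infty(T^*M)$ via additivity with respect to constants and Lipschitz continuity, you combine $\zeta_{\mathrm{MVZ}}(H)=\lim_k\ell_+(kH)/k$ with $\ell_+(-kH)=-\ell_-(kH)$ (Remark~\ref{remark:ell+}(ii)) and \eqref{eq:gammaLag} to identify $\ell_+(kH)+\ell_+(-kH)$ with $\gamma(\phi_1^{kH}(0_M))$, and then use the observation that $\phi_k^H(0_M)\subset 0_M\cup\Supp(H)$ stays in one fixed disk bundle to invoke the conjecture's uniform bound $R$. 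This is precisely the classical ``well-known to experts'' argument that the paper alludes to via its citations (\cite{EP09rigid,Entov14,PR14,shelukhin2022viterbo}) but does not reproduce. The paper instead derives Theorem~\ref{theorem:Viterbotrue} as a corollary of the slightly stronger \cref{proposition:Viterbocsheavy} (heavy $\Rightarrow$ cohomologically superheavy), whose proof runs through the heavy-duty sheaf machinery of Section~\ref{section:characterization}: the characterization of heaviness (\cref{theorem:characterization}), the wrapping kernel $\cK_A$ of \cref{subsec:explicitadjoints}, and the spectral-colimit compatibility (\cref{proposition:spectral-colim}). Your approach is shorter and more elementary for the statement at hand; the paper's approach buys the stronger conclusion of cohomological superheaviness, which it needs elsewhere. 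One small clean-up you should make explicit: the step $\phi_t^H(0_M)\subset 0_M\cup\Supp(H)$ rests on the fact that $X_H$ vanishes on $T^*M\setminus\mathring{K}$ (not merely on $T^*M\setminus K$), which follows from $H\equiv 0$ and hence $dH\equiv 0$ there by continuity of $dH$; this is what lets you conclude that $\phi_t^H$ fixes $T^*M\setminus K$ pointwise and preserves $K$ setwise.
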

This statement is well-known to experts. 
See, for example, \cite[Rem.~1.23]{EP09rigid}, \cite[Thm.~4.1]{Entov14},  \cite[Prop.~5.1.2]{PR14}, \cite[Sec.~1.1.2]{shelukhin2022viterbo} for related statements. 
In this paper, we will see a slightly stronger statement \cref{proposition:Viterbocsheavy}. 

The following is an obvious corollary of \cref{corollary:heavy,theorem:Viterbotrue}.
\begin{corollary}\label{corollary:superheavyV}
    Let $G \in \cT(T^*M)$. Assume that $\RS(G)$ is compact and that the Viterbo conjecture for $M$ over $\bfk$ is true. 
    \begin{enumerate}
        \item If $\Spec(\bfk_{M\times [0,\infty)}, G) \neq \emptyset$,
    then $\RS(G)$ is $\zeta_{\mathrm{MVZ}}$-superheavy.
        \item If $\Spec(G,\bfk_{M\times[0,\infty)}) \neq \emptyset$,
    then $\RS(G)$ is $\zeta_{\mathrm{MVZ}}$-superheavy.
    \end{enumerate}
\end{corollary}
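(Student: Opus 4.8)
The plan is simply to chain together the two results cited in the statement, since each of them has already done the real work. First I would observe that in either case (i) or (ii) the standing hypotheses are exactly those of the corresponding part of \cref{corollary:heavy}: namely, $\RS(G)$ is compact and the relevant spectral set $\Spec(\bfk_{M\times[0,\infty)},G)$ (resp.\ $\Spec(G,\bfk_{M\times[0,\infty)})$) is non-empty. Applying \cref{corollary:heavy} then gives that $A\coloneqq\RS(G)$ is $\zeta_{\mathrm{MVZ}}$-heavy.

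Next, invoking the assumption that the Viterbo conjecture for $M$ over $\bfk$ holds, I would apply \cref{theorem:Viterbotrue} to the $\zeta_{\mathrm{MVZ}}$-heavy subset $A=\RS(G)$, which upgrades heaviness to $\zeta_{\mathrm{MVZ}}$-superheaviness. Since the two parts of the statement differ only in which half of \cref{corollary:heavy} is used to produce heaviness, this single line of reasoning simultaneously establishes (i) and (ii).

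There is no genuine obstacle in this argument: all the substance lives in the two invoked statements, and in particular in \cref{theorem:Viterbotrue} (equivalently its slight strengthening \cref{proposition:Viterbocsheavy}), whose proof — unwinding the description of $\zeta_{\mathrm{MVZ}}$ via $\ell_\pm$ together with the uniform bound on $\gamma(\phi(0_M))$ furnished by the Viterbo conjecture through \eqref{eq:gammaLag} — is treated separately. The only point requiring a moment's care is to confirm that the phrase ``Viterbo conjecture for $M$ over $\bfk$ is true'' in the statement is literally the hypothesis of \cref{theorem:Viterbotrue}, which it is; hence the corollary is immediate.
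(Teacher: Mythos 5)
Your proof is correct and follows exactly the same route as the paper, which states this corollary as an immediate consequence of \cref{corollary:heavy} (heaviness of $\RS(G)$) combined with \cref{theorem:Viterbotrue} (the Viterbo conjecture upgrades $\zeta_{\mathrm{MVZ}}$-heaviness to superheaviness).
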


To the best of the author's knowledge, no counterexamples have been found, and the conjecture has been proven for some manifolds, which we will explain below.
We say that a $\bfk$-orientation is an isomorphism $\bfk_M\simeq \mathrm{or}_M$.
We call a manifold $M$ equipped with a $\bfk$-orientation a $\bfk$-oriented manifold. 
Note that the element $\mu_M$ can be regarded as an element of $H^n(M)=H^n(M;\bfk_M)$ if $M$ is $\bfk$-oriented.

First, Shelukhin~\cite{shelukhin2022viterbo} proved the conjecture for $\bfk=\bZ/2\bZ$ and $M=S^n, \bR P^n, \bC P^n, \allowbreak \bH P^n$.
Next, Shelukhin~\cite{shelukhin2022symplectic} provided a proof for the case where $M$ is $\bfk$-oriented and string point-invertible. The condition of string point-invertibility is a condition on the Gerstenhaber algebra structure on the homology of the free loop space of $M$.
Also, with a minor assumption on the coefficient field, Guillermou--Vichery~\cite{guillermou2022viterbos} proved the conjecture for homogeneous spaces of compact Lie groups. 
At around the same time, Viterbo~\cite{viterbo2022inverse} proved the conjecture for a mysterious class that includes the compact Lie groups, and provided an alternative proof for homogeneous spaces without the assumption of the coefficient field.
To be precise, the following holds:

\begin{theorem}[{\cite{shelukhin2022viterbo,shelukhin2022symplectic,guillermou2022viterbos,viterbo2022inverse}}]
\begin{enumerate}
\item For a compact Lie group $\mathsf{G}$ and its closed subgroup $\mathsf{H}$, the homogeneous space $\mathsf{G}/\mathsf{H}$ satisfies the Viterbo conjecture.
\item If $M$ is $\bfk$-oriented and string point invertible, then it satisfies the Viterbo conjecture.
\item If there exist compact manifolds $V$, $P$ and morphisms $f\colon P\to M, g\colon V\to \mathrm{Diff} (P)$ such that $(f \circ \mathrm{ev}_y\circ g )^*(\mu_M)\neq 0 $ where $\mathrm{ev}_y$ is the evaluation at some point $y\in P$, then $M$ satisfies the Viterbo conjecture.
\item If there exists a morphism $p\colon N\to M$ such that $p^*(\mu_M)\neq 0$, and $N$ satisfies the Viterbo conjecture, then $M$ also satisfies it.
\end{enumerate}
\end{theorem}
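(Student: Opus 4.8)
The plan is to observe that the four assertions are, modulo a translation between the sheaf-theoretic and the Hamiltonian--Floer versions of the spectral norm, precisely the main results of the four cited papers, and to assemble them. Throughout one should keep in mind that the Viterbo conjecture for $M$ over $\bfk$ is equivalent to the existence of a uniform bound on $\gamma(\phi(0_M))$ over all $\phi\in\Ham_c(T^*M)$ with $\phi(0_M)\subset DT^*M$; by Viterbo~\cite{Vi19} (see~\eqref{eq:gammaLag}) this quantity is a difference of two spectral invariants attached to the kernel $\cK_\phi$, and each cited work produces such a bound, either directly for $\gamma$ or for its Floer counterpart $\gamma^{\mathrm{Floer}}$.

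I would first treat (iii), the general criterion of Viterbo~\cite{viterbo2022inverse}. From the action $g$ of the compact manifold $V$ on $P$ by diffeomorphisms, together with the evaluation $\mathrm{ev}_y$ and the map $f$, one manufactures, out of an arbitrary Lagrangian in $DT^*M$, a family of Lagrangians whose spectral norms are controlled uniformly, the hypothesis $(f\circ\mathrm{ev}_y\circ g)^*(\mu_M)\neq 0$ being exactly what forces the fundamental class to survive and hence the bound to persist. Assertion (i) for a compact Lie group $M=\mathsf{G}$ is then the special case $P=V=\mathsf{G}$ with $g$ the left-translation action, $f=\mathrm{id}_{\mathsf{G}}$ and $y=e$, for which $(f\circ\mathrm{ev}_e\circ g)^*=\mathrm{id}^*$ is an isomorphism; for a general homogeneous space $\mathsf{G}/\mathsf{H}$ --- where the naive projection $\mathsf{G}\to\mathsf{G}/\mathsf{H}$ need not satisfy the non-vanishing condition --- I would instead invoke either the sheaf-theoretic proof of Guillermou--Vichery~\cite{guillermou2022viterbos}, valid under a mild hypothesis on $\bfk$, or Viterbo's refinement of~(iii) in~\cite{viterbo2022inverse}, which dispenses with that hypothesis.

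Next I would record that (ii) is Shelukhin's theorem~\cite{shelukhin2022symplectic}: string point-invertibility is a non-degeneracy condition on the Gerstenhaber (in fact BV) structure of the loop-space homology $H_*(\mathcal{L}M;\bfk)$ which makes a suitable spectral capacity of $DT^*M$ finite and thereby bounds $\gamma^{\mathrm{Floer}}(\phi(0_M))$ uniformly; the concrete instances $M=S^n,\bR P^n,\bC P^n,\bH P^n$ over $\bfk=\bZ/2\bZ$ are established in~\cite{shelukhin2022viterbo}. Finally, (iv) is a functoriality statement contained in~\cite{viterbo2022inverse}: given $p\colon N\to M$ with $p^*(\mu_M)\neq 0$, one transports a Lagrangian $\phi(0_M)\subset DT^*M$ through the Lagrangian correspondence induced by $p$ to a Lagrangian in $DT^*N$ of comparable spectral norm, and $p^*(\mu_M)\neq 0$ is precisely what guarantees that a uniform bound on $N$ descends to one on $M$.

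The main obstacle --- really the only point that is not bookkeeping --- is reconciling the sheaf-theoretic $\gamma$ used throughout this paper with the Hamiltonian--Floer spectral norm $\gamma^{\mathrm{Floer}}$ appearing in~\cite{shelukhin2022viterbo,shelukhin2022symplectic}. Here I would appeal to the comparison recalled in the remark preceding \cref{lemma:displace_spectral} (which rests on the filtered isomorphism of~\cite[Thm.~E.1]{guillermou2022gamma} together with~\cite[Lem.~3.2]{viterbo2018functors}): the two norms agree whenever $\mathrm{char}\,\bfk=2$ or $M$ is spin, which covers the statements as phrased --- $\bfk=\bZ/2\bZ$ in the instances of (i) coming from~\cite{shelukhin2022viterbo}, and, in (ii), the $\bfk$-oriented case once one checks that being spin or of characteristic $2$ suffices for the manifolds at hand. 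For the sheaf-native results~\cite{guillermou2022viterbos,viterbo2022inverse} no translation is required, so putting the four inputs together establishes the theorem.
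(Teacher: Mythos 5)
The theorem in question is not proved in the paper; the paper simply cites \cite{shelukhin2022viterbo,shelukhin2022symplectic,guillermou2022viterbos,viterbo2022inverse} and the surrounding prose attributes each item exactly as you describe, so your assembly of the four sources is in substance what the paper is doing. Your account of the attributions is accurate: (iii) is Viterbo's criterion in \cite{viterbo2022inverse}, (ii) is Shelukhin's string point-invertibility theorem \cite{shelukhin2022symplectic} with the explicit cases from \cite{shelukhin2022viterbo}, (iv) is the functoriality statement in \cite{viterbo2022inverse}, and (i) is covered both by \cite{guillermou2022viterbos} (with a coefficient-field hypothesis) and by \cite{viterbo2022inverse} via (iii) and (iv).

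The one place you reach for the wrong comparison tool is the Floer-versus-sheaf translation. The remark you invoke (the one preceding \cref{lemma:displace_spectral}, resting on \cite[Thm.~E.1]{guillermou2022gamma} and \cite[Lem.~3.2]{viterbo2018functors}) concerns the Hamiltonian spectral norm $\gamma(\phi)$ and comes with the ``$\operatorname{char}\bfk=2$ or $M$ spin'' restriction. The Viterbo conjecture as formulated here, however, is about the Lagrangian spectral norm $\gamma(\phi(0_M))$, and the relevant identification is \cref{eq:gammaLag}, due to \cite{Vi19}, which expresses $\gamma(\phi(0_M))$ as a difference of sheaf-theoretic spectral invariants of $\cK_\phi^\ostar(\bfk_{M\times[0,\infty)})$; the paper also notes (after \cref{remark:ell+}) that the orientation hypothesis in \cite{Vi19} can be removed by twisting with the orientation sheaf, so no spin/characteristic~$2$ assumption is needed. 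Citing the Lagrangian comparison \cref{eq:gammaLag} rather than the Hamiltonian one both matches what the paper actually uses and removes the restriction you worried about; with that substitution your sketch is correct.
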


\section{A characterization of heaviness}\label{section:characterization}

Let $e\colon F\to F$ be an idempotent in $h\cT_\infty(T^*M)$ with $c(e;F,F)\neq +\infty$ and denote the associated partial symplectic quasi-state by $\zeta_e$. 
To prove a given compact subset $A\subset T^*M$ is $\zeta_e$-heavy using \cref{theorem:zetae_heavy}, one need to look for $G\in \cT_A(T^*M)$ satisfying a condition in the theorem. 

In fact, there is a most sensitive pair of an object $G$ with $\RS(G)\subset A$ and a morphism $\beta\colon F\to G$ (resp. $\beta\colon G\to F$) with respect to the condition in (i) (resp. (ii)) of \cref{theorem:zetae_heavy}. 
These object and morphism are given by the left (resp. right) adjoint $\iota_A^*$(resp. $\iota_A^!$) of the inclusion functor ${\iota_A}_* \colon \cT_A(T^*M)\to \cT(T^*M)$ and unit (resp.~counit) morphism of the adjunction. An explicit description of these adjoint functors are given by Kuo~\cite{Kuowrap}. 
Since ${\iota_A}_*$ is the natural inclusion, it will be omitted in many cases below.

Let us consider the following conditions for a compact subset $A\subset T^*M$ and an idempotent $e\colon F\to F$ in $h\cT_\infty(T^*M)$ with $c(e;F,F)\neq +\infty$:
\begin{enumerate}
    \item $A$ is $\zeta_e$-heavy,
    \item there exist an object $G\in \cT_A(T^*M)$ and an element $\beta \in h\cT_\infty^* (F,G)$ satisfying $c(\beta e;F,G)\neq +\infty$,
    \item the morphism $\eta_{A,F} \colon F\to \iota_A^*F$ in $h\cT_\infty(T^*M)$ induced by the unit morphism of the adjunction satisfies  $c(\eta_{A,F} e; F, \iota_A^*F)\neq +\infty$,
    \item there exist an object $G\in \cT_A(T^*M)$ and an element $\beta \in h\cT_\infty^*(G,F)$ satisfying $c( e \beta ; G,F)\neq +\infty$, 
    \item the morphism $\epsilon_{A,F} \colon  \iota_A^!F\to F$ in $h\cT_\infty(T^*M)$ induced by the counit morphism of the adjunction satisfies  $c( e \epsilon_{A,F} ; \iota_A^!F, F)\neq +\infty$. 
\end{enumerate}

The implications (iii) $\Rightarrow$ (ii) and (v) $\Rightarrow$ (iv) are trivial. 
The implications (ii) $\Rightarrow$ (i) and (iv) $\Rightarrow$ (i) are \cref{theorem:zetae_heavy}. 
The implications (ii) $\Rightarrow$ (iii) and (iv) $\Rightarrow$ (v) follow from formal properties of the adjunctions and the spectral invariants. 

Under some assumption on $F$, we will show that the conditions (i), (ii), (iii) are equivalent. 
See \cref{remark:counit} for the dual statements (iv), (v). 

\begin{theorem}\label{theorem:characterization}
    Assume that $F$ is cohomologically constructible and $q_{\bR} \pi (\MS(F)\cap \{\tau >0\})$ is bounded in $\bR$.
    Then the following conditions for a compact subset $A\subset T^*M$ and an idempotent $e\colon F\to F$ are equivalent:  
    \begin{enumerate}
    \item $A$ is $\zeta_e$-heavy,
    \item there exist an object $G\in \cT_A(T^*M)$ and an element $\beta \in h\cT_\infty^* (F,G)$ satisfying $c(\beta e; F,G)\neq +\infty$,
    \item[\textup{(ii')}] there exist an object $G\in \cT_A(T^*M)$ and an element $\beta \in h\cT_\infty^* (F,G)$ satisfying $\beta e\neq 0 \in h\cT_\infty^* (F,G)$,
    \item the morphism $\eta_{A,F} \colon F\to  \iota_A^*F$ in $h\cT_\infty(T^*M)$ induced by the unit morphism of the adjunction satisfies $c(\eta_{A,F} e;F, \iota_A^*F)\neq \infty$. 
    \item[\textup{(iii')}] the morphism $\eta_{A,F} \colon F\to  \iota_A^*F$ in $h\cT_\infty(T^*M)$ induced by the unit morphism of the adjunction satisfies $\eta_{A,F} e\neq 0\in h\cT_\infty^*(F, \iota_A^*F)$, 
\end{enumerate}

\end{theorem}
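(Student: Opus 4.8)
The plan is to close the cycle (iii) $\Rightarrow$ (ii) $\Rightarrow$ (i) — already available from the discussion preceding the theorem together with \cref{theorem:zetae_heavy} and the elementary observation that a compact superset of a $\zeta_e$-heavy subset is again $\zeta_e$-heavy — by proving (i) $\Rightarrow$ (iii'), and separately to identify the primed conditions with the unprimed ones. For the latter I would first check that $\Sigma(F,\iota_A^*F)$ is bounded: since $A$ is compact, $\RS(\iota_A^*F)\subset A$ is compact, so by \cref{remark:spectrality} one has $\Sigma(F,\iota_A^*F)=\{c\mid\MS(T_cF)\cap\MS(\iota_A^*F)\cap\{\tau>0\}\neq\emptyset\}$, and this set is bounded because $q_\bR\pi(\MS(F)\cap\{\tau>0\})$ is bounded by hypothesis while $q_\bR\pi(\MS(\iota_A^*F)\cap\{\tau>0\})$ is bounded thanks to Kuo's description of $\iota_A^*$ and the compactness of $A$. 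By \cref{lemma:specinfty} this gives (iii) $\Leftrightarrow$ (iii'); and (ii) $\Leftrightarrow$ (ii') follows since, given $G\in\cT_A(T^*M)$ and $\beta$ with $\beta e\neq 0$, the adjunction $\iota_A^*\dashv{\iota_A}_*$ factors $\beta$ as $\overline\beta\circ\eta_{A,F}$, so $\eta_{A,F}e\neq 0$, i.e.\ (iii'), which already implies (ii).

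The core is the implication (i) $\Rightarrow$ (iii'), and here Kuo's formula $\iota_A^*F\simeq\colim_{H\in\cC_{cc}(T^*M,A)}\cK_H\ostar F$ is essential. The key input is that for $H\in C_{cc}(T^*M)$ the sequence $k\mapsto\ell_e(kH)$ is subadditive by \cref{lemma:elle}~(i) (using $\phi^{(k+l)H}_1=\phi^{kH}_1\circ\phi^{lH}_1$), so by Fekete's lemma $\zeta_e(H)=\lim_k\ell_e(kH)/k=\inf_k\ell_e(kH)/k$. Hence if $A$ is $\zeta_e$-heavy, then by \cref{lemma:heavyequiv} for every $H\in C_{cc}(T^*M,A)$ with $H\le 0$ one has $\zeta_e(H)\ge 0$, so $\ell_e(kH)\ge 0$ for all $k$; specializing to $k=1$ and writing $H'=-H$ (an arbitrary nonnegative element of $C_{cc}(T^*M,A)$), this reads $c(\nu_{H'}e;F,\cK_{H'}\ostar F)\le 0$, where $\nu_{H'}\colon F\to\cK_{H'}\ostar F$ is the canonical morphism induced by $\cK_0\to\cK_{H'}$. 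Since $\nu_{H'}e$ is represented by an honest morphism of $h\cT(T^*M)$ we also have $c(\nu_{H'}e;F,\cK_{H'}\ostar F)\ge 0$, so $c(\nu_{H'}e;F,\cK_{H'}\ostar F)=0$ uniformly in $H'$, and in particular $\nu_{H'}e\neq 0$. Finally $\eta_{A,F}e$ is the image of the compatible system $\{\nu_{H'}e\}_{H'}$ under the colimit, and one concludes $c(\eta_{A,F}e;F,\iota_A^*F)<+\infty$, i.e.\ $\eta_{A,F}e\neq 0$, by transporting this uniform bound through the colimit: after replacing $F$ by a representative whose support is proper over $\bR$ — possible under the hypotheses on $\MS(F)$ — \cref{lemma:TamarkinVerdier} shows that $\cHom^\star(F,-)$ commutes with this filtered colimit, so the persistence data of $\cHom^\star(F,\iota_A^*F)$ is the colimit of those of the $\cHom^\star(F,\cK_{H'}\ostar F)$, and the uniform vanishing of the relevant spectral numbers at level $0$ survives.

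The main obstacle is precisely this transport step. The definition of $\zeta_e$ is asymptotic in $k$, whereas (iii') is a one-step statement about a filtered colimit of the objects $\cK_H\ostar F$, and since $F$ is typically \emph{not} a compact object of $\cT(T^*M)$ one cannot simply pull $\eta_{A,F}e$ back to a single stage $\nu_He$. What makes the argument go through is the \emph{uniform} bound $c(\nu_{H'}e;F,\cK_{H'}\ostar F)=0$: it says the classes $\nu_{H'}e$ are uniformly non-torsion, and the colimit-preservation of $\cHom^\star(F,-)$ guaranteed by \cref{lemma:TamarkinVerdier} propagates this to $\iota_A^*F$. I expect the delicate points to be (a) verifying that the filtered colimit computing $\cHom^\star(F,\iota_A^*F)$ — equivalently the system of morphism spaces $h\cT_d^*(F,\cK_H\ostar F)$ — is the right one, with care about which index poset one uses (cf.\ the $\prec$-cofinality discussion in the proof of \cref{lemma:infintyGKS} and the initiality/finality remark in \cref{subsec:explicitadjoints}), and (b) choosing the representative of $F$ with good support so that \cref{lemma:TamarkinVerdier} applies.
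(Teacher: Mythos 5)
Your proposal follows essentially the same route as the paper. The equivalences among (ii), (ii'), (iii), (iii') via \cref{lemma:specinfty} and the adjunction universal property are handled exactly as in the paper, and the implication ``(ii) $\Rightarrow$ (i)'' is \cref{theorem:zetae_heavy}. Your core step ``(i) $\Rightarrow$ (iii)'' is the contrapositive of what the paper proves (``$\neg$(iii') $\Rightarrow$ $\neg$(i)''), and the identical key input is the commutativity of spectral invariants with the filtered colimit $\iota_A^*F=\colim_{H}\cK_H\ostar F$ under the constructibility and $t$-boundedness hypotheses on $F$, which the paper isolates as \cref{proposition:spectral-colim} and packages into \cref{corollary:unitinfty}.

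Two remarks on the details. First, the claim that $c(\nu_{H'}e;F,\cK_{H'}\ostar F)\ge 0$ because ``$\nu_{H'}e$ is represented by an honest morphism of $h\cT(T^*M)$'' is not justified for a general idempotent $e$: the element $e$ lives in $h\cT_\infty(T^*M)$ and need not lift to level $0$; indeed idempotence only forces $c(e;F,F)\le 0$. This is harmless, since the argument only needs the upper bound $c(\nu_{H'}e;F,\cK_{H'}\ostar F)\le 0$ coming from heaviness and Fekete. Second, the ``transport step'' is indeed the crux, and your outline (colimit preservation of $\cHom^\star(F,\mathchar`-)$ via \cref{lemma:TamarkinVerdier}, followed by reading off spectral data) is the right direction, but as stated it is an outline of \cref{proposition:spectral-colim} rather than a complete proof: in particular one also needs the hypothesis that $q_{\bR}\pi(\MS(\cK_{H_n}\ostar F)\cap\{\tau>0\})$ is uniformly bounded below along a final sequence $H_\bullet$ in $\cC_{cc}(T^*M,A)$, and the stalk/persistence argument that upgrades preservation of $\cHom^\star$ to equality of spectral invariants. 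These are exactly what \cref{proposition:spectral-colim} supplies, so citing it (or \cref{corollary:unitinfty}) closes your sketch.
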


There are many $F$ satisfying the assumption of the proposition. 
However, this assumption imposes a non-trivial constraint on the partial symplectic quasi state $\zeta_e$. See \cref{proposition:psqsconstraint} below. 

The main part of the proof is to prove (i) $\Rightarrow$ (iii).  
The following proposition is a key observation for this part. 

\begin{proposition}\label{proposition:spectral-colim}
    Let $F$ be an object of $\cT(T^*M)$ and $G_\bullet \colon \cN \bN\to \cT(T^*M)$ be a functor.
    Assume the following:
    \begin{enumerate}
    \renewcommand{\labelenumi}{$\mathrm{(\arabic{enumi})}$}
        \item $F$ is cohomologically constructible and $q_{\bR} \pi (\MS(F)\cap \{\tau >0\})$ is bounded in $\bR$.
        \item $q_{\bR} \pi (\MS(G_i) \cap \{\tau >0\})$ are uniformly bounded below. 
    \end{enumerate}
    Let $(\beta_i\colon F\to G_i)_{i\in \bN}$ be morphisms which are compatible with structure morphisms of $G_\bullet$ and $\beta_\infty\colon F\to \colim G_\bullet$ be the induced morphism. 
    Then 
    \begin{equation}
        \lim_i c(\beta_i;F,G_i)=c(\beta_\infty;F,\colim G_\bullet). 
    \end{equation}
\end{proposition}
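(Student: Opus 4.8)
<br>

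The plan is to prove the two inequalities between $\lim_i c(\beta_i;F,G_i)$ and $c(\beta_\infty;F,\colim G_\bullet)$, both viewed in $\bR\cup\{+\infty\}$, separately.

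\emph{The inequality ``$\ge$''.} This part is formal and needs no hypothesis. Write $s_{ji}\colon G_i\to G_j$ ($i\le j$) for the transition morphisms and $\iota_i\colon G_i\to\colim G_\bullet$ for the canonical ones; these are genuine morphisms of $\cT(T^*M)$, hence lift to the unshifted level, so $c(s_{ji};G_i,G_j)\ge 0$ and $c(\iota_i;G_i,\colim G_\bullet)\ge 0$. Since $\beta_j=(s_{ji})_*\beta_i$ and $\beta_\infty=(\iota_i)_*\beta_i$, \cref{lemma:specprodineq} gives $c(\beta_j;F,G_j)\ge c(\beta_i;F,G_i)$ for $i\le j$ and $c(\beta_\infty;F,\colim G_\bullet)\ge c(\beta_i;F,G_i)$ for all $i$. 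Hence $(c(\beta_i;F,G_i))_i$ is non-decreasing and bounded above by $c(\beta_\infty;F,\colim G_\bullet)$, giving ``$\ge$''.

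\emph{Reduction of ``$\le$''.} Set $s\coloneqq\lim_i c(\beta_i;F,G_i)=\sup_i c(\beta_i;F,G_i)$ and assume $s<+\infty$ (the other case is trivial). I will show it is enough to prove that for all $c\in\bR$, $d\in\bZ$ the natural map
\begin{equation}
    \colim_i\Hom_{h\cT(T^*M)}(F,T_cG_i[d])\longrightarrow\Hom_{h\cT(T^*M)}(F,T_c(\colim_i G_i)[d])
\end{equation}
is bijective. Indeed, if $\beta_\infty$ lay in the image of $h\cT^*_c(F,\colim G_\bullet)\to h\cT^*_\infty(F,\colim G_\bullet)$ for some $c<-s$, then — via $T_c(\colim_iG_i)\simeq\colim_iT_cG_i$ and the bijectivity above — a lift of $\beta_\infty$ would originate from a class $\gamma_i\in h\cT^*_\infty(F,G_i)$ with $c(\gamma_i;F,G_i)\ge-c>s$ and $(\iota_i)_*\gamma_i=\beta_\infty=(\iota_i)_*\beta_i$; taking the colimit over $c$ of the above isomorphisms yields $h\cT^*_\infty(F,\colim G_\bullet)\simeq\colim_jh\cT^*_\infty(F,G_j)$, so $(s_{ji})_*\gamma_i=(s_{ji})_*\beta_i=\beta_j$ for some $j\ge i$, and then $c(\beta_j;F,G_j)\ge c(\gamma_i;F,G_i)>s$ by \cref{lemma:specprodineq}, contradicting $s=\sup_j c(\beta_j;F,G_j)$. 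Since $T_c$ is an equivalence commuting with colimits and $\Hom_{h\cT(T^*M)}(F,T_c(\mathchar`-))\simeq\Hom_{h\cT(T^*M)}(T_{-c}F,\mathchar`-)$ with $T_{-c}F$ again satisfying hypothesis~$(1)$, this bijectivity reduces to: $R\Hom_{\cT(T^*M)}(F,\mathchar`-)$ commutes with the sequential colimit $\colim_i G_i$, for every $F$ satisfying hypothesis~$(1)$.

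\emph{The commutation, and the main difficulty.} First I would pass to convenient representatives: raising the lower end of the $t$-support of a Tamarkin object discards only a $\{\tau\le 0\}$-microsupported object — if $c$ is below the $t$-projection of $\MS(F)\cap\{\tau>0\}$, a microlocal cut-off estimate gives $F\otimes\bfk_{M\times(-\infty,c)}\in\Sh_{\{\tau\le 0\}}(M\times\bR)$, so $F\simeq F\otimes\bfk_{M\times[c,\infty)}$ in $\cT(T^*M)$ — and by hypothesis~$(2)$ this can be done uniformly for the $G_i$; so I may assume $F,G_i$ are cohomologically constructible with $\Supp(F)\subset M\times[c_1,\infty)$ and $\Supp(G_i)\subset M\times[c_0,\infty)$. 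Then $\cHom^\star(F,G)\simeq m_*\bigl(G\boxtimes D(i^{-1}F)\bigr)$ as in the proof of \cref{lemma:TamarkinVerdier} (this step needs only cohomological constructibility of $F$), $R\Hom_{\cT(T^*M)}(F,\mathchar`-)\simeq R\Hom_{\cT(T^*M)}\bigl(\bfk_{M\times[0,\infty)},\cHom^\star(F,\mathchar`-)\bigr)$ by closed monoidality, and everything is supported in the fixed set $S\coloneqq M\times[c_0,\infty)\times(-\infty,-c_1]$. Here $m\colon(x,t_1,t_2)\mapsto(x,t_1+t_2)$ is \emph{never} proper on $S$ — its fibres there are half-lines — so $m_*\ne m_!$ and \cref{lemma:TamarkinVerdier} cannot be cited; the resolution should be that the bound $\MS(G_i)\cap\{\tau>0\}\subset\{t\ge a_0\}$ forces the restriction of $G_i\boxtimes D(i^{-1}F)$ to each such half-line to be microsupported in $\{\tau\le 0\}$ near its infinite end, so its cohomology along the half-line is carried by a bounded piece and commutes with the sequential colimit, and — combined with the compactness of $M$ (so $R\Gamma(M;\mathchar`-)$ commutes with filtered colimits) and the consequent confinement of $\MS\bigl(\cHom^\star(F,G_i)\bigr)\cap\{\tau>0\}$ to a fixed $t$-half-line — this should yield the commutation. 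The hard part is exactly this: the hypotheses are strictly weaker than the support--properness hypotheses of \cref{lemma:TamarkinVerdier} (already the unit $\bfk_{M\times[0,\infty)}$ satisfies hypothesis~$(1)$ but has no bounded-support representative in $\cT(T^*M)$), so rather than quoting that lemma one must argue directly, extracting from the $\{\tau>0\}$-microsupport bounds that the non-compact directions of the fibres of $m$ contribute nothing to the colimit.
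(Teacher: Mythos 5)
Your ``$\ge$'' direction is fine and matches the paper. The trouble is the reduction of ``$\le$'': you reduce the proposition to the statement that $\colim_i\Hom_{h\cT(T^*M)}(F,T_cG_i[d])\to\Hom_{h\cT(T^*M)}(F,T_c(\colim_iG_i)[d])$ is bijective for all $c,d$, and this bijectivity is simply false under hypotheses~(1) and~(2). Take $F=G=G'=\bfk_{M\times[0,\infty)}$ and $G_i=G\oplus T_iG'$ with structure morphisms $\id\oplus\tau_1$; then hypotheses (1) and (2) hold, $\colim_iG_i\simeq G$ because $\colim_iT_iG'\simeq 0$, yet $\colim_i\Hom(F,T_cG_i[d])\simeq H^d(M)\oplus H^d(M)$ maps non-injectively to $\Hom(F,T_cG[d])\simeq H^d(M)$. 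In particular the isomorphism $h\cT^*_\infty(F,\colim G_\bullet)\simeq\colim_jh\cT^*_\infty(F,G_j)$ that you invoke in the last step of the reduction (to pass from $(\iota_i)_*\gamma_i=(\iota_i)_*\beta_i$ to $(s_{ji})_*\gamma_i=(s_{ji})_*\beta_i$) fails, and your strategy cannot be completed even in principle; note also that even if the bijectivity were available, you acknowledge at the end that the commutation step itself is only sketched and is ``the hard part.''

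The paper escapes exactly this difficulty by not attempting full commutation of $\Hom(F,\mathchar`-)$ with the colimit. Instead it replaces $F$ by the convolution $F\star\bfk_{M\times[-R,a)}$ and tracks only whether certain truncated morphisms vanish. The point is that hypothesis~(1) makes $F$ locally constant in $t$ on $\{t\gg 0\}$ (in addition to vanishing on $\{t\ll 0\}$), so $F\star\bfk_{M\times[-R,a)}$ has bounded $t$-support even when $\Supp(F)$ does not — precisely the missing upper truncation in your reduction, which you only cut off from below. Then \cref{lemma:TamarkinVerdier} genuinely applies to $F\star\bfk_{M\times[-R,a)}$, making $\cHom^\star(F\star\bfk_{M\times[-R,a)},\mathchar`-)$ colimit-preserving, and $q_{\bR*}=q_{\bR!}$ (compactness of $M$) preserves colimits as well; the residual obstruction, that $\Hom_{\cT(\pt)}(\bfk_{[\varepsilon,\infty)},\mathchar`-)$ need not commute with colimits, is handled by passing to the germ at $\varepsilon$, i.e.\ the stalk $H^0((H_i)_\varepsilon)$, which does commute with colimits. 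The contradiction is then between a nonzero germ surviving into $H^0((H_\infty)_\varepsilon)$ and the vanishing of $\bfk_{(-\infty,\varepsilon')}\to H_\infty$ for $\varepsilon'>\varepsilon$. This two-step move — truncate $F$ using the microsupport bound, then argue on stalks rather than on $\Hom$'s — is the ingredient your proposal is missing.
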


\begin{proof}
    $\lim_i c(\beta_i;F,G_i)\le c(\beta_\infty;F,\colim_i G_i)$ is obvious. 
    If the conclusion is not true, we can take $a,b\in \bR$ satisfying $\lim_i c(\beta_i;F,G_i)<a<b<c(\beta_\infty;F,\colim_i G_i)$.

    For any $R'\ge 0, i\in \bN$ and $\varepsilon \in [0,b-a]$, 
    the morphism $F\star \bfk_{M\times [-R',\infty)}\to G_i$ induced by $\beta_i$ can not be lifted to a morphism $F\star \bfk_{M\times [a+\varepsilon,\infty)}\to G_i$.
    This means that the induced morphism $F\star \bfk_{M\times [-R',a+\varepsilon)}\to G_i$ is non-zero. 
    On the other hand, there exists a real number $R>0$ such that
    $F\star \bfk_{M\times [-R+\varepsilon , a+\varepsilon)}\to \colim_i G_i$ is zero for any $\varepsilon\in [0,b-a]$.  

    By the adjunctions and \cref{lemma:TamarkinVerdier}, for any $G\in \cT(T^*M)$, 
    \begin{equation}
        \Hom_{\cT(T^*M)}(F\star \bfk_{M\times [-R+\varepsilon, a+\varepsilon)},G)\simeq \Hom_{\cT (\mathrm{pt})}(\bfk_{[\varepsilon,\infty)}, q_{\bR *}(D^T(F\star \bfk_{M\times [-R,a)})\star G))
    \end{equation}
    holds. 

    Since $q_{\bR *}\simeq q_{\bR !}$, 
    \begin{equation}
        \colim_i q_{\bR *}(D^T(F\star \bfk_{M\times [-R,a)})\star G_i) \simeq q_{\bR *}(D^T(F\star \bfk_{M\times [-R,a)})\star \colim_i G_i ). 
    \end{equation}

    Put $H_\bullet \colon \cN\bN\to \cT (\mathrm{pt})$ be the functor $q_{\bR *}(D^T(F\star \bfk_{M\times [-R,a)})\star G_\bullet)[-1]$ and $H_\infty \coloneqq \colim H_\bullet$. 
    Since $\bfk_{[\varepsilon, \infty)}\simeq \bfk_{(-\infty, \varepsilon)}[1]$ in $\cT(\mathrm{pt})$, 
    for any $\varepsilon \in [0,b-a]$ and $i\in \bN$, the induced morphism $\bfk_{(-\infty, \varepsilon)}\to  H_i$ is non-zero. 
    On the other hand, the induced morphism 
    $\bfk_{(-\infty, \varepsilon)}\to H_\infty$ is zero for each $\varepsilon \in [0,b-a]$.

    For each $H\in \Sh_{\{\tau \ge 0 \}}(\bR)$ and $c\in \bR$,  
    the stalk $H_c$ is isomorphic to $\colim_{\varepsilon'\to +0}\Gamma((-\infty , c+\varepsilon');H)$. 
    Let us identify $\cT (\pt)$ with the subcategory of $\Sh_{\{\tau \ge 0\}} (\bR)$. 
    Hence the morphisms $(\bfk_{(-\infty, \varepsilon)}\to H_i)_{\varepsilon \in [0,b-a]}$ above induce a nonzero element in the cohomology $H^0((H_i)_\varepsilon)$ of the stalk for each $\varepsilon \in [0,b-a)$ and $i\in \bN$. 
    
    Since stalk and cohomology commute with colimits, 
    $\colim_i H^0((H_i)_\varepsilon)\simeq H^0((H_\infty)_\varepsilon)$ and hence a non-zero element in $H^0((H_\infty)_\varepsilon)$ is induced for each $\varepsilon \in [0,b-a)$. 
    This contradicts to that 
    $\bfk_{(-\infty, \varepsilon')}\to H_\infty$ is zero for each $\varepsilon' \in (\varepsilon,b-a]$. 
\end{proof}

\begin{corollary}\label{corollary:unitinfty}
    Assume that $F$ is cohomologically constructible and $q_{\bR} \pi (\MS(F)\cap \{\tau >0\})$ is bounded in $\bR$.
    Then the following are equivalent:
    \begin{enumerate}
        \item $c(\beta e;F,\cK_A \ostar F)=+\infty$,
        \item $c(\beta e;F,\cK_A \ostar F)>0$,
        \item for any $s\in \bR$ there exists $H^s\in C_{cc}(T^*M, A)$ with $c(e;F,\cK_{H^s} \ostar F)>s$,
        \item there exists $H\in C_{cc}(T^*M, A)$ with $c(e;F,\cK_H \ostar F)>0$.
    \end{enumerate}
\end{corollary}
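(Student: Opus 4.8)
The plan is to establish the cycle of implications (i) $\Rightarrow$ (ii) $\Rightarrow$ (iv) $\Rightarrow$ (iii) $\Rightarrow$ (i), using the description $\iota_A^* F \simeq \cK_A \ostar F = \colim_{H \in \cC_{cc}(T^*M,A)} \cK_H \ostar F$ together with the key continuity result of \cref{proposition:spectral-colim}. The implication (i) $\Rightarrow$ (ii) is trivial, and (iv) $\Rightarrow$ (iii) is immediate by monotonicity: if $c(e;F,\cK_H\ostar F)>0$ for one $H$, then for any $s$ one rescales, replacing $H$ by $\lambda H$ with $\lambda$ large, noting $\cK_{\lambda H}\ostar F$ behaves well under the $\bR_{\ge 0}$-scaling of Hamiltonians (using \cref{lemma:elle}(i), which gives subadditivity of $\ell_e$ along the semigroup generated by $\phi^H$, hence that $c(e;F,\cK_{\phi^{kH}}(F))/k$ has a limit that, being positive for one representative by hypothesis after rescaling, can be made to exceed any $s$ — more directly, $c(e;F,\cK_H^{\ostar k}(F)) \ge k\, c(e;F,\cK_H\ostar F)$ by \cref{lemma:specprodineq}, since $e^2=e$ implies iterating lifts; and $\cK_H^{\ostar k} = \cK_{H\sharp\cdots\sharp H}$ which lies in the relevant poset when $H$ does, as it still vanishes near $A$). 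For (iii) $\Rightarrow$ (i): apply \cref{theorem:zetae_heavy}(i) — wait, more precisely, combine with the main characterization; but staying self-contained, (iii) directly feeds the argument of \cref{theorem:zetae_heavy}.

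The heart of the matter is (ii) $\Rightarrow$ (iv), and this is where I expect the main obstacle. Here $\beta = \eta_{A,F}$ is the unit morphism $F \to \iota_A^* F = \colim_H \cK_H\ostar F$, and the composites $\beta_H \colon F \to \cK_H \ostar F$ (the structure maps into the colimit, which are exactly the natural morphisms $F \simeq \cK_0 \ostar F \to \cK_H \ostar F$ coming from $0 \le$ something... actually from the colimit structure) satisfy $c(e; F, \cK_H\ostar F) \le c(\beta_H e; F, \cK_H \ostar F)$, and one wants to pass these through the colimit. The index category $\cC_{cc}(T^*M,A)$ is filtered, and after cofinality one may restrict to a sequence $(H_n)_n$, putting $G_n \coloneqq \cK_{H_n}\ostar F$ and $G_\infty = \iota_A^* F$. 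The hypotheses of \cref{proposition:spectral-colim} must be checked: condition (1) is the standing assumption on $F$; condition (2), uniform boundedness below of $q_\bR\pi(\MS(G_n)\cap\{\tau>0\})$, follows because $\MS(\cK_{H_n})$ is controlled — the Hamiltonians $H_n$ are compactly supported, hence their GKS kernels have microsupport with bounded "action", so convolving with $F$ (whose microsupport over $\{\tau>0\}$ is bounded by assumption) keeps the image uniformly bounded below; this is the technical point requiring care, and I would isolate it as a small lemma. Granting this, \cref{proposition:spectral-colim} gives
\begin{equation}
    c(\eta_{A,F}\, e; F, \iota_A^* F) = \lim_n c(\beta_{H_n} e; F, \cK_{H_n}\ostar F) \ge \lim_n c(e; F, \cK_{H_n}\ostar F),
\end{equation}
so if the left side is $+\infty$ then $c(e;F,\cK_{H_n}\ostar F) \to +\infty$; in particular it exceeds $0$ for $n$ large, giving (iv) with $H = H_n$. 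Conversely if (iv) fails for all $H$, i.e. $c(e;F,\cK_H\ostar F)\le 0$ for every $H \in C_{cc}(T^*M,A)$ — hmm, this direction needs (ii) $\Rightarrow$ (iv) stated contrapositively, so actually I want: not-(iv) implies not-(ii). If $c(e;F,\cK_H\ostar F)\le 0$ for all admissible $H$, then since $\beta_H e$ is, up to the map $F\to\cK_H\ostar F$, governed by $c(e;F,\cK_H\ostar F)$ together with the spectral invariant of the structure map, and the structure maps $\cK_0 \to \cK_H$ have spectral invariant $\le 0$ (as $\cK_H$ sits "below" — vanishing near $A$ means $H\le 0$-ish contributions are absent... this needs thought), one concludes $c(\beta_He;F,\cK_H\ostar F)$ stays bounded, whence by \cref{proposition:spectral-colim} the colimit value $c(\eta_{A,F}e;F,\iota_A^*F)$ is finite, contradicting (ii).

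Thus the logical skeleton is: (iv) $\Leftrightarrow$ (iii) by the scaling/iteration argument via \cref{lemma:specprodineq}; (iii) $\Rightarrow$ (ii) trivial; (ii) $\Rightarrow$ (iv) by \cref{proposition:spectral-colim} applied to the defining colimit of $\iota_A^*F$ after a cofinality reduction, the crux being the uniform microsupport bound (2). The equivalence with (i) then comes by citing \cref{theorem:zetae_heavy}(i) in one direction and, for the reverse, re-running its proof: $\zeta_e$-heaviness of $\RS(\iota_A^* F)\subset A$ — no, rather one argues that if $A$ is $\zeta_e$-heavy then applying the heaviness inequality to Hamiltonians supported away from $A$ forces $c(e;F,\cK_H\ostar F)$ to grow, yielding (iii). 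The main obstacle, to reiterate, is verifying hypothesis (2) of \cref{proposition:spectral-colim}, i.e.\ the uniform lower bound on the reduced microsupports of $\cK_{H_n}\ostar F$; I would handle it by noting $\MS(\cK_H)$ over $\{\tau>0\}$ projects into a set whose "$t$-coordinate" is bounded below by $-\|H\|_{\mathrm{osc}}$-type quantities which are not uniform — so in fact one must be more careful and instead use that the structure maps in the colimit only increase the action, making the relevant invariants monotone in $n$, which sidesteps needing a uniform bound at all and is the cleaner route.
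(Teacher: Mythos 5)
Your cycle $\mathrm{(i)}\Rightarrow\mathrm{(ii)}\Rightarrow\mathrm{(iv)}\Rightarrow\mathrm{(iii)}\Rightarrow\mathrm{(i)}$ differs from the paper's $\mathrm{(iii)}\Rightarrow\mathrm{(iv)}\Rightarrow\mathrm{(ii)}\Rightarrow\mathrm{(i)}\Rightarrow\mathrm{(iii)}$, and two of your steps are genuinely different arguments. For $\mathrm{(iv)}\Rightarrow\mathrm{(iii)}$ you iterate the Hamiltonian, using $\cK_H^{\ostar k}\simeq\cK_{kH}$ and \cref{lemma:specprodineq} to get $c(e;F,\cK_{kH}\ostar F)\ge k\,c(e;F,\cK_H\ostar F)\to\infty$; the paper never needs this direction, because it closes the cycle using the idempotence $\cK_A\ostar\cK_A\simeq\cK_A$ (\cref{equation:kernelidempotnce}) to show $\mathrm{(ii)}\Rightarrow\mathrm{(i)}$: any lift $F\to T_{-s}\cK_A\ostar F$ can be squared to $F\to T_{-2s}\cK_A\ostar F$, forcing the invariant to be $+\infty$. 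Your semigroup iteration $H\mapsto kH$ and the paper's idempotence trick are really the same phenomenon at two levels (the monoid of flows versus the single idempotent kernel); both are fine. Your use of \cref{proposition:spectral-colim} for $\mathrm{(ii)}\Rightarrow\mathrm{(iv)}$ is also morally the paper's use of it for $\mathrm{(i)}\Rightarrow\mathrm{(iii)}$ by contraposition, with the same cofinality reduction to $\cN\bN$ and the same reliance on hypothesis $(2)$, which the paper dispatches in one sentence.

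However, there is a genuine gap: you never correctly establish any implication into condition $\mathrm{(i)}$. Your discussion of $\mathrm{(iii)}\Rightarrow\mathrm{(i)}$ and, later, ``the equivalence with $\mathrm{(i)}$'' invokes \cref{theorem:zetae_heavy} and re-running heaviness arguments, which shows you are conflating condition $\mathrm{(i)}$ of this corollary (the purely spectral statement $c(\beta e;F,\cK_A\ostar F)=+\infty$) with condition $\mathrm{(i)}$ of \cref{theorem:characterization} ($A$ is $\zeta_e$-heavy). These are different statements, and \cref{theorem:zetae_heavy} concerns heaviness, not this spectral equality. In fact $\mathrm{(iii)}\Rightarrow\mathrm{(i)}$ is essentially trivial: the inequality $c(\beta e;F,\cK_A\ostar F)\ge c(e;F,\cK_H\ostar F)$ for each $H\in C_{cc}(T^*M,A)$ — which is precisely what the paper uses for $\mathrm{(iv)}\Rightarrow\mathrm{(ii)}$, and which you already use implicitly for $\mathrm{(iii)}\Rightarrow\mathrm{(ii)}$ — gives $c(\beta e;F,\cK_A\ostar F)\ge s$ for every $s$, hence $=+\infty$. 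Once you replace the heaviness hand-wave with this one-line observation (or, alternatively, with the paper's $\cK_A\ostar\cK_A\simeq\cK_A$ doubling argument for $\mathrm{(ii)}\Rightarrow\mathrm{(i)}$), your cycle closes and the proof is correct.

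A minor further point: your worry about hypothesis $(2)$ of \cref{proposition:spectral-colim} — the uniform lower bound on $q_\bR\pi(\MS(\cK_{H_n}\ostar F)\cap\{\tau>0\})$ — is reasonable, but the alternative you sketch (``use that the structure maps only increase the action, sidestepping the uniform bound'') does not by itself let you invoke \cref{proposition:spectral-colim} as stated; if you take that route you would need to modify the proposition, not just its application. The paper simply asserts $(2)$ is guaranteed by the assumption on $F$, so your proposal does not introduce a new problem here, but your alternative is not a drop-in replacement.
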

\begin{proof}
    (iii) $\Rightarrow$ (iv) is trivial. 
    
    (iv) $\Rightarrow$ (ii) is obvious by the inequality $c(e;F,\cK_H \ostar F)\le c(\beta e;F,\cK_A \ostar F)$ for any $H\in C_{cc}(T^*M, A)$. 

    (ii) $\Rightarrow$ (i) is deduced by \cref{equation:kernelidempotnce} $\cK_A \ostar \cK_A\simeq \cK_A$ . 
    If $c(\beta e;F,\cK_A \ostar F)>0$, then for each $s\in (0, c(\beta e;F,\cK_A \ostar F))$ there exists a lift $\tl{\alpha}\colon F\to T_{-s} \cK_A \ostar F$ of $\beta e$. 
    Then the composition 
    \begin{equation}
        T_{-s}\cK_A^{\ostar}(\tl{\alpha})\circ\tl{\alpha}\colon F\to T_{-s} \cK_A \ostar F\to T_{-2s} \cK_A \ostar \cK_A \ostar F\simeq  T_{-2s} \cK_A \ostar F
    \end{equation}
    is also a lift of $\beta e$ and hence $2s\le c(\beta e;F,\cK_A \ostar F)$. This means $c(\beta e;F,\cK_A \ostar F)=+\infty$.

    (i) $\Rightarrow$ (iii) follows from \cref{proposition:spectral-colim} as follows. 
    Let us prove the contraposition $\neg$(iii) $\Rightarrow$ $\neg$(i) and assume that there exists $s\in \bR$ such that $c(e;F,\cK_{H} \ostar F)\le s$ for any $H\in C_{cc}(T^*M, A)$. 
    By fixing a final functor $H_\bullet\colon \cN \bN \to \cC_{cc}(T^*M, A)$, we can write $\cK_A\ostar F$ as the sequential colimit $\colim_n \cK_{H_n}\ostar F$. 
    By the assumption, $c(e;F,\cK_{H_n} \ostar F)\le s$ holds for each $n\in \bN$. By \cref{proposition:spectral-colim}, $c(\beta e;F,\cK_A \ostar F)=\lim_n c(e;F,\cK_{H_n} \ostar F)\le s <+\infty$ is deduced. Note that the assumption (2) in the \cref{proposition:spectral-colim} is guaranteed by the assumption on $F$. 
\end{proof}

\begin{proof}[Proof of \cref{theorem:characterization}.]
    ``(iii)$\Rightarrow$ (ii)'' and ``(ii) $\Rightarrow$ (ii')'' are trivial implications. 
    By \cref{lemma:specinfty}, the condition (iii) is equivalent to (iii'). 
    By the universal property of $ \iota_A^*F$, ``(ii') $\Rightarrow$ (iii')'' holds.
    Hence all the four conditions (ii), (ii'), (iii) and (iii') are equivalent. 
    
    \cref{theorem:zetae_heavy} asserts ``(ii) $\Rightarrow$ (i)''. 
    
    We prove the contraposition ``$\neg$(iii') $\Rightarrow$ $\neg$(i)''. 
    Let us assume $c(\beta e;F, \iota_A^*F)=+\infty$.
    Then there exists $H\in C_{cc}(T^*M, A)$ with $c(e;F,\cK_{H} \ostar F)>0$ by \cref{corollary:unitinfty}. 
    Hence $\zeta_e(-H)\le \ell_e (-H)=-c(e;F,\cK_{H} \ostar F)<0$, which means that $A$ is not $\zeta_e$-heavy.
\end{proof}

For the case $e=\id_F$, we obtain a slightly refined version of \cref{theorem:characterization} by the universal property of $ \iota_A^*F$.
\begin{corollary}\label{corollary:heavynonzero}
    Assume that $F$ is cohomologically constructible and $q_{\bR} \pi (\MS(F)\cap \{\tau >0\})$ is bounded in $\bR$.
    For any compact subset $A\subset T^*M$,
    $A$ is $\zeta_{F}$-heavy if and only if $ \iota_A^*F\not\simeq 0$ in $h\cT_\infty (T^*M)$. 
\end{corollary}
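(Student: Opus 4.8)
The plan is to derive Corollary~\ref{corollary:heavynonzero} from Theorem~\ref{theorem:characterization} applied to the idempotent $e=\id_F$. In that case condition (iii') reads: the unit morphism $\eta_{A,F}\colon F\to \iota_A^*F$ is non-zero in $h\cT_\infty(T^*M)$ (note $\eta_{A,F}\cdot \id_F=\eta_{A,F}$). So it suffices to show that, for $A$ compact, the unit morphism $\eta_{A,F}$ is non-zero in $h\cT_\infty(T^*M)$ if and only if $\iota_A^*F\not\simeq 0$ in $h\cT_\infty(T^*M)$.

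First I would recall that $\iota_A^*$ is left adjoint to the fully faithful inclusion ${\iota_A}_*\colon \cT_A(T^*M)\to \cT(T^*M)$, so $\iota_A^*F$ is (up to equivalence) an object of $\cT_A(T^*M)$ with the universal property that, for every $G\in \cT_A(T^*M)$, precomposition with $\eta_{A,F}$ gives a bijection $\Hom(\iota_A^*F,G)\xrightarrow{\sim}\Hom(F,G)$. The ``only if'' direction is immediate: if $\eta_{A,F}\neq 0$ then $\iota_A^*F$ cannot be the zero object, since a nonzero morphism lands in it. For the ``if'' direction, suppose $\iota_A^*F\not\simeq 0$; then $\id_{\iota_A^*F}\neq 0$, and applying the universal property with $G=\iota_A^*F$, the morphism $\id_{\iota_A^*F}$ corresponds to exactly $\eta_{A,F}$ under the adjunction bijection $\Hom(\iota_A^*F,\iota_A^*F)\xrightarrow{\sim}\Hom(F,\iota_A^*F)$; since this bijection is in particular injective and sends $0$ to $0$, we conclude $\eta_{A,F}\neq 0$.

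The one subtlety to handle carefully is the passage between $\cT(T^*M)$ (and its adjunction) and the quotient $h\cT_\infty(T^*M)$ in which heaviness is phrased. The adjunction $\iota_A^*\dashv {\iota_A}_*$ lives at the level of $\cT(T^*M)$, but Theorem~\ref{theorem:characterization} already states condition (iii') in $h\cT_\infty(T^*M)$, so I would simply invoke that theorem: under the stated hypotheses on $F$ (cohomologically constructible, $q_\bR\pi(\MS(F)\cap\{\tau>0\})$ bounded), $A$ is $\zeta_F$-heavy iff $\eta_{A,F}\neq 0$ in $h\cT_\infty(T^*M)$. Then the remaining work is the purely formal equivalence ``$\eta_{A,F}\neq 0$ in $h\cT_\infty$'' $\Leftrightarrow$ ``$\iota_A^*F\not\simeq 0$ in $h\cT_\infty$'' sketched above, where I use that the adjunction descends to $h\cT_\infty(T^*M)$ (equivalently, that the image of $\eta_{A,F}$ under $h\cT(T^*M)\to h\cT_\infty(T^*M)$ is still a unit exhibiting $\iota_A^*F$ as the reflection of $F$ into $\cT_A(T^*M)/\mathrm{Tor}$). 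I do not expect a genuine obstacle here; the main point is just to spell out that $\iota_A^*F\simeq 0$ in $h\cT_\infty$ forces $\eta_{A,F}=0$ there and conversely, using the adjunction bijection applied to the identity of $\iota_A^*F$.

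Concretely, the write-up will be about two sentences: ``For $e=\id_F$, condition (iii') of Theorem~\ref{theorem:characterization} says $\eta_{A,F}\neq 0$ in $h\cT_\infty(T^*M)$. By the universal property of $\iota_A^*F$, applied with $G=\iota_A^*F$, the identity $\id_{\iota_A^*F}$ corresponds to $\eta_{A,F}$; hence $\eta_{A,F}=0$ in $h\cT_\infty(T^*M)$ if and only if $\id_{\iota_A^*F}=0$ in $h\cT_\infty(T^*M)$, i.e.\ iff $\iota_A^*F\simeq 0$ there. Combining with Theorem~\ref{theorem:characterization} gives the claim.'' The only care needed is citing that $\iota_A^*$ and its unit remain meaningful after passing to $h\cT_\infty$, which follows from $\iota_A^*$ being defined on $\cT(T^*M)$ together with \cref{proposition:HomhT}.
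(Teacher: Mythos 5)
Your proposal is correct and follows essentially the same route as the paper: invoke the equivalence (i) $\Leftrightarrow$ (iii') of \cref{theorem:characterization} for $e=\id_F$, then observe that $\eta_{A,F}\neq 0$ in $h\cT_\infty(T^*M)$ iff $\iota_A^*F\not\simeq 0$ there, via the adjunction applied with target $\iota_A^*F$. The paper spells out the last equivalence by noting (using \cref{proposition:HomhT}) that non-vanishing in $h\cT_\infty$ means $\tau_c\circ\eta_{A,F}\neq 0$ for all $c\ge 0$, and that the adjunction bijection $\Hom_{\cT(T^*M)}(F,T_c\iota_A^*F)\simeq\Hom_{\cT_A(T^*M)}(\iota_A^*F,T_c\iota_A^*F)$ identifies $\tau_c\circ\eta_{A,F}$ with $\tau_c$; your phrasing ``the adjunction descends to $h\cT_\infty$'' is exactly this observation after passing to the colimit over $c$ (which uses that $T_c$ preserves $\cT_A(T^*M)$ and commutes with precomposition by $\eta_{A,F}$), so there is no genuine gap, only a point that should be stated rather than left to ``I do not expect a genuine obstacle.''
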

\begin{proof}
    If $A$ is $\zeta_{F}$-heavy, $ \iota_A^*F\not\simeq 0$ in $h\cT_\infty (T^*M)$ by (i) $\Rightarrow$ (iii') of \cref{theorem:characterization}. 

    If $ \iota_A^*F\not\simeq 0$ in $h\cT_\infty (T^*M)$, the natural morphism $\tau_{c}\colon  \iota_A^*F\to T_c \iota_A^*F$ is non-zero for any $c\ge 0$.
    By the adjunction isomorphism $\Hom_{\cT(T^*M)} (F,T_c \iota_A^*F)\simeq \Hom_{\cT_A(T^*M)} ( \iota_A^*F,T_c \iota_A^*F)$, the composite $\tau_c\circ \eta_{A,F}$ is also non-zero for any $c\ge 0$. 
    This means $\eta_{A,F}$ is non-zero as a morphism of $\cT_\infty(T^*M)$. 
    The assertion follows from (iii') $\Rightarrow$ (i) of \cref{theorem:characterization}. 
\end{proof}

\begin{remark}\label{remark:counit}
    The author do not know whether the dual statement (i) $\Rightarrow$ (v) holds or not. 
    At least, the dual statement of the key proposition \cref{proposition:spectral-colim} does not hold. 
    
    Let $V_0$ be the direct sum $\bfk^\bN$. For each $n\in \bN$, define $V_n\subset V_0$ as the subspace consisting of the elements $(a_i)_{i\in \bN}$ with $a_i=0$ for each $0\le i <n$.
    Consider the linear map $V_0\to \bfk$ as taking the sum of its components.
    Let $F_n\coloneqq (V_n)_{[0,\infty)}$ and $G\coloneqq \bfk_{[0,\infty)}$.
    Then we obtain a projective system $F_\bullet\colon \cN\bN^{\mathrm{op}}\to \cD (\mathrm{pt})$ and consistent morphisms $\beta_i\colon F_i\to G$. In this case, $c(\beta_i;F_i, G)=0$ for each $i\in \bN$, however $c(\beta_\infty; \lim F_\bullet, G)=+\infty$ since $\lim F_\bullet=0$.
\end{remark}

\begin{definition}
    A compact subset $A\subset T^*M$ is \emph{cohomologically superheavy} if the  morphism $\beta\mu_M\colon \bfk_{M\times [0,\infty)}\to  \iota_A^*\bfk_{M\times [0,\infty)}\otimes \mathrm{or}_{M\times \bR}[n]$ 
    is non-zero in $h\cT_\infty (T^*M)$. 
\end{definition}

By \cref{theorem:superheavy,lemma:specinfty}, cohomological superheaviness implies $\zeta_{\mathrm{MVZ}}$-superheaviness.
The author do not know whether the inverse holds in general. 
Heaviness/superheviness is preserved by Hausdorff limits, that is clear by \cref{lemma:heavyequiv}. 
On the other hand, the corresponding property for cohomological superheaviness is not obvious. 
However, the Viterbo conjecture implies the inverse statement. 

\begin{proposition}\label{proposition:Viterbocsheavy}
    If the Viterbo conjecture for $M$ is true over $\bfk$, every $\zeta_{\mathrm{MVZ}}$-heavy subset in $T^*M$ is cohomologically superheavy.
\end{proposition}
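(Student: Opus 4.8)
The plan is to recast cohomological superheaviness as a finiteness statement for a spectral invariant, rewrite $\zeta_{\mathrm{MVZ}}$-heaviness in terms of the invariants $\ell_\pm$ via the machinery of \cref{section:characterization}, and then use the Viterbo conjecture, through \cref{eq:gammaLag}, to keep the relevant spectral invariants bounded along the colimit defining $\iota_A^*$.

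First I would reduce the problem. By definition $A$ is cohomologically superheavy iff $\beta\mu_M\colon \bfk_{M\times[0,\infty)}\to \iota_A^*\bfk_{M\times[0,\infty)}\otimes\mathrm{or}_{M\times\bR}[n]$ is nonzero in $h\cT_\infty(T^*M)$. Since $\iota_A^*\bfk_{M\times[0,\infty)}$ has compact reduced microsupport (contained in $A$), the set $\Sigma$ of the pair $(\bfk_{M\times[0,\infty)},\iota_A^*\bfk_{M\times[0,\infty)}\otimes\mathrm{or}_{M\times\bR}[n])$ is bounded, so by \cref{lemma:specinfty} this is equivalent to $c(\beta\mu_M;\bfk_{M\times[0,\infty)},\iota_A^*\bfk_{M\times[0,\infty)}\otimes\mathrm{or}_{M\times\bR}[n])\neq+\infty$; equivalently, using the adjunction $\iota_A^*\dashv\iota_{A*}$ and naturality of the unit, one has to bound $c(\iota_A^*\mu_M;\,\iota_A^*\bfk_{M\times[0,\infty)},\,\iota_A^*(\bfk_{M\times[0,\infty)})\otimes\mathrm{or}_{M\times\bR}[n])$ from above. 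On the other side, by \cref{corollary:heavynonzero,corollary:unitinfty} and \cref{remark:ell+}(ii), $\zeta_{\mathrm{MVZ}}$-heaviness of $A$ is exactly the condition $\ell_-(H)\le0$ for every $H\in C_{cc}(T^*M,A)$, equivalently $\ell_+(H)\ge0$ for every such $H$; and by \cref{remark:ell+}(iii) the $H$-level morphism through which $\beta\mu_M$ factors into $\cK_H\ostar(\bfk_{M\times[0,\infty)}\otimes\mathrm{or}_{M\times\bR}[n])$ has spectral invariant precisely $\ell_+(H)$. So, writing $\iota_A^*=\colim_{H\in C_{cc}(T^*M,A)}\cK_H\ostar(\mathchar`-)$ as in \cref{subsec:explicitadjoints}, it remains to show that $\ell_+(H)$ stays bounded (not merely non-negative) along a cofinal family computing this colimit, and to transport that bound to $c(\beta\mu_M)$ in the spirit of \cref{proposition:spectral-colim} and \cref{corollary:unitinfty}.

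The Viterbo conjecture enters via \cref{eq:gammaLag}, which for $H\in C_{cc}^\infty(T^*M,A)$ reads $\ell_+(H)-\ell_-(H)=\gamma(\phi_H(0_M))$ (using \cref{remark:ell+}(iii)). The geometric observation is that $H\equiv0$ near $A$, so $\phi_H$ is the identity near $A$, and — for autonomous $H$ — the vector field $X_H$ is supported in $\Supp(H)$, hence all the Lagrangians $\phi_{kH}(0_M)$ ($k\ge1$) lie in the \emph{fixed} compact set $0_M\cup\Supp(H)$, independently of $k$. Rescaling the metric so that this set lies in $DT^*M$, the Viterbo conjecture gives a constant $R$ with $\gamma(\phi_{kH}(0_M))<R$ for all $k$; combined with $\ell_-(kH)\le0$ this forces $0\le\ell_+(kH)<R$, hence $\zeta_{\mathrm{MVZ}}(H)=\lim_k\ell_+(kH)/k=0$ (recovering \cref{theorem:Viterbotrue}) as well as the uniform bounds on $\ell_\pm$ along rays that are needed to control the colimit. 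I would then feed these bounds into the argument behind \cref{proposition:spectral-colim}/\cref{corollary:unitinfty}, using the idempotence \cref{equation:kernelidempotnce} $\cK_A\ostar\cK_A\simeq\cK_A$ to replace the colimit by a cofinal subfamily with controlled action windows — so that hypothesis $(2)$ of \cref{proposition:spectral-colim} becomes available — thereby identifying $c(\beta\mu_M;\dots)$ with a finite limit of the $\ell_+(H)$'s, and concluding $c(\beta\mu_M;\dots)\neq+\infty$.

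The last step is where I expect the real difficulty to lie. The index family $C_{cc}(T^*M,A)$ contains Hamiltonians of arbitrarily large support and $C^0$-norm, so the objects $\cK_H\ostar\bfk_{M\times[0,\infty)}$ — whose reduced microsupports are the Lagrangians $\phi_H(0_M)$ and whose ``action windows'' have width comparable to $\gamma(\phi_H(0_M))$ — drift off to infinity, and hypothesis $(2)$ of \cref{proposition:spectral-colim} is not literally satisfied; indeed, without the Viterbo hypothesis $\sup_{H\in C_{cc}(T^*M,A)}\ell_+(H)$ can be $+\infty$ even when $A$ is heavy, which is consistent with the converse being open in general. So the essential content is to exploit the Viterbo bound on $\gamma$, in conjunction with $\ell_-(H)\le0$, to reorganize the colimit so that only bounded action windows are ever seen, making the spectral invariant of $\beta\mu_M$ a genuine finite limit — concretely, one would check that under Viterbo the morphisms $\cK_H\ostar(\mathchar`-)$ applied to $\mu_M$ have spectral invariant $0$ between fixed-microsupport models of $\iota_A^*\bfk_{M\times[0,\infty)}$ and $\iota_A^*(\bfk_{M\times[0,\infty)})\otimes\mathrm{or}_{M\times\bR}[n]$, and pass to the limit, obtaining $c(\iota_A^*\mu_M)=0$ and hence cohomological superheaviness.
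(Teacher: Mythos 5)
Your overall architecture matches the paper's: reduce cohomological superheaviness to finiteness of $c(\eta_{A,\bfk_{M\times[0,\infty)}}\mu_M;\ldots)$, translate $\zeta_{\mathrm{MVZ}}$-heaviness through \cref{corollary:unitinfty} into $\ell_-(H)\le 0$ for all $H\in C_{cc}(T^*M,A)$, feed the Viterbo bound through \cref{eq:gammaLag} to control $\ell_+(H)$, and pass to the colimit via \cref{proposition:spectral-colim}. But the step you flag as ``where the real difficulty lies'' is precisely the one you have not closed, and your proposed route to close it is off-track. Your observation about autonomous $H$ and the ray $\{kH\}_{k\ge 1}$ staying inside a metric-ball $DT^*M$ rescaled \emph{per $H$} only re-derives \cref{theorem:Viterbotrue} ($\zeta_{\mathrm{MVZ}}(H)=0$ on $C_{cc}(T^*M,A)$); it does not produce the uniform constant $R$ needed across the filtered family computing $\iota_A^*$, since a different $H$ in that family forces a different rescaling and a different $R$. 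Likewise, invoking $\cK_A\ostar\cK_A\simeq\cK_A$ to ``reorganize the colimit'' is not the mechanism; the paper does not use the idempotence here.

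The missing idea is to fix the geometry once and for all and then choose the cofinal family to fit it: rescale the metric so that the fixed compact set $A$ lies in the \emph{interior} of $DT^*M$, and take a final functor $H_\bullet\colon\cN\bN\to\cC_{cc}(T^*M,A)$ with $H_n\ge 0$, $H_n\equiv 0$ near $A$, and $H_n$ constant outside $DT^*M$ (this is possible because the only constraint on $H$ is near $A$, and $A\subset\mathrm{int}(DT^*M)$). Then $X_{H_n}$ vanishes outside $DT^*M$, so $\phi_1^{H_n}(0_M)\subset DT^*M$ for \emph{every} $n$, and the Viterbo conjecture yields a single $R$ with $\gamma(\phi_1^{H_n}(0_M))<R$ uniformly. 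Combined with $\ell_-(H_n)=0$ (monotonicity from $H_n\ge 0$ plus heaviness), \cref{eq:gammaLag} gives $c(\mu_M;\bfk_{M\times[0,\infty)},\cK_{H_n}^\ostar(\bfk_{M\times[0,\infty)})\otimes\mathrm{or}_{M\times\bR})<R$ uniformly in $n$, the hypotheses of \cref{proposition:spectral-colim} are met along this specific family, and the limit is $\le R<+\infty$. Without this choice of cofinal family your ``bounded action windows'' are not in hand, so the colimit step in your sketch remains a genuine gap.
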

\begin{proof}
    Take $A\subset T^*M$ is a $\zeta_{\mathrm{MVZ}}$-heavy subset.
    We may assume $A$ is contained in the interior of $DT^*M$ by choosing the Riemannian metric on $M$ sufficiently small. 
    By (i) $\Leftrightarrow$ (iii) of \cref{theorem:characterization}, $c(\eta_{A,\bfk_{M\times[0,\infty)}}; \bfk_{M\times[0,\infty)}, \cK_A^{\ostar} (\bfk_{M\times [0,\infty)}))\neq \infty$ holds. 
    By (i) $\Leftrightarrow$ (iv) of \cref{corollary:unitinfty}, for any $H\in C_{cc}(T^*M,A)$, $c(1; \bfk_{M\times[0,\infty)}, \cK_H^{\ostar} (\bfk_{M\times [0,\infty)}))\le 0$. 
    Take a final functor $H_\bullet\colon \cN\bN\to \cC_{cc}(T^*M,A)$ satisfying $0\le H_n$ and $H_n$ is constant outside $DT^*M$ for each $n$.  
    By the monotonicity of $c$, $c(1; \bfk_{M\times[0,\infty)}, \cK_{H_n}^{\ostar }(\bfk_{M\times [0,\infty)}))=0$ for each $n$. 
    If the Viterbo conjecture for $M$ is true over $\bfk$, since $\phi_1^{H_n}(0_M)\subset DT^*M$, there exists a real number $R>0$ such that $\gamma(\phi_1^{H_n}(0_M))<R$ for each $n$. 
    Hence by \cref{eq:gammaLag}, we obtain
    \begin{equation}
        c(\mu_M; \bfk_{M\times[0,\infty)}, \cK_{H_n}^{\ostar} (\bfk_{M\times [0,\infty)})\otimes \mathrm{or}_{M\times\bR})<R
    \end{equation}
    for each $n$. 
    By \cref{proposition:spectral-colim}, we also get 
    \begin{equation}
        c(\eta_{A,\bfk_{M\times[0,\infty)}}\mu_M; \bfk_{M\times[0,\infty)}, \cK_{A}^{\ostar} (\bfk_{M\times [0,\infty)})\otimes \mathrm{or}_{M\times\bR})\le R. 
    \end{equation}
    This implies $A$ is cohomologically superheavy. 
\end{proof}

Let us conclude this section by observing that the assumption on $F$ imposes a constraint on $\zeta_e$. 
\begin{proposition}\label{proposition:psqsconstraint}
    Assume that $F$ is cohomologically constructible and $q_{\bR} \pi (\MS(F)\cap \{\tau >0\})$ is bounded in $\bR$.
    Let $e\colon F\to F$ be an idempotent in $\cT_\infty(T^*M)$ with $c(e;F,F)<+\infty$.
    There exists $G\in \Sh(M)$ such that
    $\zeta_e=\zeta_{G\boxtimes\bfk_{[0,\infty)}}$. 
    Moreover if $\RS(F)$ is compact, then $G$ is cohomologically locally constant.  
\end{proposition}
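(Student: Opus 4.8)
The plan is to extract $G$ from the behaviour of $F$ as $t\to+\infty$ and to show that this behaviour already determines $\zeta_e$. Write $E\coloneqq\bfk_{M\times[0,\infty)}\star F$ for the canonical representative of $F$ with $\MS(E)\subset\{\tau\ge 0\}$; then $\MS(E)\cap\{\tau>0\}=\MS(F)\cap\{\tau>0\}$, so by hypothesis there is $s_0\in\bR$ with $\MS(E)\cap\{t>s_0,\ \tau>0\}=\emptyset$, and $E$ is locally constant in the $\bR$-direction on $M\times(s_0,\infty)$. Fix $s>s_0$, let $j_s\colon M\to M\times\bR$ be $x\mapsto(x,s)$, and set $G_0\coloneqq j_s^{-1}E\in\Sh(M)$, so that $E|_{M\times(s_0,\infty)}\simeq G_0\boxtimes\bfk_{M\times(s_0,\infty)}$. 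A representative $F\to T_cF$ of $e$ in $h\cT(T^*M)$ restricts, through the canonical identifications of the translation functors on $\{t>s_0\}$, to an endomorphism $\bar e$ of $G_0$ with $\bar e^2=\bar e$; as $\Sh(M)$ is idempotent-complete, put $G\coloneqq\mathrm{Im}(\bar e)\in\Sh(M)$ (so $G=G_0$ when $e=\id_F$).

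The core is the identity $(\ast)$: $\zeta_e=\zeta_{\bar e\boxtimes\id}$, where $\bar e\boxtimes\id$ denotes the induced idempotent on $G_0\boxtimes\bfk_{[0,\infty)}$. Granting $(\ast)$ one concludes formally: $\bar e$ splits $G_0\simeq G\oplus G'$, so $\bar e\boxtimes\id$ is the projection of $G_0\boxtimes\bfk_{[0,\infty)}$ onto the direct summand $G\boxtimes\bfk_{[0,\infty)}$, and since $\cK_H^{\ostar -k}(\mathchar`-)$ is additive and respects this decomposition, comparing the limits defining the two quasi-states gives $\zeta_{\bar e\boxtimes\id}=\zeta_{G\boxtimes\bfk_{[0,\infty)}}$. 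To prove $(\ast)$ I would compare the sequences $c\bigl(e;F,\cK_H^{\ostar -k}F\bigr)$ and $c\bigl(\bar e\boxtimes\id;\,G_0\boxtimes\bfk_{[0,\infty)},\,\cK_H^{\ostar -k}(G_0\boxtimes\bfk_{[0,\infty)})\bigr)$ term by term. The natural corestriction and restriction morphisms relating $E$ to its $t\to+\infty$ germ $G_0\boxtimes\bfk_{[0,\infty)}$ have fibres and cofibres supported, modulo $\{\tau\le 0\}$, in a bounded window of $t$, hence torsion in $\cT(T^*M)$; they therefore descend to morphisms $F\to G_0\boxtimes\bfk_{[0,\infty)}$ and $G_0\boxtimes\bfk_{[0,\infty)}\to F$ in $h\cT_\infty(T^*M)$ intertwining $e$ and $\bar e\boxtimes\id$. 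Now the hypotheses enter decisively: since $F$ is cohomologically constructible with $q_{\bR}\pi(\MS(F)\cap\{\tau>0\})$ bounded, \cref{lemma:TamarkinVerdier} identifies the relevant internal-$\Hom$ objects with convolutions by $D(i^{-1}F)$, which preserve colimits and commute with $\cK_H^{\ostar -k}(\mathchar`-)$ up to a shift in the $t$-filtration that stays bounded as $k\to\infty$; combining this with \cref{lemma:specprodineq}, \cref{lemma:specinfty} and \cref{remark:finitelength} one gets $\bigl|c(e;F,\cK_H^{\ostar -k}F)-c(\bar e\boxtimes\id;\,G_0\boxtimes\bfk_{[0,\infty)},\,\cdots)\bigr|=O(1)$ in $k$, whence equal slopes.

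I expect this last uniform estimate to be the main obstacle: getting an $O(1)$, rather than merely $o(k)$, comparison of the two spectral filtrations along the whole orbit $\cK_H^{\ostar -k}$, and handling a general idempotent $e$ rather than the tautological ``projection onto the $t\to+\infty$ part''; the constructibility and properness of $F$ is precisely what is used to control the part of $F$ that does not survive to $t=+\infty$.

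Finally, for the last assertion: if $\RS(F)$ is compact then $\rho(\MS(E)\cap\{\tau>0\})$ is bounded. A microsupport estimate for the restriction $j_s^{-1}$ gives $\MS(G_0)\subset\{(x;\xi):(x,s;\xi,0)\in\MS(E)\}$; a non-zero conic direction there would lie in $\MS(E)$ all along $\{t>s_0\}$ at $\tau=0$, and since $E$ has no microsupport with $\tau<0$, this direction must arise as a limit of points of $\MS(E)\cap\{\tau>0\}$ with $\tau\to 0$, forcing $\RS(F)$ to be unbounded in the fibre direction — a contradiction. Hence $\MS(G_0)\subset 0_M$, i.e.\ $G_0$, and with it its summand $G$, is cohomologically locally constant.
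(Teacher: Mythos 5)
Your proposal takes a genuinely different route from the paper's. The paper rescales: it defines $F_n \coloneqq r^{-1}(F)|_{M\times\bR\times\{1/n\}}$ with $r(x,t,s)=(x,t/s)$, uses the boundedness of $q_\bR\pi(\MS(F)\cap\{\tau>0\})$ to show $(F_n)_n$ is Cauchy with $d(F,F_n)<\infty$, takes the limit $F_\infty$, and observes that $\MS(F_\infty)\cap\{\tau>0\}$ is forced into $\{t=0\}$, whence a microsupport estimate gives $F_\infty\simeq G'\boxtimes\bfk_{[0,\infty)}$ already \emph{in} $\cT(T^*M)$. Then \cref{remark:finitelength} immediately yields $\zeta_e=\zeta_{e'}$, and the idempotent splits by idempotent-completeness of $h\Sh(M)$. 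You instead extract the $t\to+\infty$ germ $G_0=j_s^{-1}E$ of the canonical representative directly. Both routes should produce the same $G$, and your microsupport argument for the cohomologically-locally-constant addendum is along the right lines.

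The gap is in your claim that the fibres/cofibres of the comparison morphisms between $E$ and its $+\infty$-germ ``have fibres and cofibres supported, modulo $\{\tau\le0\}$, in a bounded window of $t$, hence torsion.'' The cofibre of $E\to E_{\ge s}$ is (a shift of) $E_{<s}=E\otimes\bfk_{M\times(-\infty,s)}$, whose support is in general \emph{not} bounded below: the hypothesis bounds the $t$-extent of $\MS(F)\cap\{\tau>0\}$, not of $\Supp(F)$, and the canonical representative $E=\bfk_{M\times[0,\infty)}\star F$ may carry a nonzero locally constant germ $G_-$ as $t\to-\infty$ as well as the germ $G_0$ at $t\to+\infty$. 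Showing that this $-\infty$ germ contributes only torsion in $\cT(T^*M)$ is precisely the nontrivial point you wave away; without it the torsion claim is unsupported. (The paper's rescaling avoids this: it compresses all of $\MS\cap\{\tau>0\}$ to $t=0$ and then invokes a microsupport argument in $\cT(T^*M)$ that handles both germs at once.) Finally, note that once you \emph{do} have an isomorphism $F\simeq G_0\boxtimes\bfk_{[0,\infty)}$ in $h\cT_\infty(T^*M)$ intertwining $e$ and $\bar e\boxtimes\id$, \cref{remark:finitelength} already gives the $O(1)$ comparison of spectral filtrations term by term along $\cK_H^{\ostar -k}$; the machinery via \cref{lemma:TamarkinVerdier}, \cref{lemma:specprodineq}, \cref{lemma:specinfty} that you invoke for the ``uniform estimate'' is a detour. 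The real work, and the place where your argument is incomplete, is establishing that isomorphism.
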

\begin{proof}
    Consider the morphism
    \begin{equation}
        r\colon M\times \bR\times \bR_{>0}\to M\times \bR; (x,t,s)\mapsto \left(x,\frac{t}{s}\right)
    \end{equation}
    and define $F_n\coloneqq r^{-1}(F)|_{M\times \bR\times \left\{\frac{1}{n}\right\}}$ for each $n\in \bZ_{\ge 1}$.
    By \cite{AI20}, $d(F_n, F_m)< +\infty$ for any $m,n \in \bZ_{\ge 1}$ and the sequence $(F_n)_n$ is Cauchy. 
    Let $F_\infty$ be a convergence point of $(F_n)_n$, which exists by \cite{AI22completeness} or \cite{guillermou2022gamma}. 
    Let $e'\colon F_\infty \to F_\infty$ be an idempotent corresponding to the idempotent $e\colon F\to F$ through some isomorphism $F\simeq F_\infty$ in $h\cT_{\infty}(T^*M)$ by \cref{remark:finitelength}.

    By a microsupport estimation, we can see that $F_\infty$ is of the form $G'\boxtimes \bfk_{[0,\infty)}$ with $G'\in \Sh(M)$. 
    If $\RS(F)$ is compact, the microsupport estimate also says $G'$ is cohomologically locally constant. 
    The idempotent in $e'$ corresponds to an idempotent $e''\colon G\to G$ in $h\Sh(M)$. 
    Since $h\Sh(M)$ is idempotent complete by \cite{bokstedt1993homotopy}, $e''$ corresponds to some object $G$ which is a retract of $G'$. 
    Note that if $G'$ is cohomologically locally constant, so is the retract $G$.  
\end{proof}

\section{Examples}\label{section:examples}

In this section, we give some examples that we can check the $\zeta_{\mathrm{MVZ}}$superheaviness by our theorems and known results about the Viterbo conjecture.

\begin{example}\label{example:contifunc}
    For a continuous function $f\colon M\to \bR$, we define a subset $Z_f\subset M\times \bR$ as $\{(x,t) \in M\times \bR \mid t\ge -f(t) \}$ and define $F_f\coloneqq \bfk_{Z_f}$.
    If $f$ is of class $C^1$, the derivative $df\colon M\to T^*M$ is defined as a continuous section. In this case, we denote by $\Gamma_{df}\subset T^*M$ the image of $df$. We then have that $\RS (F_f)=\Gamma_{df}$.
    For a general continuous function $f$, 
    $\RS (F_f)$ is called the Vichery subdifferential \cite{VicheryHDC}. 
    Note that $\RS (F_f)$ is not necessarily compact for a general continuous function. However, if $f$ is Lipschitz continuous, then $\RS(F_f)$ is compact.

    If $\RS(F_f)$ is compact, it is $\zeta_{\mathrm{MVZ}}$-superheavy by \cref{theorem:superheavy}.
\end{example}

\begin{example}\label{example:exactLag}
    Any compact exact Lagrangian submanifold in a cotangent bundle is $\zeta_{\mathrm{MVZ}}$-superheavy. This is deduced from \cref{theorem:superheavy} and the existence result \cite{Gu12,Gu23cotangent,Vi19} of the sheaf quatizations of compact exact Lagrangian submanifolds.
\end{example}

The completion of the space of Lagrangians with respect to the spectral metric gives interesting examples. 
As a common generalization of \cref{example:contifunc,example:exactLag}, we have the following. 

\begin{example}
    Viterbo~\cite{viterbo2022supports} introduced the notion of $\gamma$-supports for elements of the completion of the space of some Lagrangian submanifolds with respect to the spectral distance.
    The $\gamma$-support of an element of the completion coincides with the reduced microsupport of an associated sheaf quantization \cite{AGHIV}. 
    Especially, a $\gamma$-support is realized as a reduced microsupport of a sheaf $F$ with $d(\bfk_{M\times [0,\infty)}, F)<\infty$, 
    Hence, compact $\gamma$-supports are $\zeta_{\mathrm{MVZ}}$-superheavy by \cref{theorem:superheavy}.
\end{example}

\begin{remark}
    A Birkhoff attractor is a closed subset of $T^*S^1$ associated to a map called conformally exact symplectic map on $T^*S^1$. 
    It can be an indecomposable continuum. 
    See \cite{AHV24} and its references.
    The Birkhoff attractors are realized as $\gamma$-supports by \cite[Thm.~1.3]{AHV24}. 
    Hence we obtain an examples of a $\zeta_{\mathrm{MVZ}}$-superheavy subset which is an indecomposable continuum in $T^*S^1$.
\end{remark}

With a morphism of manifolds $f \colon X \to Y$, we associate the following commutative diagram of morphisms of manifolds:
\begin{equation}\label{diag:fpifd}
\begin{aligned}
\xymatrix{
	T^*X \ar[d]_-{\pi} & X \times_Y T^*Y \ar[d] \ar[l]_-{f_d}
	\ar[r]^-{f_\pi} & T^*Y \ar[d]^-{\pi} \\
	X \ar@{=}[r] & X \ar[r]_-f & Y,
}
\end{aligned}
\end{equation}
where $f_\pi$ is the projection and $f_d$ is induced by the transpose
of the tangent map $f' \colon TX \to X \times_Y TY$.

The following lemma is a consequence of \cref{corollary:heavy}. 
\begin{lemma}\label{lemma:heavydescent}
    Let $p \colon N \to M$ be a submersion.
    Let $G \in \cT(T^*N)$ and assume that $\Spec(\bfk_{M\times [0,\infty)}, G) \neq \emptyset$ and $\tilde{p}=p \times \id_\bR$ is proper on $\Supp(G)$ and moreover $\RS(G)\cap \Image (p_d)$ is compact.
    Then $\RS(\tilde{p}_*G)$ is $\zeta_{\mathrm{MVZ}}$-heavy.
\end{lemma}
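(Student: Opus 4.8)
The plan is to apply \cref{corollary:heavy}~(i) to the object $\tilde{p}_*G\coloneqq R\tilde{p}_*G$, viewed as an object of $\cT(T^*M)$; the properness of $\tilde{p}$ on $\Supp(G)$ (which also gives $R\tilde{p}_*G\simeq R\tilde{p}_!G$) is what makes the direct-image microsupport estimate below applicable. By \cref{corollary:heavy}~(i) it then suffices to prove two things: (a) $\RS(\tilde{p}_*G)$ is compact, and (b) $\Spec(\bfk_{M\times[0,\infty)},\tilde{p}_*G)\neq\emptyset$. Here I read the hypothesis on $G$ as $\Spec(\bfk_{N\times[0,\infty)},G)\neq\emptyset$, which is the same as $\Spec(\tilde{p}^{-1}\bfk_{M\times[0,\infty)},G)\neq\emptyset$ since $\tilde{p}^{-1}\bfk_{M\times[0,\infty)}\simeq\bfk_{N\times[0,\infty)}$.

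For (a), I would use the microsupport estimate for proper direct images (\cite[\S5.4]{KS90}): $\MS(\tilde{p}_*G)\subset\tilde{p}_\pi\,\tilde{p}_d^{-1}(\MS(G))$. As $\tilde{p}=p\times\id_\bR$, the maps $\tilde{p}_\pi$ and $\tilde{p}_d$ attached to $\tilde{p}$ as in \eqref{diag:fpifd} are $p_\pi\times\id_{T^*\bR}$ and $p_d\times\id_{T^*\bR}$, so they leave the $(t,\tau)$-coordinates unchanged. Intersecting with $\{\tau>0\}$ and applying $\rho$, a short computation using that each $(dp_n)^*$ is injective (because $p$ is a submersion) shows $\rho\bigl(\MS(\tilde{p}_*G)\cap\{\tau>0\}\bigr)\subset p_\pi\bigl(p_d^{-1}(\RS(G)\cap\Image(p_d))\bigr)$. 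Since $p$ is a submersion, $p_d\colon N\times_M T^*M\to T^*N$ is a closed embedding of vector bundles over $N$, so $p_d^{-1}(\RS(G)\cap\Image(p_d))$ is homeomorphic to the compact set $\RS(G)\cap\Image(p_d)$; hence its image under the continuous map $p_\pi$ is compact, and $\RS(\tilde{p}_*G)$ — being the closure of a subset of that compact set — is compact.

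For (b), I would work inside the Tamarkin categories, using the adjunction $\tilde{p}^{-1}\dashv\tilde{p}_*$, the commutation $\tilde{p}_*T_c\simeq T_c\tilde{p}_*$ (again because $\tilde{p}=p\times\id_\bR$), and $\tilde{p}^{-1}\bfk_{M\times[0,\infty)}\simeq\bfk_{N\times[0,\infty)}$. Feeding these into \cref{proposition:HomhT} gives, for each $d$, the chain $h\cT_\infty^d(\bfk_{M\times[0,\infty)},\tilde{p}_*G)\simeq\colim_c\Hom_{h\cT(T^*M)}(\bfk_{M\times[0,\infty)},T_c\tilde{p}_*G[d])\simeq\colim_c\Hom_{h\cT(T^*N)}(\bfk_{N\times[0,\infty)},T_cG[d])\simeq h\cT_\infty^d(\bfk_{N\times[0,\infty)},G)$, hence an isomorphism $h\cT_\infty^*(\bfk_{M\times[0,\infty)},\tilde{p}_*G)\simeq h\cT_\infty^*(\bfk_{N\times[0,\infty)},G)$. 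Because this isomorphism is induced by the adjunction at each finite stage $c$ and is compatible with the structure maps $T_c\to T_{c'}$ defining the colimits, it intertwines the invariants $c(-;-,-)$ of the two pairs; therefore $\Spec(\bfk_{M\times[0,\infty)},\tilde{p}_*G)=\Spec(\bfk_{N\times[0,\infty)},G)\neq\emptyset$. Plugging (a) and (b) into \cref{corollary:heavy}~(i) then finishes the proof.

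The step I expect to be the main obstacle is (b): what has to be transported is not merely a nonzero morphism but the \emph{finiteness} of its spectral invariant, which forces the adjunction identifications to remain compatible with the $T_c$-filtration at every level (sheaf, $h\cT$, $h\cT_\infty$); in particular one must make sure the adjunction $\tilde{p}^{-1}\dashv\tilde{p}_*$ is genuinely available in the Tamarkin categories (not just in $\Sh$) and interacts correctly with the convolution units $\bfk_{M\times[0,\infty)}$ and $\bfk_{N\times[0,\infty)}$. The microsupport bookkeeping in (a) — identifying the image of $\rho$ and checking that the closure stays inside the compact set — is the remaining, essentially routine, technical point.
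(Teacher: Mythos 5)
Your proof is correct and matches the paper's intended argument: the paper's own proof is just the terse remark ``consequence of \cref{corollary:heavy}'' followed by the inclusion $\RS(\tilde{p}_*G)\subset p_\pi p_d^{-1}\RS(G)$, which is precisely the two-step verification (compactness of $\RS(\tilde{p}_*G)$ via the proper direct-image microsupport estimate, and $\Spec(\bfk_{M\times[0,\infty)},\tilde{p}_*G)=\Spec(\bfk_{N\times[0,\infty)},G)\neq\emptyset$ via the $\tilde{p}^{-1}\dashv\tilde{p}_*$ adjunction) that you carry out, including the correct reading of the hypothesis as $\Spec(\bfk_{N\times[0,\infty)},G)\neq\emptyset$.
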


Note that under the situation of the above lemma, we have $\RS(\tilde{p}_*G) \subset p_\pi p_d^{-1}\RS(G)$.

\begin{example}\label{example:realproj}
    Take $S^{n}=\{(x_0,\dots, x_{n+1})\in \bR^{n+1} \mid \sum_{i=0}^{n+1}x_i^2=1 \}$, $ \bR P^n\coloneqq S^n /(\bZ/2\bZ)$ and let $p\colon S^n\to \bR P^n$ be the quotient map. 
    Let us take $f\colon S^{n}\to \bR; (x_0,\dots ,x_{n+1})\mapsto x_0$. 
    Then $\iota \coloneqq p_\pi \circ(p_d)^{-1}\circ df\colon S^n\to T^*\bR P^n$ is an exact Lagrangian immersion. 

    Applying \cref{lemma:heavydescent} to $p\colon S^n\to \bR P^n$ and $F_f=\bfk_{Z_f}\in \cT(T^*S^n)$, we obtain $\iota (S^n)\subset T^*\bR P^n$ is $\zeta_{\mathrm{MVZ}}$-heavy. 
    Since the Viterbo conjecture for $\bR P^n$ is true over any coefficient field, $\iota (S^n)\subset T^*\bR P^n$ is $\zeta_{\mathrm{MVZ}}$-superheavy by \cref{corollary:superheavyV}. 
    If the characteristic of $\bfk$ is not $2$, the superheaviness can be deduced directly from \cref{theorem:superheavy}.

    For $n=1$, $\iota (S^1)\subset T^*S^1$ is the singular fiber of the pendulum. 
\end{example}

\begin{example}\label{example:spherical}
    Let us take $S^{2n+1}=\{(z_0,\dots, z_{n+1})\in \bC^{n+1} \mid \sum_{i=0}^{n+1}|z_i|^2=1 \}$ and consider the quotient map $p\colon S^{2n+1}\to \bC P^n$ by the $\mathrm{U}(1)$-action. 
    
    Let us take $f\colon S^{2n+1}\to \bR; (z_0,\dots ,z_{n+1})\mapsto \Re z_0$ and then the fiberwise singular points of $f$ along $p$ is the sphere $S^{2n}=\{(z_0,\dots, z_{n+1})\in S^{2n+1}\mid z_0\in \bR \}$. 
    Then the differential of $f$ on $S^{2n}$ along the base direction gives an exact Lagrangian immersion $\iota\colon S^{2n}\to T^*\bC P^n$.  

    Applying \cref{lemma:heavydescent} to $p\colon S^{2n+1}\to \bC P^n$ and $F_f=\bfk_{Z_f}\in \cT(T^*S^{2n+1})$, we obtain $\iota (S^{2n})\subset T^*\bC P^n$ is $\zeta_{\mathrm{MVZ}}$-heavy. 
    Since the Viterbo conjecture for $\bC P^n$ is true over any coefficient field, $\iota (S^{2n})\subset T^*\bC P^n$ is $\zeta_{\mathrm{MVZ}}$-superheavy.

    For $n=1$, $\iota (\Sigma_f)\subset T^* S^2$ is the superheavy singular fiber of the spherical pendulum, whose superheaviness is proved by \cite{KO22rigidfiber}. 
    See \cite{CB15} for detailed description for the fibers of the spherical pendulum. 
    
\end{example}
\begin{remark}\label{remark:quatproj}
     \cref{example:realproj,example:spherical} can be extended to the quaternionic version.\footnote{Lagrangian immersions in \cref{example:spherical,remark:quatproj} were treated in an intensive lecture series by Kenji Fukaya at Kyoto University in January 2023.}
\end{remark}

\begin{example}[The Lagrange top] 
    Let be $q\colon\mathrm{SO}(3)\to S^2$ the map picking up the first column of each matrix. 
    In $n=1$ case of \cref{example:spherical}, the superheaviness of the singular fiber of the spherical pendulum is proved by the existence of the object $p_*F_f\in \cT(T^*S^2)$. 
    The reduced microsupport of $q^{-1}p_*F_f\in \cT(T^*\mathrm{SO}(3))$ is a singular fiber of the Lagrange top. 
    By \cref{corollary:superheavyV}, the singular fiber is $\zeta_{\mathrm{MVZ}}$-superheavy. 
    This is also proved by \cite{KO22rigidfiber} and
    see \cite{CB15} also for detailed description for the fibers of the Lagrange top. 
\end{example}

\appendix

\section{Partial symplectic quasi-states from kernels}\label{sec:psqsHam}

In this appendix, we discuss partial symplectic quasi-states that are directly constructed from GKS kernels. 
Many of the proofs are parallel to those handled in the main text.

We define $\zeta_{\mathrm{Ham}}\colon C_{cc}(T^*M)\to \bR$ by
\begin{equation}
    \zeta_{\mathrm{Ham}}(H) \coloneqq -\lim_k\frac{c(1;\bfk_{\Delta_M\times [0,\infty)}, \cK_{H}^{\ostar -k})}{k}.
\end{equation}
This functional will be a partial symplectic quasi-states and correspond to one defined in \cite{Lanzat13}.
More generally, for $K \in \cT(T^*M\times T^*M)$ an idempotent $e\colon K \to K$ in $\cT_\infty (T^*M\times T^*M)$ with $c(e;K,K)=+\infty$,
we can define
\begin{equation}
    \zeta_{\mathrm{Ham}, e}(H) \coloneqq -\lim_k\frac{c(e;K, \cK_{H}^{\ostar -k}(K))}{k}.
\end{equation}
\begin{proposition}
    For each $K \in \cT(T^*M\times T^*M)$ and each idempotent $e\colon K \to K$ in $\cT_\infty (T^*M\times T^*M)$ with $c(e;K,K)=+\infty$, 
     $\zeta_{\mathrm{Ham}, e}$ is a partial symplectic quasi-state. 
\end{proposition}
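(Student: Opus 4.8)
The plan is to mimic the proof of \cref{theorem:idempotent_quasistate} essentially verbatim, working in $\cT(T^*M^2)=\cT(T^*(M\times M))$ with $K$ in place of $F$; here the kernels $\cK_H$ for $H\in C_{cc}(T^*M)$ act on $\cT(T^*M^2)$ through $\ostar$ on the first factor, and every auxiliary result invoked in that proof (\cref{lemma:specprodineq,lemma:elle,lemma:displace_spectral,proposition:gamma=d,theorem:Ham_stability,lemma:GKSsharp,lemma:GKSconst}) is a statement about $\Ham_c(T^*M)$, the spectral invariants, and the pseudo-distance $d_{\cT(T^*M^2)}$, hence is available without change. First I would record, as in \cref{lemma:elle},
\[
    \ell_{\mathrm{Ham},e}(\phi)\coloneqq -c(e;\cK_\phi^{\ostar}(K),K)\qquad(\phi\in\Ham_c(T^*M)),
\]
where $e$ is viewed as a morphism $\cK_\phi^{\ostar}(K)\to K$ in $h\cT_\infty$ via the isomorphism $\cK_\phi^{\ostar}(K)\simeq K$ coming from $d_{\cT(T^*M^2)}(\cK_\phi^{\ostar}(K),K)\le\gamma(\phi)<+\infty$. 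Using that $\cK_\phi^{\ostar}(\mathchar`-)$ is an autoequivalence of $\cT(T^*M^2)$ with inverse $\cK_{\phi^{-1}}^{\ostar}(\mathchar`-)$ which commutes with the $T_c$ and the $\tau_c$ and so preserves spectral invariants, and using \cref{lemma:GKSsharp,lemma:GKSconst} to split off the constant $C_H$ (with $H-C_H\in C_c(T^*M)$) and identify $\cK_H^{\ostar -k}$ with $\cK_{(\phi^H_1)^{-k}}$ up to the shift $T_{-kC_H}$, I would rewrite $\zeta_{\mathrm{Ham},e}(H)=C_H+\lim_{k\to\infty}\ell_{\mathrm{Ham},e}\bigl((\phi^H_1)^k\bigr)/k$.

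Next I would establish the two inequalities of \cref{lemma:elle} for $\ell_{\mathrm{Ham},e}$: subadditivity $\ell_{\mathrm{Ham},e}(\phi\psi)\le\ell_{\mathrm{Ham},e}(\phi)+\ell_{\mathrm{Ham},e}(\psi)$ from \cref{lemma:specprodineq}, $e^2=e$, and $\cK_{\phi\psi}\simeq\cK_\phi\ostar\cK_\psi$ (\cref{lemma:GKSsharp}); and the Lipschitz estimate $|\ell_{\mathrm{Ham},e}(\phi)-\ell_{\mathrm{Ham},e}(\psi)|\le\gamma(\phi,\psi)$ from the Lipschitz dependence of $c$ on $d_{\cT(T^*M^2)}$ (as in \cref{remark:finitelength}), the $1$-Lipschitz property of $\cK_\phi^{\ostar}(\mathchar`-)$ and $\cK_\psi^{\ostar}(\mathchar`-)$ for $d_{\cT(T^*M^2)}$, and \cref{proposition:gamma=d}. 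Subadditivity and Fekete's lemma give the existence of $\lim_k\ell_{\mathrm{Ham},e}((\phi^H_1)^k)/k$, and the Lipschitz estimate together with finiteness of $c(e;K,K)=-\ell_{\mathrm{Ham},e}(\id)$ gives finiteness of the limit. From here the seven axioms of \cref{definition:partial_quasistate} are checked exactly as in the proof of \cref{theorem:idempotent_quasistate} (which follows \cite{MVZ12}): Lipschitz continuity from the Lipschitz estimate and \cref{theorem:Ham_stability}; semi-homogeneity from the homogenized formula for integer $\lambda\ge 0$ (since $\phi^{\lambda H}_1=(\phi^H_1)^\lambda$ for autonomous $H$), then for rational $\lambda\ge 0$ and, by Lipschitz continuity, for all real $\lambda\ge 0$; monotonicity from the monotonicity of $c$ along the natural morphism $\cK_{H_1}\to\cK_{H_2}$ for $H_1\le H_2$; additivity with respect to constants from $\cK_{H+a}\simeq T_a\cK_H$, hence $\zeta_{\mathrm{Ham},e}(H+a)=\zeta_{\mathrm{Ham},e}(H)+a$, with normalization $\zeta_{\mathrm{Ham},e}(0)=-\lim_k c(e;K,K)/k=0$ since $\cK_0=\bfk_{\Delta_M\times[0,\infty)}$ is the $\ostar$-unit; partial additivity from the fact that $\{H_1,H_2\}=0$ makes the flows commute, so $\cK_{H_1+H_2}\simeq\cK_{H_1}\ostar\cK_{H_2}$ and hence $\cK_{H_1+H_2}^{\ostar -k}(K)\simeq\cK_{H_1}^{\ostar -k}\cK_{H_2}^{\ostar -k}(K)$, followed by \cref{lemma:specprodineq} and $e^2=e$; vanishing from \cref{lemma:displace_spectral}, since if $\phi_{H'}$ displaces $\Supp(H)$ it displaces the support of the Hamiltonian generating $(\phi^H_1)^k$, which lies in $\Supp(H)$, so $\gamma((\phi^H_1)^k)\le 2\gamma(\phi_{H'})$ uniformly in $k$, whence the Lipschitz estimate bounds $\ell_{\mathrm{Ham},e}((\phi^H_1)^k)$ uniformly and the limit is $0$; and invariance from $\phi^{H\circ\varphi}_1=\varphi^{-1}\phi^H_1\varphi$ together with the $\chi$-uniform bound $|\ell_{\mathrm{Ham},e}(\varphi^{-1}\chi\varphi)-\ell_{\mathrm{Ham},e}(\chi)|\le\ell_{\mathrm{Ham},e}(\varphi)+\ell_{\mathrm{Ham},e}(\varphi^{-1})$ obtained from subadditivity.

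I expect the only genuinely delicate point to be the identification in the first step: making precise the transport of the idempotent $e$ along the $h\cT_\infty$-isomorphisms $\cK_\phi^{\ostar}(K)\simeq K$, and checking that the autoequivalences $\cK_H^{\ostar\pm k}(\mathchar`-)$ move spectral invariants in exactly the controlled way claimed, so that $\zeta_{\mathrm{Ham},e}(H)$ really equals the homogenization of $\ell_{\mathrm{Ham},e}$. This is where one must be careful that $H\in C_{cc}(T^*M)$ need not be normalized and that $\cK_H$ for merely continuous $H$ is defined only as a colimit of smooth kernels; once these identifications and the two inequalities of \cref{lemma:elle} are in place, the remaining verifications are routine and formally identical to the $\cT(T^*M)$ case.
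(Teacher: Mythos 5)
Your proposal is correct and follows essentially the same route as the paper: the paper's proof is literally ``replace $\ell_e(\phi)=-c(e;\cK_\phi^{\ostar}(F),F)$ by $-c(e;\cK_\phi^{\ostar}(K),K)$ and run the proof of \cref{theorem:idempotent_quasistate}'' (which in turn invokes \cref{lemma:elle,lemma:displace_spectral} and the MVZ template), which is exactly your plan. Your additional care about transporting $e$ along the finite-distance isomorphism $\cK_\phi^{\ostar}(K)\simeq K$ and about $\cK_H$ for merely continuous $H$ being a colimit is a sound elaboration of subtleties the paper leaves implicit.
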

\begin{proof}
    The proof is parallel to that of \cref{theorem:idempotent_quasistate} by replacing $\ell_e (\phi)\coloneqq -c(e;\cK_{\phi}^{\ostar} (F), F)$ by $-c(e;\cK_{\phi}^{\ostar} (K), K)$.
\end{proof}
\begin{proposition}
    \begin{enumerate}
        \item For each $F \in \cT(T^*M)$, each idempotent $e\colon F \to F$ in $\cT_\infty (T^*M)$ with $c(e;F,F)=+\infty$, and each $H\in C_{cc}(T^*M)$, $\zeta_e (H)\le \zeta_{\mathrm{Ham}}(H)$.
        \item For each $K \in \cT(T^*M\times T^*M)$, each idempotent $e\colon K \to K$ in $\cT_\infty (T^*M\times T^*M)$ with $c(e;K,K)=+\infty$, and each $H\in C_{cc}(T^*M)$,
        $\zeta_{\mathrm{Ham}, e} (H)\le \zeta_{\mathrm{Ham}}(H)$.
    \end{enumerate}
\end{proposition}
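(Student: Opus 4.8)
The plan is to reduce both inequalities to a single termwise estimate for spectral invariants. Since $\zeta_e$, $\zeta_{\mathrm{Ham},e}$ and $\zeta_{\mathrm{Ham}}$ are partial symplectic quasi-states, the limits defining them exist (via the subadditivity of \cref{lemma:elle}\,(i), as in the proof of \cref{theorem:idempotent_quasistate}); hence for (i) it suffices to prove that for every $H\in C_{cc}(T^*M)$ and every $k\in\bZ_{\ge 1}$ one has
\[
    c\bigl(e;F,\cK_H^{\ostar -k}(F)\bigr)\ \ge\ c(e;F,F)\ +\ c\bigl(1;\bfk_{\Delta_M\times[0,\infty)},\cK_H^{\ostar -k}\bigr),
\]
and then to divide by $k$, let $k\to\infty$ so that the fixed finite number $c(e;F,F)$ drops out, and change signs. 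For (ii) the very same estimate, with $K\in\cT(T^*M\times T^*M)$ in place of $F$ and the convolution $\ostar\colon\cT(T^*M^2)\times\cT(T^*M^2)\to\cT(T^*M^2)$ in place of the action on $\cT(T^*M)$, does the job; the argument below carries over verbatim.

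To prove the estimate, recall that $\bfk_{\Delta_M\times[0,\infty)}=\cK_0$ is the monoidal unit, so $\cK_0\ostar F\simeq F$, and that the endomorphism $e$ occurring in the definition of $\zeta_e$ is transported into $h\cT_\infty^*\bigl(F,\cK_H^{\ostar -k}(F)\bigr)$ by postcomposition with the canonical morphism $\nu_k\colon F\to\cK_H^{\ostar -k}(F)$ obtained by convolving the canonical morphism $1\colon\cK_0\to\cK_H^{\ostar -k}$ (the one entering the definition of $\zeta_{\mathrm{Ham}}$) with $\id_F$ and using $\cK_0\ostar F\simeq F$. Thus the morphism whose spectral invariant is $c(e;F,\cK_H^{\ostar -k}(F))$ is $\nu_k\circ e$, and \cref{lemma:specprodineq} gives
\[
    c\bigl(\nu_k\circ e;F,\cK_H^{\ostar -k}(F)\bigr)\ \ge\ c(e;F,F)+c\bigl(\nu_k;F,\cK_H^{\ostar -k}(F)\bigr).
\]
Moreover the functor $L\mapsto L\ostar F$ commutes with the translation functors, $(T_cL)\ostar F\simeq T_c(L\ostar F)$, so any lift $\cK_0\to T_c\cK_H^{\ostar -k}$ of $1$ convolves with $\id_F$ into a lift $F\simeq\cK_0\ostar F\to T_c\bigl(\cK_H^{\ostar -k}(F)\bigr)$ of $\nu_k$; therefore $c\bigl(\nu_k;F,\cK_H^{\ostar -k}(F)\bigr)\ge c\bigl(1;\cK_0,\cK_H^{\ostar -k}\bigr)$. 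Combining the last two displays yields the claimed termwise estimate, and (i) follows as explained; (ii) is identical with $K$ in place of $F$.

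The computation is light, and I do not expect a genuine obstacle: the two points needing care are the identification of the morphism appearing in the definition of $\zeta_e$ (resp.\ $\zeta_{\mathrm{Ham},e}$) with $\nu_k\circ e=(1\ostar\id_F)\circ e$ under $\cK_0\ostar F\simeq F$, and the compatibility of $\ostar$ with the translation functors $T_c$; both are already implicit in the earlier sections (cf.\ the proof of \cref{theorem:idempotent_quasistate} and the natural transformations $\tau_c$ of \cref{subsec:Tamarkin_cat}). Conceptually the proposition simply records that, among the partial symplectic quasi-states produced from a single object and a single idempotent by this recipe, $\zeta_{\mathrm{Ham}}$ — the one attached to the monoidal unit $\bfk_{\Delta_M\times[0,\infty)}$ with its identity idempotent — is the largest, since passing from that pair to an arbitrary $(F,e)$ (or $(K,e)$) only composes with further morphisms whose spectral invariants are bounded below.
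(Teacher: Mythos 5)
Your proof is correct and follows the same route as the paper: the paper simply asserts the termwise inequality $c(e;F,\cK_H^{\ostar -k}(F))\ge c(1;\bfk_{\Delta_M\times[0,\infty)},\cK_H^{\ostar -k})+c(e;F,F)$ and then divides by $k$ and passes to the limit, exactly as you do. You merely supply the (routine but welcome) justification of that inequality via \cref{lemma:specprodineq} together with the compatibility of $\ostar$ with the translation functors $T_c$.
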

\begin{proof}
    (i) For each $k$,  
    \begin{equation}
        c(e;F,\cK_H^{\ostar -k}(F))\ge c(1;\bfk_{\Delta_M\times [0,\infty)}, \cK_{H}^{\ostar -k}) +c(e;F,F)
    \end{equation}
    holds. Hence by taking the limit $k\to \infty$, we obtain the assertion. 

    (ii) is parallel to (i). 
\end{proof}
\begin{corollary}
The following hold:
\begin{enumerate}
    \item Every $\zeta_e$-heavy subset is $\zeta_{\mathrm{Ham}}$-heavy for each $F \in \cT(T^*M)$ and each idempotent $e\colon F \to F$ in $\cT_\infty (T^*M)$ with $c(e;F,F)=+\infty$.
    \item Every $\zeta_{\mathrm{Ham}, e}$-heavy subset is $\zeta_{\mathrm{Ham}}$-heavy  for each $K \in \cT(T^*M\times T^*M)$ and each idempotent $e\colon K \to K$ in $\cT_\infty (T^*M\times T^*M)$ with $c(e;K,K)=+\infty$.
\end{enumerate}
\end{corollary}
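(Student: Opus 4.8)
The plan is to deduce both statements directly from the pointwise inequalities just established, namely $\zeta_e(H) \le \zeta_{\mathrm{Ham}}(H)$ for all $H \in C_{cc}(T^*M)$ in case (i), and $\zeta_{\mathrm{Ham}, e}(H) \le \zeta_{\mathrm{Ham}}(H)$ for all $H$ in case (ii). The point is that $\zeta$-heaviness is monotone in $\zeta$: if $\zeta \le \zeta'$ pointwise on $C_{cc}(T^*M)$ and a compact subset $A$ is $\zeta$-heavy, then for every $H \in C_{cc}(T^*M)$ one has $\zeta'(H) \ge \zeta(H) \ge \min_A H$, so $A$ is $\zeta'$-heavy.

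First I would recall that, by definition (or equivalently via \cref{lemma:heavyequiv}), a compact subset $A$ is $\zeta$-heavy precisely when $\zeta(H) \ge \min_A H$ for all $H \in C_{cc}(T^*M)$; this is meaningful here because $\zeta_e$, $\zeta_{\mathrm{Ham}}$, and $\zeta_{\mathrm{Ham}, e}$ have all been verified to be partial symplectic quasi-states. Then, for (i), given a $\zeta_e$-heavy subset $A$ and an arbitrary $H \in C_{cc}(T^*M)$, I would chain the inequalities $\zeta_{\mathrm{Ham}}(H) \ge \zeta_e(H) \ge \min_A H$, the first by the preceding proposition and the second by $\zeta_e$-heaviness of $A$; this shows $A$ is $\zeta_{\mathrm{Ham}}$-heavy. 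For (ii) the same one-line argument applies verbatim with $\zeta_{\mathrm{Ham}, e}$ in place of $\zeta_e$, again using the preceding proposition for the domination $\zeta_{\mathrm{Ham}, e} \le \zeta_{\mathrm{Ham}}$.

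There is no genuine obstacle: the corollary is a purely formal consequence of the pointwise domination already proved, and both parts follow from the general monotonicity principle for heaviness. The only caveat worth flagging is the standing hypothesis $c(e; F, F) = +\infty$ (resp.\ $c(e; K, K) = +\infty$), under which $\zeta_e$ (resp.\ $\zeta_{\mathrm{Ham}, e}$) is defined and the preceding proposition applies; with that in hand the proof is immediate.
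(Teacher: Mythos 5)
Your proof is correct and is exactly the intended argument: the paper states this as a corollary of the preceding pointwise inequality $\zeta_e \le \zeta_{\mathrm{Ham}}$ (resp.\ $\zeta_{\mathrm{Ham},e} \le \zeta_{\mathrm{Ham}}$) without further comment, and the monotonicity of heaviness in the quasi-state that you spell out is precisely the implicit step. One small remark: the hypothesis $c(e;F,F)=+\infty$ that you flag as the ``caveat'' appears to be a typo in the appendix for $c(e;F,F)\neq +\infty$, the standing hypothesis used throughout Section~3 to guarantee that $\zeta_e$ is well-defined; your reading (that this is the well-definedness hypothesis) is the right one.
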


The following proposition is a direct consequence of \cref{lemma:cnonposi}.
\begin{proposition}
    $\zeta_{\mathrm{Ham}}(H)\ge 0 $ for each $H\in C_c(T^*M)$.
\end{proposition}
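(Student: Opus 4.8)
The plan is to reduce the statement directly to \cref{lemma:cnonposi}, which asserts $c(1;\phi,\psi)\le 0$ for all $\phi,\psi\in\Ham_c(T^*M)$. The key point is that, for $H\in C^\infty_c(T^*M)$ and each $k\in\bN$, the kernel $\cK_H^{\ostar -k}$ is again the GKS kernel of a compactly supported Hamiltonian diffeomorphism: combining \cref{lemma:GKSsharp} with the identifications $\cK_H\simeq\cK_{\phi^H_1}$ and $\cK_H^{-1}\simeq\cK_{\overline H}$ gives $\cK_H^{\ostar -k}\simeq\cK_{\psi_k}$ with $\psi_k=(\phi^H_1)^{-k}\in\Ham_c(T^*M)$, while $\bfk_{\Delta_M\times[0,\infty)}\simeq\cK_{\id}$ by \cref{lemma:GKSconst}.

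First I would rewrite the numerator in the definition of $\zeta_{\mathrm{Ham}}$ as
\[
    c(1;\bfk_{\Delta_M\times[0,\infty)},\cK_H^{\ostar -k})=c(1;\cK_{\id},\cK_{\psi_k})=c(1;\id,\psi_k),
\]
and apply \cref{lemma:cnonposi} to conclude $c(1;\id,\psi_k)\le 0$ for every $k$. Dividing by $k$ and passing to the limit (which exists by the superadditivity of $k\mapsto c(1;\id,\psi_k)$, itself a consequence of \cref{lemma:specprodineq}, as already used to make $\zeta_{\mathrm{Ham}}$ well defined) yields $\lim_k k^{-1}c(1;\bfk_{\Delta_M\times[0,\infty)},\cK_H^{\ostar -k})\le 0$, hence $\zeta_{\mathrm{Ham}}(H)\ge 0$ for smooth $H$. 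For a general $H\in C_c(T^*M)$, I would approximate $H$ in $C^0$ by a sequence $(H_n)_n\subset C^\infty_c(T^*M)$ and use that $c$ is $1$-Lipschitz with respect to $d_{\cT(T^*M^2)}$ together with the $d_{\cT(T^*M^2)}$-continuity of $\ostar$, so that $c(1;\bfk_{\Delta_M\times[0,\infty)},\cK_H^{\ostar -k})=\lim_n c(1;\bfk_{\Delta_M\times[0,\infty)},\cK_{H_n}^{\ostar -k})\le 0$ for each $k$.

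There is no real obstacle here: the only point needing a line of justification is the identification $\cK_H^{\ostar -k}\simeq\cK_{\psi_k}$ with $\psi_k\in\Ham_c(T^*M)$, which is a formal consequence of the multiplicativity of GKS kernels recalled in \Cref{subsec:SQHam}. Alternatively one could bypass it entirely by noting that the monotonicity and microsupport argument in the proof of \cref{lemma:cnonposi} applies to $\cK_H^{\ostar -k}$ verbatim.
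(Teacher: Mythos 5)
Your proof is correct and is exactly the argument the paper has in mind: the paper simply states that the proposition is ``a direct consequence of \cref{lemma:cnonposi}'' without spelling out the details, and you have supplied precisely the missing steps — identifying $\cK_H^{\ostar -k}\simeq\cK_{(\phi^H_1)^{-k}}$ and $\bfk_{\Delta_M\times[0,\infty)}\simeq\cK_{\id}$ via \cref{lemma:GKSsharp,lemma:GKSconst}, applying \cref{lemma:cnonposi} to each $k$, using superadditivity to pass to the limit, and extending from $C^\infty_c$ to $C_c$ by $C^0$-approximation together with the Lipschitz continuity of $c$ in the interleaving distance.
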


\begin{corollary}
    There is no $\zeta_{\mathrm{Ham}}$-superheavy subset. 
\end{corollary}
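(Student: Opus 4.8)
The plan is to derive a contradiction from the preceding proposition (that $\zeta_{\mathrm{Ham}}(H)\ge 0$ for all $H\in C_c(T^*M)$) together with the very definition of superheaviness. First I would argue by contradiction: suppose some compact subset $A\subset T^*M$ is $\zeta_{\mathrm{Ham}}$-superheavy. Since $A$ is compact, one can pick a function $H\in C_c(T^*M)$ (e.g.\ a smooth compactly supported bump function) with $H\equiv -1$ on a neighborhood of $A$; existence of such an $H$ is standard. Then \cref{lemma:heavyequiv}, applied to this $H$ and the constant $c=-1$, gives $\zeta_{\mathrm{Ham}}(H)\le -1$. On the other hand $H\in C_c(T^*M)$, so the preceding proposition gives $\zeta_{\mathrm{Ham}}(H)\ge 0$, which is a contradiction. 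Hence no compact subset of $T^*M$ is $\zeta_{\mathrm{Ham}}$-superheavy.

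I do not anticipate any real obstacle: the statement is a purely formal consequence of the one-sided positivity $\zeta_{\mathrm{Ham}}\ge 0$ on $C_c(T^*M)$, since superheaviness of $A$ forces $\zeta_{\mathrm{Ham}}(H)\le \max_A H$ for every $H\in C_{cc}(T^*M)$, and choosing $H$ to equal a strictly negative constant near $A$ makes the right-hand side negative. The only minor point to spell out is the construction of a compactly supported function that is equal to a negative constant on a neighborhood of the given compact set $A$, which is routine (take a negative constant times a cutoff that is $1$ near $A$ and supported in a slightly larger compact set).
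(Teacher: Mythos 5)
Your proof is correct and is exactly the intended argument (the paper states this as a corollary without spelling out the proof): take a compactly supported $H$ equal to $-1$ near $A$, so superheaviness forces $\zeta_{\mathrm{Ham}}(H)\le -1$, contradicting $\zeta_{\mathrm{Ham}}(H)\ge 0$ from the preceding proposition. Note you could dispense with \cref{lemma:heavyequiv} and apply the definition of superheaviness directly via $\max_A H=-1$, but this is a cosmetic point.
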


For a compact subset $A$ of $T^*M$, we defined
\begin{equation}
    \cK_A\coloneqq \colim_{H\in \cC_{cc}(T^*M,A)} \cK_H. 
\end{equation}

\begin{theorem}\label{theorem:Hamchar}
    For a compact subset $A$ of $T^*M$, the following are equivalent:
    \begin{enumerate}
        \item $A$ is $\zeta_{\mathrm{Ham}}$-heavy,
        \item the canonical morphism $\bfk_{\Delta\times [0,\infty)}\to \cK_A$ is non-zero as a morphism of $h\cT_\infty(T^*M\times T^*M)$,  
        \item $\cK_A \not\simeq 0$ in $h\cT_\infty(T^*M\times T^*M)$. 
    \end{enumerate}
\end{theorem}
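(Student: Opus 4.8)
The plan is to run the argument in parallel with that of \cref{corollary:heavynonzero} (hence of \cref{theorem:characterization}), taking $\cK_0 \coloneqq \bfk_{\Delta_M \times [0,\infty)}$ in the role of the object ``$F$'' there and $\cK_A$ in the role of ``$\iota_A^* F$''. This is legitimate because $\cK_0$ is cohomologically constructible and $q_{\bR}\pi(\MS(\cK_0)\cap\{\tau>0\})=\{0\}$ is bounded, so it satisfies the hypotheses under which the main-text arguments operate; moreover $\cK_0$ is the unit for $\ostar$, so $\cK_A = \cK_A\ostar\cK_0$, and the colimit-structure morphism $u\colon \cK_0 \to \cK_A$ (available since $0 \in C_{cc}(T^*M,A)$) plays the role of the unit morphism $\eta_{A,F}$. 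I will also use the ``constant-shift'' identity $\cK_H\ostar\cK_A \simeq T_c\cK_A$ for every $H\in C_{cc}(T^*M)$ with $H\equiv c$ on a neighbourhood of $A$; this follows from \cref{lemma:GKSsharp}, \cref{lemma:GKSconst}, and the fact that $H'\mapsto (H-c)\sharp H'$ is an isomorphism of the poset $C_{cc}(T^*M,A)$, and it substitutes for \cref{lemma:constantshift}, so that the non-compactness of $\RS(\cK_A)$ never enters.

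The implication (ii) $\Rightarrow$ (iii) is trivial. For (iii) $\Rightarrow$ (ii) I would use the idempotence $\cK_A\ostar\cK_A\simeq\cK_A$ from \cref{equation:kernelidempotnce}: then $\cK_A\ostar(-)$ is an idempotent endofunctor of $\cT(T^*M^2)$ with the natural transformation $\id \Rightarrow \cK_A\ostar(-)$ induced by $u$, so for every $N$ with $\cK_A\ostar N\simeq N$ — in particular for $N = T_c\cK_A$, using $T_c(\cK_A\ostar\cK_A)\simeq T_c\cK_A$ — precomposition with $u$ is a bijection $\Hom_{h\cT(T^*M^2)}(\cK_A,N) \xrightarrow{\sim} \Hom_{h\cT(T^*M^2)}(\cK_0,N)$. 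Hence, if $\cK_A\not\simeq 0$ in $h\cT_\infty(T^*M^2)$, then $\tau_c(\cK_A)\neq 0$ in $h\cT(T^*M^2)$ for every $c\ge 0$, and transporting along these bijections gives $\tau_c(\cK_A)\circ u \neq 0$ for every $c$, i.e.\ $u\neq 0$ in $h\cT_\infty(T^*M^2)$. This is the exact analogue of the adjunction argument in the proof of \cref{corollary:heavynonzero}, with the localization playing the role of the adjunction.

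It remains to prove (i) $\Leftrightarrow$ (iii). For (i) $\Rightarrow$ (iii) I argue contrapositively: if $\cK_A\simeq 0$ in $h\cT_\infty(T^*M^2)$ then $\Hom_{h\cT_\infty(T^*M^2)}(\cK_0,\cK_A)=0$, hence $c(u;\cK_0,\cK_A)=+\infty$, and the analogue of \cref{corollary:unitinfty} — whose proof transcribes verbatim, using $\cK_A\ostar\cK_A\simeq\cK_A$ and the analogue of \cref{proposition:spectral-colim} with $\cK_0$ as ``$F$'' and $(\cK_{H_n})_n$ as ``$G_\bullet$'' for a cofinal sequence $(H_n)$ in $C_{cc}(T^*M,A)$ — then produces $H\in C_{cc}(T^*M,A)$ with $c(1;\cK_0,\cK_H)>0$; consequently $\zeta_{\mathrm{Ham}}(-H)\le -c(1;\cK_0,\cK_H)<0=\min_A(-H)$, so $A$ is not $\zeta_{\mathrm{Ham}}$-heavy by \cref{lemma:heavyequiv}. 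For (iii) $\Rightarrow$ (i): by (iii) $\Rightarrow$ (ii) we have $u\neq 0$ in $h\cT_\infty$; once this is upgraded to $c(u;\cK_0,\cK_A)\neq +\infty$ (see below), the computation in the proof of \cref{theorem:zetae_heavy}, with the constant-shift identity in place of \cref{lemma:constantshift}, yields $\zeta_{\mathrm{Ham}}(H)\ge c$ for every $H$ with $H\equiv c$ near $A$, i.e.\ $A$ is $\zeta_{\mathrm{Ham}}$-heavy.

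The step I expect to be the main obstacle is exactly this upgrade, together with the verification of the hypotheses of the \cref{proposition:spectral-colim} analogue. In the main text, ``non-zero in $h\cT_\infty$'' is promoted to ``spectral invariant $\neq +\infty$'' via \cref{lemma:specinfty}, which needs $\Sigma$ to be semibounded; there this followed from compactness of $\RS(\iota_A^* F)\subset A$, but $\RS(\cK_A)$ is not compact. I would instead argue through the localization: applying $\cK_A\ostar(-)$ to partial lifts of $u$ and using $\cK_A\ostar u\simeq \id_{\cK_A}$ reduces the problem to showing $c(\id_{\cK_A};\cK_A,\cK_A)\neq +\infty$, and by the usual doubling argument $c(\id_{\cK_A};\cK_A,\cK_A)\in\{0,+\infty\}$; so what must be proven is that this invariant equals $0$ whenever $\cK_A\not\simeq 0$ in $h\cT_\infty$ — a self-contained variant of \cref{lemma:specinfty} for the identity morphism. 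Likewise, applying \cref{proposition:spectral-colim} requires choosing the cofinal sequence $(H_n)$ in $C_{cc}(T^*M,A)$ so that the images $q_{\bR}\pi(\MS(\cK_{H_n})\cap\{\tau>0\})$ are uniformly bounded below; checking that such a choice is possible is where I expect the real work to lie, the remainder being a transcription of the arguments of \cref{section:results} and \cref{section:characterization}.
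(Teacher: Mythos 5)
Your outline matches the paper's proof almost step for step: treat $\cK_0 \coloneqq \bfk_{\Delta_M\times[0,\infty)}$ as the object ``$F$'', $\cK_A$ as ``$\iota_A^*F$'', the structure morphism $u$ as the unit $\eta_{A,F}$, and substitute \cref{lemma:Kernelconst} for \cref{lemma:constantshift}; (ii)~$\Rightarrow$~(iii) is trivial, (iii)~$\Rightarrow$~(ii) via idempotence, and (i)~$\Leftrightarrow$~(ii) by transporting the arguments of \cref{corollary:unitinfty,theorem:zetae_heavy}. One place where the paper is noticeably cleaner than your version: for (iii)~$\Rightarrow$~(ii) the paper simply applies the functor $\cK_A^{\ostar}(\mathchar`-)$ to $u$ and observes $\cK_A \ostar u = \id_{\cK_A}$, so $\cK_A \not\simeq 0$ forces $u\neq 0$ immediately; your argument through the localization bijection $\Hom(\cK_A,N)\xrightarrow{\sim}\Hom(\cK_0,N)$ is valid but does more work than is needed.

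You are right to flag the two technical points at the end --- checking that the hypotheses of \cref{proposition:spectral-colim} are met for the system $(\cK_{H_n})_n$, and the passage from ``$u\neq 0$ in $h\cT_\infty$'' to ``$c(u;\cK_0,\cK_A)\neq+\infty$'', for which \cref{lemma:specinfty} is not directly available because $\RS(\cK_A)$ is not compact. The paper states only that the proofs are ``parallel'' to those of \cref{theorem:characterization,theorem:zetae_heavy,corollary:unitinfty} and does not address either of these points explicitly, so they must be supplied in a complete write-up; your doubling observation $c(u;\cK_0,\cK_A)\in\{0,+\infty\}$ (and likewise $c(\id_{\cK_A};\cK_A,\cK_A)\in\{0,+\infty\}$) is exactly the right reduction for the second point, and reducing the first to a suitable choice of a cofinal chain $H_1\ge H_2\ge\cdots$ in $C_{cc}(T^*M,A)$ is the right strategy. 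These are the only places where the proposal falls short of a complete proof; the overall structure and the role assigned to \cref{lemma:Kernelconst} are the paper's.
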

\begin{proof}
    A proof of (i) $\Rightarrow$ (ii) is parallel to that of (i) $\Rightarrow$ (iii') of \cref{theorem:characterization}.

    A proof of (ii) $\Rightarrow$ (i) is parallel to that of (i) of \cref{theorem:zetae_heavy}, once one replaced \cref{lemma:constantshift} by \cref{lemma:Kernelconst} below.

    The implication (ii) $\Rightarrow$ (iii) is trivial. 

    Applying $\cK_A^{\ostar}$ to the canonical morphism $\bfk_{\Delta\times [0,\infty)}\to \cK_A$, we obtain the identity morphism $\id_{\cK_A}$ of $\cK_A$. 
    Hence (iii) implies (ii). 
\end{proof}
\begin{lemma}\label{lemma:Kernelconst}
    Let $H\in C_{cc}(T^*M)$.
    If $H\equiv c$ on some open neighborhood of $A$ then 
    $\cK_H\ostar\cK_A\simeq T_c\cK_A$. 
\end{lemma}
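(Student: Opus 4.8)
The plan is to imitate the proof of \cref{lemma:constantshift}, with the action of GKS kernels on objects of $\cT(T^*M)$ replaced by their action on $\cK_A$, using the colimit description $\cK_A\simeq\colim_{H'\in\cC_{cc}(T^*M,A)}\cK_{H'}$ together with \cref{lemma:GKSsharp,lemma:GKSconst} and a cofinality argument. First I would reduce to the case $c=0$: since $H\equiv c$ on an open neighbourhood of $A$, the function $H_0\coloneqq H-c$ lies in $C_{cc}(T^*M,A)$, and $\cK_H\simeq T_c\cK_{H_0}$ by \cref{lemma:GKSsharp,lemma:GKSconst} exactly as in the proof of \cref{lemma:constantshift}; hence $\cK_H\ostar\cK_A\simeq T_c(\cK_{H_0}\ostar\cK_A)$, and it suffices to prove $\cK_{H_0}\ostar\cK_A\simeq\cK_A$ for $H_0\in C_{cc}(T^*M,A)$.

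Next, since $\ostar$ commutes with colimits, \cref{lemma:GKSsharp} gives $\cK_{H_0}\ostar\cK_A\simeq\colim_{H'\in\cC_{cc}(T^*M,A)}\cK_{H_0\sharp H'}$. Because $H_0$ vanishes near $A$, its Hamiltonian flow $\phi^{H_0}_s$ is the identity on a neighbourhood of $A$, so $H_0\sharp H'\equiv 0$ near $A$, that is, $H_0\sharp H'\in C_{cc}(T^*M\times[0,1],A)$; moreover the assignment $H'\mapsto H_0\sharp H'$ is monotone, so by \cref{lemma:infintyGKS} it defines a functor $\Phi\colon\cC_{cc}(T^*M,A)\to\cC_{cc}(T^*M\times[0,1],A)$ with $\cK_{H_0\sharp H'}\simeq\cK_{\Phi(H')}$ compatibly with structure morphisms. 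Granting that $\Phi$ is final, and recalling that the inclusion $\cC_{cc}(T^*M,A)\hookrightarrow\cC_{cc}(T^*M\times[0,1],A)$ is final, we obtain $\cK_{H_0}\ostar\cK_A\simeq\colim_{H'}\cK_{\Phi(H')}\simeq\colim_{G\in\cC_{cc}(T^*M\times[0,1],A)}\cK_G\simeq\cK_A$, which completes the proof.

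It remains to prove that $\Phi$ is final. Both posets are directed (they are closed under $\max$), so it is enough to check that every $G\in C_{cc}(T^*M\times[0,1],A)$ satisfies $G\le H_0\sharp H'$ for some $H'\in C_{cc}(T^*M,A)$. Writing $q=(\phi^{H_0}_s)^{-1}(p)$, this inequality becomes $H'(q)\ge R(q,s)$ for all $(q,s)$, where $R(q,s)\coloneqq(G-H_0)(\phi^{H_0}_s(q),s)$; since $\phi^{H_0}_s$ is the identity near $A$ and outside $\Supp(H_0)$, the function $R$ lies in $C_{cc}(T^*M\times[0,1],A)$, with the same constant-at-infinity part as $G-H_0$. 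One then takes $H'(q)\coloneqq\max_{s\in[0,1]}R(q,s)$, which is continuous, vanishes near $A$, and is constant at infinity, hence lies in $C_{cc}(T^*M,A)$ and satisfies $G\le H_0\sharp H'$ by construction. The main obstacle is exactly this bookkeeping: one must track the constant-at-infinity components through the decomposition $C_{cc}=C_c\oplus C([0,1])$ to ensure that $H_0\sharp H'$, $R$ and $\max_sR$ stay in the prescribed $C_{cc}(\,\cdot\,,A)$ classes, and one must check that \cref{lemma:GKSsharp} applies to the continuous Hamiltonians $H_0,H'$, which follows from the colimit-of-smooth description of $\cK_{(-)}$ underlying \cref{lemma:infintyGKS}. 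These verifications are routine.
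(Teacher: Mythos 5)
Your proof is correct, and the overall scaffolding (reduce to $c=0$, commute $\ostar$ with the colimit defining $\cK_A$, then a finality argument for the induced reindexing by $H_0\sharp(\mathchar`-)$) matches the paper's. But the finality step is handled differently, and the paper's route is notably slicker.

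The paper first reduces to \emph{smooth} $H$ using the colimit-of-smooth definition of $\cK_H$ for continuous $H$, which lets it invoke \cref{lemma:GKSsharp} directly and avoids the extension to continuous Hamiltonians you flag at the end. It then takes the index poset to be $C_{cc}^\infty(T^*M\times[0,1],A)$ (time-dependent), and observes that $H\sharp(\mathchar`-)$ maps this poset to itself bijectively, order-preservingly, with order-preserving inverse $\overline{H}\sharp(\mathchar`-)$. So the induced functor on nerves is an \emph{equivalence}, and in particular final, with no further work. This trick is only available because the time-dependent poset is closed under both $H\sharp(\mathchar`-)$ and $\overline{H}\sharp(\mathchar`-)$. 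Your version keeps the source as the time-independent poset $\cC_{cc}(T^*M,A)$, so $\Phi=H_0\sharp(\mathchar`-)$ is far from surjective and you must prove finality by hand via the cofinal construction $H'(q):=\max_{s\in[0,1]}R(q,s)$. That construction is valid — $R$ vanishes near $A$ because $\phi^{H_0}_s$ fixes a neighbourhood of $A$, and the constant-at-infinity parts behave as you describe — but it is longer and requires the bookkeeping you mention. If you allow yourself the same preliminary reduction to smooth $H$ and pass to the time-dependent poset before reindexing, the order-isomorphism observation eliminates both the cofinality computation and the need to extend \cref{lemma:GKSsharp} to continuous Hamiltonians.
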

\begin{proof}
    We may assume $c=0$ since $\cK_H\simeq T_c\cK_{H-c}$. 
    Moreover we may assume $H$ is smooth since the kernel $\cK_H$ for continuous $H$ is defined as a colimit of $\cK_{H'}$ for smooth $H'$ and the colimit commute with $\ostar$. 

    Since $H\sharp(\mathchar`-) \colon C_{cc}^\infty(T^*M\times [0,1],A)\to  C_{cc}^\infty(T^*M\times [0,1],A)$ is an order preserving bijection, 
    the induced functor $\cC_{cc}^\infty(T^*M\times [0,1],A)\to  \cC_{cc}^\infty(T^*M\times [0,1],A)$ is an equivalence. 
    In particular, it is a final functor. 
    By the commutativity of $\colim$ and $\ostar$, we obtain
    \begin{equation}
    \begin{aligned}
        \cK_H\ostar\cK_A
        & \simeq \cK_H \ostar \colim_{H'\in \cC_{cc}^\infty(T^*M\times [0,1],A)} \cK_{H'} \\
        & \simeq \colim_{H'\in \cC_{cc}^\infty(T^*M\times [0,1],A)}\cK_{H\sharp H'} \\
        & \simeq \colim_{H''\in \cC_{cc}^\infty(T^*M\times [0,1],A)}\cK_{H''}
        \simeq \cK_A.
    \end{aligned}
    \end{equation}
\end{proof}

\begin{remark}\label{remark:OSrem}
    The main result of \cite{OnoSugimoto} restricted for cotangent bundles is very similar to \cref{theorem:Hamchar}. 
    Their relative symplectic cohomologies are defined as some colimits of Hamiltonian Floer cohomologies. 
    In other words, the colimit is taken at the Hom-space level. 
    On the other hand, our colimit is taken in the sheaf category $\cT(T^*M\times T^*M)$. 
    Due to the fact that Hom and colim do not commute in general, 
    the direct relationship between our results and theirs is not clear. 
    The author expect that colimit in this paper, being taken in the sheaf category, would be easier to study.

    Since the relation between sheaf kernels and symplectic cohomologies of domains in cotangent bundles are described in \cite{KSZ23},
    the author expects a concrete relation between the kernel $\cK_A$ and relative symplectic cohomology of $A \subset T^*M$. 
\end{remark}

For $\zeta_{\mathrm{Ham}}$-heavy subset $A\subset T^*M$, we define $\zeta_{\mathrm{Ham}, A}\colon C_{cc}(M)\to \bR$ by
\begin{equation}
    \zeta_{\mathrm{Ham}, A}(H) \coloneqq -\lim_k\frac{c(\id_{\cK_A};\cK_A, \cK_{H}^{\ostar -k}( \cK_A))}{k}.
\end{equation}
This is a special case of above construction of $\zeta_{\mathrm{Ham},e}$. 
At the end of this appendix, we introduce the basic properties of $\zeta_{\mathrm{Ham},A}$. 
\begin{proposition}
    For $\zeta_{\mathrm{Ham}}$-heavy subset $A\subset T^*M$, we have the following:
    \begin{enumerate}
        \item $\zeta_{\mathrm{Ham}, A}$ is a partial symplectic quasi-state. 
        \item For every $H\in C_{cc}(T^*M)$, $\zeta_{\mathrm{Ham}, A}(H)\le \zeta_{\mathrm{Ham}}(H)$. 
        \item $A$ is $\zeta_{\mathrm{Ham}, A}$-superheavy. 
    \end{enumerate}
\end{proposition}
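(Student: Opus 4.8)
The plan is to read off (i) and (ii) from the general facts about $\zeta_{\mathrm{Ham},e}$ established above, applied to the pair $(K,e)=(\cK_A,\id_{\cK_A})$, and to prove (iii) in parallel with \cref{prop:existence_superheavy}, using \cref{lemma:Kernelconst} in place of \cref{lemma:constantshift}.

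For (i), the only point to verify is the running hypothesis $c(\id_{\cK_A};\cK_A,\cK_A)\neq+\infty$. Since $\id_{\cK_A}$ admits a lift at level $0$ and, by the idempotence $\cK_A\ostar\cK_A\simeq\cK_A$ together with the $\ostar$-analogue of \cref{lemma:specprodineq}, $c(\id_{\cK_A};\cK_A,\cK_A)\ge 2\,c(\id_{\cK_A};\cK_A,\cK_A)$, this invariant lies in $\{0,+\infty\}$. To exclude $+\infty$ I would use that $A$ is $\zeta_{\mathrm{Ham}}$-heavy: by \cref{theorem:Hamchar} the canonical morphism $\cK_0\to\cK_A$ is non-zero in $h\cT_\infty(T^*M\times T^*M)$, and $\id_{\cK_A}$ is obtained from it by applying $\cK_A\ostar(\mathchar`-)$; an argument parallel to the proof of \cref{corollary:unitinfty} (the dichotomy ``$c=+\infty$ or $c\le 0$'', again fed by $\cK_A\ostar\cK_A\simeq\cK_A$) then shows that $c(\id_{\cK_A};\cK_A,\cK_A)=+\infty$ would contradict this non-vanishing. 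Hence $c(\id_{\cK_A};\cK_A,\cK_A)=0$, and (i) follows as the special case $(K,e)=(\cK_A,\id_{\cK_A})$ of the $\zeta_{\mathrm{Ham},e}$-proposition.

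Part (ii) is the general inequality $\zeta_{\mathrm{Ham},e}(H)\le\zeta_{\mathrm{Ham}}(H)$ for $(K,e)=(\cK_A,\id_{\cK_A})$: apply \cref{lemma:specprodineq} to get $c(\id_{\cK_A};\cK_A,\cK_H^{\ostar -k}(\cK_A))\ge c(1;\cK_0,\cK_H^{\ostar -k})+c(\id_{\cK_A};\cK_A,\cK_A)$, divide by $k$, and let $k\to\infty$, using the finiteness from (i). For (iii), by \cref{lemma:heavyequiv} it suffices to show $\zeta_{\mathrm{Ham},A}(H)\le c$ whenever $H\in C_{cc}(T^*M)$ is $\equiv c$ on a neighbourhood of $A$. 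Since $H$ is autonomous, $\cK_H^{-1}=\cK_{-H}$ with $-H\in C_{cc}(T^*M)$ and $-H\equiv-c$ near $A$, so $\cK_H^{\ostar -k}(\cK_A)=\cK_{-H}^{\ostar k}\ostar\cK_A$, and applying \cref{lemma:Kernelconst} $k$ times (peeling off one factor $\cK_{-H}$ at a time, and using that $\ostar$ commutes with the colimit defining $\cK_{-H}$ when $H$ is only continuous) gives $\cK_H^{\ostar -k}(\cK_A)\simeq T_{-kc}\cK_A$. Under this identification the canonical morphism $\cK_A=\cK_0\ostar\cK_A\to\cK_H^{\ostar -k}(\cK_A)$ corresponds to $\id_{\cK_A}\in h\cT_\infty^*(\cK_A,\cK_A)\cong h\cT_\infty^*(\cK_A,T_{-kc}\cK_A)$, whence $c(\id_{\cK_A};\cK_A,\cK_H^{\ostar -k}(\cK_A))=c(\id_{\cK_A};\cK_A,\cK_A)-kc=-kc$; dividing by $k$ and letting $k\to\infty$ gives $\zeta_{\mathrm{Ham},A}(H)=c\le c$ (which also re-proves the $\zeta_{\mathrm{Ham},A}$-heaviness of $A$).

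The main obstacle is the last compatibility in (iii), together with the related finiteness in (i): one must check that the isomorphism $\cK_{-H}^{\ostar k}\ostar\cK_A\simeq T_{-kc}\cK_A$ produced by \cref{lemma:Kernelconst} genuinely intertwines the canonical ``$1$''-morphism of kernels with the tautological shift morphism representing $\id_{\cK_A}$. This means unwinding the construction of that isomorphism --- it is assembled from the ``$1$''-morphisms of GKS kernels and the identifications $\cK_G\simeq T_a\cK_{G-a}$ --- and invoking the $\ostar$-naturality of the spectral invariant together with \cref{lemma:specprodineq}; in fact only the inequality $c(\id_{\cK_A};\cK_A,\cK_H^{\ostar -k}(\cK_A))\ge c(\id_{\cK_A};\cK_A,\cK_A)-kc$ is needed, which makes the argument robust. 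All remaining manipulations are formal and identical to those in \cref{prop:existence_superheavy} and in the earlier propositions of this appendix.
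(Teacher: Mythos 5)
Your proof takes essentially the same route as the paper: parts (i) and (ii) are read off from the general propositions about $\zeta_{\mathrm{Ham},e}$ applied to $(K,e)=(\cK_A,\id_{\cK_A})$, and part (iii) from \cref{lemma:Kernelconst} exactly as in \cref{prop:existence_superheavy}. What you add --- and the paper takes silently for granted --- is the verification of the hypothesis $c(\id_{\cK_A};\cK_A,\cK_A)\neq+\infty$ (the condition ``$=+\infty$'' in the statement of the general propositions is a typo for ``$\neq+\infty$'', as in \cref{subsec:partial_quasistate}); it is good that you flagged this.

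Your route to that finiteness can be streamlined and made airtight. You go through the statement ``$\eta\colon\cK_0\to\cK_A$ is non-zero in $h\cT_\infty$'' (\cref{theorem:Hamchar}~(ii)), but non-vanishing of a morphism in $h\cT_\infty$ does not by itself rule out an infinite spectral invariant (cf.\ \cref{remark:c=+infty}). The cleaner observation is that the contraposition argument used in the proof of \cref{theorem:Hamchar}, which is parallel to (i)$\Rightarrow$(iii) of \cref{theorem:characterization}, in fact establishes the stronger statement $c(1;\cK_0,\cK_A)\neq+\infty$ when $A$ is $\zeta_{\mathrm{Ham}}$-heavy. From there a single application of \cref{lemma:specprodineq} to the factorization $\eta=\id_{\cK_A}\circ\eta$ gives
$c(1;\cK_0,\cK_A)\ge c(1;\cK_0,\cK_A)+c(\id_{\cK_A};\cK_A,\cK_A)$,
hence $c(\id_{\cK_A};\cK_A,\cK_A)\le 0<+\infty$ (your dichotomy-via-idempotence step then pins it to $0$, which you do need for the $k\to\infty$ limit in (ii)). This avoids re-running the \cref{corollary:unitinfty}-type argument at the level of $\cK_A$. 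Finally, the compatibility worry in (iii) is harmless: the spectral invariant of $\id_{\cK_A}$ regarded in $h\cT_\infty^*(\cK_A,\cK_H^{\ostar -k}(\cK_A))$ is intrinsic to the target object, and the shift $c(\alpha;F,T_{-kc}G)=c(\alpha;F,G)-kc$ is a formal property of the definition; you do not need to unwind the construction of the isomorphism in \cref{lemma:Kernelconst}, only to know it exists, and the resulting equality $\zeta_{\mathrm{Ham},A}(H)=c$ is exactly what the paper's terse ``direct consequence of \cref{lemma:Kernelconst}'' encodes.
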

\begin{proof}
    (i) and (ii) are already stated for a more general setting. 

    (iii) is a direct consequence of \cref{lemma:Kernelconst}. 
\end{proof}

\printbibliography

@article {GKS,
    AUTHOR = {Guillermou, St{\'e}phane and Kashiwara, Masaki and Schapira,
              Pierre},
     TITLE = {Sheaf quantization of {H}amiltonian isotopies and applications
              to nondisplaceability problems},
   JOURNAL = {Duke Math. J.},
  FJOURNAL = {Duke Mathematical Journal},
    VOLUME = {161},
      YEAR = {2012},
    NUMBER = {2},
     PAGES = {201--245},
      ISSN = {0012-7094},
     CODEN = {DUMJAO},
   MRCLASS = {53D35 (18F30)},
  MRNUMBER = {2876930},
MRREVIEWER = {Corrado Marastoni},
       DOI = {10.1215/00127094-1507367},
       URL = {http://dx.doi.org/10.1215/00127094-1507367},
}

@book {KS90,
    AUTHOR = {Kashiwara, Masaki and Schapira, Pierre},
     TITLE = {Sheaves on manifolds},
    SERIES = {Grundlehren der Mathematischen Wissenschaften},
    VOLUME = {292},
 PUBLISHER = {Springer-Verlag, Berlin},
      YEAR = {1990},
     PAGES = {x+512},
      ISBN = {3-540-51861-4},
   MRCLASS = {58G07 (18F20 32C38 35A27)},
  MRNUMBER = {1299726 (95g:58222)},
}

@preamble{
   "\def\cprime{$'$} "
}

@incollection {GS14,
    AUTHOR = {Guillermou, St{\'e}phane and Schapira, Pierre},
     TITLE = {Microlocal theory of sheaves and {T}amarkin's non
              displaceability theorem},
 BOOKTITLE = {Homological mirror symmetry and tropical geometry},
    SERIES = {Lect. Notes Unione Mat. Ital.},
    VOLUME = {15},
     PAGES = {43--85},
 PUBLISHER = {Springer, Cham},
      YEAR = {2014},
   MRCLASS = {53D37 (35A27 37J05)},
  MRNUMBER = {3330785},
MRREVIEWER = {Hansol Hong},
       DOI = {10.1007/978-3-319-06514-4_3},
       URL = {http://dx.doi.org/10.1007/978-3-319-06514-4_3},
}

@article{KS18persistent,
	title={Persistent homology and microlocal sheaf theory},
	author={Kashiwara, Masaki and Schapira, Pierre},
	journal={Journal of Applied and Computational Topology},
	volume={2},
	number={1-2},
	pages={83--113},
	year={2018},
	publisher={Springer}
}

@InProceedings{Tamarkin,
	author="Tamarkin, Dmitry",
	editor="Hitrik, Michael
	and Tamarkin, Dmitry
	and Tsygan, Boris
	and Zelditch, Steve",
	title="Microlocal Condition for Non-displaceability",
	booktitle="Algebraic and Analytic Microlocal Analysis",
	year="2018",
	publisher="Springer International Publishing",
	address="Cham",
	pages="99--223"
}

@article {Ike19,
    AUTHOR = {Ike, Yuichi},
     TITLE = {Compact exact {L}agrangian intersections in cotangent bundles
              via sheaf quantization},
   JOURNAL = {Publ. Res. Inst. Math. Sci.},
  FJOURNAL = {Publications of the Research Institute for Mathematical
              Sciences},
    VOLUME = {55},
      YEAR = {2019},
    NUMBER = {4},
     PAGES = {737--778},
      ISSN = {0034-5318},
   MRCLASS = {14F05},
  MRNUMBER = {4024997},
       DOI = {10.4171/PRIMS/55-4-3},
       URL = {https://doi.org/10.4171/PRIMS/55-4-3},
}

@misc{Vi19,
  title={Sheaf Quantization of Lagrangians and Floer cohomology}, 
  author={Claude Viterbo},
  year={2019},
  eprint={1901.09440},
  archivePrefix={arXiv},
  primaryClass={math.SG}
}

@misc{Gu12,
      title={Quantization of conic Lagrangian submanifolds of cotangent bundles}, 
      author={Stéphane Guillermou},
      year={2012},
      eprint={1212.5818},
      archivePrefix={arXiv},
      primaryClass={math.SG}
}

@article {Gu23cotangent,
    AUTHOR = {Guillermou, St\'{e}phane},
     TITLE = {Sheaves and symplectic geometry of cotangent bundles},
   JOURNAL = {Ast\'{e}risque},
  FJOURNAL = {Ast\'{e}risque},
    NUMBER = {440},
      YEAR = {2023},
     PAGES = {x+274},
      ISSN = {0303-1179,2492-5926},
      ISBN = {978-2-85629-972-2},
   MRCLASS = {53D12 (18F20 35A27)},
  MRNUMBER = {4612528},
       DOI = {10.24033/ast.1199},
       URL = {https://doi.org/10.24033/ast.1199},
}

@article{AI20,
	title={Persistence-like distance on {T}amarkin's category and symplectic displacement energy},
	author={Asano, Tomohiro and Ike, Yuichi},
	JOURNAL = {J. Symplectic Geom.},
	FJOURNAL = {The Journal of Symplectic Geometry},
	VOLUME = {18},
	YEAR = {2020},
	NUMBER = {3},
	PAGES = {613--649},
}

@article{bokstedt1993homotopy,
  title={Homotopy limits in triangulated categories},
  author={B{\"o}kstedt, Marcel and Neeman, Amnon},
  journal={Compositio Mathematica},
  volume={86},
  number={2},
  pages={209--234},
  year={1993}
}

@misc{guillermou2022gamma,
  title={The singular support of sheaves is $\gamma$-coisotropic},
  author={Guillermou, St{\'e}phane and Viterbo, Claude},
  eprint = {2203.12977},
	eprinttype = {arXiv},
	primaryclass = {math.SG},
  year={2022}
}

@article{FS07,
    AUTHOR = {Frauenfelder, Urs and Schlenk, Felix},
     TITLE = {Hamiltonian dynamics on convex symplectic manifolds},
   JOURNAL = {Israel J. Math.},
  FJOURNAL = {Israel Journal of Mathematics},
    VOLUME = {159},
      YEAR = {2007},
     PAGES = {1--56},
      ISSN = {0021-2172},
   MRCLASS = {53D40 (37J05 37J45)},
  MRNUMBER = {2342472},
MRREVIEWER = {Michael J. Usher},
       DOI = {10.1007/s11856-007-0037-3},
       URL = {https://doi.org/10.1007/s11856-007-0037-3},
}

@article{AI22completeness,
  title={Completeness of derived interleaving distances and sheaf quantization of non-smooth objects}, 
  author={Tomohiro Asano and Yuichi Ike},
  journal={Mathematische Annalen},  
  year={2024}
}

@misc{viterbo2022supports,
      title={On the supports in the Humili\`ere completion and $\gamma$-coisotropic sets}, 
      author={Claude Viterbo},
      year={2022},
      eprint={2204.04133},
      archivePrefix={arXiv},
      primaryClass={math.SG}
}

@article {KO22rigidfiber,
    AUTHOR = {Kawasaki, Morimichi and Orita, Ryuma},
     TITLE = {Rigid fibers of integrable systems on cotangent bundles},
   JOURNAL = {J. Math. Soc. Japan},
  FJOURNAL = {Journal of the Mathematical Society of Japan},
    VOLUME = {74},
      YEAR = {2022},
    NUMBER = {3},
     PAGES = {829--847},
      ISSN = {0025-5645},
   MRCLASS = {57R17 (53D12 53D20 53D35 53D40 58K05 70H06)},
  MRNUMBER = {4484232},
       DOI = {10.2969/jmsj/84278427},
       URL = {https://doi.org/10.2969/jmsj/84278427},
}

@article {MVZ12,
    AUTHOR = {Monzner, Alexandra and Vichery, Nicolas and Zapolsky, Frol},
     TITLE = {Partial quasimorphisms and quasistates on cotangent bundles,
              and symplectic homogenization},
   JOURNAL = {J. Mod. Dyn.},
  FJOURNAL = {Journal of Modern Dynamics},
    VOLUME = {6},
      YEAR = {2012},
    NUMBER = {2},
     PAGES = {205--249},
      ISSN = {1930-5311},
   MRCLASS = {53Dxx},
  MRNUMBER = {2968955},
       DOI = {10.3934/jmd.2012.6.205},
       URL = {https://doi.org/10.3934/jmd.2012.6.205},
}

@book{CB15,
    AUTHOR = {Cushman, Richard H. and Bates, Larry M.},
     TITLE = {Global aspects of classical integrable systems},
   EDITION = {Second},
 PUBLISHER = {Birkh\"{a}user/Springer, Basel},
      YEAR = {2015},
     PAGES = {xx+477},
   MRCLASS = {37-01 (37Jxx 37Kxx 70Exx 70H06)},
  MRNUMBER = {3242761},
       DOI = {10.1007/978-3-0348-0918-4},
       URL = {https://doi.org/10.1007/978-3-0348-0918-4},
}

@misc{guillermou2022viterbos,
      title={Viterbo's spectral bound conjecture for homogeneous spaces}, 
      author={Stéphane Guillermou and Nicolas Vichery},
      year={2022},
      eprint={2203.13700},
      archivePrefix={arXiv},
      primaryClass={math.SG}
}

@misc{viterbo2022inverse,
      title={Inverse reduction inequalities for spectral numbers and applications}, 
      author={Claude Viterbo},
      year={2022},
      eprint={2203.13172},
      archivePrefix={arXiv},
      primaryClass={math.SG}
}

@article{shelukhin2022viterbo,
  title={Viterbo conjecture for Zoll symmetric spaces},
  author={Shelukhin, Egor},
  journal={Inventiones mathematicae},
  volume={230},
  number={1},
  pages={321--373},
  year={2022},
  publisher={Springer}
}

@article{shelukhin2022symplectic,
  title={Symplectic cohomology and a conjecture of {V}iterbo},
  author={Shelukhin, Egor},
  journal={Geometric and Functional Analysis},
  volume={32},
  number={6},
  pages={1514--1543},
  year={2022},
  publisher={Springer}
}

@article {viterbo2022symplectic,
    AUTHOR = {Viterbo, Claude},
     TITLE = {Symplectic homogenization},
   JOURNAL = {J. \'{E}c. polytech. Math.},
  FJOURNAL = {Journal de l'\'{E}cole polytechnique. Math\'{e}matiques},
    VOLUME = {10},
      YEAR = {2023},
     PAGES = {67--140},
      ISSN = {2429-7100,2270-518X},
   MRCLASS = {37J06 (35F20 49J45 49L25 53D35)},
  MRNUMBER = {4536296},
       DOI = {10.5802/jep.214},
       URL = {https://doi.org/10.5802/jep.214},
}

@article{EP09rigid,
  title={Rigid subsets of symplectic manifolds},
  author={Entov, Michael and Polterovich, Leonid},
  journal={Compositio Mathematica},
  volume={145},
  number={3},
  pages={773--826},
  year={2009},
  publisher={London Mathematical Society}
}

@incollection {OnoSugimoto,
    AUTHOR = {Ono, Kaoru and Sugimoto, Yoshihiro},
     TITLE = {Note on (super) heavy subsets in symplectic manifolds},
 BOOKTITLE = {Gromov-{W}itten theory, gauge theory and dualities},
    SERIES = {Proc. Centre Math. Appl. Austral. Nat. Univ.},
    VOLUME = {48},
     PAGES = {174--192},
 PUBLISHER = {Austral. Nat. Univ., Canberra},
      YEAR = {2019},
      ISBN = {978-0-6481056-2-6},
   MRCLASS = {53D05 (53D40)},
  MRNUMBER = {3951405},
MRREVIEWER = {Erkao\ Bao},
}

@article {Lanzat13,
    AUTHOR = {Lanzat, Sergei},
     TITLE = {Quasi-morphisms and symplectic quasi-states for convex
              symplectic manifolds},
   JOURNAL = {Int. Math. Res. Not. IMRN},
  FJOURNAL = {International Mathematics Research Notices. IMRN},
      YEAR = {2013},
    NUMBER = {23},
     PAGES = {5321--5365},
      ISSN = {1073-7928,1687-0247},
   MRCLASS = {53D40},
  MRNUMBER = {3142258},
MRREVIEWER = {Michael\ J.\ Usher},
       DOI = {10.1093/imrn/rns205},
       URL = {https://doi.org/10.1093/imrn/rns205},
}

@article{AGHIV,
	author = {Tomohiro Asano and St\'ephane Guillermou and Vincent Humili\`ere and Yuichi Ike and Claude Viterbo},
     title = {The $\gamma $-support as a micro-support},
     journal = {Comptes Rendus. Math\'ematique},
     pages = {1333--1340},
     publisher = {Acad\'emie des sciences, Paris},
     volume = {361},
     year = {2023},
     doi = {10.5802/crmath.499}
}

@article {Oh97,
    AUTHOR = {Oh, Yong-Geun},
     TITLE = {Symplectic topology as the geometry of action functional. {I}.
              {R}elative {F}loer theory on the cotangent bundle},
   JOURNAL = {J. Differential Geom.},
  FJOURNAL = {Journal of Differential Geometry},
    VOLUME = {46},
      YEAR = {1997},
    NUMBER = {3},
     PAGES = {499--577},
      ISSN = {0022-040X,1945-743X},
   MRCLASS = {58E05 (57R70 58D15 58F05)},
  MRNUMBER = {1484890},
MRREVIEWER = {Joa\ Weber},
       URL = {http://projecteuclid.org/euclid.jdg/1214459976},
}

@article {Oh99,
    AUTHOR = {Oh, Yong-Geun},
     TITLE = {Symplectic topology as the geometry of action functional.
              {II}. {P}ants product and cohomological invariants},
   JOURNAL = {Comm. Anal. Geom.},
  FJOURNAL = {Communications in Analysis and Geometry},
    VOLUME = {7},
      YEAR = {1999},
    NUMBER = {1},
     PAGES = {1--54},
      ISSN = {1019-8385,1944-9992},
   MRCLASS = {53D40 (57R58)},
  MRNUMBER = {1674121},
MRREVIEWER = {Darko\ Milinkovi\'{c}},
       DOI = {10.4310/CAG.1999.v7.n1.a1},
       URL = {https://doi.org/10.4310/CAG.1999.v7.n1.a1},
}

@article {Kuowrap,
    AUTHOR = {Kuo, Christopher},
     TITLE = {Wrapped sheaves},
   JOURNAL = {Adv. Math.},
  FJOURNAL = {Advances in Mathematics},
    VOLUME = {415},
      YEAR = {2023},
     PAGES = {Paper No. 108882, 71},
      ISSN = {0001-8708,1090-2082},
   MRCLASS = {18F20 (32C38 53D37 53D40 55N30)},
  MRNUMBER = {4543073},
MRREVIEWER = {Mee\ Seong\ Im},
       DOI = {10.1016/j.aim.2023.108882},
       URL = {https://doi.org/10.1016/j.aim.2023.108882},
}

@article{MSV23,
author = {Mak, Cheuk Yu and Sun, Yuhan and Varolgunes, Umut},
title = {A characterization of heaviness in terms of relative symplectic cohomology},
journal = {Journal of Topology},
volume = {17},
number = {1},
pages = {e12327},
doi = {https://doi.org/10.1112/topo.12327},
url = {https://londmathsoc.onlinelibrary.wiley.com/doi/abs/10.1112/topo.12327},
Zxeprint = {https://londmathsoc.onlinelibrary.wiley.com/doi/pdf/10.1112/topo.12327},
abstract = {Abstract For a compact subset K\$K\$ of a closed symplectic manifold (M,ω)\$(M, \omega)\$, we prove that K\$K\$ is heavy if and only if its relative symplectic cohomology over the Novikov field is nonzero. As an application, we show that if two compact sets are not heavy and Poisson commuting, then their union is also not heavy. A discussion on superheaviness together with some partial results is also included.},
year = {2024}
}

@misc{VicheryHDC,
      title={Homological differential calculus}, 
      author={Nicolas Vichery},
      year={2013},
      eprint={1310.4845},
      archivePrefix={arXiv},
      primaryClass={math.AT}
}

@book {HZ11,
    AUTHOR = {Hofer, Helmut and Zehnder, Eduard},
     TITLE = {Symplectic invariants and {H}amiltonian dynamics},
    SERIES = {Modern Birkh\"{a}user Classics},
      NOTE = {Reprint of the 1994 edition},
 PUBLISHER = {Birkh\"{a}user Verlag, Basel},
      YEAR = {2011},
     PAGES = {xiv+341},
      ISBN = {978-3-0348-0103-4},
   MRCLASS = {53D35 (37J05 53D40 70G45 70H05)},
  MRNUMBER = {2797558},
       DOI = {10.1007/978-3-0348-0104-1},
       URL = {https://doi.org/10.1007/978-3-0348-0104-1},
}

@article {BGT13,
    AUTHOR = {Blumberg, Andrew J. and Gepner, David and Tabuada,
              Gon\c{c}alo},
     TITLE = {A universal characterization of higher algebraic {$K$}-theory},
   JOURNAL = {Geom. Topol.},
  FJOURNAL = {Geometry \& Topology},
    VOLUME = {17},
      YEAR = {2013},
    NUMBER = {2},
     PAGES = {733--838},
      ISSN = {1465-3060,1364-0380},
   MRCLASS = {19D10 (18D20 19D25 19D55 55N15 55U40)},
  MRNUMBER = {3070515},
MRREVIEWER = {Ross\ Staffeldt},
       DOI = {10.2140/gt.2013.17.733},
       URL = {https://doi.org/10.2140/gt.2013.17.733},
}

@article {EP06quasi-state,
    AUTHOR = {Entov, Michael and Polterovich, Leonid},
     TITLE = {Quasi-states and symplectic intersections},
   JOURNAL = {Comment. Math. Helv.},
  FJOURNAL = {Commentarii Mathematici Helvetici. A Journal of the Swiss
              Mathematical Society},
    VOLUME = {81},
      YEAR = {2006},
    NUMBER = {1},
     PAGES = {75--99},
      ISSN = {0010-2571,1420-8946},
   MRCLASS = {53D35 (46L30 53D40)},
  MRNUMBER = {2208798},
MRREVIEWER = {Martin\ Pinsonnault},
       DOI = {10.4171/CMH/43},
       URL = {https://doi.org/10.4171/CMH/43},
}

@phdthesis{VicheryPhD,
	author = {Vichery, Nicolas},
	date-added = {2022-11-18 17:48:26 +0100},
	date-modified = {2022-11-18 17:48:26 +0100},
	school = {{\'E}cole polytechnique},
	title = {Homog{\'e}n{\'e}isation symplectique et Applications de la th{\'e}orie des faisceaux {\`a} la topologie symplectique},
	url = {https://tel.archives-ouvertes.fr/pastel-00780016/},
	year = {2012},
	bdsk-url-1 = {https://tel.archives-ouvertes.fr/pastel-00780016/}}

@misc{KSZ23,
      title={On the Hochschild cohomology of Tamarkin categories}, 
      author={Christopher Kuo and Vivek Shende and Bingyu Zhang},
      year={2023},
      eprint={2312.11447},
      archivePrefix={arXiv},
      primaryClass={math.SG}
}

@book{LurieHTT,
    AUTHOR = {Lurie, Jacob},
     TITLE = {Higher topos theory},
    SERIES = {Annals of Mathematics Studies},
    VOLUME = {170},
 PUBLISHER = {Princeton University Press, Princeton, NJ},
      YEAR = {2009},
     PAGES = {xviii+925},
   MRCLASS = {18-02 (18B25 18E35 18G30 18G55 55U40)},
  MRNUMBER = {2522659},
MRREVIEWER = {Mark\ Hovey},
       DOI = {10.1515/9781400830558},
       URL = {https://doi.org/10.1515/9781400830558},
}

@misc{LurieHA,
    title={Higher Algebra},
    author={Lurie, Jacob},
    URL = {https://www.math.ias.edu/~lurie/papers/HA.pdf}
}

@misc{AHV24,
      title={Higher Dimensional Birkhoff attractors}, 
      author={Marie-Claude Arnaud and Vincent Humilière and Claude Viterbo},
      year={2024},
      eprint={2404.00804},
      archivePrefix={arXiv},
      primaryClass={math.SG}
}

@article {NS18,
    AUTHOR = {Nikolaus, Thomas and Scholze, Peter},
     TITLE = {On topological cyclic homology},
   JOURNAL = {Acta Math.},
  FJOURNAL = {Acta Mathematica},
    VOLUME = {221},
      YEAR = {2018},
    NUMBER = {2},
     PAGES = {203--409},
      ISSN = {0001-5962,1871-2509},
   MRCLASS = {55U35 (16E40 18E30 19D99)},
  MRNUMBER = {3904731},
MRREVIEWER = {Geoffrey\ M. L. Powell},
       DOI = {10.4310/ACTA.2018.v221.n2.a1},
       URL = {https://doi.org/10.4310/ACTA.2018.v221.n2.a1},
}

@inproceedings {Entov14,
    AUTHOR = {Entov, Michael},
     TITLE = {Quasi-morphisms and quasi-states in symplectic topology},
 BOOKTITLE = {Proceedings of the {I}nternational {C}ongress of
              {M}athematicians---{S}eoul 2014. {V}ol. {II}},
     PAGES = {1147--1171},
 PUBLISHER = {Kyung Moon Sa, Seoul},
      YEAR = {2014},
   MRCLASS = {53D35 (17B99 20F69 46L30 53D40 53D45)},
  MRNUMBER = {3728656},
MRREVIEWER = {Karl\ Friedrich\ Siburg},
}

@book {PR14,
    AUTHOR = {Polterovich, Leonid and Rosen, Daniel},
     TITLE = {Function theory on symplectic manifolds},
    SERIES = {CRM Monograph Series},
    VOLUME = {34},
 PUBLISHER = {American Mathematical Society, Providence, RI},
      YEAR = {2014},
     PAGES = {xii+203},
      ISBN = {978-1-4704-1693-5},
   MRCLASS = {53Dxx (22E65 57R17 57R58 58E05 81P45)},
  MRNUMBER = {3241729},
MRREVIEWER = {Charles-Michel\ Marle},
       DOI = {10.1090/crmm/034},
       URL = {https://doi.org/10.1090/crmm/034},
}

@misc{viterbo2018functors,
      title={Functors and Computations in Floer homology with Applications Part II}, 
      author={Viterbo, Claude},
      year={2018},
      eprint={1805.01316},
      archivePrefix={arXiv},
      primaryClass={math.SG}
}

\noindent Tomohiro Asano: 
Research Institute for Mathematical Sciences, Kyoto University, \linebreak Kitashirakawa-Oiwake-Cho, Sakyo-ku, 606-8502, Kyoto, Japan.

\noindent \textit{E-mail address}: \texttt{tasano[at]kurims.kyoto-u.ac.jp}, \texttt{tomoh.asano[at]gmail.com}

\end{document}